\numberwithin{equation}{section}
\newtheorem{theorem}{Theorem}[section]
\newtheorem{corollary}[theorem]{Corollary}
\newtheorem{lemma}[theorem]{Lemma}
\newtheorem{proposition}[theorem]{Proposition}
\newtheorem{remark}[theorem]{Remark}
\newtheorem{definition}[theorem]{Definition}
\newtheorem{example}[theorem]{Example}
\newtheorem{conjecture}[theorem]{Conjecture}
\def \R {{\mathbb R}}
\def \GTNN {{Gr^{\mbox{\tiny TNN}} (k,n)}}
\def \S {{\mathcal S}_{\mathcal M}^{\mbox{\tiny TNN}}}
\title[Edge vectors on plabic networks]{Edge vectors on plabic networks in the disk and amalgamation of totally non-negative Grassmannians}
\author{Simonetta Abenda}
\address{Dipartimento di Matematica and Alma Mater Research Center on Applied Mathematics, Universit\`a di Bologna, Italy\\ INFN, sez. di Bologna, Italy}
\email{simonetta.abenda@unibo.it}
\author{Petr G. Grinevich}
\address {Steklov Mathematical Institute of Russian Academy of Sciences, Moscow, Russia\\
L.D.Landau Institute for Theoretical Physics, Chernogolovka, Russia\\
Lomonosov Moscow State University, Faculty of Mechanics and Mathematics, Moscow, Russia}
\email{pgg@landau.ac.ru}
\thanks{This research of S. Abenda has been partially supported by MMNLP-INFN and GAST-INFN Research Projects, GNFM-INDAM,  and RFO University of Bologna. The work of P.G. Grinevich has been performed at the Steklov International Mathematical Center and supported by the Ministry of Science and Higher Education of the Russian Federation (agreement no. 075-15-2022-265). Partially this research was fulfilled during the visit of the second author (P.G.) to IHES, Université Paris-Saclay, France in November 2017.}
\begin{document}

\begin{abstract}
The amalgamation of cluster varieties introduced by Fock and Goncharov in \cite{FG1} plays a relevant role both in mathematical and physical problems.
In particular, amalgamation in the totally non-negative part of positroid varieties is explicitly described as gluing of several copies of small positive Grassmannians, $Gr^{\mbox{\tiny TP}} (1,3)$ and $Gr^{\mbox{\tiny TP}} (2,3)$, has important topological implications \cite{Pos2} and naturally appears in the computation of
amplitude scatterings in $N=4$ SYM theory \cite{AGP1, AGP2}. Lam \cite{Lam2} has proposed to represent amalgamation in positroid varieties by equivalence classes of relations on bipartite graphs and to identify total non-negativity via appropriate edge signatures. In this paper we provide an explicit geometric characterization of such signatures in the setting of the planar bicolored trivalent directed perfect networks in the disk introduced in \cite{Pos} to parametrize positroid cells $\S\subset \GTNN$ using systems of relations for $n$-vectors.

More precisely, to any such graph $\mathcal G$, we associate a geometric signature satisfying both the full rank condition and the total non--negativity property on the full positroid cell. Such signature is uniquely identified by geometric indices (local winding and intersection number) ruled by the orientation $\mathcal O$ and gauge ray direction $\mathfrak l$ on $\mathcal G$. 

The principal result is the enrichement of Postnikov's construction in \cite{Pos} by associating measurements not only to boundary edges or vertices, but to internal edges as well. Indeed we generalize prior results by Postnikov \cite{Pos} and Talaska \cite{Tal2} providing an explicit representation of the solution to the system of geometric relations on the network $(\mathcal N, \mathcal O, \mathfrak l)$ of graph $\mathcal G$ and positive weights.
At this aim, we assign canonical basis vectors in $\mathbb{R}^n$ at the boundary sinks and define the vectors components at the edge $e$ as (finite or infinite) summations over the directed paths from $e$ to the given boundary sink. Such edge vectors have the following properties:

1) They solve the geometric system of relations on $(\mathcal N, \mathcal O, \mathfrak l)$;

2) Their components are rational in the weights with subtraction--free denominators, and have explicit expressions in terms of the conservative and edge flows of \cite{Tal2}. 
At the boundary sources they coincide with the entries of the boundary measurement matrix defined in \cite{Pos}.
If $\mathcal N$ is acyclically orientable, all components are subtraction--free rational expressions in the weights with respect to a convenient basis. Null edge vectors may occur on reducible networks not acyclically orientable;

3) We provide explicit formulas both for the transformation rules of the edge vectors with respect to the orientation and the several gauges of the given network, and for their transformations due to moves and reductions of networks.

Finally, we associate a Kasteleyn orientation to the graph following \cite{CR2}. If the graph is bipartite, it is known that the partition functions of dimer configurations on the graph with given boundary conditions coincide with the Plucker coordinates of the corresponding point of the totally non-negative Grassmannian \cite{PSW,Lam2,Sp,A3,AGPR}.  In the case of plabic graphs which are not bipartite we show that the partition function for a given boundary condition is not a multiple of the corresponding minor of the boundary measurement matrix. Therefore, in this case a statistical mechanical interpretation of the boundary measurement map remains open. 

\medskip \noindent {\sc{2010 MSC.}} 14M15; 05C10, 05C22.

\noindent {\sc{Keywords.}} Totally non-negative Grassmannians, positroid cells, planar bicolored networks in the disk, moves and reductions, amalgamation, boundary measurement map, edge vectors.
\end{abstract}

\maketitle

\tableofcontents

\section{Introduction}
Totally non--negative Grassmannians $\GTNN$ historically first appeared as a special case of the generalization to reductive Lie groups by Lusztig  \cite{Lus1,Lus2} of the classical notion of total positivity \cite{GK,GK2,Sch,Kar}. As for classical total positivity, $\GTNN$ naturally arise in relevant problems in different areas of mathematics and physics. The combinatorial objects introduced by Postnikov \cite{Pos}, see also \cite{Rie}, to characterize $\GTNN$ have been linked to the theory of cluster algebras of Fomin-Zelevinsky  \cite{FZ1,FZ2} in \cite{Sc,OPS}. The topological characterization of $\GTNN$ is provided in \cite{GKL} (see also \cite{PSW,RW}).

In particular the planar bicolored (plabic) graphs introduced in \cite{Pos} have appeared in many contexts, such as the topological classification of real forms for isolated singularities of plane curves \cite{FPS}, they are on--shell diagrams (twistor diagrams) in scattering amplitudes in $N=4$ supersymmetric Yang--Mills theory \cite{AGP1,AGP2,ADM} and have a statistical mechanical interpretation as dimer models in the disk \cite{Lam1}. Totally non-negative Grassmannians naturally appear in many other areas, including the theory of Josephson junctions \cite{BG}, statistical mechanical models such as the asymmetric exclusion process \cite{CW} and in the theory of integrable systems. In particular, plabic graphs have been used in KP integrable hierarchy both to describe the asymptotic behavior and the tropical limit of KP-II real regular multi--line soliton solutions (see \cite{CK,KW1,KW2} and references therein) and in \cite{AG1, AG2,AG3,AG5, AG6} to parametrize such soliton solutions as limits of real finite--gap KP--II solutions via real regular divisors on $\mathtt M$--curves in agreement with \cite{DN, Kr4}.

The motivation to the present research comes from problems of mathematical and theoretical physics where total positivity
is connected to some measurable outcome at the boundary of the graph due to real local interactions occurring at its vertices. In the mathematical language, this issue may be described in terms of the amalgamation of cluster varieties originally introduced by Fock and Goncharov in \cite{FG1}, which has relevant applications in cluster algebras and relativistic quantum field theory \cite{AGP1,AGP2,Kap,MS}. In particular, if the projected graphs represent positroid cells, amalgamation of adjacent boundary vertices preserves the total non--negativity property and plays a relevant role also in real algebraic geometric problems such as polyhedral subdivisions \cite{Pos2}. In connection to relevant open problems in theoretical physics, Lam \cite{Lam2} has proposed to use spaces of relations on planar bipartite graphs to represent amalgamation in totally non--negative Grassmannians and to characterize their maximal rank and total non-negativity properties in terms of admissible edge signatures on the final planar graph.

\smallskip

In this paper we provide an explicit solution to the above problem in the form of geometric conditions on trivalent plabic graphs so that the amalgamation of several copies of little positive Grassmannians $Gr^{\mbox{\tiny TP}} (1,3)$ and $Gr^{\mbox{\tiny TP}} (2,3)$ preserves total non--negativity and produces the expected positroid cell $\S \subset Gr^{\mbox{\tiny TNN}}(k,n)$. We explicitly characterize such admissible edge signatures by defining convenient geometric indices on each planar bicolored directed trivalent perfect (plabic) graph in the disk; such geometric signatures are parametrized by the perfect orientations of the graph and the gauge ray directions. 

In our construction, we provide an explicit characterization of the edge vectors solving such systems of relations at the internal edges using Talaska-type formulas \cite{Tal2}. We use $n$--row vectors and perfectly oriented trivalent plabic networks because this representation is suitable for the mathematical formulation of several problems connected to total non--negativity \cite{AG1, AG3, AG5, AG6,AGP1,AGP2, CK, KW1, KW2}. 
We remark that the formulation of the same problem in terms of $k$--column vectors is straightforward and amounts to exchange relations
at white and black vertices. The transformation of the trivalent plabic graph into an equivalent bipartite one using Postnikov moves avoids the use of orientation at the price of increasing the valency of the 
internal vertices. We remark that valency greater than three may lead to the introduction of extra parameters in applications \cite{AG1,AG3}.

On a given plabic network $\mathcal N$ representing a point in $\S$, the $j$--th edge vector component on $e$ is defined as a summation over all directed paths from $e$ to the boundary sink $b_j$. The absolute value of the contribution of one such path is the product of the edge weights counted with their multiplicities, whereas its sign depends on the sum of two indices: the generalized winding index of the path with respect to a chosen gauge direction $\mathfrak l$, and  the number of intersections of the path with the gauge rays starting at the boundary sources. Such intersection index generalizes the notion of boundary sources passed by a directed path from boundary to boundary to the case in which the initial vertex of $e$ is internal to the graph. We remark that the idea of fixing a ray direction to measure locally the winding first appears in \cite{GSV}. 

For any given choice of positive edge weights on the chosen oriented graph with fixed gauge ray direction, we show that the system of edge vectors solves a full rank system of relations and that the solution of such system at the boundary sources provides the boundary measurement matrix associated to such network by Postnikov \cite{Pos}. The vector components at internal edges are rational in the weights with subtraction--free denominators and are explicitly computed using conservative and edge flows, thus extending the results in \cite{Tal2} to the interior of the graph. Moreover, if the graph is reduced in Postnikov sense, the vector components at all internal edges are subtraction--free rational expressions in the weights, therefore they satisfy the stronger condition settled for the boundary measurement map in \cite{Pos}. On the contrary, null edge vectors may appear in reducible networks even if there exist paths from the given edge to the boundary sinks. In such case, we conjecture that it is possible to obtain non--zero edge vectors using the extra gauge freedom of weights on reducible networks. We also explicitly characterize how edge vectors change with respect to changes of orientation, of gauge ray direction and with respect to Postnikov moves and reductions.

In particular, the image of the boundary measurement map coincides with $\S$ if the vector space is $\R^n$.

We remark that in \cite{A3}, it is proven that the geometric signature fulfills a variant of Kasteleyn theorem in \cite{Sp} in the case of reduced bipartite graphs. In such case the minors of the boundary measurement matrix are the partition functions of weighted dimer configurations up to a multiplicative constant. On the contrary, here we show that there is no such relation between the boundary measurement map and dimer partition functions when the plabic graph is not bipartite. Therefore the statistical mechanical interpretation of the boundary measurement map remains an open problem in such case.

\smallskip

In the continuation to this paper \cite{AG7}, we provide a combinatorial representation of geometric signatures: the total signature on each face depends just on the number of white vertices bounding it. We show that there is a unique geometric signature on each graph up to gauge equivalence, and that this is the unique signature inducing Postnikovs boundary measurement map. Moreover, no other signature is compatible with total non-negativity for arbitrary positive weights. In \cite{AG5, AG6} we apply the present construction to detect the position of real regular divisors associated to multi--line real regular KP--II solitons on the ovals of rational degenerations of non--singular $\mathtt M$--curves dual to plabic graphs. 

In \cite{AGPR} an alternative construction of vector--relation configurations has been proposed on undirected reduced bipartite graphs representing positroid varieties in $Gr(k,n)$ with the purpose of connecting the pentagram map \cite{Schw} and $q$--nets \cite{BS,DS}.

\smallskip

Natural open problems are connected to the generalization of the present construction on Riemann surfaces with boundaries, the investigation of the notion of boundary measurement map in such setting and its connections to field theoretical models and integrable systems. There are several results in this direction so far and relevant applications.

In particular, in \cite{GSV1} it is proven that the boundary measurement map possesses a natural Poisson-Lie structure, compatible with the natural cluster algebra structure on such Grassmannians. An interesting open question is how to use such Poisson--Lie structure in association with our geometric approach. 

In \cite{GSV2} and \cite{Mach}, the authors respectively extend the boundary measurement map to the case perfectly oriented planar networks in the annulus and to perfectly oriented bicolored graphs on Riemann surfaces with boundaries. Moreover in \cite{Mach} explicit expressions for the bondary measurements are provided using a generalization of Talaska formula \cite{Tal3}. The main difference with the case of the disk is that the boundary measurement map defined in \cite{GSV, Mach} depends on the chosen perfect orientation on the graph in an untrival way. So a natural open question is whether it is possible to provide a natural geometric classification of all possible boundary measurement maps.

We plan to pursue such detailed construction in a different paper with the aim of generalizing the construction of KP-II divisors for other classes of soliton solutions and compare it with the so--called top-down approach for non-planar diagrams from gluing legs which plays a relevant role in the computation of scattering amplitudes of field theoretical models \cite{AGP1, AGP2, BFGW}. An extension of the present construction to planar graphs in geometries different from the disk would open also the possibility of investigating the generalization of geometric relations in the framework of discrete integrable systems in cluster varieties \cite{KG, FG1}, dimer models \cite{KO, CR1, CR2} and possible relations to the Deodhar decomposition of the Grassmannian \cite{MR, TW}, which has already proven relevant for KP soliton theory \cite{KW1}.

\smallskip

\paragraph{\textbf{Main results and plan of the paper}}
In Section \ref{sec:plabic_graphs} we recall some useful properties of totally non--negative Grassmannians $\GTNN$ and set up the class of networks $\mathcal N$ used throughout the paper.  In the following $\S \subset Gr^{\mbox{\tiny TNN}}(k,n)$ is a positroid cell of dimension $D$, and $\mathcal G$ is a planar bicolored directed trivalent perfect (plabic) graph in the disk representing $\S$ (Definition \ref{def:graph}). In our setting boundary vertices are all univalent, internal sources or sinks are not allowed, internal vertices may be either bivalent or trivalent and $\mathcal G$ may be either reducible or irreducible in Postnikov sense \cite{Pos}. $\mathcal G$ has $g+1$ faces where $g=D$ if the graph is reduced, otherwise $g>D$.

Then, we fix an orientation $\mathcal O$ on $\mathcal G$ and assign positive weights to the edges so that the resulting oriented network $(\mathcal N, \mathcal O)$ represents a point $[A]\in \S$. On $\mathcal N$ we also fix a reference direction $\mathfrak l$ (gauge ray direction, see Definition \ref{def:gauge_ray}) to measure the winding and count the number of boundary sources encountered along a walk starting at an internal edge and reaching the boundary of the disk. 

In Section \ref{sec:def_edge_vectors}, for any given edge $e$ in $(\mathcal N,\mathcal O,\mathfrak l)$, we consider all directed walks from $e$ to the boundary and to each such walk we assign three numbers: weight, winding and number of intersections with gauge rays starting at boundary sources. Then the $j$--th component of the edge vector $E_e$ is formally defined as the (finite or infinite) sum of such signed contributions over all directed walks from $e$ to the boundary vertex $b_j$. By definition, edge vectors satisfy linear relations at the vertices and this system has full rank on $(\mathcal N,\mathcal O,\mathfrak l)$ (Theorem \ref{theo:consist}). 

Therefore the formal sums may be substituted by rational expressions in the edge weights and, adapting remarkable results in 
\cite{Pos,Tal2} to our setting, in Theorem \ref{theo:null} we prove that the edge vectors components are rational expressions
with subtraction--free denominators and may be explicitly computed in terms of the conservative and of the edge flows defined in Section \ref{sec:flows}.

The edge vectors at the boundary sources are the row vectors of the Postnikov boundary measurement matrix minus the pivot term for the same choice of perfect orientation of the graph and positive edge weights. Therefore the image of the system of relations at the boundary vertices coincides with the boundary measurement map.

We also provide explicit formulas for the dependence of the edge vectors on the orientation and the weight, vertex and ray direction gauge freedoms of planar networks (Sections \ref{sec:gauge_ray}, \ref{sec:orient} and \ref{sec:different_gauge}). 
The technical lemmas concerning transformation rules of edge vectors with respect to changes of orientation are proven in Appendix \ref{app:orient}.
Finally we explain the dependence of edge vectors on Postnikov's moves and reductions (Section \ref{sec:moves_reduc}). 

In Theorem \ref{thm:null_acyclic} we prove that if $\mathcal G$ possesses an acyclic orientation, then the components of the edge vectors are subtraction--free rational in the edge weights with respect to the canonical basis. This property holds for any choice of gauge ray direction and of vertex gauge. Changes of orientation act on the components just as a multiplicative factor if we express the vectors with respect to the new basis (Corollary \ref{cor:null_changes}). Therefore zero edge vectors are forbidden in such case. On the contrary, if $\mathcal G$ is reducible and does not possess an acyclic orientation, null edge vectors may appear in the solution to the linear system even if there do exist paths starting at the given edge and reaching the boundary (see Example \ref{example:null}). In this case the boundary measurement map is surjective, but not injective, since there is an extra freedom in the assignment of the edge weights, which we call the unreduced graph gauge freedom (Remark \ref{rem:gauge_freedom}). We conjecture that, using such extra gauge freedom, it is possible to choose weights on reducible graphs so that all edge vectors are not null provided that for each edge there exists a directed path to the boundary containing it (Conjecture \ref{conj:null}).

In Section \ref{sec:comb} we restrict ourselves to plabic graphs $\mathcal G$ representing irreducible positroid cells $\S$ and such that for each edge there exists a directed path from boundary to boundary containing it. After recalling some results from \cite{AG7}, we explain the connection with amalgamation. Finally we recall the definition of Kasteleyn orientation for surface graphs with boundaries in \cite{CR2} and discuss the statistical mechanical interpretation of the boundary measurement map whether the graph is bipartite or not.

\section{Plabic networks and totally non--negative Grassmannians}\label{sec:plabic_graphs}

In this Section we recall some basic definitions on totally non--negative Grassmannians and define the class of graphs $\mathcal G$ representing a given positroid cell which we use throughout the text. 
We use the following notations throughout the paper:
\begin{enumerate}
\item $k$ and $n$ are positive integers such that $k<n$;
\item  For $s\in {\mathbb N}$  $[s] =\{ 1,2,\dots, s\}$; if $s,j \in {\mathbb N}$, $s<j$, then
$[s,j] =\{ s, s+1, s+2,\dots, j-1,j\}$;
\end{enumerate}

\begin{definition}\textbf{Totally non-negative Grassmannian \cite{Pos}.}
Let $Mat^{\mbox{\tiny TNN}}_{k,n}$ denote the set of real $k\times n$ matrices of maximal rank $k$ with non--negative maximal minors $\Delta_I (A)$. Let $GL_k^+$ be the group of $k\times k$ matrices with positive determinants. We define a totally non-negative Grassmannian as 
\[
\GTNN = GL_k^+ \backslash Mat^{\mbox{\tiny TNN}}_{k,n}.
\]
\end{definition}
In the theory of totally non-negative Grassmannians an important role is played by the positroid stratification. Each cell in this stratification is defined as the intersection of a Gelfand-Serganova stratum \cite{GS,GGMS} with the totally non-negative part of the Grassmannian. More precisely:
\begin{definition}\textbf{Positroid stratification \cite{Pos}.} Let $\mathcal M$ be a matroid i.e. a collection of $k$-element ordered subsets $I$ in $[n]$, satisfying the exchange axiom (see, for example \cite{GS,GGMS}). Then the positroid cell $\S$ is defined as
$$
\S=\{[A]\in \GTNN\ | \ \Delta_{I}(A) >0 \ \mbox{if}\ I\in{\mathcal M} \ \mbox{and} \  \Delta_{I}(A) = 0 \ \mbox{if} \ I\not\in{\mathcal M}  \}.
$$
A positroid cell is irreducible if, for any $j\in [n]$, there exist $I, J\in \mathcal M$ such that $j\in I$ and $j\not\in J$.
\end{definition}
The combinatorial classification of all non-empty positroid cells and their rational parametrizations were obtained in \cite{Pos}, \cite{Tal2}. In our construction we use the classification of positroid cells via directed planar networks in the disk in \cite{Pos}. More precisely, we use the following class of graphs ${\mathcal G}$ introduced by Postnikov \cite{Pos}:
\begin{definition}\label{def:graph} \textbf{Planar bicolored directed trivalent perfect graphs in the disk (plabic graphs).} A graph ${\mathcal G}$ is called plabic if:
\begin{enumerate}
\item  ${\mathcal G}$ is planar, directed and lies inside a disk. Moreover ${\mathcal G}$ is connected in the sense it does not possess components isolated from the boundary;
\item It has finitely many vertices and edges;
\item It has $n$ boundary vertices on the boundary of the disk labeled $b_1,\cdots,b_n$ clockwise. Each boundary vertex has degree 1. We call a boundary vertex $b_i$ a source (respectively sink) if its edge is outgoing (respectively incoming);
\item The remaining vertices are called internal and are located strictly inside the disk. They are either bivalent or trivalent;
\item ${\mathcal G}$ is a perfect graph, that is each internal vertex in  ${\mathcal G}$ is incident to exactly one incoming edge or to one outgoing edge. In the first case the vertex is colored white, in the second case black. Bivalent vertices are assigned either white or black color.
\end{enumerate}
A face of the graph is called internal if it does not contain boundary vertices, otherwise is called external. The external face containing the boundary vertices $b_n$, $b_1$ in clockwise order is called infinite, all other faces are called finite.

Moreover, to simplify the overall construction we further assume that the boundary vertices $b_j$, $j\in [n]$ lie on a common interval in the boundary of the disk. 
\end{definition}

\begin{remark}
\begin{enumerate}
\item The trivalency assumption is not restrictive, since any perfect plabic graph can be transformed into a trivalent
  one.
\item The assumption that the boundary vertices $b_j$, $j\in [n]$ lie on a common interval in the boundary of the disk is not restrictive. Indeed, one may equivalently represent the given graph inside the upper half-plane, and assume that the boundary vertices lie on the line, all edges are straight intervals, and the infinite face contains the infinite point. 
\end{enumerate}
\end{remark}
In Figure \ref{fig:Rules0} we present an example of a plabic graph satisfying Definition~\ref{def:graph} and representing a 10-dimensional positroid cell in $Gr^{\mbox{\tiny TNN}}(4,9)$. 

In the following we also consider a more restrictive class of plabic graphs.
\begin{definition}\label{def:PBDTP} \textbf{PBDTP graph}
A plabic graph $\mathcal G$ is called PBDTP if it satisfies the following additional condition: for any edge of $\mathcal G$  there exists a directed path from the boundary to the boundary containing it.
\end{definition}

The class of perfect orientations of the plabic graph ${\mathcal G}$ are those which are compatible with the coloring of the vertices. The graph is of type $(k,n)$ if it has $n$ boundary vertices and $k$ of them are boundary sources. Any choice of perfect orientation preserves the type of ${\mathcal G}$. To any perfect orientation $\mathcal O$ of ${\mathcal G}$ one assigns the base $I_{\mathcal O}\subset [n]$ of the $k$-element source set for $\mathcal O$. Following \cite{Pos} the matroid of ${\mathcal G}$ is the set of $k$-subsets  $I_{\mathcal O}$ for all perfect orientations:
$$
\mathcal M_{\mathcal G}:=\{I_{\mathcal O}|{\mathcal O}\ \mbox{is a perfect orientation of}\ \mathcal G \}.
$$
In \cite{Pos} it is proven that $\mathcal M_{\mathcal G}$ is a totally non-negative matroid $\mathcal S^{\mbox{\tiny TNN}}_{\mathcal M_{\mathcal G}}\subset \GTNN$. The following statements are straightforward adaptations of more general statements of \cite{Pos} to the case of  plabic graphs:
\begin{theorem}
A plabic graph $\mathcal G$ can be transformed into a plabic graph $\mathcal G'$ via a finite sequence of Postnikov moves and reductions if and only if $\mathcal M_{\mathcal G}=\mathcal M_{\mathcal G'}$.
\end{theorem}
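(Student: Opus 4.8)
The statement is the restriction to the PBDTP class of Postnikov's classification of move‑equivalence classes of plabic graphs in \cite{Pos}, and the plan is to deduce it from that theorem once we check that the PBDTP subclass is preserved by the relevant local operations and is rich enough to realize all the equivalences that are needed.

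The implication ``related by moves and reductions $\Rightarrow$ $\mathcal M_{\mathcal G}=\mathcal M_{\mathcal G'}$'' is the elementary direction, and I would handle it case by case. Each Postnikov move and each reduction is supported on a bounded subgraph, and one checks directly that it induces a bijection between the perfect orientations of $\mathcal G$ and of $\mathcal G'$ which fixes the set of boundary sources: the square move only recolours the four vertices of a square face and permutes its four perfect orientations among themselves; the unicolored contraction/uncontraction and the insertion or removal of a bivalent vertex touch no boundary edge; the reductions delete reducible substructures (parallel edges, bubbles, internal leaves) across which any perfect orientation can be rerouted. Hence $I_{\mathcal O}$ ranges over the same family of $k$‑subsets of $[n]$, i.e.\ $\mathcal M_{\mathcal G}=\mathcal M_{\mathcal G'}$. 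Alternatively, one may quote that each of these operations preserves the image of the boundary measurement map, which is $\mathcal S^{\mathrm{TNN}}_{\mathcal M}$, and then recover the matroid from the cell as $\{I : \Delta_I\not\equiv 0\}$.

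For the converse, assume $\mathcal M_{\mathcal G}=\mathcal M_{\mathcal G'}=:\mathcal M$. First I would apply Postnikov reductions — followed by the standard trivalent‑ization of any vertex of valency at least $4$ into a binary tree of same‑coloured trivalent vertices, and by the insertion of bivalent white vertices at the boundary — to replace $\mathcal G$ and $\mathcal G'$ by reduced PBDTP graphs $\mathcal G_{\mathrm r}$ and $\mathcal G_{\mathrm r}'$; by the forward direction these still have matroid $\mathcal M$. So it suffices to connect two reduced PBDTP graphs with the same matroid by a sequence of moves that stays in the PBDTP class. Viewed as ordinary plabic graphs they are move‑equivalent by Postnikov's theorem via a finite sequence of moves (M1)--(M3); the remaining task, and the real content of this direction, is to reroute that sequence so that every intermediate graph is again PBDTP. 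A convenient way to organize this is to fix, for $\mathcal M$, the canonical representative obtained from the Le--graph of its Le--diagram by the same trivalent‑ization and boundary normalization, and to prove that every reduced PBDTP graph with matroid $\mathcal M$ is move‑equivalent within PBDTP to this canonical graph; composing these equivalences with $\mathcal G\sim\mathcal G_{\mathrm r}$ and $\mathcal G'\sim\mathcal G_{\mathrm r}'$ then gives the statement.

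The hard part will be exactly this rerouting, namely showing that the PBDTP subclass is ``move‑connected''. Postnikov's theorem is proved in the wider category that allows internal vertices of arbitrary valency and lollipops, so one must argue that the only move which can raise a valency above three — the unicolored contraction — can always be absorbed by uncontracting the offending vertex into a tree of trivalent vertices of the same colour and carrying the subsequent moves across that tree, and that perfectness (each internal vertex has a unique incoming or a unique outgoing edge) is preserved throughout once an orientation is propagated along the moves. A further delicate point that I would isolate separately is the clause ``each internal face contains trivalent vertices of both colours'': this is not preserved by an arbitrary move, but it holds automatically for reduced graphs, which are move‑equivalent to the Le--graph where it is immediate, so it only needs to be monitored along the canonical path to the Le--graph; for reducible graphs it is a standing hypothesis that the reductions keep easily.
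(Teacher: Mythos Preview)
The paper does not actually prove this theorem: it is stated immediately after the sentence ``The following statements are straightforward adaptations of more general statements of \cite{Pos} to the case of PBDTP graphs,'' and no argument is given. So there is nothing to compare your attempt against; the authors simply invoke Postnikov's classification and assert that the restriction to the PBDTP subclass is routine.

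Your sketch is therefore considerably more than what the paper offers, and the overall strategy---invoke Postnikov's theorem for the ambient class of plabic graphs and then check that the PBDTP subclass is closed under (suitably reformulated) moves and reductions---is the right one. The paper does supply one ingredient you would need: in Section~\ref{sec:moves_reduc} it explains that in the trivalent setting the unicolored contraction/uncontraction (M2) is replaced by a \emph{flip move} of two same-coloured trivalent vertices, which keeps all vertices trivalent; this is exactly the ``absorb high-valency vertices back into trees'' step you flagged. The clause ``each internal face has trivalent vertices of both colours'' that you worry about is addressed in the paper only by the remark that it holds automatically for reduced graphs (being move-equivalent to the Le--graph), which matches your own observation; for reducible graphs the paper does not discuss it further, so if you were writing a full proof you would indeed have to monitor it along the reduction steps as you say.

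In short: your plan is sound and more detailed than the paper's (non-)proof, but you should be aware that the authors treat this as a citation rather than a result requiring independent argument.
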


A graph  $\mathcal G$ is reduced if there is no other graph in its move reduction equivalence class which can be obtained from $\mathcal G$ applying a sequence of transformations containing at least one reduction. Each positroid cell $\S$ is represented by at least one reduced graph, the so called Le--graph, associated to the Le--diagram representing $\S$ and it is possible to assign positive weights to such graphs in order to obtain a global parametrization of $\S$ \cite{Pos}.

If a positroid cell is irreducible, then the plabic graphs representing it do not possess isolated boundary vertices.

We have the following elementary Lemma.
\begin{lemma}
  \begin{enumerate}
  \item A PBDTP graph always represents an irreducible cell;
  \item In a  PBDTP graph all internal faces contain vertices of both colors;  
  \item If a plabic graph is PBDTP in one orientation, it is PBDTP in all other perfect orientations. 
\end{enumerate}
\end{lemma}

\begin{proposition}\cite{Pos}
If $\mathcal G$ is a reduced  plabic graph, then the dimension of  $\mathcal S^{\mbox{\tiny TNN}}_{\mathcal M_{\mathcal G}}$ is equal to the number of faces of $\mathcal G$ minus 1.
\end{proposition}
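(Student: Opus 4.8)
The plan is to reduce both assertions to results of \cite{Pos}. For the first, I would start from the plabic graph canonically associated to the Le-diagram of $\S$, which is reduced by \cite{Pos}, and then bring it into the PBDTP form of Definition~\ref{def:graph} using Postnikov's moves alone. Concretely: an internal white (resp. black) vertex of valency $d>3$ is replaced, by repeated uncontraction of unicolored edges, by a binary tree of trivalent white (resp. black) vertices; a bivalent white vertex $V_i$ is then inserted on the edge at each boundary vertex $b_i$ by the bivalent-vertex insertion move; and, if necessary, the boundary vertices are slid onto a common boundary interval. These operations are genuine moves, hence they preserve $\mathcal M_{\mathcal G}$, do not introduce internal sources or sinks (perfectness is a local condition preserved by each move), and leave the number of faces unchanged; since by \cite{Pos} a graph is reduced exactly when it realizes the minimal number of faces in its move-and-reduction equivalence class, the graph we obtain is still reduced. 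It then remains to observe that the Le-graph has trivalent vertices of both colors in each internal face and that the above moves can be arranged so as not to destroy this property, which yields the desired reduced PBDTP Le-graph.

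For the dimension formula I would argue as follows. By the move-equivalence Theorem stated above, any two reduced PBDTP graphs with the same matroid are related by a sequence of moves \emph{and} reductions; but reductions strictly decrease the number of faces while moves preserve it, so reduced graphs lie in the minimal-face stratum of an equivalence class and are therefore connected within it by moves alone. Hence the number $F$ of faces of a reduced PBDTP graph representing $\mathcal S^{\mbox{\tiny TNN}}_{\mathcal M_{\mathcal G}}$ is a well-defined invariant, which it suffices to evaluate on one representative, say the Le-graph of the first part, where the internal faces are in bijection with the boxes of the Le-diagram, so that $F=|D|+1$.

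Finally I would invoke Postnikov's boundary measurement construction. On a reduced PBDTP network with underlying graph $\mathcal G$, $V_{\textup{int}}$ internal vertices and $E$ edges (the $n$ edges at the boundary vertices included), positive edge weights are sent into $\GTNN$, and for reduced $\mathcal G$ this map descends to a bijection from the quotient of $(\mathbb R_{>0})^{E}$ by the gauge group $(\mathbb R_{>0})^{V_{\textup{int}}}$ acting at the internal vertices onto $\mathcal S^{\mbox{\tiny TNN}}_{\mathcal M_{\mathcal G}}$ \cite{Pos}. Since the boundary vertices are fixed by the gauge action and $\mathcal G$ is connected with $n\ge 1$ boundary vertices, this action is free, so the quotient has dimension $E-V_{\textup{int}}$; Euler's relation for a connected graph in the disk, $V_{\textup{int}}-E+F=1$, then gives $E-V_{\textup{int}}=F-1$, whence $\dim \mathcal S^{\mbox{\tiny TNN}}_{\mathcal M_{\mathcal G}}=F-1$, which equals $|D|$ by the previous paragraph.

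The only genuinely delicate step is the first one: one must check that the uncontraction and bivalent-insertion moves used to pass from the Le-graph to its PBDTP form keep the graph inside the class of Definition~\ref{def:graph} --- in particular that perfectness, the absence of internal sources and sinks, and the presence of bicolored trivalent vertices in every internal face all persist --- and that the output remains reduced. All of this is local and follows from Postnikov's analysis of the moves together with his characterization of reduced graphs; once injectivity modulo gauge of the boundary measurement map on reduced graphs is cited from \cite{Pos}, the dimension count is pure bookkeeping via Euler's formula.
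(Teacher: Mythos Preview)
The paper does not give its own proof of this proposition; it is stated with attribution to \cite{Pos} and no argument is supplied. Your sketch is a correct and reasonable elaboration of the standard route, reducing each step to Postnikov's results: moves to bring the Le-graph into PBDTP form while preserving reducedness (via the minimal-face characterization), invariance of the face count among reduced representatives, and the Euler/gauge-action bookkeeping for the dimension. One small imprecision: when you write ``the internal faces are in bijection with the boxes of the Le-diagram'', you mean the \emph{filled} boxes (those carrying a $1$), of which there are $|D|$; the Le-diagram itself has $k(n-k)$ boxes in general.
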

The plabic graph in Figure \ref{fig:Rules0} is a reduced plabic graph representing a 10-dimensional positroid cell in $
Gr^{\mbox{\tiny TNN}} (4,9)$.

\begin{lemma}\textbf{Relations between vertices, edges, faces}
Let $t_W, t_B, d_W$ and $d_B$ respectively be the number of trivalent white, trivalent black, bivalent white and bivalent black internal vertices of ${\mathcal G}$. Let $n_I$ be the number of internal edges ({\sl i.e.} edges not connected to a boundary vertex) of ${\mathcal G}$. By Euler formula we have $g = n_I +n -(t_W + t_B+d_W+d_B)$. Moreover,
the following identities hold
$3(t_W+t_B)+2(d_W+d_B) = 2n_I +n$, $2t_B+ t_W+d_W+d_B =n_I+k$.
Therefore 
\begin{equation}\label{eq:vertex_type}
t_W = g-k, \qquad t_B = g-n+k, \qquad d_W+d_B= n_I+ 2n - 3g.
\end{equation}
\end{lemma}

\section{Systems of edge vectors on plabic networks}\label{sec:def_edge_vectors}
For any given $[A]\in \mathcal S^{\mbox{\tiny TNN}}_{\mathcal M_{\mathcal G}}$, there exists $\mathcal N$ a network representing $[A]$ with plabic graph $\mathcal G$ for some choice of positive edge weights $w_e$ \cite{Pos}. 

In \cite{Pos}, for any given oriented planar network in the disk it is defined the formal boundary measurement map 
\[
M_{ij} := \sum\limits_{P \, : \, b_i\mapsto b_j} (-1)^{\mbox{\scriptsize
 Wind}(P)} w(P),
\] 
where the sum is over all directed walks from the source $b_i$ to the sink $b_j$, $w(P)$ is the product of the edge weights of $P$ and $\mbox{Wind}(P)$ is its topological winding index. These formal power series sum up to subtraction--free rational expressions in the weights \cite{Pos} and explicit expressions in function of flows and conservative flows in the network are obtained in \cite{Tal2}. Let $I$ be the base inducing the orientation of $\mathcal N$ used in the computation of the boundary measurement map. Then the point $Meas(\mathcal N)\in Gr(k,n)$ is represented by the boundary measurement matrix $A$ such that:
\begin{itemize}
\item The submatrix $A_I$ in the column set $I$ is the identity matrix;
\item The remaining entries $A^r_j = (-1)^{\sigma(i_r,j)} M_{ij}$, $r\in [k]$, $j\in \bar I$, where $\sigma(i_r,j)$ is the number of elements of $I$ strictly between $i_r$ and $j$.
\end{itemize}
In the following we extend this measurement to the edges of plabic networks in such a way that, if $e_r$ is the edge at the boundary source $b_{i_r}$, then the vector $E_{e_r} = A[r]- E_{i_r}$, with $E_{j}$ the $j$--th vector of the canonical basis. At this aim we introduce gauge rays both to measure the local winding between consecutive edges in the path and to count the number of boundary sources passed by a path from an internal edge to a boundary sink vertex using the number of its intersections with gauge rays starting at the boundary sources. 
In \cite{GSV}, gauge rays were introduced to compute the winding number of a path joining boundary vertices. Here we use it also to generalize the index $\sigma(i_r,j)$ when the path starts at an internal edge $e$.

\begin{figure}
  \centering{\includegraphics[width=0.65\textwidth]{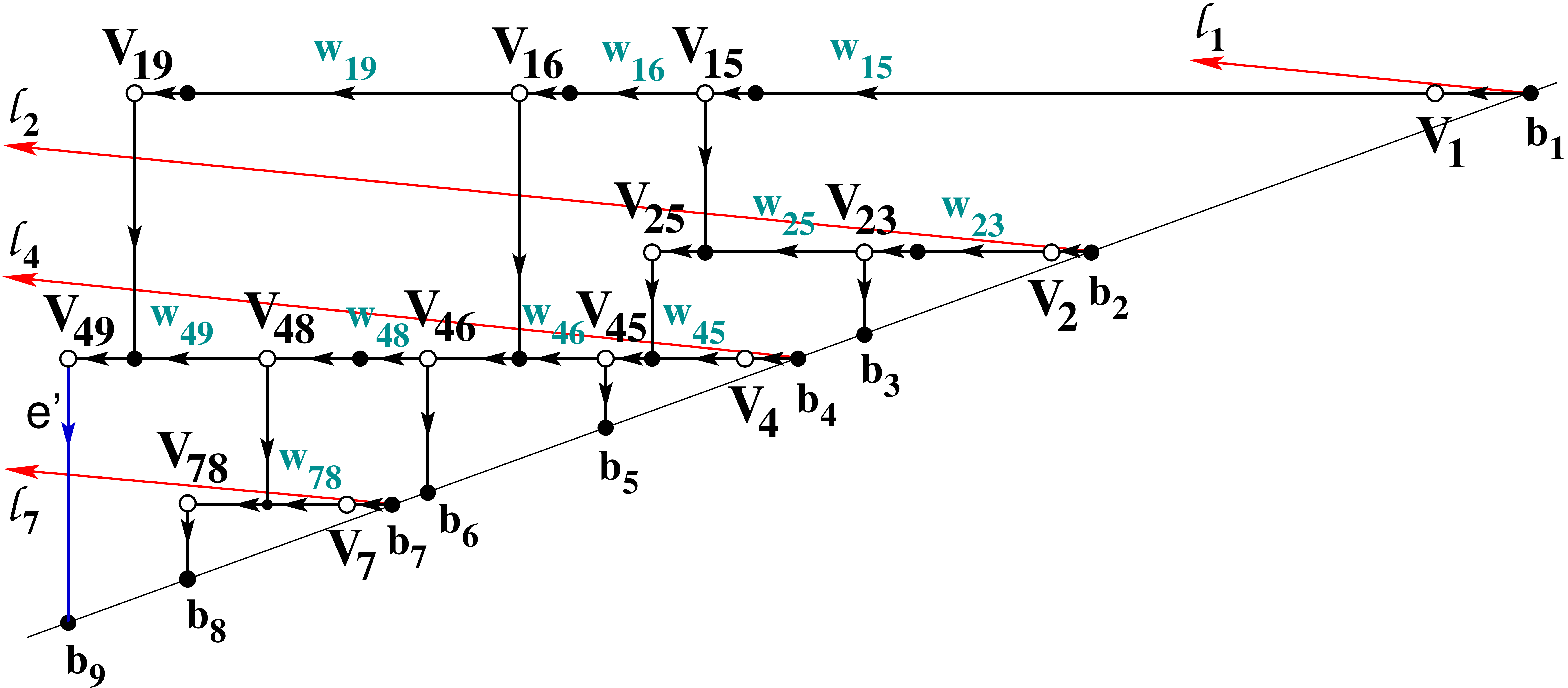}
      \caption{\small{\sl The rays starting at the boundary sources for a given orientation of the 
          network uniquely fix the edge vectors.
					}}\label{fig:Rules0}}
\end{figure}

\begin{definition}\label{def:gauge_ray}\textbf{The gauge ray direction $\mathfrak{l}$.}
A gauge ray direction is an oriented direction ${\mathfrak l}$ with the following properties:
\begin{enumerate}
\item The ray ${\mathfrak l}$ starting at a boundary vertex points inside the disk (upper half-plane); 
\item This direction is not parallel to any edge;
\item All rays starting at boundary vertices do not contain internal vertices of the network.
\end{enumerate}
\end{definition}
We remark that the first property may always be satisfied since all boundary vertices lie at a common straight interval in the boundary of $\mathcal N$. We then define the local winding number between a pair of consecutive edges $e_k,e_{k+1}$ as follows.

\begin{figure}%[H]
\centering{\includegraphics[width=0.7\textwidth]{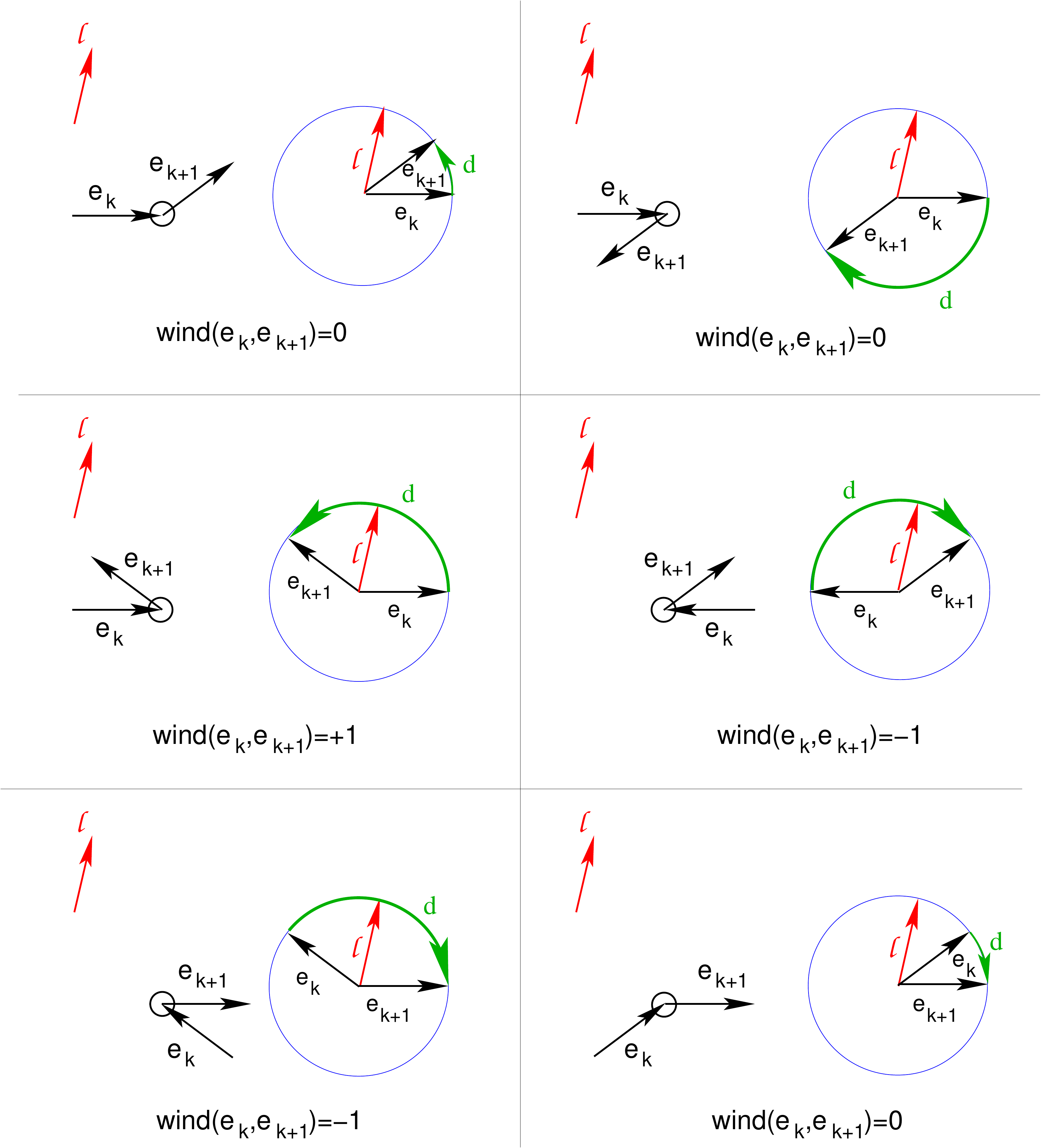}}
\caption{\small{\sl The local rule to compute the winding number.}}\label{fig:winding}
\end{figure}
\begin{definition}\label{def:winding_pair}\textbf{The local winding number at an ordered pair of oriented edges}
Let  $(e_k,e_{k+1})$ be an ordered pair of oriented edges. If they are not antiparallel, let us define
\begin{equation}\label{eq:def_s}
s(e_k,e_{k+1}) = \left\{
\begin{array}{ll}
+1 & \mbox{ if the ordered pair is positively oriented }  \\
0  & \mbox{ if } e_k \mbox{ and } e_{k+1} \mbox{ are parallel }\\
-1 & \mbox{ if the ordered pair is negatively oriented }
\end{array}
\right.
\end{equation}
Then the winding number of the ordered pair $(e_k,e_{k+1})$ with respect to the gauge ray direction $\mathfrak{l}$ is
\begin{equation}\label{eq:def_wind}
\mbox{wind}(e_k,e_{k+1}) = \left\{
\begin{array}{ll}
+1 & \mbox{ if } s(e_k,e_{k+1}) = s(e_k,\mathfrak{l}) = s(\mathfrak{l},e_{k+1}) = 1\\
-1 & \mbox{ if } s(e_k,e_{k+1}) = s(e_k,\mathfrak{l}) = s(\mathfrak{l},e_{k+1}) = -1\\
0  & \mbox{otherwise}.
\end{array}
\right.
\end{equation}
We illustrate the rule in Figure \ref{fig:winding}.

In the non generic case of ordered antiparallel edges, we slightly rotate the pair $(e_k, e_{k+1})$ to $(e_k^{\prime},
e_{k+1}^{\prime})$ as in Figure \ref{fig:antipar} and define
\begin{equation}\label{eq:s_antipar}
\mbox{wind}(e_k,e_{k+1}) = \lim_{\epsilon\to 0^+} \mbox{wind} (e_k^{\prime},e_{k+1}^{\prime}).
\end{equation}

\end{definition}
\begin{figure}%[H]
  \centering{\includegraphics[width=0.49\textwidth]{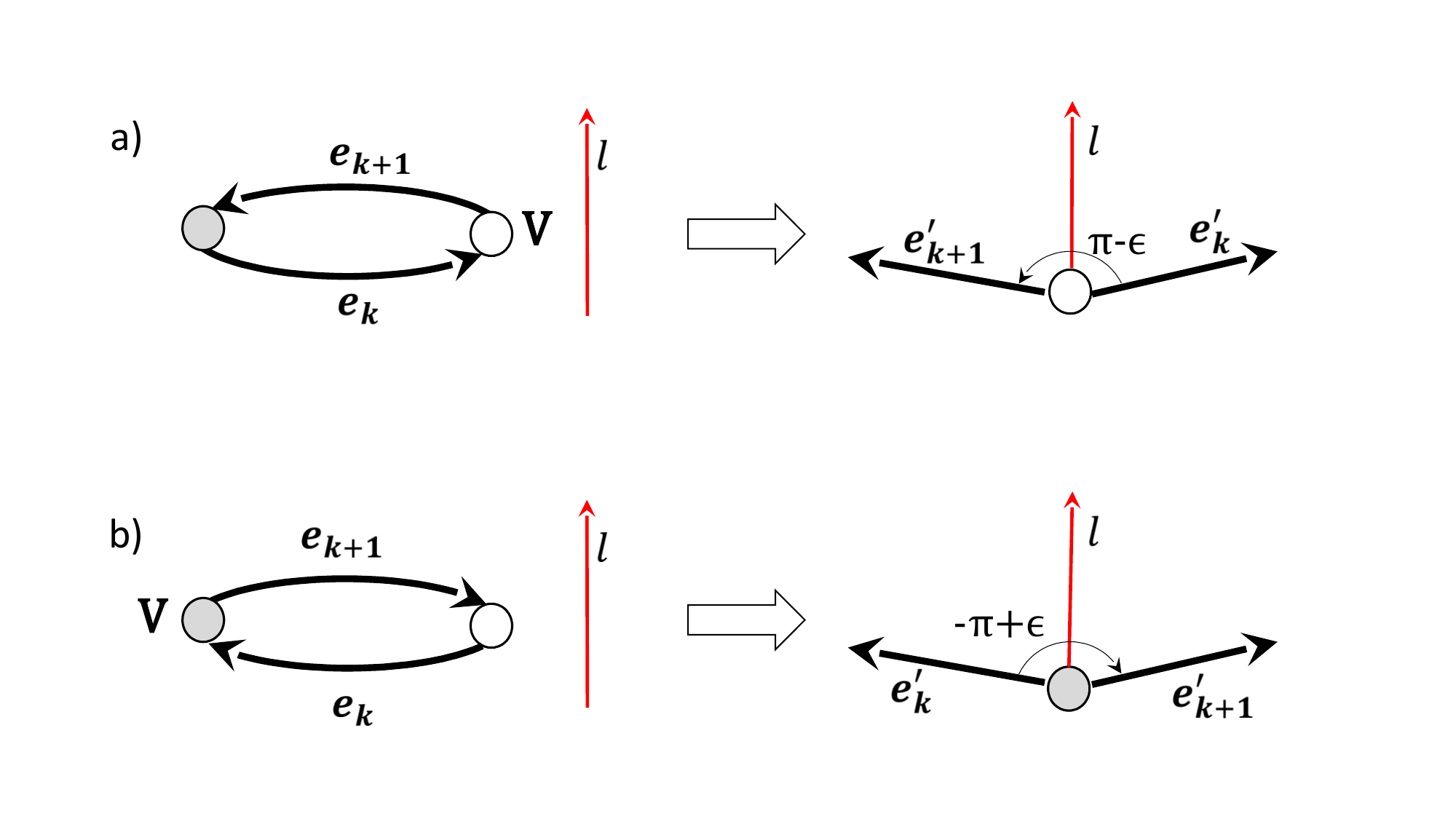}
	\includegraphics[width=0.49\textwidth]{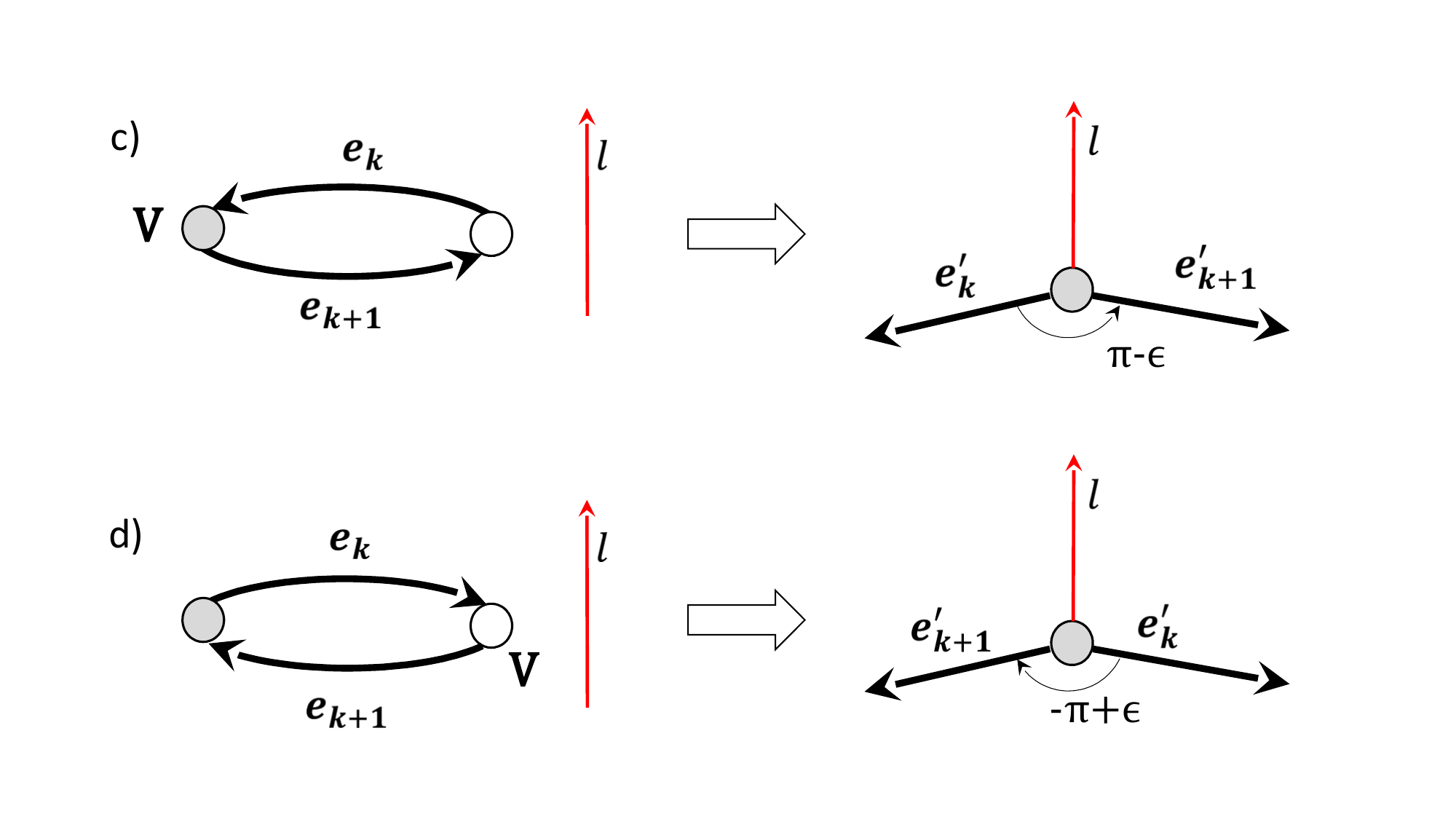}
      \caption{\small{\sl If the ordered pair $(e_k, e_{k+1})$ is antiparallel at $V$, we slightly rotate the two edge vectors at $V$ to compute $\mbox{wind}(e_k,e_{k+1})$. Using (\ref{eq:s_antipar}) and (\ref{eq:def_wind}), we get $a)$: $\mbox{wind}(e_k,e_{k+1})=1$; $b)$: $\mbox{wind}(e_k,e_{k+1})=-1$; $c)$: $\mbox{wind}(e_k,e_{k+1})=0$; $d)$: $\mbox{wind}(e_k,e_{k+1})=0$. }}\label{fig:antipar}}
\end{figure}

The local winding defined above has the following properties:

\begin{lemma}
\label{lem:rotation}  
\begin{enumerate}   
\item If we keep $e_k$, $e_{k+1}$ fixed and rotate the gauge direction $\mathfrak l$, $\mbox{wind}(e_k,e_{k+1})$ changes by $\pm 1$ each time $\mathfrak l$ passes $e_k$ or $e_{k+1}$;
\item If we keep $e_k$, $\mathfrak l$ fixed and  rotate $e_{k+1}$,  $\mbox{wind}(e_k,e_{k+1})$ changes by $\pm 1$ each time $e_{k+1}$  passes  $\mathfrak l$ or  $-e_k$.
\end{enumerate}
\end{lemma}  
The proof is straightforward.

Let $b_{i_r}$, $r\in[k]$, $b_{j_l}$, $l\in[n-k]$, respectively  be the set of boundary sources and boundary sinks 
associated to the given orientation. Then draw the rays ${\mathfrak l}_{i_r}$, $r\in[k]$, starting at $b_{i_r}$ associated with the pivot columns of the given orientation. In Figure \ref{fig:Rules0} we show an example.

Let us now consider a directed path ${\mathcal P}=\{e=e_1,e_2,\cdots, e_m\}$ starting at a vertex $V_1$ (either a boundary source or internal vertex) and ending at a 
boundary sink $b_j$, where $e_1=(V_1,V_2)$, $e_2=(V_2,V_3)$, \ldots, $e_m=(V_m,b_j)$. At each edge the orientation of the path coincides with the orientation of this edge in the graph.

We assign three numbers to ${\mathcal P}$:
\begin{enumerate}
\item The \textbf{weight $w({\mathcal P})$} is simply the product of the weights $w_l$ of all edges $e_l$ in ${\mathcal P}$, $w({\mathcal P})=\prod_{l=1}^m w_l$. If we pass the 
same edge $e$ of weight $w_e$ $r$ times, the weight is counted as $w_e^r$;
\item The \textbf{generalized winding number} $\mbox{wind}({\mathcal P})$ is the sum of the local winding numbers at each ordered pair of its edges
$\mbox{wind}(\mathcal P) = \sum_{k=1}^{m-1} \mbox{wind}(e_k,e_{k+1}),$ 
with $\mbox{wind}(e_k,e_{k+1})$ as in Definition \ref{def:winding_pair}; 
\item $\mbox{int}(\mathcal P)$ is the \textbf{number  of intersections} between the path and the rays ${\mathfrak l}_{i_r}$, $r\in[k]$: $\mbox{int}(\mathcal P) = \sum\limits_{s=1}^m \mbox{int}(e_s)$, where $\mbox{int}(e_s)$ is the number of intersections of gauge rays ${\mathfrak l}_{i_r}$ with $e_s$.
\end{enumerate}
The generalized winding of the path $\mathcal P$ depends on the gauge ray direction $\mathfrak{l}$ since it
counts how many times the tangent vector to the 
path is parallel and has the same orientation as ${\mathfrak l}$; also the number of intersections $\mbox{int}(\mathcal P)$ depends on $\mathfrak{l}$. 

\begin{definition}\label{def:edge_vector}\textbf{The edge vector $E_e$.}
For any edge $e$, let us consider all possible directed paths ${\mathcal P}:e\rightarrow b_{j}$, 
in $({\mathcal N},{\mathcal O},{\mathfrak l})$ such that the first edge is $e$ and the end point is the boundary vertex $b_{j}$, $j\in[n]$.
Then the $j$-th component of $E_{e}$ is defined as:
\begin{equation}\label{eq:sum}
\left(E_{e}\right)_{j} = \sum\limits_{{\mathcal P}\, :\, e\rightarrow b_{j}} (-1)^{\mbox{wind}({\mathcal P})+ \mbox{int}({\mathcal P})} 
w({\mathcal P}).
\end{equation}
If there is no path from $e$ to $b_{j}$, the $j$--th component of $E_e$ is assigned to be zero. By definition, at the edge $e$ at the boundary sink $b_j$, the edge vector $E_{e}$ is
\begin{equation}\label{eq:vec_bou_sink}
\left(E_{e}\right)_{k} =  (-1)^{\mbox{int}(e)} w(e) \delta_{jk}.
\end{equation}
\end{definition}

In particular, if $b_j$ is a boundary source, then for any $e$, the $j$-th component of $E_e$ is equal to zero. 
If $e$ is an edge belonging to the connected component of an isolated boundary sink $b_j$, 
then $E_e$ is proportional to the $j$--th vector of the canonical basis, whereas $E_e$ is the null vector if $e$ is an edge belonging to the connected component of an isolated boundary source.

If the number of paths starting at $e$ and ending at $b_j$ is finite for a given edge $e$ and destination $b_j$, the component $\left(E_{e}\right)_{j}$  
in (\ref{eq:sum}) is a polynomial in the edge weights.

If the number of paths starting at $e$ and ending at $b_j$ is infinite and the weights are sufficiently small, it is easy to check that the right hand side in (\ref{eq:sum}) converges. In Section 
\ref{sec:rational} we adapt the summation procedures of \cite{Pos} and \cite{Tal2} to prove that the edge vector components are rational expressions with subtraction-free denominators and provide explicit expressions in Theorem \ref{theo:null}. 

\subsection{Edge--loop erased walks, conservative and edge flows}\label{sec:flows}

Our next aim is to study the structure of the expressions representing the components of the edge vectors.

First, following \cite{Fom,Law}, we adapt the notion of loop-erased walk to our situation, since our walks start at an edge, not at a vertex. 
\begin{definition}
\label{def:loop-erased-walk}
\textbf{Edge loop-erased walks.}  Let ${\mathcal P}$ be a walk (directed path) given by
$$
V_e \stackrel{e}{\rightarrow} V_1 \stackrel{e_1}{\rightarrow} V_2 \rightarrow \ldots \rightarrow b_j,
$$
where $V_e$ is the initial vertex of the edge $e$. The edge loop-erased part of  ${\mathcal P}$, denoted $LE({\mathcal P})$, is defined recursively as 
follows. If ${\mathcal P}$ does not pass any edge twice (i.e. all edges $e_i$ are distinct), then $LE({\mathcal P})={\mathcal P}$.
Otherwise, set $LE({\mathcal P})=LE({\mathcal P}_0)$, where ${\mathcal P}_0$ is obtained from ${\mathcal P}$ removing the first
edge loop it makes; more precisely, given all pairs $l,s$ with $s>l$ and $e_l = e_s$, one chooses the one with the smallest value of $s$ and removes the cycle
$$
V_l \stackrel{e_l}{\rightarrow} V_{l+1} \stackrel{e_{l+1}}\rightarrow V_{l+2} \rightarrow \ldots \stackrel{e_{s-1}}\rightarrow V_{s} ,
$$
from ${\mathcal P}$.
\end{definition}

\begin{remark}\label{rem:loop}
An edge loop-erased walk can pass twice through the first vertex $V_e$, but it cannot pass twice any other vertex due to  perfectness.
For example, the directed path $1,2,3,4,5,6,7,8,12$ at Figure~\ref{fig:loop_erased} is edge loop-erased but it passes twice through the starting 
vertex $V_1$.
In general, the edge loop-erased walk does not coincide with the loop-erased walk defined in \cite{Fom, Law}. For instance, the directed path
$1,2,3,4,5,6,7,8,9,4,11$ has edge loop-erased walk $1,2,3,4,11$ and the loop-erased walk $7,8,9,4,11$. 

The two definitions coincide if $e$ starts at a boundary source.  

In our text we never use loop-erased walks in the sense of \cite{Fom} and we use the notation $LE({\mathcal P})$ in the sense of 
Definition~\ref{def:loop-erased-walk}. 
\end{remark}

\begin{figure}
  \centering{
	\includegraphics[width=0.3\textwidth]{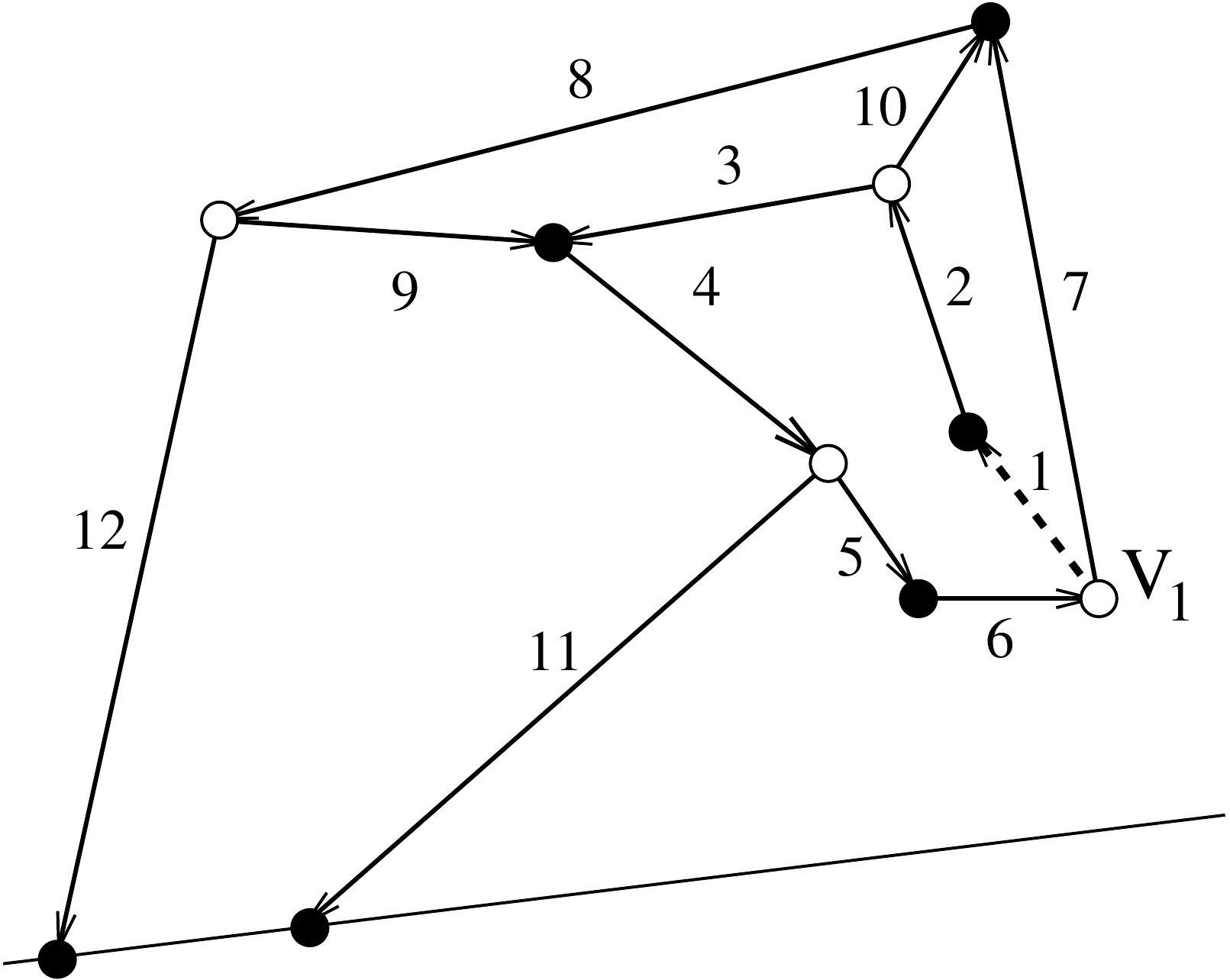}}
  \caption{\small{\sl The graph of Remark \ref{rem:loop}.}\label{fig:loop_erased}}
\end{figure}

With this procedure, to each path starting at $e$ and ending at $b_j$ we associate a unique edge loop-erased walk $LE({\mathcal P})$, where the latter path is either acyclic or possesses one simple cycle passing through the initial vertex.
Then we formally reshuffle the summation over 
infinitely many paths starting at $e$ and ending at $b_j$ to a summation over a finite number of equivalent classes  $[LE({\mathcal P}_s)]$, each one consisting of all paths sharing the 
same edge loop-erased walk, $LE({\mathcal P}_s)$, $s=1,\ldots, S$. Let us remark that 
$ \mbox{int}({\mathcal P})-\mbox{int}(LE({\mathcal P}_s))=0 \,\,(\!\!\!\!\mod 2) $ for any ${\mathcal P}\in [LE({\mathcal P}_s)]$, and, moreover, 
$ \mbox{wind}({\mathcal P})-\mbox{wind}(LE({\mathcal P}_s))$ has the same parity as the number of simple cycles of ${\mathcal P}$ minus the number of 
simple cycles of $LE({\mathcal P}_s)$.
With this in mind, we rexpress (\ref{eq:sum}) as follows
\begin{equation}\label{eq:sum2}
\left(E_{e}\right)_{j} = \sum\limits_{s=1}^S (-1)^{\mbox{wind}(LE({\mathcal P}_s))+ \mbox{int}(LE({\mathcal P}_s))}\left[ 
\mathop{\sum\limits_{{\mathcal P}:e\rightarrow b_{j}}}_{{\mathcal P}\in [LE({\mathcal P}_s)] } (-1)^{\mbox{wind}({\mathcal P})-\mbox{wind}(LE({\mathcal P}_s))  } 
w({\mathcal P}) \right].
\end{equation}

We remark that the winding number along each simple closed loop introduces a $-$ sign in agreement with \cite{Pos}. Therefore the summation over paths may be interpreted as a discretization of path integration in some spinor theory. In typical spinor theories the change of phase during the rotation of the spinor corresponds to standard measure on the group $U(1)$ and requires the use of complex numbers. The introduction of the gauge direction forces the use of $\delta$--type measures instead of the standard measure on $U(1)$, and it permits to work with real numbers only.

Next we adapt the definitions of flows and conservative flows in \cite{Tal2} to our case.

\begin{definition}\label{def:edge_flow}\textbf{Edge flow at $e$}. A collection $F_e$ of distinct edges in a plabic graph $\mathcal G$ is called edge flow starting at the edge $e$ if: 
\begin{enumerate}
\item It contains the edge $e$;
\item For each interior vertex $V_d$ in $\mathcal G$ except the starting vertex of $e$  the number of edges of $F$ that arrive at $V_d$ is equal to the number of edges of $F$ that leave from $V_d$;
\item If $V$ is the starting vertex of $e$, the number of edges of $F$ that arrive at $V$ is equal to the number of edges of $F$ that leave from $V$ minus 1;
\item It contains no boundary edges at sources, except possibly $e$ itself.  
\end{enumerate}
We  denote by   ${\mathcal F}_{ej}(\mathcal G)$ the collection of edge flows at edge $e$ containing the boundary sink $b_j$.  
\end{definition}

\begin{definition}\label{def:cons_flow}\textbf{Conservative flow \cite{Tal2}}. A collection $C$ of distinct edges in a plabic graph $\mathcal G$ is called a conservative flow if 
\begin{enumerate}
\item For each interior vertex $V_d$ in $\mathcal G$ the number of edges of $C$ that arrive at $V_d$ is equal to the number of edges of $C$ that leave from $V_d$;
\item $C$ does not contain edges incident to the boundary.
\end{enumerate}
We denote the set of all conservative flows $C$ in $\mathcal G$  by ${\mathcal C}(\mathcal G)$. In particular, ${\mathcal C}(\mathcal G)$ contains the trivial flow with no edges to which we assign weight 1.
\end{definition}

The conservative flows are collections of non-intersecting simple loops in the directed graph $\mathcal G$.

In our setting an edge flow  $F_{e,b_j}$ in ${\mathcal F}_{e,b_j}(\mathcal G)$ is either an edge loop-erased walk $P_{e,b_j}$ starting at the edge $e$ and ending at the boundary sink $b_j$ or the union of $P_{e,b_j}$ with a conservative flow with no common vertices with $P_{e,b_j}$. In particular, our definition of edge flow coincides with the definition of flow in \cite{Tal2} if $e$ starts at a boundary source.

Next we assign weight, winding and intersection numbers to edge flows, and weight to conservative flows. We remark that in \cite{Tal2} there is no winding nor intersection number assigned to flows from boundary to boundary.
\begin{definition}\label{def:numb_flows}
  \begin{enumerate} 
\item We assign one number to each $C\in {\mathcal C}(\mathcal G)$: the \textbf{weight $w(C)$} is the product of the weights of all edges in $C$. In particular, we assign weight 1 to the trivial conservative flow;    
\item Let  $F_{e,b_j}\in{\mathcal F}_{e,b_j}(\mathcal G)$ be  the union of the edge loop-erased walk $P_{e,b_j}$ with a conservative flow with no common edges with $P_{e,b_j}$ (this conservative flow may be the trivial one). We assign three numbers to $F_{e,b_j}$:
\begin{enumerate} 
\item The \textbf{weight $w(F_e)$} is the product of the weights of all edges in $F_e$.
\item The \textbf{winding number} $\mbox{wind}(F_{e,b_j})$:
\begin{equation}
\label{eq:wind_flow}
\mbox{wind}(F_{e,b_j}) = \mbox{wind}(P_{e,b_j});
\end{equation}
\item The \textbf{intersection number} $\mbox{int}(F_{e,b_j})$:
\begin{equation}
\label{eq:int_flow}
\mbox{int}(F_{e,b_j}) = \mbox{int}(P_{e,b_j}).
\end{equation}
\end{enumerate}
\end{enumerate}
\end{definition}

\subsection{The linear system on $({\mathcal N},\mathcal O,\mathfrak l)$}\label{sec:linear}

The edge vectors satisfy linear relations at the vertices of ${\mathcal N}$. In Theorem \ref{theo:consist} we prove that this set of linear relations provides a unique system of edge vectors on $({\mathcal N},\mathcal O,\mathfrak l)$ for any chosen set of independent vectors at the boundary sinks. Therefore the components of the edge vectors in Definition \ref{def:edge_vector} have a unique rational representation. In the next Section, we provide their explicit representation
in Theorem \ref{theo:null}.

\begin{lemma}
\label{lem:relations}
The edge vectors $E_e$ on $({\mathcal N},\mathcal O,\mathfrak l)$ satisfy the following linear equation at each vertex:  
\begin{enumerate}
\item  At each bivalent vertex with incoming edge $e$ and outgoing edge $f$:
\begin{equation}\label{eq:lineq_biv}
E_e =  (-1)^{\mbox{int}(e)+\mbox{wind}(e,f)} w_e E_f;
\end{equation}
\item At each trivalent black vertex with incoming edges $e_2$, $e_3$ and outgoing edge $e_1$ we have two relations:
\begin{equation}\label{eq:lineq_black}
E_2 =  (-1)^{\mbox{int}(e_2)+\mbox{wind}(e_2, e_1)}\ w_2 E_1,\quad\quad
E_3 =  (-1)^{\mbox{int}(e_3)+\mbox{wind}(e_3, e_1)}\  w_3 E_1;
\end{equation}
\item At each trivalent white vertex with incoming edge $e_3$ and outgoing edges $e_1$, $e_2$:
\begin{equation}\label{eq:lineq_white}
E_3 =  (-1)^{\mbox{int}(e_3)+\mbox{wind}(e_3, e_1)}\ w_3 E_1 + (-1)^{\mbox{int}(e_3)+\mbox{wind}(e_3, e_2)}\ w_3 E_2,
\end{equation}
\end{enumerate}
where $E_k$ denotes the vector associated to the edge $e_k$.
\end{lemma}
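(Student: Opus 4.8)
The plan is to verify each of the three identities directly from Definition \ref{def:edge_vector}, by grouping the directed paths starting at the appropriate edge according to their second edge. The key observation is that every directed path $\mathcal P$ starting at an internal edge $e$ and reaching some boundary sink $b_j$ has the form $e$ followed by a directed path $\mathcal P'$ starting at the next edge; conversely, every directed path $\mathcal P'$ starting at an edge $f$ adjacent to the terminal vertex $V$ of $e$, with the orientation of $f$ outgoing from $V$, extends uniquely to a path through $e$. So the only thing to check is how the three weights $w(\cdot)$, $\mbox{wind}(\cdot)$ and $\mbox{int}(\cdot)$ behave under prepending $e$.

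For the weight this is immediate: $w(\mathcal P) = w_e \, w(\mathcal P')$, since $w$ is multiplicative over edges (Definition \ref{def:edge_vector}(1)). For the intersection number, note $\mbox{int}(\mathcal P) = \mbox{int}(e) + \mbox{int}(\mathcal P')$ because $\mbox{int}$ is a sum over the edges of the path; the gauge ray crossings on $e$ contribute the fixed summand $\mbox{int}(e)$, independent of the continuation. For the winding, $\mbox{wind}(\mathcal P) = \mbox{wind}(e,e_1) + \mbox{wind}(\mathcal P')$ where $e_1$ is the second edge of $\mathcal P$, because $\mbox{wind}$ is a sum of local winding numbers over consecutive pairs and the only new pair created by prepending $e$ is $(e,e_1)$. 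Collecting these, each path $\mathcal P = (e,\mathcal P')$ contributes $(-1)^{\mbox{int}(e)+\mbox{wind}(e,f)}w_e$ times the contribution of $\mathcal P'$ to $(E_f)_j$, where $f$ is the second edge. Summing over all $\mathcal P'$ gives $(E_f)_j$ times that prefactor.

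This yields the lemma case by case. At a bivalent vertex there is a unique outgoing edge $f$, so the sum over second edges is trivial and we get \eqref{eq:lineq_biv} directly; one must also check the degenerate base case where $e$ itself ends at a boundary sink, but then $f$ is the boundary edge and \eqref{eq:vec_bou_sink} together with \eqref{eq:lineq_biv} are consistent. At a trivalent black vertex the unique outgoing edge is $e_1$, and both incoming edges $e_2,e_3$ satisfy the same relation with $f=e_1$, giving \eqref{eq:lineq_black}. At a trivalent white vertex the incoming edge is $e_3$ and there are two outgoing edges $e_1,e_2$; the paths through $e_3$ split into those continuing along $e_1$ and those continuing along $e_2$, so the sum over second edges has two terms, producing the two summands of \eqref{eq:lineq_white}. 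The main subtlety to be careful about is convergence and well-definedness of the sums when there are infinitely many paths: the rearrangement by second edge is legitimate for sufficiently small weights (absolute convergence, as noted after Definition \ref{def:edge_vector}), and the identity between rational expressions then follows by analytic continuation — which is precisely why the stronger consistency statement is deferred to Theorem \ref{theo:consist}. Since here we only assert the formal linear relations, no obstacle arises beyond this bookkeeping.
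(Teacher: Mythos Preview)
Your proposal is correct and follows exactly the approach indicated by the paper, whose proof consists of the single sentence ``This statement follows directly from the definition of edge vector components as summations over all paths starting from this edge.'' You have simply supplied the bookkeeping that this sentence leaves implicit: the bijection between paths from $e$ and paths from its successor edges via prepending $e$, together with the additive behaviour of $\mbox{wind}$, $\mbox{int}$ and the multiplicative behaviour of $w$ under this operation.
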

This statement follows directly from the definition of edge vector components as summations over all paths starting from this edge. Let us remark that the last two formulas can be naturally generalized for perfect graphs and vertices of valency greater then 3.

In Figure \ref{fig:rules23} we illustrate these relations at trivalent vertices assuming that the incoming edges do not 
intersect the gauge boundary rays. For instance, if $\mathfrak l$ belongs to the sector $S_1$, at the white vertex $E_3 = w_3 ( E_2 -E_1)$, where $E_j$ denotes the vector associated to the edge $e_j$, $j\in [3]$, whereas at the black vertex $E_3= w_3 E_1$ and $E_2 =-w_2 E_1$.

\begin{figure}
  \centering
	{\includegraphics[width=0.37\textwidth]{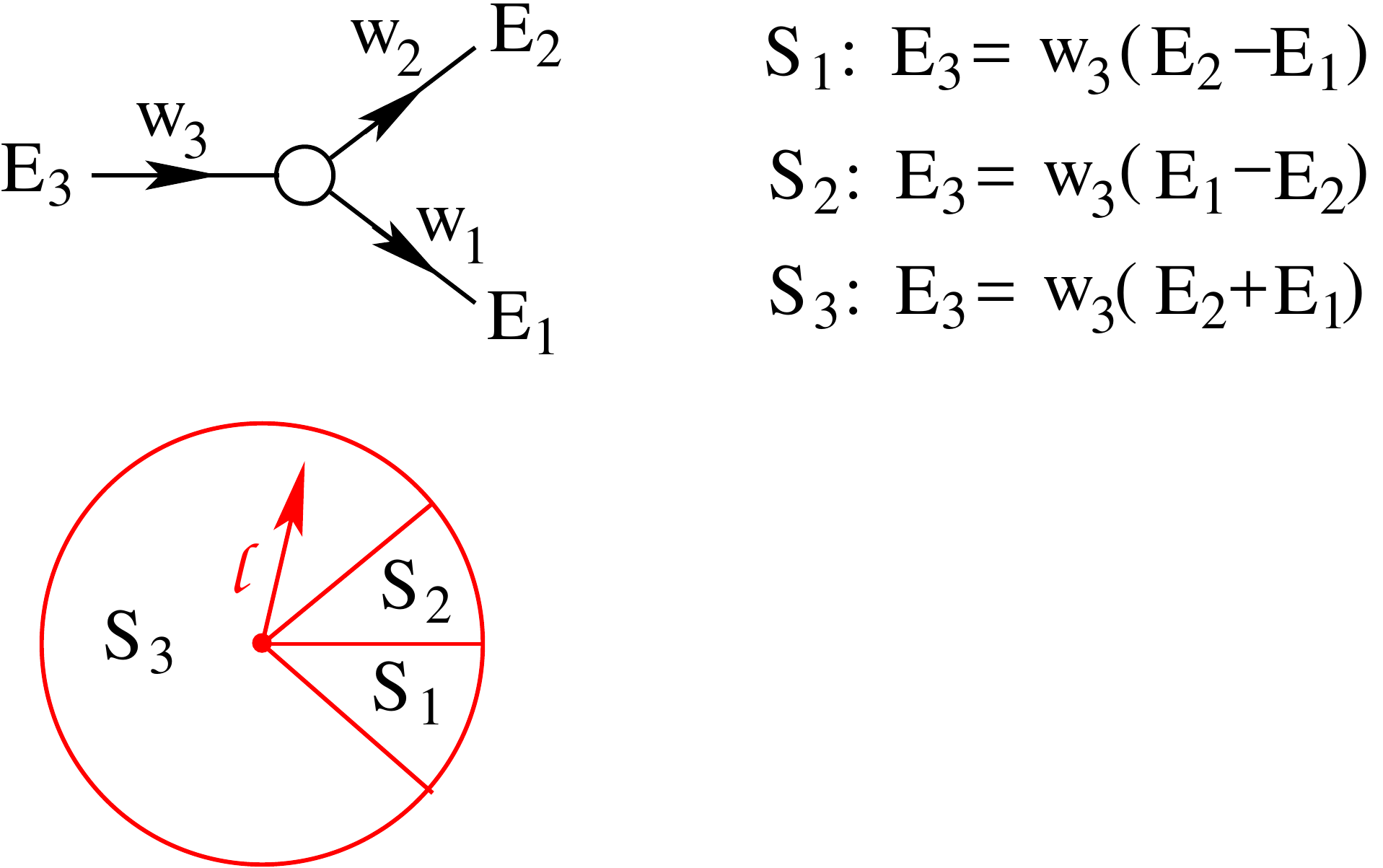}}
	\hfill
	{\includegraphics[width=0.49\textwidth]{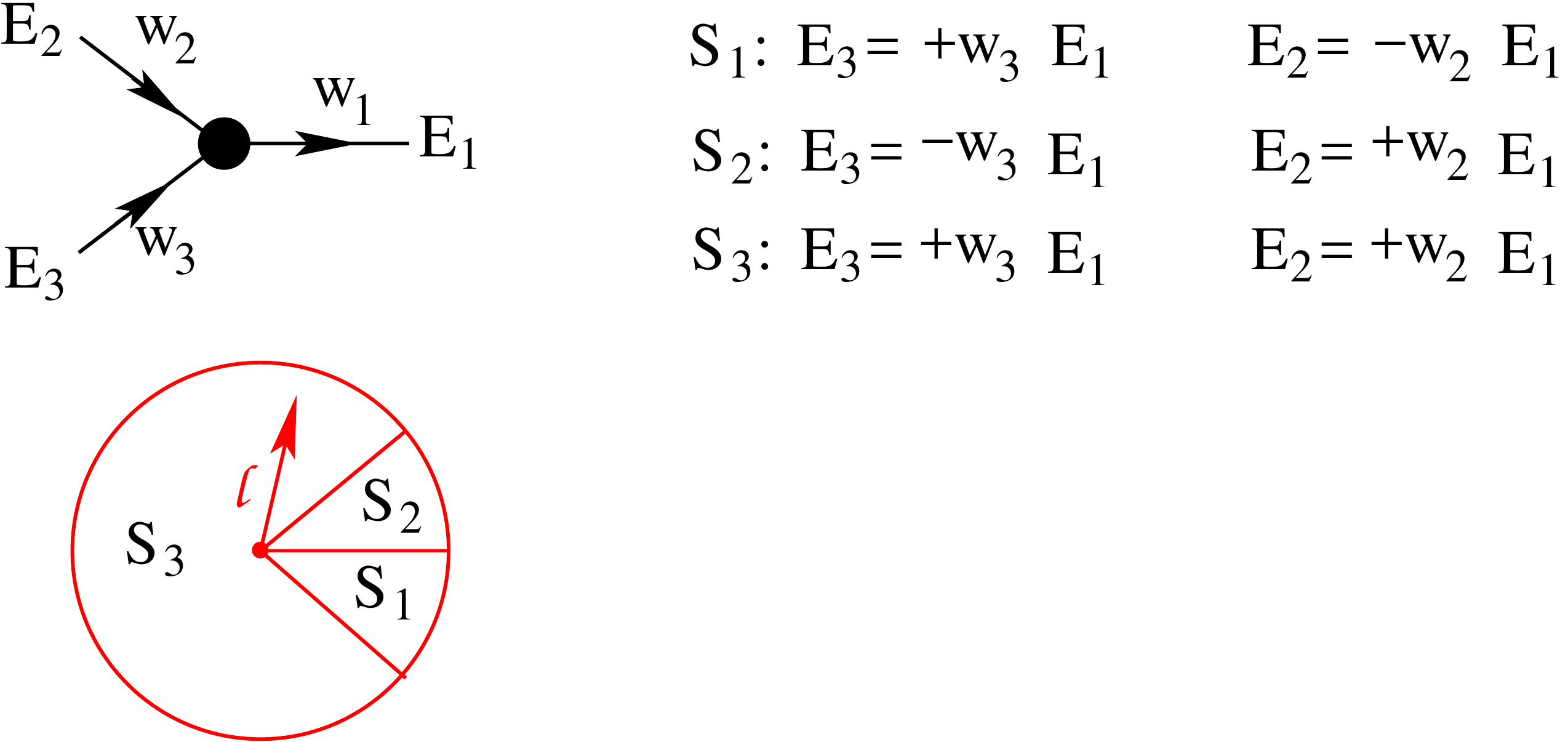}}
  \caption{\small{\sl The linear system at black and white vertices as a function of the sector $S_i$ in which the gauge ray direction $\mathfrak{l}$ is located.}}\label{fig:rules23}
\end{figure}

Next we show that, for any given boundary condition at the boundary sink vertices, the linear system in Lemma \ref{lem:relations} defined by equations (\ref{eq:lineq_black}), (\ref{eq:lineq_white}) and (\ref{eq:lineq_biv}) at the internal vertices of $(\mathcal N, \mathcal O, \mathfrak{l})$ possesses a unique solution.

\begin{theorem}\textbf{Full rank of the geometric system of equations for edge vectors on $(\mathcal N, \mathcal O, \mathfrak{l})$.}\label{theo:consist}
Let $(\mathcal N, \mathcal O, \mathfrak{l})$ be a given plabic network with orientation ${\mathcal O}={\mathcal O}(I) $ and gauge ray direction $\mathfrak{l}$.

Given a set $\{B_j\,|\,j\in\bar I\}$ of $n-k$ linearly independent vectors assigned to the boundary sinks $b_j$, let the corresponding edge vectors be defined by: $E_{e_j} = (-1)^{\mbox{int}(e_j)} w_{e_j} B_j$, $j\in\bar I$.  Then  the linear system of equations (\ref{eq:lineq_biv})--(\ref{eq:lineq_white}) at all the internal vertices of $(\mathcal N, \mathcal O, \mathfrak{l})$ has full rank and the number of equations coincides with the number of unknowns, therefore it is consistent and provides a unique system of edge vectors on 
$(\mathcal N, \mathcal O, \mathfrak{l})$. 

Moreover, if we properly order variables and equations, the determinant of the matrix $M$ for this linear system is the sum of the weights of all conservative flows in $(\mathcal N,\mathcal O)$:
\begin{equation}
\label{eq:tal_den}
\det M = \sum\limits_{C\in {\mathcal C}(\mathcal G)}  w(C).
\end{equation}
\end{theorem}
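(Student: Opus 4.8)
The plan is to set up the linear system as a square matrix equation and show that the coefficient matrix $M$ is invertible by computing its determinant explicitly. First I would fix an enumeration of the unknowns: the edge vectors $E_e$ are to be determined for every edge $e$ that is not incident to a boundary sink, while the vectors $E_{f}$ at the edges $f$ incident to the boundary sinks $b_j$ are prescribed to be the given $F_1,\dots,F_{n-k}$ (up to the sign/weight normalization in \eqref{eq:vec_bou_sink}). Counting equations: each bivalent vertex contributes one relation \eqref{eq:lineq_biv}, each trivalent black vertex contributes two relations \eqref{eq:lineq_black}, each trivalent white vertex contributes one relation \eqref{eq:lineq_white}. Using the identities \eqref{eq:vertex_type} one checks that the total number of scalar relations (per component) equals the number of unknown edges, so $M$ is square. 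Since everything decouples component by component, it suffices to treat $M$ as a scalar matrix and show $\det M\neq 0$; I would actually prove the stronger identity \eqref{eq:tal_den}.

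The core of the argument is a determinant expansion in the spirit of the Lindström–Gessel–Viennot / Talaska computation for boundary measurements, adapted so that the ``source'' is an edge rather than a boundary vertex. I would organize the rows of $M$ by the vertex at which each relation is based and the columns by edge (unknown). The diagonal entry associated to an edge $e$ comes from normalizing its own relation to the form $E_e - (\text{stuff}) = \dots$, so the diagonal of $M$ is all $1$'s; the off-diagonal entries are the signed weighted coefficients $\pm w_{\bullet}$ coming from \eqref{eq:lineq_biv}–\eqref{eq:lineq_white}. Expanding $\det M = \sum_\pi \mathrm{sgn}(\pi)\prod_i M_{i\pi(i)}$, a non-zero term corresponds to a permutation $\pi$ of the unknown edges such that for each edge $e$, $M_{e,\pi(e)}\neq 0$, i.e. $\pi(e)$ is an edge appearing in the relation based at the head vertex of $e$ (or $\pi(e)=e$). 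One shows such a permutation decomposes the selected edges into fixed points together with oriented cycles in $\mathcal G$, and that each oriented cycle is precisely a connected component of a conservative flow in the sense of Definition \ref{def:cons_flow} — it uses no boundary edges, and conservation at each internal vertex is forced by the bipartite/perfect structure (at a white vertex only one incoming edge, at a black vertex only one outgoing edge, so the ``in = out'' balance along $\pi$ is automatic). The sign bookkeeping is the delicate point: each local winding factor $(-1)^{\mathrm{wind}(e_k,e_{k+1})}$ and each intersection factor $(-1)^{\mathrm{int}(e_k)}$ along a cycle must combine with $\mathrm{sgn}(\pi)$ to give $+1$, so that the cycle contributes $+w(\text{cycle})$; this is exactly the statement (invoked after \eqref{eq:sum2}) that the winding along any simple closed loop is $-1$ and that the number of gauge-ray intersections of a closed loop is even, together with the fact that an $m$-cycle contributes $(-1)^{m-1}$ to $\mathrm{sgn}(\pi)$. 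Assembling over all choices of disjoint oriented cycles then yields $\det M = \sum_{C\in\mathcal C(\mathcal G)} w(C)$.

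Concretely the steps are: (1) verify the count so that $M$ is square, via \eqref{eq:vertex_type}; (2) choose the normalization making $\mathrm{diag}(M)=(1,\dots,1)$ and identify the off-diagonal entries; (3) interpret the terms in the permutation expansion as families of vertex-disjoint oriented cycles, and show these are exactly the conservative flows $\mathcal C(\mathcal G)$ (the empty family giving the term $1$); (4) prove the sign of each such term is $+1$ by combining $\mathrm{sgn}$ of the permutation with the winding-equals-$-1$-per-loop and even-intersection-per-loop facts; (5) conclude \eqref{eq:tal_den}, hence $\det M\neq 0$ since all weights are positive, hence existence and uniqueness of the solution; (6) finally observe that the explicit summation formula \eqref{eq:sum} (resp. \eqref{eq:sum2}) does satisfy the relations of Lemma \ref{lem:relations} — this is the content of the remark ``follows directly from the definition'' — so the unique solution is the system of edge vectors of Definition \ref{def:edge_vector}, when the prescribed sink data is the canonical one; for general linearly independent $F_j$ one uses linearity of the system in the boundary data.

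I expect the main obstacle to be the sign computation in step (4), i.e. showing that the orientation-dependent factors $(-1)^{\mathrm{wind}}$, the ray-intersection factors $(-1)^{\mathrm{int}}$, and $\mathrm{sgn}(\pi)$ conspire to $+1$ along every oriented cycle regardless of how the cycle winds or how many gauge rays it crosses; this requires the careful local analysis of Definition \ref{def:winding_pair} around a closed loop and a parity argument on gauge-ray crossings of a Jordan curve. The bookkeeping that $M$ is genuinely square (step (1)) and that the non-vanishing permutation terms are exactly conservative flows (step (3)) is comparatively routine given the perfectness of $\mathcal G$ and the relations in \eqref{eq:vertex_type}.
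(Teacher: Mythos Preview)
Your proposal is correct and follows essentially the same route as the paper: count equations versus unknowns via \eqref{eq:vertex_type} to get a square system with unit diagonal, expand $\det M$ over permutations, identify the nonvanishing terms with conservative flows, and verify that for each simple cycle the factors $(-1)^{u_r}$ from $\mathrm{sgn}(\sigma)$, the local windings, and the intersection parities combine to $+1$. The only cosmetic difference is that the paper first disposes of the acyclic case (where $M$ is upper triangular and $\det M=1$) before doing the permutation expansion in general, whereas you go straight to the general argument; your step~(6) on matching the unique solution with Definition~\ref{def:edge_vector} is an extra remark not needed for the theorem as stated.
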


\begin{proof} Let $g+1$ be the number of faces of ${\mathcal N}$ and let $t_W, t_B, d_W$ and $d_B$ respectively be the number of trivalent white, of trivalent black, of bivalent white, and of bivalent black internal vertices of ${\mathcal N}$ as in (\ref{eq:vertex_type}), where $n_I$ is the number of internal edges ({\sl i.e.} edges not connected to a boundary vertex) of ${\mathcal N}$. The total number of equations is
  $$
  n_L=2t_B+ t_W+d_W+d_B =n_I+k,
  $$
  whereas the total number of variables is equal to the total number of edges $n_I+n$. Therefore the number of free boundary conditions is $n-k$ and equals the number of boundary sinks. 

Let us consider the inhomogeneous linear system obtained from equations (\ref{eq:lineq_biv})--(\ref{eq:lineq_white}) in the $
n_L$ unknowns given by the edge vectors not ending at the boundary sinks. Let us denote $M$ the $n_L\times n_L$ representative matrix of such linear system in which we enumerate edges so that each $r$-th row corresponds to the equation in (\ref{eq:lineq_biv}), (\ref{eq:lineq_black}) and (\ref{eq:lineq_white})  in which the edge $e_r$ ending at the given vertex is in the l.h.s.. Then $M$ has unit diagonal by construction. 

If the orientation $\mathcal O$ is acyclic, then it is possible to enumerate the edges of $\mathcal N$ so that their indices in the right hand sight of each equation are bigger than that of the index on the left hand side. Therefore $M$ is upper triangular with unit diagonal, $\det M=1$ and the system of linear relations at the vertices has full rank.

Suppose now that the orientation is not acyclic. The standard formula expresses the determinant of $M$ as:
\begin{equation}
\label{eq:detM}
\det M = \sum\limits_{\sigma\in {S_{n_L}}} \mbox{sign}(\sigma)\prod\limits_{i=1}^{n_L}m_{i,\sigma(i)},
\end{equation}
where $S_{n_L}$ is the permutation group and $\mbox{sign}$ denotes the parity of the permutation $\sigma$. 

Any permutation can be uniquely decomposed as the product of disjoint cycles:

$\sigma=(i_1,i_2,\ldots,i_{u_1+1})(j_1,j_2,\ldots,j_{u_2+1})\ldots(l_1,l_2,\ldots,l_{u_s+1}),$ 
and
$\mbox{sign}(\sigma)=(-1)^{u_1+u_2+\ldots+u_s}.$
On the other side, for $i\ne j$ $m_{i,j}\ne 0$ if and only if the ending vertex of the edge $i$ is the starting vertex of the edge $j$. Therefore $\prod\limits_{i=1}^{n_L} m_{i,\sigma(i)} \ne 0$ if and only if each cycle with $u_k>0$ in $\sigma$ coincides with a simple cycle in the graph, i.e. $\sigma$ encodes a conservative flow in the network. Therefore (\ref{eq:detM}) can be equivalently expressed as:
\begin{equation}
\label{eq:detM1}
\det M =\sum\limits_{C\in {\mathcal C}(\mathcal G)} \mbox{sign}(\sigma(C))\prod\limits_{i=1}^{n_L}m_{i,\sigma(i)}
\end{equation}
where
$\sigma(C)$ denotes the permutation corresponding to the conservative flow $C=C_1\cup C_2\cup\ldots\cup C_s$. Therefore 
\[
\mbox{sign}(\sigma(C))\prod\limits_{i=1}^{n_L}m_{i,\sigma(i)}=\prod\limits_{r=1}^s \left[(-1)^{u_r}\prod\limits_{t=1}^{u_r+1}(-1)^{1+\mbox{wind}(e_{i_t},e_{i_{t+1}})+\mbox{int}(e_{i_t})} w_{i_t}\right]= \prod\limits_{r=1}^s w(C_i)=  w(C),
\]
since the total winding of each simple cycle is $1\,\,(\!\!\!\mod 2)$, the total intersection number for each simple cycle is $0\,\,(\!\!\!\mod 2)$, and $w(C)=w(C_1)\cdots w(C_s)$. 
\end{proof}

\subsection{Explicit formula for the edge vector components}\label{sec:rational}

A deep result of \cite{Pos}, see also \cite{Tal2}, is that each infinite summation in the square bracket of (\ref{eq:sum2}) is a subtraction-free rational expression when $e$ is the edge at a boundary source.
In this Section we adapt Theorem~3.2 in \cite{Tal2} to our purposes. The edge vectors $E_e$ defined in (\ref{eq:sum}) are linear combinations of the edge vectors at the boundary sinks, and the coefficients are rational expressions in the weights with subtraction-free denominator. We express them explicitly as functions of the edge flows and conservative flows. We remark that, contrary to the case in which the initial edge starts at a boundary source, if $e$ is an internal edge, the $j$--th component of $E_e$ may be null even if there exist paths starting at $e$ and ending at $b_j$ (see Section \ref{sec:null_vectors} and Figure \ref{fig:zero-vector}). 

\begin{theorem}\label{theo:null}\textbf{Rational representation for the components of vectors $E_e$}
Let $({\mathcal N},\mathcal O, \mathfrak l)$ be a plabic network representing a point $[A]\in \S \subset \GTNN$ with orientation $\mathcal O$ associated to the base $I =\{ 1\le i_1< i_2 < \cdots < i_k\le n\}$ in the matroid $\mathcal M$ and gauge ray direction $\mathfrak{l}$. Let us assign the vectors $B_j$ to the boundary sinks $b_j$, $j\in \bar I$.
Then edge vector $E_{e}$ at the edge $e$ defined in (\ref{eq:sum2}), is a rational expression in the weights on the network with subtraction-free denominator: 
\begin{equation}
\label{eq:tal_formula}
E_{e} =\mathlarger{\mathlarger{\sum\limits}_{j\in\bar I}}\ \ \left[ \frac{\displaystyle\sum\limits_{F\in {\mathcal F}_{e,b_j}(\mathcal G)} \big(-1\big)^{\mbox{wind}(F)+\mbox{int}(F)}\ w(F)}{\sum\limits_{C\in {\mathcal C}(\mathcal G)} \ w(C)}\right]\  B_j,
\end{equation}
where notations are as in Definitions~\ref{def:edge_flow},~\ref{def:cons_flow} and~\ref{def:numb_flows}.  
\end{theorem}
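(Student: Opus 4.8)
The plan is to continue directly from the reshuffled series (\ref{eq:sum2}). Since its outer summation is finite --- one term per edge loop-erased walk $P_s:=LE({\mathcal P}_s)$, $s=1,\dots,S$ --- it suffices to show that each inner summation over the equivalence class $[P_s]$ is a rational function of the weights with denominator $\sum_{C\in{\mathcal C}(\mathcal G)}w(C)$, and then to reorganize the resulting numerators into a sum over edge flows. First I would settle the sign bookkeeping inside a class: by the two parity statements recorded just before (\ref{eq:sum2}), all ${\mathcal P}\in[P_s]$ share the value of $\mbox{int}({\mathcal P})$ modulo $2$, while $\mbox{wind}({\mathcal P})-\mbox{wind}(P_s)$ equals, modulo $2$, the number of simple cycles of ${\mathcal P}$ minus that of $P_s$. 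Hence the inner sign in (\ref{eq:sum2}) collapses to $(-1)^{c({\mathcal P})-c(P_s)}$, where $c(\cdot)$ counts simple cycles and $c(P_s)\in\{0,1\}$ by Remark~\ref{rem:loop}. It is this collapse that keeps the whole computation over the reals rather than requiring $U(1)$-phases.

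The core step is the resummation of $\Sigma_s:=\sum_{{\mathcal P}\in[P_s]}(-1)^{c({\mathcal P})-c(P_s)}\,w({\mathcal P})$. Every ${\mathcal P}\in[P_s]$ is obtained from the fixed backbone $P_s$ by attaching closed excursions along its vertices, in the order $P_s$ first visits them, the attached material living in an appropriate subgraph of $\mathcal G$. I would sum the geometric-type series of these attachments --- carrying the factor $(-1)$ per simple cycle as a weight, which is legitimate for sufficiently small positive weights exactly as in \cite{Pos,Tal2} --- using the loop-erased walk resummation of \cite{Fom,Law} adapted to edge-weighted directed planar networks; the attachment generating function telescopes to $\displaystyle w(P_s)\,\frac{\sum_{C\perp P_s}w(C)}{\sum_{C}w(C)}$, where $C$ ranges over conservative flows of $\mathcal G$ and $C\perp P_s$ means $C$ and $P_s$ share no edge. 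The signs disappear from both numerator and denominator by exactly the computation already carried out in the proof of Theorem~\ref{theo:consist}: a family of disjoint simple cycles contributes the parity of a permutation which the windings cancel, since each simple cycle has odd total winding, while the intersection numbers of conservative flows are even. The sole structural novelty with respect to \cite{Tal2} is that $P_s$ itself may contain one simple cycle through the initial vertex $V_e$ of $e$; this cycle is treated like a conservative-flow component for the telescoping, its winding being already recorded in $\mbox{wind}(P_s)$.

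Assembling, each summand of (\ref{eq:sum2}) becomes $(-1)^{\mbox{wind}(P_s)+\mbox{int}(P_s)}\,w(P_s)\sum_{C\perp P_s}w(C)\big/\sum_{C}w(C)$. Using Definition~\ref{def:numb_flows} --- $\mbox{wind}(F)=\mbox{wind}(P_s)$, $\mbox{int}(F)=\mbox{int}(P_s)$ and $w(F)=w(P_s)w(C)$ for $F=P_s\cup C$ --- and the fact that on a trivalent (plus bivalent) graph every $F\in{\mathcal F}_{e,b_j}(\mathcal G)$ decomposes uniquely into an edge loop-erased walk from $e$ to $b_j$ and an edge-disjoint conservative flow (Definitions~\ref{def:edge_flow},~\ref{def:numb_flows}), the numerators reorganize precisely into $\sum_{F\in{\mathcal F}_{e,b_j}(\mathcal G)}(-1)^{\mbox{wind}(F)+\mbox{int}(F)}w(F)$, which is (\ref{eq:tal_formula}); the denominator is manifestly subtraction-free. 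That this rational function equals the object represented by the series for all positive weights, and not only for small ones, follows from Theorem~\ref{theo:consist}, since the edge vectors are characterised there as the unique solution of (\ref{eq:lineq_biv})--(\ref{eq:lineq_white}) with the canonical basis assigned at the sinks; alternatively, one may verify directly that the right-hand side of (\ref{eq:tal_formula}) solves that linear system --- using the correspondences between edge flows at an edge and at its neighbours --- and then invoke the same uniqueness, which bypasses the convergence question entirely.

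The main obstacle is the rigorous control of the resummation of the second paragraph together with its winding and intersection accounting. One must prove that attaching a closed excursion at a vertex of the backbone changes $\mbox{wind}$ only by the number of simple cycles it contains modulo $2$ and changes $\mbox{int}$ by an even integer --- this is exactly where the gauge ray direction $\mathfrak l$ and the local rule of Definition~\ref{def:winding_pair} must be invoked, including the delicate non-generic antiparallel cases --- and that the formal telescoping reproduces $\sum_{C\perp P_s}w(C)\big/\sum_C w(C)$ coefficient by coefficient, which is the genuine content of the adaptation of Theorem~3.2 of \cite{Tal2}. The remaining ingredients --- absolute convergence for small positive weights and the well-definedness of the backbone/conservative-flow decomposition of an edge flow --- are routine on trivalent graphs.
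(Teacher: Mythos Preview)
Your outline and the paper's proof rest on the same underlying mechanism but are organized differently. The paper does not sum the classes $[P_s]$ one at a time: it multiplies the series for $(E_e)_j$ by the denominator $\sum_{C\in\mathcal C(\mathcal G)}w(C)$, obtaining a double sum over pairs $({\mathcal P},C)$ with ${\mathcal P}:e\to b_j$ an arbitrary walk and $C$ a conservative flow, and then constructs a sign-reversing involution $\varphi$ on those pairs for which either ${\mathcal P}$ is not edge loop-erased or $C$ shares an edge with ${\mathcal P}$. The involution either pulls the first completed loop of ${\mathcal P}$ off into $C$, or pushes the first component of $C$ that ${\mathcal P}$ meets into ${\mathcal P}$; it preserves $w({\mathcal P})w(C)$, flips the parity of $\mbox{wind}({\mathcal P})$, and preserves the parity of $\mbox{int}({\mathcal P})$. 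What survives is exactly the set of pairs with ${\mathcal P}$ edge loop-erased and $C\perp{\mathcal P}$, i.e.\ the edge flows, giving the numerator of (\ref{eq:tal_formula}) directly.

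Your ``telescoping'' identity $\Sigma_s=w(P_s)\sum_{C\perp P_s}w(C)\big/\sum_C w(C)$ is, once cross-multiplied, precisely this involution restricted to the single backbone $P_s$ (note $\varphi$ preserves the edge loop-erased part of ${\mathcal P}$), so your scheme is a per-class repackaging of the same argument rather than an independent route; you correctly flag this step as the main obstacle, but you defer its proof to \cite{Fom,Law,Tal2} rather than give the involution, which is the actual content. The paper's global version is shorter because it avoids the intermediate decomposition into classes and the subsequent reassembly into edge flows. Your closing alternative---verify directly that the right-hand side of (\ref{eq:tal_formula}) satisfies the linear relations (\ref{eq:lineq_biv})--(\ref{eq:lineq_white}) and invoke the uniqueness in Theorem~\ref{theo:consist}---is a genuinely different and perfectly viable strategy that the paper does not use; it trades the combinatorial involution for an algebraic check and sidesteps convergence entirely.
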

\begin{proof}
The proof is a straightforward adaptation of the proof in \cite{Tal2} for the computation of the Pl\"ucker coordinates. 
If the graph is acyclic, the proof of (\ref{eq:tal_formula}) is elementary since the denominator is one and the edge flows $F_{e,b_j}$ are in one-to-one correspondence with directed paths connecting $e$ to $b_j$. Therefore  (\ref{eq:tal_formula}) and (\ref{eq:sum}) coincide when $B_j$ is the $j$-th vector of the canonical basis, $j\in\bar I$.

Otherwise, in view of  (\ref{eq:sum2}), we have to prove the following identity:
\begin{equation}
\label{eq:tal_formula_2}
 \sum\limits_{{\mathcal P}:e\rightarrow b_{j}}  \sum\limits_{C\in {\mathcal C}(\mathcal G)}  (-1)^{\mbox{wind}({\mathcal P})+ \mbox{int}({\mathcal P})}  w({\mathcal P})  w(C)  = \sum\limits_{F\in {\mathcal F}_{e,b_j}(\mathcal G)} \big(-1\big)^{\mbox{wind}(F)+\mbox{int}(F)}\ w(F),
\end{equation}
where in the left-hand side the first sum is over all directed paths from $e$ to $b_j$. In the left-hand side we have two types of terms:
\begin{enumerate}
\item $\mathcal P$ is an edge loop-erased walk and $C$ is a conservative flow with no common edges with $\mathcal P$. By (\ref{eq:wind_flow}), the summation over this group coincides with the right-hand side of (\ref{eq:tal_formula_2});
\item $\mathcal P$ is not edge loop-erased or it is loop-erased, but has a common edge with  $C$.
\end{enumerate}
Following \cite{Tal2}, we prove that the summation over the second group gives zero by introducing a sign-reversing involution $\varphi$ on the set of pairs $(C,P)$. We first assign two numbers to each pair $(C,P)$ as follows:
\begin{enumerate}
\item Let $P=(e_1,\ldots,e_m)$. If $P$ is edge loop-erased, set $\bar s=+\infty$; otherwise, let $L_1=(e_l,e_{l+1},\ldots,e_{s})$ be the first loop erased according to Definition~\ref{def:loop-erased-walk} and set $\bar s=s$;
\item If $C$ does not intersect $P$, set $\bar t=+\infty$. Otherwise, set $\bar t$ the smallest $t$ such that $e_t\in P$ and $e_t\in C$. Denote the component of $C$ containing $e_{\bar t}$ by $L_2=(l_1,\ldots,l_p)$ with $l_1=e_{\bar t}$. 
\end{enumerate}
A pair $(C,P)$ belongs to the second group if and only if at least one of the numbers $\bar s$, $\bar t$ is finite. Moreover, in this case, $\bar s\ne \bar t$, because if  $e_{\bar s}=e_{\bar t}$, then $\bar t < \bar s$ by the labeling rules. We then define $(C^*,P^*)=\varphi(C,P)$ as follows:
\begin{enumerate}
\item  If $\bar s < \bar t$, then $P$ completes its first cycle $L_1$ before intersecting any cycle in $C$. In this case $L_1\cap C=\emptyset$, and we remove $L_1$ from $P$ and add it to $C$. Then $P^*=(e_1,\ldots,e_{l-1},e_s,\ldots,e_m)$ and $C^*=C\cup L_1$;
\item If $\bar t < \bar s$, then $P$ intersects  $L_2$ before completing its first cycle. Then we remove $L_2$ from $C$ and add it to $P$: $C^*=C\backslash L_2$, $P^*=(e_1,\ldots,e_{\bar t-1},e_{l_1}=e_{\bar t},e_{l_2},\ldots,e_{l_p},e_{\bar t+1},\ldots,e_m)$. 
\end{enumerate}
From the construction of $\varphi$ it follows immediately that $(C^*,P^*)$ belongs to the second group, $\varphi^2=\mbox{id}$, and $\varphi$ is sign-reversing since 
$w(C^*) w(P^*) = w(C) w(P)$, $\mbox{wind}(P) +\mbox{wind}(P^*) =1 \,\,
(\!\!\!\!\mod 2)$ and $\mbox{int}(P) +\mbox{int}(P^*) =0 \,\,
(\!\!\!\!\mod 2)$. 
\end{proof}

\begin{corollary}
\label{cor:bound_source}\textbf{The connection between the edge vectors at the boundary sources and Talaska formula for the boundary measurement matrix.} Under the hypotheses of Theorem \ref{theo:null}, let $e$ be the edge starting at the boundary source $b_{i_r}$. Then the number $\mbox{wind}(F)+\mbox{int}(F)$ has the same parity for all edge flows $F$ from $b_{i_r}$ to $b_j$ and it is equal to the number $N_{rj}$ of boundary sources between $i_r$ and $j$ in the orientation $\mathcal O$,
\begin{equation}\label{eq:index_source}
N_{rj} = \# \left\{ i_s \in I\ , \ i_s \in \big] \min \{i_r, j\}, \max \{ i_r , j \} \big[ \ \right\}.
\end{equation}
Therefore, for such edges and the choice $B_j=E_j$, where $E_k$, $k\in[n]$ are the canonical basic vectors in $\R^n$, (\ref{eq:tal_formula}) simplifies to
\begin{equation}
\label{eq:tal_formula_source}
\left(E_{e}\right)_{j}= \big(-1\big)^{N_{rj}}\ \frac{\sum_{F\in {\mathcal F}_{e,b_j}(\mathcal G)} \ w(F)}{\sum_{C\in {\mathcal C}(\mathcal G)} \ w(C)} =A^r_{j},
\end{equation}
where $A^r_j$ is the entry of the reduced row echelon matrix $A$ with respect to the base $I=\{1\le i_1 < i_2 < \cdots < i_k \le n\}$.

In particular,  the edge vectors at the boundary sources  are the rows of Postnikov boundary measurement matrix for the same orientation and choice of weights, except for the pivot terms which are indexed by the boundary sources themselves. Therefore the image of this map when we vary the positive weights is the full positroid cell represented by the given graph. 
\end{corollary}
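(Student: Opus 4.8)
The plan is to derive the corollary from Theorem~\ref{theo:null} together with a parity count for the exponent $\mbox{wind}(F)+\mbox{int}(F)$, and then to recognise the resulting subtraction--free ratio as an entry of Postnikov's boundary measurement matrix by means of Talaska's formula. First, by Definition~\ref{def:numb_flows}, any $F\in{\mathcal F}_{e,b_j}({\mathcal G})$ with $e$ the edge at the boundary source $b_{i_r}$ decomposes as $F=P\cup C'$, with $P$ the edge loop--erased walk from $b_{i_r}$ to $b_j$ and $C'$ a conservative flow sharing no edge with $P$, and one has $\mbox{wind}(F)=\mbox{wind}(P)$, $\mbox{int}(F)=\mbox{int}(P)$. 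Since $e$ issues from a boundary source, by Remark~\ref{rem:loop} this $P$ is an ordinary loop--erased walk, hence repeats no vertex and, by planarity, is a simple directed path. So the statement reduces to the parity claim that $\mbox{wind}(P)+\mbox{int}(P)\equiv N_{rj}\pmod{2}$ for every simple directed path $P\colon b_{i_r}\to b_j$ in $({\mathcal N},{\mathcal O},{\mathfrak l})$: granting it, every term $(-1)^{\mbox{wind}(F)+\mbox{int}(F)}$ appearing in (\ref{eq:tal_formula}) equals $(-1)^{N_{rj}}$, and Theorem~\ref{theo:null} then yields at once the first equality of (\ref{eq:tal_formula_source}).

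To prove the parity claim I would show that the parity of $\mbox{wind}(P)+\mbox{int}(P)$ is invariant under isotopy of $P$ through simple arcs with fixed endpoints $b_{i_r},b_j$, and then evaluate it on one convenient reference arc $P_0$. Because all boundary vertices lie on a common straight interval and the interior of a simple $P\colon b_{i_r}\to b_j$ lies in the open disk, each boundary vertex between $b_{i_r}$ and $b_j$ lies on the same side of $P$ as it does of the arc $P_0$ running within a small neighbourhood of the boundary interval from $b_{i_r}$ to $b_j$; hence $P$ is isotopic to $P_0$ through simple arcs never touching a boundary vertex, since a properly embedded arc in a disk is unique up to isotopy rel the marked boundary points once the induced partition of those points is fixed. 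Along such an isotopy $\mbox{int}(P)$ changes by $\pm 2$ when a tangency with a gauge ray is created or destroyed, and by $\pm 1$ exactly when the initial direction of $P$ at $b_{i_r}$ rotates through ${\mathfrak l}$, i.e.\ across the ray ${\mathfrak l}_{i_r}$; and $\mbox{wind}(P)$, the signed number of times the tangent of $P$ is parallel to ${\mathfrak l}$, changes by $\pm 2$ when an interior tangency with ${\mathfrak l}$ appears or disappears, and by the same $\pm 1$ when the initial direction at $b_{i_r}$ crosses ${\mathfrak l}$ — whereas the direction of $P$ at $b_j$, which points out of the disk, never crosses ${\mathfrak l}$, which points into it (Definition~\ref{def:gauge_ray}). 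Since a simple arc joining two boundary points of a disk cannot wind (its union with a straight boundary interval is a Jordan curve, whose total turning is $\pm2\pi$, so the turning of the arc is bounded), no further changes of $\mbox{wind}$ occur, and the parity of $\mbox{wind}+\mbox{int}$ is therefore constant along the isotopy. Finally $\mbox{int}(P_0)=N_{rj}$, because $P_0$ meets precisely the rays ${\mathfrak l}_{i_s}$ with $i_s$ between $i_r$ and $j$ and does so once each, while $\mbox{wind}(P_0)$ is the small, explicitly computable turning contribution from the two ends of $P_0$; a direct inspection of that contribution gives $\mbox{wind}(P_0)+\mbox{int}(P_0)\equiv N_{rj}\pmod{2}$, completing the claim.

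For the identification with $A^r_j$: by the parity claim and (\ref{eq:tal_formula}), $(E_e)_j=(-1)^{N_{rj}}\big(\sum_{F\in{\mathcal F}_{e,b_j}({\mathcal G})}w(F)\big)\big/\big(\sum_{C\in{\mathcal C}({\mathcal G})}w(C)\big)$. For $e$ issuing from the source $b_{i_r}$, the set ${\mathcal F}_{e,b_j}({\mathcal G})$ coincides, as a weighted collection of subgraphs, with Talaska's set of flows from $b_{i_r}$ to $b_j$, so the displayed ratio is exactly the subtraction--free expression that \cite{Tal2} assigns to the boundary measurement $M_{i_r j}$. Since $\sigma(i_r,j)=N_{rj}$ in the formula $A^r_j=(-1)^{\sigma(i_r,j)}M_{i_r j}$ recalled before Definition~\ref{def:edge_vector}, this gives $(E_e)_j=A^r_j$, which is the second equality in (\ref{eq:tal_formula_source}).

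The crux is the parity claim, and within it the assertion that $P$ can be brought to $P_0$ through simple arcs in such a way that each elementary move changes $\mbox{wind}$ and $\mbox{int}$ by numbers of equal parity — in particular that no unmatched $\pm1$ jump of the winding occurs; this is precisely where one uses that an arc properly embedded in a disk with both endpoints on the boundary is unknotted. An equivalent route would replace the isotopy argument by a crossing--parity computation on the Jordan curve $P\cup(\text{boundary interval from }b_j\text{ to }b_{i_r})$, but one would then still have to relate $\mbox{wind}(P)$ to the turning number of that curve, which is of comparable difficulty.
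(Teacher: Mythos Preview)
Your reduction to the parity claim and the identification with $A^r_j$ via Talaska's formula match the paper. The parity argument, however, follows a genuinely different route. The paper does not isotope to a reference arc; instead it exploits the fact that, since boundary vertices are univalent, any two simple paths $P,\tilde P$ from $b_{i_r}$ to $b_j$ share both their first edge $e$ and last edge $f$. Closing either path with a single return edge $e_{j,i_r}$ produces a Jordan curve, whose total winding is $1\pmod 2$; hence $\mbox{wind}(P)\equiv 1-\mbox{wind}(f,e_{j,i_r})-\mbox{wind}(e_{j,i_r},e)\pmod 2$, a quantity depending only on $e,f$ and thus the same for all $P$. For $\mbox{int}$, the segments on which $P$ and $\tilde P$ differ bound closed regions, so the intersection counts there differ by even numbers. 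The value $N_{rj}$ then drops out of a direct Jordan--curve count: rays issued from sources strictly between $i_r$ and $j$ must cross $P$ an odd number of times to escape the region bounded by $P$ and the boundary interval, the others an even number, while the self--ray $\mathfrak l_{i_r}$ contributes with the same parity as $\mbox{wind}(P)$.

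Your isotopy approach is sound in outline but leaves several points only asserted. You claim that when the initial direction of the arc rotates through $\mathfrak l$ both $\mbox{wind}$ and $\mbox{int}$ jump ``by the same $\pm1$''; for the parity conclusion only the fact that each jumps by an odd number matters, and that this is the \emph{only} source of odd jumps (in particular no $\pm1$ jump of $\mbox{int}$ occurs at the exit points of the rays on the far side of the disk, which holds because your intermediate arcs keep their interiors in the open disk) should be argued rather than stated. The final evaluation on $P_0$ is left to ``a direct inspection'', and the passage from the paper's piecewise--linear winding (Definition~\ref{def:winding_pair}) to the continuous turning you track along a smooth isotopy needs a word of justification. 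None of these is fatal, but the paper's route sidesteps all of them by comparing two network paths through their common first and last edges rather than invoking an external reference arc; what your approach buys is that it does not rely on that univalency, at the cost of the extra bookkeeping.
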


\begin{proof}
First of all, in this case, each edge flow $F$ from $i_r$ to $j$ is either an acyclic edge loop--erased walk $\mathcal P$ or the union of $\mathcal P$ with a conservative flow $C$ with no common edges with $\mathcal P$. Therefore to prove that the number $\mbox{wind}(F)+\mbox{int}(F)$ has the same parity for all $F$ is equivalent to prove that $\mbox{wind}(\mathcal P)+\mbox{int}(\mathcal P)$ has the same parity for all edge loop--erased walks from $b_{i_r}$ to $b_j$ (again notations are as in Definitions~\ref{def:edge_flow},~\ref{def:cons_flow} and~\ref{def:numb_flows}).   
Any two such loop erased walks, ${\mathcal P}$ and ${\tilde {\mathcal P}}$, share at least the initial and the final edges and are both acyclic.
\begin{figure}%[H]
  \centering
	{\includegraphics[width=0.4\textwidth]{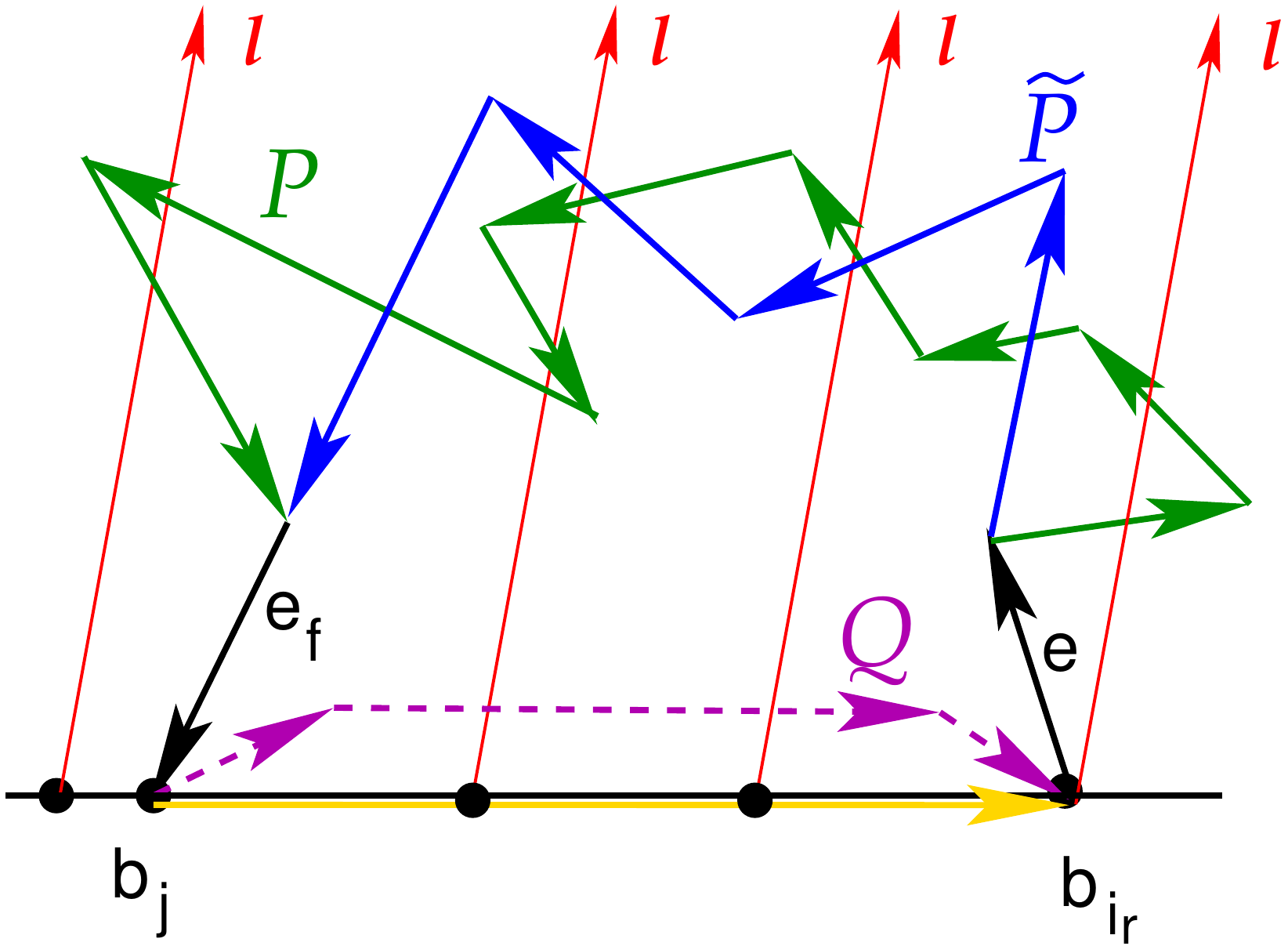} \hspace{1cm} \includegraphics[width=0.4\textwidth]{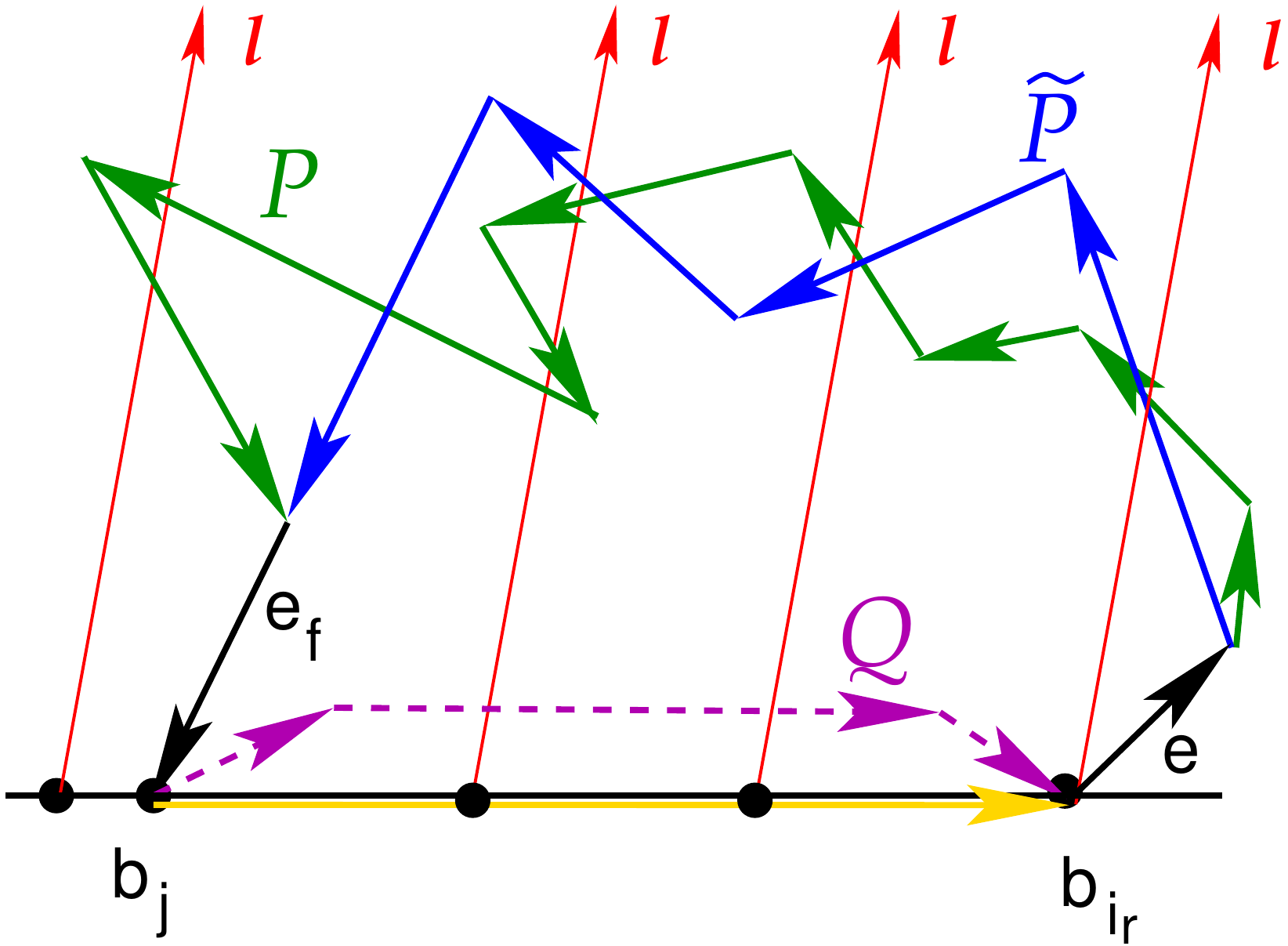}}
  \caption{\small{\sl We illustrate the proof of Corollary~\ref{cor:bound_source}. The path $\mathcal P$ is the union of black and green edges, the path $\tilde{\mathcal P}$ is the union of black and blue edges. Additional path $Q$ is drawn magenta, and the edge  $e_{j,i_r}$  is drawn gold. On the left (respectively right) the gauge ray starting at  $b_{i_r}$ lies outside (respectively inside) the angle $\widehat{-e_{j,i_r},e}$.}\label{fig:winding_intersection}}
\end{figure}
If we add an edge $e_{j,i_r}$ from $b_j$ to $b_{i_r}$ (see Fig.~\ref{fig:winding_intersection}), we obtain a pair of simple cycles with the same orientation and of total winding equal to $1$ modulo $2$. Therefore 
\begin{equation}
\label{eq:2wind}  
\mbox{wind} (P) = \mbox{wind} (\tilde P)  = 1 - \mbox{wind}(e_{f},e_{j,i_r}) - \mbox{wind}(e_{j,i_r},e) \quad (\!\!\!\!\!\!\mod 2). 
\end{equation}
Moreover, $\mbox{wind}(e_{f},e_{j,i_r})=0$ and  
$$
\mbox{wind}(e_{j,i_r},e) =\left\{\begin{array}{ll} 1 & \mbox{if the gauge ray}\ \ {\mathfrak l}_{i_r} \ \ \mbox{lies outside the angle} \ \ \widehat{-e_{j,i_r},e}   \\
     0 & \mbox{if the gauge ray}\ \ {\mathfrak l}_{i_r} \ \ \mbox{lies inside the angle} \ \ \widehat{-e_{j,i_r},e}                                 
    \end{array}\right.
$$
Therefore, in the first case $\mbox{wind} (P) = \mbox{wind} (\tilde P)  = 0 \quad (\!\!\!\!\mod 2)$ and in the second case
$\mbox{wind} (P) = \mbox{wind} (\tilde P)  = 1 \quad (\!\!\!\!\mod 2)$.

Next, let us add a directed path $Q$ from $b_j$ to  $b_{i_r}$ very close to the boundary to the graph (see Fig~\ref{fig:winding_intersection}). Then the total intersection number of the simple cycles $Q\cup P$, $Q\cup \tilde P$ are both zero $(\!\!\!\!\mod 2)$ and we easily conclude that $\mbox{int} ({\mathcal P}) =\mbox{int} ({\tilde {\mathcal P}}) \, (\!\!\!\!\mod 2)$.

Without loss of generality, we may assume that $i_r<j$. Since $\mathcal P$ is acyclic, all pivot rays ${\mathfrak l}_{i_l}$, $i_l\in [i_r -1] \cup [j, n]$ intersect $\mathcal P$ an even number of times, whereas all pivot rays ${\mathfrak l}_{i_l}$, $i_l\in [i_r +1, j]$ intersect $\mathcal P$ an odd number of times. Finally, the ray ${\mathfrak l}_{i_r}$ intersects $\mathcal P$  an even (odd) number of times if the gauge ray ${\mathfrak l}_{i_r}$ lies outside (inside) the angle $\widehat{-e_{j,i_r},e}$. Therefore,  $\mbox{wind}(F)+\mbox{int}(F)$ is equal to the number $N_{rj}$ of sources in the interval $]i_r,j[$  (\ref{eq:index_source}), and the sum in the right hand side in (\ref{eq:tal_formula_source}) coincides  with the formula in \cite{Tal2}.
\end{proof}

\begin{figure}%[H]
  \centering
	{\includegraphics[width=0.49\textwidth]{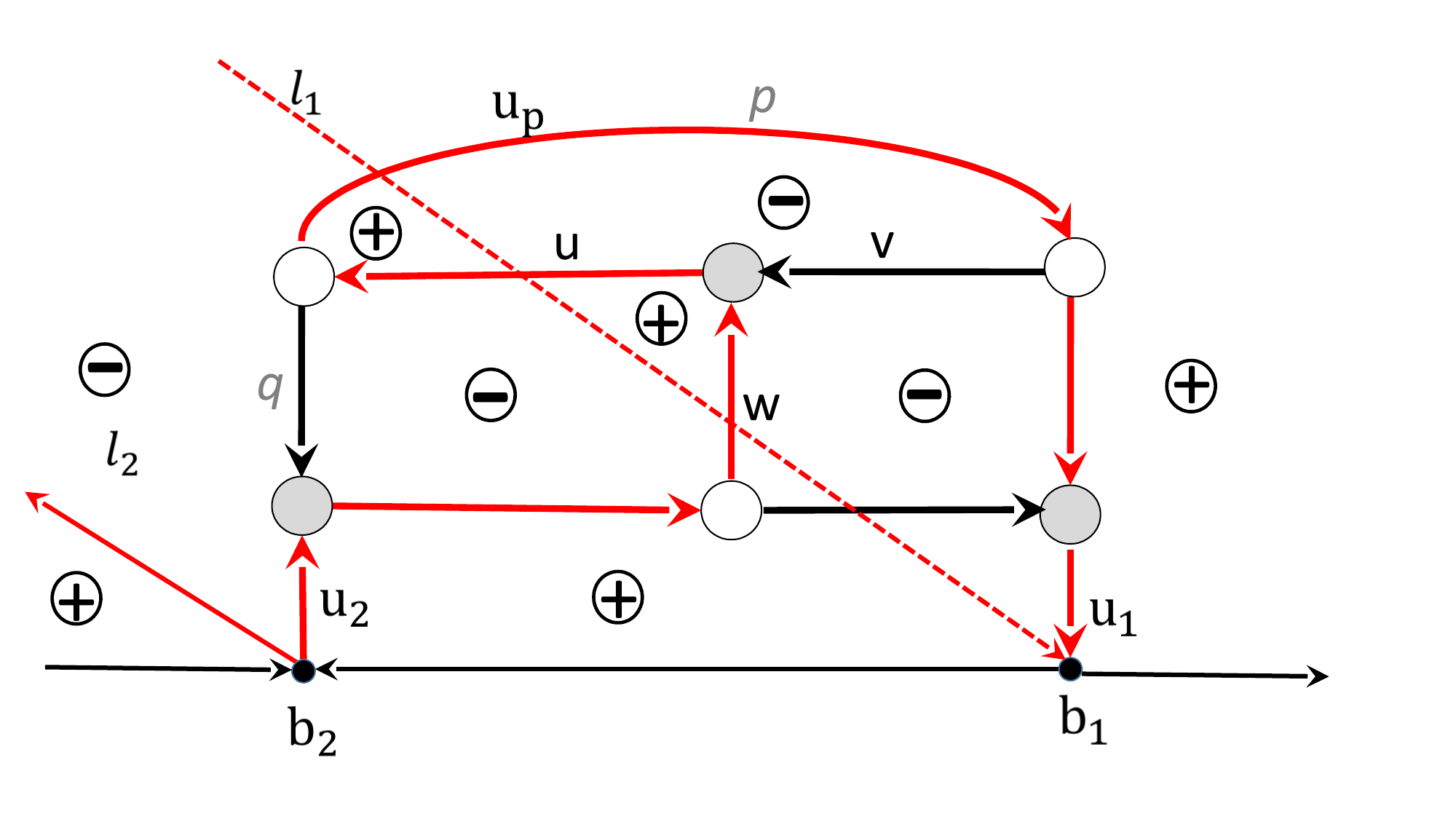}}
	\hfill
	{\includegraphics[width=0.49\textwidth]{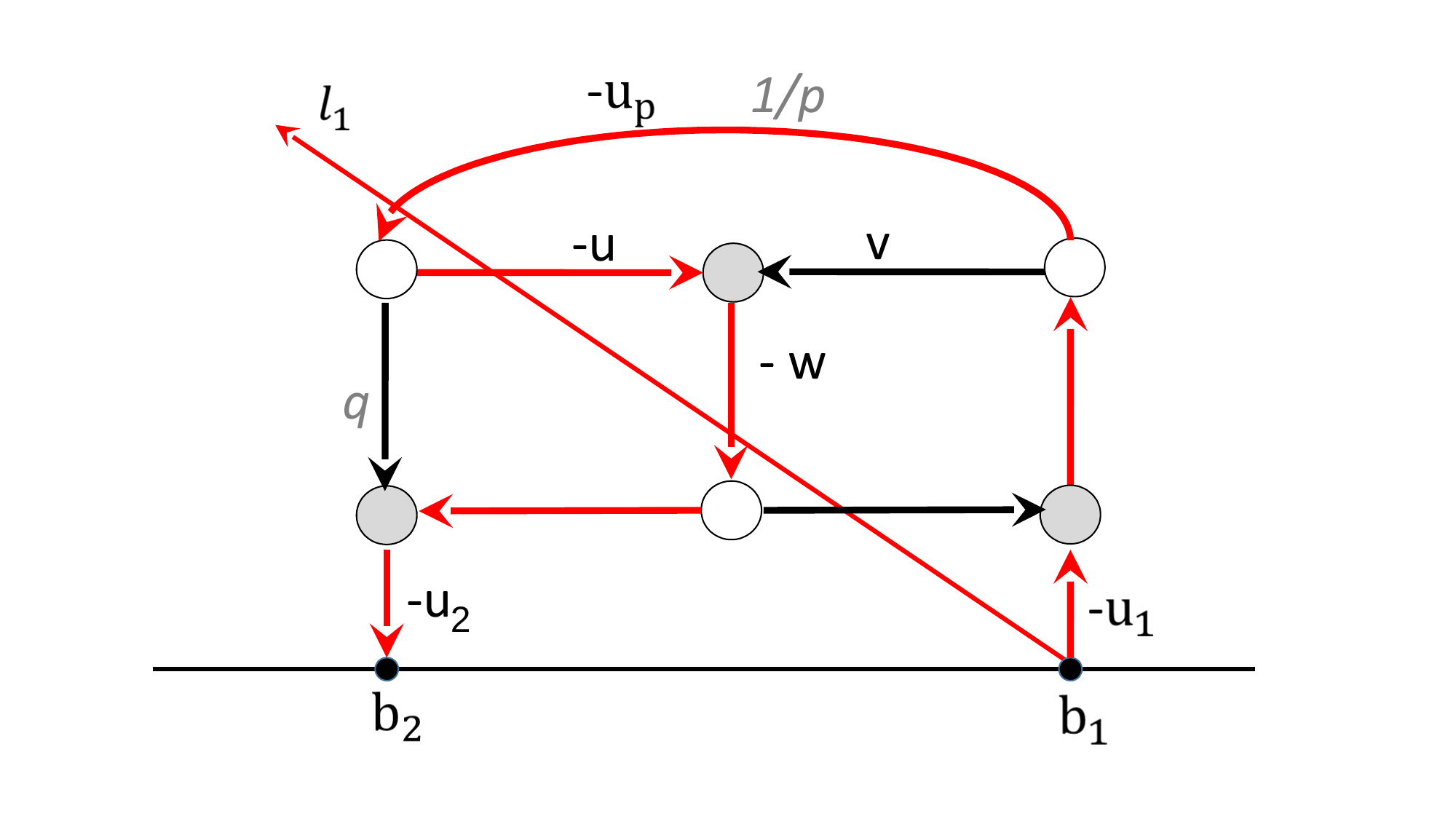}}
  \caption{\small{\sl The computation of edge vectors using Theorem \ref{theo:null} and Lemma \ref{lemma:path}. The path along which we change orientation is colored red in both Figures; we mark regions to compute the indices in (\ref{eq:eps_not_path}) and  (\ref{eq:eps_on_path})  [left].}\label{fig:talaska_orientation}}
\end{figure}

\begin{example}
For the orientation and gauge ray direction as in Figure \ref{fig:Rules0}, the vectors $E_e$ on the Le--network coincide with those introduced
in the direct algebraic construction in \cite{AG3}.
\end{example}

\begin{example}\label{example:null}
We illustrate both the Theorem and the Corollary on the example in Figure \ref{fig:talaska_orientation} [left]. The network represents the point $[ 2p/(1+p+q),1] \in Gr^{\mbox{\tiny TP}}(1,2)$: all weights are equal to 1 except for the two edges carrying the positive weights $p$ and $q$. In the given orientation, the networks possesses two conservative flows of weight $p$ and $q$. Therefore $\sum\limits_{C\in {\mathcal C}(\mathcal G)} \ w(C)=1+p+q$. There are two possible loop erased edge walks starting at $u$, which coincide with the edge flows from $u$, so that $\sum\limits_{F\in {\mathcal F}_{u,b_1}(\mathcal G)} \big(-1\big)^{\mbox{wind}(F)+\mbox{int}(F)}\ w(F) = q-p$.
Therefore on the edges $u,v,w$, using (\ref{eq:tal_formula}) we get
$$
E_u=E_v=-E_w=\left( \frac{q-p}{1+p+q}, 0\right).
$$
We remark that $E_u=E_v=E_w=(0,0)$, when $p=q$, that is null edge vectors are possible even if there exist paths starting at the edge and ending at some boundary sink. We shall return on the problem of null edge vectors in Section \ref{sec:null_vectors}.
It is easy to check that all other edge vectors associated to such network have non--zero first component for any choice of $p,q>0$. In particular the edge vector at the boundary source is equal to
$E_{u_2} =(\frac{1+2p}{1+p+q},0)$, since there are two loop erased walks starting from $u_2$ and three edge flows so that $\sum\limits_{F\in {\mathcal F}_{u_2,b_1}(\mathcal G)} \big(-1\big)^{\mbox{wind}(F)+\mbox{int}(F)}\ w(F) = 1(1+p)+p$.
The edge vector $E_{u_p} = (\frac{p+2pq}{1+p+q},0)$ since there are two loop erased walks and three edge flows starting at $u_p$.
Similarly $E_{u_1} =(1,0)$ since there is only one loop erased walk and three edge flows from $u_1$.
Finally the representative matrix associated to this system of vectors is $A[1] = (\frac{1+2p}{1+p+q}, 1)$.
\end{example}

\section{Dependence of edge vectors on orientation and network gauge freedoms}\label{sec:vector_changes}

In this section we discuss the dependence of edge vectors on the various gauge freedoms of the network.

\subsection{Dependence of edge vectors on gauge ray direction $\mathfrak l$}\label{sec:gauge_ray}
We show that the effect of a change of direction in the gauge ray $\mathfrak l$ on the vectors $E_e$ is the following: the new vectors  $E^{\prime}_e$ coincide with the old ones  $E_e$ up to a sign, and the boundary measurement matrix is preserved.

\begin{figure}
  \centering
	{\includegraphics[width=0.65\textwidth]{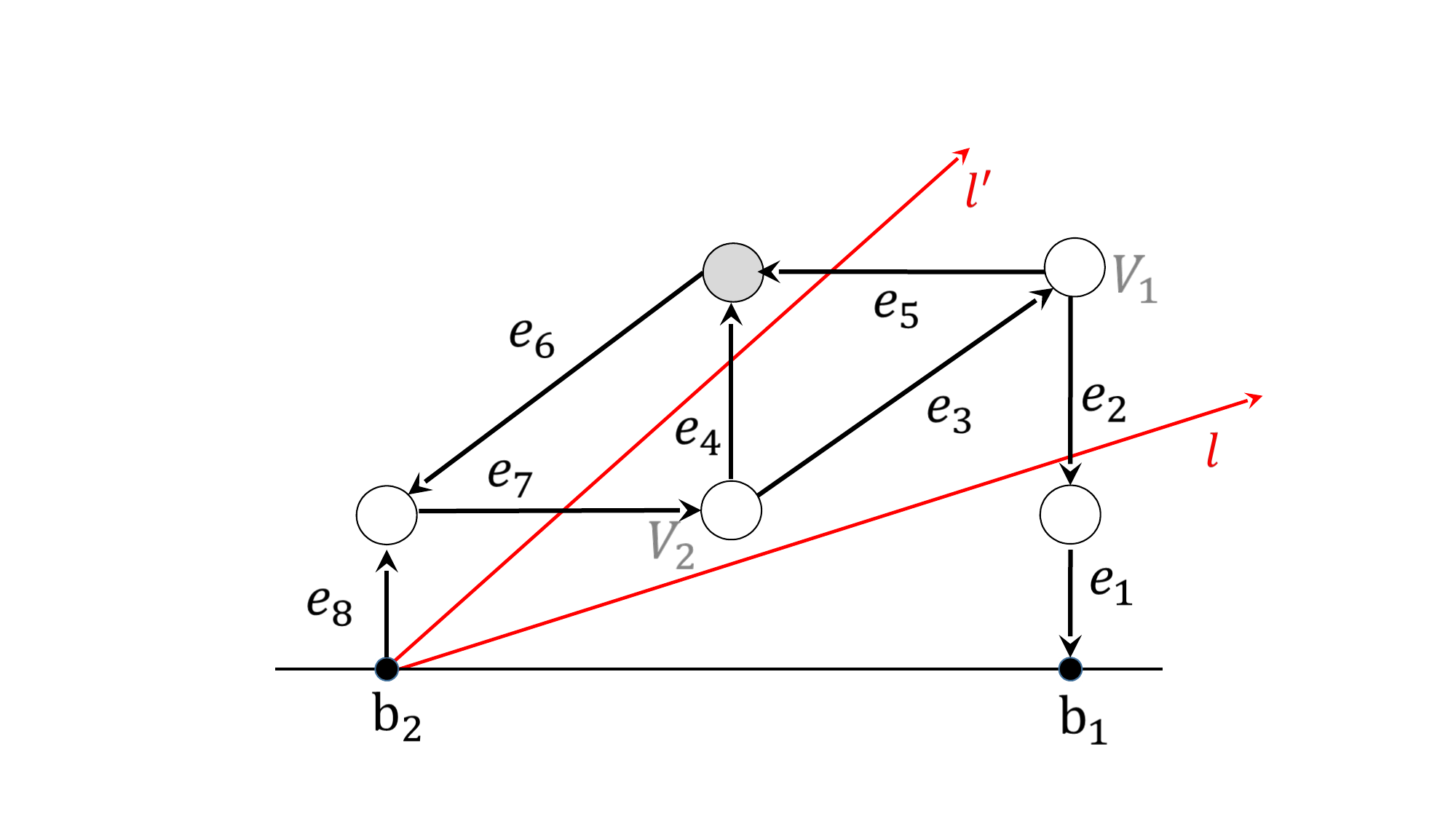}}
	\vspace{-.7 truecm}
  \caption{\small{\sl We illustrate Proposition \ref{prop:rays}.}}\label{fig:pivot}
\end{figure}

\begin{proposition}\label{prop:rays}\textbf{The dependence of the system of vectors on the ray direction $\mathfrak l$}
Let $({\mathcal N},\mathcal O)$ be an oriented network and consider two gauge directions $\mathfrak l$ and 
$\mathfrak{l}^{\prime}$.
\begin{enumerate}
\item For any boundary source edge $e_{i_r}$ the vector $E_{e_{i_r}}$ does not depend on the gauge direction $\mathfrak l$ 
and it coincides with the $r$-th row of the generalized RREF of $[A]$, associated to the pivot set $I$,  minus  the $i_r$--th vector of the canonical basis, which we denote $E_{i_r}$,
\begin{equation}
\label{eq:Ei}
E_{e_{i_r}} = A[r] -E_{i_r}.
\end{equation}
\item For any other edge $e$ we have
\begin{equation}\label{eq:vector_ray}
E^{\prime}_e = (-1)^{\mbox{cr}(V_e)+\mbox{par(e)}} E_e,
\end{equation} 
where $E_e$ and  $E^{\prime}_e$ respectively are the edge vectors for $e$ for the gauge direction ${\mathfrak l}$
and ${\mathfrak l}^{\prime}$, $\mbox{par(e)}$ is 1 if $e$ belongs to the angle $\widehat{{\mathfrak l},{\mathfrak l}^{\prime}}$ , and 0 otherwise,
whereas $\mbox{cr}(V_e)$ denotes the number of gauge rays passing the initial vertex $V_e$ of $e$ during the rotation from  ${\mathfrak l}$ to ${\mathfrak l}^{\prime}$ inside the disk.
\end{enumerate}
\end{proposition}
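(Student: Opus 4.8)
This is essentially a reformulation of Corollary~\ref{cor:bound_source}. For the edge $e=e_{i_r}$ at the boundary source $b_{i_r}$, formula (\ref{eq:tal_formula_source}) gives $\left(E_{e_{i_r}}\right)_j=(-1)^{N_{rj}}\big(\sum_{F}w(F)\big)\big/\big(\sum_C w(C)\big)=A^r_j$ for all $j\in\bar I$; neither the combinatorial count $N_{rj}$ of (\ref{eq:index_source}) nor the subtraction--free ratio of weights depends on $\mathfrak l$, so $E_{e_{i_r}}$ does not depend on the gauge direction. For $j=i_s\in I$ there is no directed walk from a source to a source, so $\left(E_{e_{i_r}}\right)_{i_s}=0=A^r_{i_s}-\delta_{rs}$; together these give $E_{e_{i_r}}=A[r]-E_{i_r}$, manifestly independent of $\mathfrak l$.

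\textbf{Part (2): reduction and strategy.} I would first reduce to an \emph{elementary} change of gauge direction. The directions at which a requirement of Definition~\ref{def:gauge_ray} fails form a finite set; joining $\mathfrak l$ to $\mathfrak l^{\prime}$ inside the admissible arc of ``into the disk'' directions by a generic path, it suffices to treat each crossing of such a direction -- an elementary step in which essentially one local event occurs, namely $(\mathrm E_1)$ the direction $\mathfrak l$ passes through the direction of a single edge $g$ (so $\mbox{par}(g)$ changes by $1$), or $(\mathrm E_2)$ a single gauge ray $\mathfrak l_{i_r}$ sweeps through a single vertex $V$ (two of these may be forced to coincide only at a direction parallel to a boundary edge, handled below) -- and then compose the resulting signs. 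Now by Theorem~\ref{theo:consist}, $\{E_e\}$ (resp.\ $\{E^{\prime}_e\}$) is the \emph{unique} solution of the relations (\ref{eq:lineq_biv})--(\ref{eq:lineq_white}) for $(\mathcal N,\mathcal O,\mathfrak l)$ (resp.\ $(\mathcal N,\mathcal O,\mathfrak l^{\prime})$) with the canonical basis placed at the boundary sinks. Set $\widehat E_e:=(-1)^{\mbox{int}(V_e)+\mbox{par}(e)}E_e$ on every non--source edge and $\widehat E_{e_{i_r}}:=E_{e_{i_r}}$ on source edges (legitimate by Part~(1)). By uniqueness it is then enough to check that $\{\widehat E_e\}$ satisfies the $\mathfrak l^{\prime}$--relations at every internal vertex and reduces to $(-1)^{\mbox{int}^{\prime}(e)}w_e F_j$ at each sink edge $e\to b_j$; this forces $\widehat E_e=E^{\prime}_e$, i.e.\ (\ref{eq:vector_ray}).

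\textbf{Part (2): the local parity checks.} At an internal vertex $V$ with path--edge $a$ incoming and path--edge $b$ outgoing, substituting the $\mathfrak l$--relation into the $\mathfrak l^{\prime}$--relation, and using that a gauge ray enters or leaves a segment only through an endpoint (so $\mbox{int}^{\prime}(e)-\mbox{int}(e)$ equals, mod~$2$, the number of ray crossings of $e$ sweeping past its two endpoints), each relation collapses to the congruence
\[
\mbox{int}_V(a)+\mbox{int}_V(b)+\Delta\mbox{wind}(a,b)\ \equiv\ \mbox{par}(a)+\mbox{par}(b)\pmod 2
\]
(with an evident variant when $a$ is a source edge), all quantities referring to the elementary crossing, where $\mbox{int}_V(\cdot)$ counts the crossings of an edge sweeping past its endpoint $V$. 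For event $(\mathrm E_1)$ one uses that $\mathfrak l$ crossing the direction of either member of an ordered pair flips its local winding by $1$ mod~$2$ while leaving all $\mbox{int}$'s unchanged on bounded segments, which matches the $\mbox{par}$ terms exactly; for event $(\mathrm E_2)$ the windings are unchanged, and the key geometric input is that when a gauge ray sweeps past an internal vertex $V$ it flips its crossing parity with \emph{every} edge germ at $V$ (the two complementary open half--planes bounded by the moving ray partition the germ directions, none of which is parallel to $\mathfrak l$ by Definition~\ref{def:gauge_ray}), whence $\mbox{int}_V(a)\equiv\mbox{int}_V(b)\equiv 1$ and the congruence reads $0\equiv 0$; a sweep past any other vertex leaves all terms zero. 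The sink condition reduces to $\mbox{par}(e)=0$ on a sink edge $e$ -- its direction points out of the disk, so $\mathfrak l$ never crosses it during an into--the--disk rotation -- together with the fact that no gauge ray ever sweeps past a boundary vertex, since the boundary vertices lie on a common interval and the gauge rays point into the disk; with (\ref{eq:vec_bou_sink}) this yields $\widehat E_e=(-1)^{\mbox{int}^{\prime}(e)}w_e F_j$.

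\textbf{Main obstacle.} The substance of the proof is the exhaustive local verification behind the displayed congruences: black versus white vertices (two congruences at each trivalent vertex), all cyclic orders of the two or three edge germs, and respecting the ``antiparallel'' convention of Definition~\ref{def:winding_pair} (Figure~\ref{fig:antipar}). The most delicate case is the bivalent white vertices $V_i$ attached to the boundary, where one germ is a non--internal edge that may be parallel to $\mathfrak l$ and where the two elementary events merge; this is exactly where the simplifying hypotheses of Definitions~\ref{def:graph} and~\ref{def:gauge_ray} are genuinely used, and it is also what forces the separate treatment of source edges via Part~(1). An alternative, more computational route bypasses Theorem~\ref{theo:consist}: keep the path sums (\ref{eq:sum}) and show directly that across an elementary crossing the sign of \emph{every} directed walk $\mathcal P:e\to b_j$ changes by the same path--independent factor $(-1)^{\mbox{int}(V_e)+\mbox{par}(e)}$; the same cancellations reappear there as identities on consecutive triples of edges of $\mathcal P$, at the price of arguing uniformity over infinitely many walks.
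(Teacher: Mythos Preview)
Your Part~(1) matches the paper's argument exactly.

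For Part~(2), your proposal is correct but takes a genuinely different route from the paper. The paper does \emph{not} use the uniqueness of Theorem~\ref{theo:consist}; it argues directly on the path--sum representation (\ref{eq:sum}) --- precisely the ``alternative, more computational route'' you sketch at the end. Its key observation is cleaner than your cancellation of consecutive triples: as $\mathfrak l(t)$ rotates, the parity of $\mbox{wind}(\mathcal P)$ changes only when $\mathfrak l(t)$ becomes parallel to the \emph{initial} edge $e$ (at any interior edge two adjacent local windings flip together; the final edge points out of the disk so is never crossed), and the parity of $\mbox{int}(\mathcal P)$ changes only when a gauge ray sweeps past the \emph{initial} vertex $V_e$ (at any interior vertex two incident edges flip together; boundary sinks are never crossed). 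Since $e$ and $V_e$ are common to every walk from $e$, the global sign factor $(-1)^{\mbox{int}(V_e)+\mbox{par}(e)}$ falls out immediately, with no case analysis by vertex colour and no concern about the antiparallel convention.

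Your primary approach via the linear system is legitimate and is in the spirit of how the paper treats changes of orientation (Section~\ref{sec:orient}, Appendix~\ref{app:orient}); it has the virtue of working uniformly and of producing the transformation rule of the \emph{signature} (this is exactly what the paper later packages as Lemma~\ref{lem:z_orient_gauge}(3)). The cost is the exhaustive local verification you flag as the main obstacle. The paper's path--level argument is shorter here precisely because the gauge--ray rotation preserves the orientation and the weights, so one never needs to compare two different linear systems --- the walks themselves are unchanged and only their signs move.
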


\begin{proof}
Formula (\ref{eq:Ei}) follows from Corollary~\ref{cor:bound_source}: indeed (\ref{eq:index_source}) implies that the components of $E_{e_{\bar i}}$ are invariant with 
respect to changes of the gauge direction. Finally, since there is no path to the boundary source $b_{\bar i}$, 
the corresponding component of the edge vector is zero. 

To prove the second statement, we show that, for a given initial edge $e$, the sign contribution of each edge loop--erased walk $\mathcal P$ starting at $e$ is either the same before and after the gauge ray rotation or changes in the same way for every walk independently of the destination $b_j$.

Indeed let us consider a monotone continuous change of the gauge direction from initial $\mathfrak l = \mathfrak l(0)$ to final ${\mathfrak l}'=\mathfrak l(1)$. If for some $t\in(0,1)$ the vector $\mathfrak l(t)$ forms a zero angle with an edge of $\mathcal P$ distinct from the initial one, the parity of the  winding number of $\mathcal P$ remains unchanged. On the contrary, if $\mathfrak l(t)$ forms a zero angle with the initial edge $e$, the winding number of $\mathcal P$ changes its parity, and, in such case we settle $\mbox{par(e)}=1$. We remark that $\mathfrak l(t)$ can never form a zero angle with the edge at the boundary sink in $\mathcal P$. 

Similarly, if one of the gauge lines passes through a vertex in $\mathcal P$ distinct from the initial vertex, then the parity of the intersection number of $\mathcal P$ remains unchanged. It changes  $1\  (\!\!\!\!\mod 2)$ only if one of the gauge rays passes through the initial vertex of $\mathcal P$ (again it can never pass through the final vertex).    

Since the first edge $e$ and its initial vertex are common to all paths starting at $e$, all components of the 
vector $E_e$ either remain invariant, or are simultaneously multiplied by $-1$.
\end{proof}

\begin{example}
We illustrate Proposition \ref{prop:rays} in Figure \ref{fig:pivot}. In the rotation from $\mathfrak{l}$ to $\mathfrak{l}^{\prime}$ inside the disk, the gauge ray starting at $b_2$ passes the vertices $V_1$ and $V_2$ and the direction $e_3$. Therefore
$E^{\prime}_{e_i} = - E_{e_i}$, for $i=2,4,5$, whereas $E^{\prime}_{e_i} = E_{e_i}$ for all other edges.
\end{example}

\subsection{Dependence of edge vectors on orientation of the graph}\label{sec:orient}
We now explain how the system of vectors changes when we change the orientation of the graph. Following \cite{Pos}, a change of orientation can be represented as a finite composition of elementary changes of orientation, each one consisting in
a change of orientation either along a simple cycle ${\mathcal Q}_0$ or
along a non-self-intersecting oriented path ${\mathcal P}$ from a boundary source $i_0$ to a boundary sink $j_0$.
Here we use the standard rule that we do not change the edge weight if the edge does not change orientation, otherwise we replace the original weight by its reciprocal. 

\begin{theorem}\label{theo:orient}\textbf{The dependence of the system of vectors on the orientation of the network.}
Let ${\mathcal N}$ be a plabic network representing a given point $[A]\in \S \subset\GTNN$ and $\mathfrak l$ be a gauge ray direction. Let $\mathcal O$, ${\hat {\mathcal O}}$ be
two perfect orientations of ${\mathcal N}$ for the bases $I,I^{\prime}\in {\mathcal M}$. Let $A[r]$, $r\in [k]$, denote 
the $r$-th row of a chosen representative matrix of $[A]$.
Let $E_e$ be the system of 
vectors associated to $({\mathcal N},\mathcal O, \mathfrak l)$ and satisfying the boundary conditions $E[j]$ at $b_j$, $j \in \bar I$, whereas $\hat E_e$ are those associated to $({\mathcal N},{\hat {\mathcal O}}, \mathfrak l)$ and satisfying the boundary conditions $E[l]$ at $b_l$, $l \in \bar I^{\prime}$.  
Then for any $e\in {\mathcal N}$, there exist real constants $\alpha_e\ne 0$, $c^r_e$, $r\in [k]$ such that
\begin{equation}\label{eq:orient}
\hat E_e = \alpha_e E_e + \sum_{r=1}^k c^r_e A[r],
\end{equation}
where for elementary transformations $\alpha_e$ are as in (\ref{eq:hat_E_P}), (\ref{eq:hat_E_Q}).
\end{theorem}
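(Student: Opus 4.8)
The plan is to reduce to elementary changes of orientation and to follow the edge vectors through each of them, using the explicit formula of Theorem~\ref{theo:null} together with the uniqueness statement of Theorem~\ref{theo:consist}. First, the assertion is independent of the chosen representative of $[A]$: replacing $A$ by $gA$, $g\in GL_k$, only alters the $c^r_e$, because the row span $W_A:=\mathrm{span}\{A[1],\dots,A[k]\}$ is a fixed $k$-plane, independent of both the representative and of the perfect orientation (the boundary measurement matrix changes with the orientation but still represents $[A]$). Second, relations of the form~\eqref{eq:orient} compose: if $\hat E_e=\alpha_e E_e+v_e$ and $\hat{\hat E}_e=\hat\alpha_e\hat E_e+\hat v_e$ with $v_e,\hat v_e\in W_A$, then $\hat{\hat E}_e=\hat\alpha_e\alpha_e E_e+(\hat\alpha_e v_e+\hat v_e)$, where $\hat\alpha_e\alpha_e\neq 0$ and $\hat\alpha_e v_e+\hat v_e\in W_A=\mathrm{span}\{A[r]\}$, so~\eqref{eq:orient} holds again. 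By Postnikov, any change of perfect orientation is a finite composition of elementary ones: reversal along a simple directed cycle $\mathcal Q_0$, or reversal along a non-self-intersecting directed path $\mathcal P$ from a boundary source $i_0$ to a boundary sink $j_0$, with the convention that each reversed edge has its weight replaced by its reciprocal. Hence it suffices to prove~\eqref{eq:orient} in these two cases.

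\textbf{Cycle reversal.} Here the source set is unchanged, so $E_e$ and $\hat E_e$ carry the same canonical boundary data and I claim $\hat E_e=\alpha_e E_e$ with $\alpha_e\neq 0$, i.e. $c^r_e=0$. I would prove this by evaluating both sides through~\eqref{eq:tal_formula} and setting up the symmetric-difference bijection $C\mapsto C\oplus\mathcal Q_0$ on conservative flows, together with its counterpart on the edge flows $\mathcal F_{e,b_j}(\mathcal G)$ (the edge flows overlapping $\mathcal Q_0$ need separate care), exactly as in the standard proof that the boundary measurement matrix is orientation-independent. Inverting the weights along $\mathcal Q_0$ makes this bijection multiply numerator and denominator of~\eqref{eq:tal_formula} by the same monomial in the cycle weights, while the winding along the reversed cycle supplies a uniform sign, so only the overall edge-dependent factor $\alpha_e$ survives. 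Equivalently, one exhibits a diagonal gauge $E_e\mapsto\lambda_e E_e$, with $\lambda_e$ a monomial in the cycle weights equal to $1$ away from $\mathcal Q_0$, conjugating the linear system of Lemma~\ref{lem:relations} for $\mathcal O$ into the one for $\hat{\mathcal O}$, and concludes by Theorem~\ref{theo:consist}.

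\textbf{Path reversal --- the main case.} Now $I'=(I\setminus\{i_0\})\cup\{j_0\}$: the vertex $b_{i_0}$ becomes a sink carrying $E[i_0]$, $b_{j_0}$ becomes a source, and the boundary measurement matrix $\hat A$ for $\hat{\mathcal O}$ is a different reduced row echelon form with the same row span $W_A$. The scheme is: (i) by Lemma~\ref{lemma:path}, whose proof I would base on the path-reversal flow bijection (symmetric difference with $\mathcal P$), the system $\hat E$ on the reoriented network coincides, edge by edge and up to nonzero real scalars $\alpha_e$, with a system $\tilde E$ living on the original network $(\mathcal N,\mathcal O,\mathfrak{l})$ but with suitably modified boundary data at the old sinks $b_j$, $j\in\bar I$; (ii) this modified data differs from the standard data $E[j]$ only by a rescaling together with a correction lying in $W_A$ --- the correction is governed by the vectors at the new sink $b_{i_0}$ and at $b_{j_0}$, which Proposition~\ref{prop:rays} builds out of $\hat A$ (e.g. $\hat E_{e_{j_0}}=\hat A[r']-E[j_0]$ with $\hat A[r']\in W_A$); (iii) since, by Lemma~\ref{lem:relations}, the edge vectors depend linearly on the boundary data and, by Theorem~\ref{theo:consist}, the system has a unique solution, this modification propagates to each $\tilde E_e$ as a rescaling of $E_e$ plus a vector of $W_A$, i.e. $\tilde E_e=\beta_e E_e+\sum_r d^r_e A[r]$; combining with (i) gives~\eqref{eq:orient}. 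Alternatively, and more directly, one applies~\eqref{eq:tal_formula} to $(\hat E_e)_j$, transports the flow data from $\hat{\mathcal O}$ to $\mathcal O$ by the path-reversal bijection, and observes that the flows of the reoriented network whose images run into the new sink $b_{i_0}$ --- which is a \emph{source} of $\mathcal O$, so has no legitimate edge flow to it in $\mathcal O$ --- are precisely the ones producing the correction, their aggregate contribution reorganizing into a multiple of the $j$-th entry of $A[r]$. The delicate point, and the main obstacle, is exactly Lemma~\ref{lemma:path} and this path-reversal bijection: unlike the orientation-independence of the boundary measurement matrix (a clean involution on pairs consisting of a boundary-to-boundary path and a conservative flow), here an edge flow may start at an arbitrary \emph{internal} edge $e$, and after reversing $\mathcal P$ new edge flows appear that terminate at $b_{i_0}$ with no counterpart among the edge flows from $e$ in $\mathcal O$; the entire content of the $\sum_r c^r_e A[r]$ term is the claim that, under the bijection, their contributions collect into a linear combination of the fixed rows $A[r]$ with edge-dependent coefficients, and keeping exact control of the winding and intersection signs throughout (via Definitions~\ref{def:winding_pair}--\ref{def:edge_vector} and the parity computations of Corollary~\ref{cor:bound_source}) is where the real work lies.
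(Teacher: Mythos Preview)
Your high--level reduction---decompose the change of orientation into elementary path and cycle reversals, prove the relation~\eqref{eq:orient} for each, and compose---is exactly what the paper does. The difference is in how the elementary cases (Lemmas~\ref{lemma:path} and~\ref{lemma:cycle}) are established.

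The paper does \emph{not} argue via flow bijections on the Talaska formula. Instead it writes down, by hand, the candidate vectors $\hat E_e$ as $(-1)^{\epsilon(e)}\tilde E_e$ or $(-1)^{\epsilon(e)}w_e^{-1}\tilde E_e$ (formulas~\eqref{eq:hat_E_P},~\eqref{eq:hat_E_Q}), with $\epsilon(e)$ a purely geometric index read off from the $\pm$ regions cut out by $\mathcal P_0$ (or $\mathcal Q_0$) and the gauge rays, and then checks in Appendix~\ref{app:orient} that these $\hat E_e$ satisfy the linear relations~\eqref{eq:lineq_biv}--\eqref{eq:lineq_white} at every internal vertex of $(\mathcal N,\hat{\mathcal O},\mathfrak l)$. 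Uniqueness (Theorem~\ref{theo:consist}) then forces $\hat E_e$ to be the edge vectors in the new orientation. For the path case, the intermediate system $\tilde E_e$ is built on $(\mathcal N,\mathcal O,\mathfrak l)$ with the modified boundary condition~\eqref{eq:orient1} chosen precisely so that $\tilde E_e-E_e$ is a scalar multiple of the single row $A[r_0]$; this is how the $W_A$--correction arises, not via Proposition~\ref{prop:rays}. Your ``diagonal gauge'' alternative for the cycle case is in fact the paper's method, with one correction: the gauge constants $\lambda_e$ are $\pm1$ (not $1$) on edges off $\mathcal Q_0$ and $\pm w_e^{-1}$ on edges of $\mathcal Q_0$, the sign being governed by whether the initial vertex of $e$ lies in the interior or exterior region. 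What your flow--bijection route would buy is a more conceptual explanation of why the correction lands in $W_A$; what the paper's route buys is that the sign bookkeeping, which you correctly identify as the crux, becomes a finite vertex--local verification (Lemmas~\ref{lem:wind}--\ref{lem:indip}) rather than a global tracking of windings and intersections through a bijection on flows starting at an internal edge.
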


\begin{proof}
It is sufficient to prove this statement in the case of elementary changes of orientation (see Lemmas~\ref{lemma:path} and \ref{lemma:cycle} below). Indeed, a generic change of orientation is represented by the composition of a finite set of such elementary transformations, and the resulting system of vectors does not depend on the sequence of transformations.
\end{proof}

Both in the case of an elementary change of orientation along a non-self-intersecting directed path $\mathcal P_0$ from a boundary source to a boundary sink or along a simple cycle ${\mathcal Q}_0$, we provide the explicit relation between the edge vectors in the two orientations.

Given an elementary change of orientation, we assign an index ${\gamma}(e)$ to each edge of the network in its \textbf{initial orientation}.

First, we mark all regions of the disk by either $+$ or $-$ using the following rule.
\begin{enumerate}
\item If $\mathcal P_0$ is a non-self-intersecting oriented path from a boundary source $i_0$ to a boundary sink $j_0$ in the initial orientation of ${\mathcal N}$, we divide the interior of the disk into a finite number of regions bounded by the gauge ray ${\mathfrak l}_{i_0}$ oriented 
upwards, the gauge ray ${\mathfrak l}_{j_0}$ oriented downwards, the path $\mathcal P_0$ oriented as in $({\mathcal N},\mathcal O,\mathfrak l)$ and the boundary of the disk divided into two arcs, each oriented from $j_0$ to $i_0$. Then we mark a region with a $+$ if its boundary is
oriented, otherwise mark it with $-$ (see Figure~\ref{fig:inv_symb}).
\item If $\mathcal Q_0$ is a closed oriented simple path, it  divides the interior of the disk into two regions: we mark the region external to $\mathcal Q_0$ with a $+$ and the internal region with $-$. 
\end{enumerate}

\begin{figure}%[H]
  \centering
	{\includegraphics[width=0.4\textwidth]{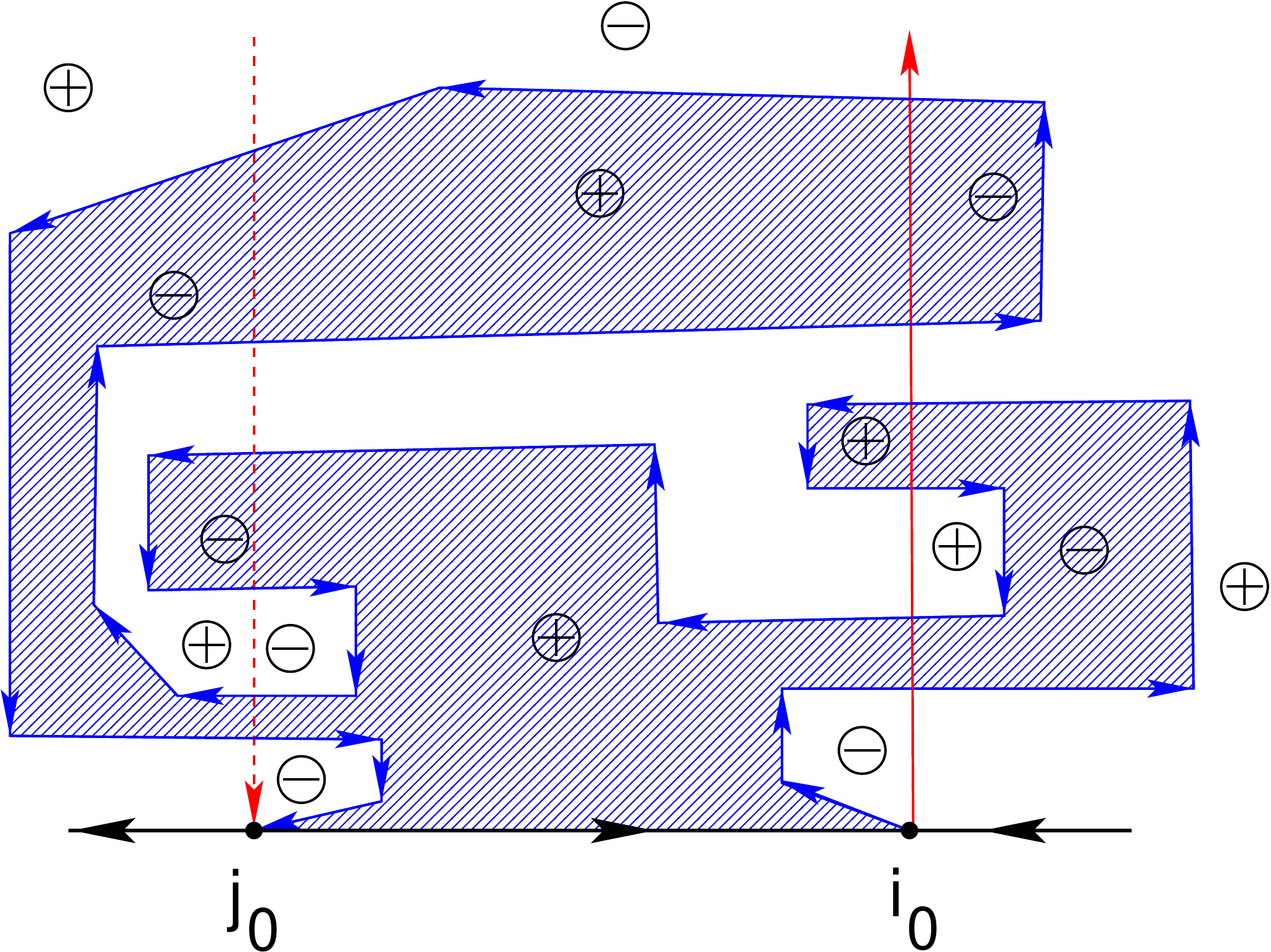}}
	\hfill
	{\includegraphics[width=0.4\textwidth]{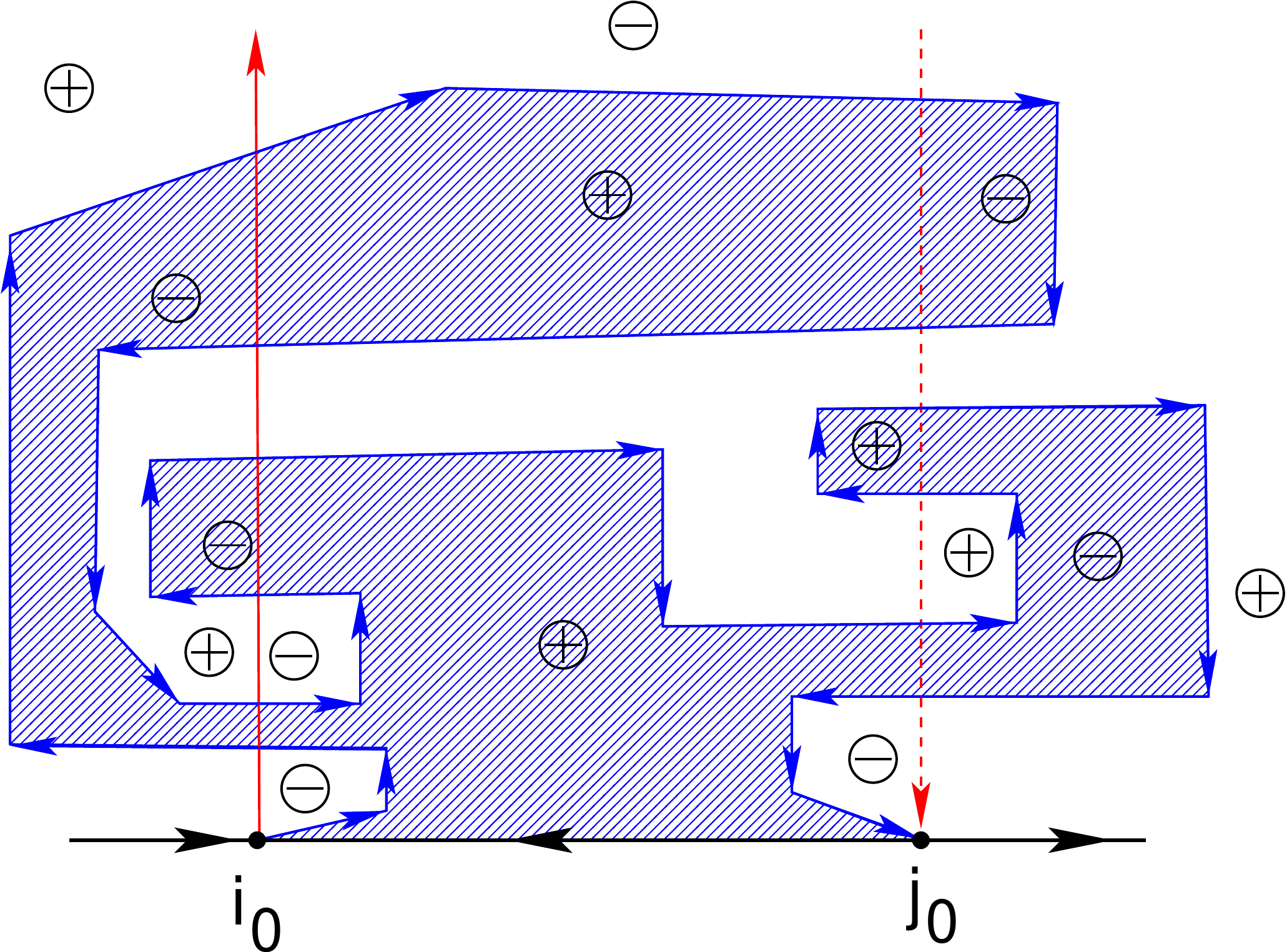}}
  \caption{\small{\sl We illustrate the marking of the regions. The marking is invariant with respect to changes of orientation.}\label{fig:inv_symb}}
\end{figure}

Let us remark that this marking remains invariant after the change of orientation.

If the edge $e\not \in \mathcal P_0$ (respectively $e\not \in \mathcal Q_0$), then we assign it an index $\gamma(e)$ as follows
\begin{equation}\label{eq:eps_not_path}
\gamma (e) =
\left\{ 
\begin{array}{ll} 
0 & \mbox{ if the starting vertex of } e \mbox{ belongs to a } + \mbox{ region, }\\
1 & \mbox{ if the starting vertex of } e \mbox{ belongs to a } - \mbox{ region, }\\
\end{array}
\right.
\end{equation}
where, in case the initial vertex of $e$ belongs to $\mathcal P_0$ or $\mathcal Q_0$, we make an infinitesimal shift of the starting vertex in the direction of $e$ before assigning the edge to a region. 

If the edge $e\in \mathcal P_0$ (respectively $e\in \mathcal Q_0$), we assign it the index 
\begin{equation}\label{eq:eps_on_path}
\gamma (e) = \gamma_1 (e) + \gamma_2 (e) +\mbox{int}(e),
\end{equation}
using the initial orientation $\mathcal O$ as follows:
\begin{enumerate}
\item We look at the region to the left and near the ending point of $e$, and assign index 
\[
\gamma_1 (e) =
\left\{ 
\begin{array}{ll} 
0 & \mbox{ if the region is marked with } + ,\\
1 & \mbox{ if the region is marked with } - ;\\
\end{array}
\right.
\]
\item We consider the ordered pair $(e, \mathfrak{l})$ and assign index 
\[
\gamma_2 (e) = \frac{1-s(e,\mathfrak{l})}{2}
\]
with $s(\cdot,\cdot)$ as in (\ref{eq:def_s}).
\end{enumerate}

It is easy to check that $\gamma(e)$ does not change after the change of orientation.

\medskip

If the change of orientation is ruled by $\mathcal P_0$, we use a two-steps proof:
\begin{enumerate}
\item We conveniently change the boundary conditions at the boundary sinks in the initial orientation of the
network $({\mathcal N},\mathcal O,\mathfrak l)$, we compute the system of vectors ${\tilde E}_e$ satisfying these new boundary conditions and we give explicit relations between the two systems of vectors $E_e$ and  ${\tilde E}_e$ on $({\mathcal N},\mathcal O,\mathfrak l)$;
\item Then, we show that the system of vectors ${\hat E}_e$ in (\ref{eq:hat_E_P}), defined on $({\mathcal N},{\hat {\mathcal O}},\mathfrak l)$ coincides with the system ${\tilde E}_e$ up to non-zero multiplicative factors.
\end{enumerate}

\begin{lemma}\label{lemma:path}\textbf{The effect of a change of orientation along a non self--intersecting path from a boundary source to a boundary sink.}
Let $I = \{ 1\le i_1 < i_2 < \cdots < i_k\le n\}$ and $\bar I = \{ 1\le j_1 < j_2 < \cdots < j_{n-k}\le n\}$ respectively be the pivot and non--pivot indices in the representative RREF matrix $A$ associated to $({\mathcal N},\mathcal O,\mathfrak l)$. Assume that all the edges at the boundary vertices have unit weight and that no gauge ray intersects such edges in the initial orientation. 
Assume that we change the orientation  along a non-self-intersecting oriented path $\mathcal P_0$ from a boundary source $i_0$ to a 
boundary sink $j_0$. Let $E_e$ and $\tilde E_e$ be the systems of vectors on $({\mathcal N},\mathcal O,\mathfrak l)$ corresponding to the following choices of boundary conditions at edges $e_j$ ending at the boundary sinks $b_j$, $j\in \bar I$:
\begin{equation}
\label{eq:orient1}
E_{e_{j}}=E_{j}, \quad\quad\quad\quad
\tilde E_{e_{j}}= \left\{ \begin{array}{ll} E_{j} & \mbox{ if } j\not = j_0;\\
E_{j_0}-\frac{1}{A^{r_0}_{j_0}} A[r_0], &\mbox{ if } j = j_0,
\end{array}
\right.
\end{equation}
where $E_{j}$ is the $j$--th vector of the canonical basis, whereas $A[r_0]$ is the row of the matrix $A$ associated to the source $i_0$. Then the following system of vectors $\hat E_e$,  $e\in {\mathcal N}$, 
\begin{equation}\label{eq:hat_E_P}
{\hat E}_e = \left\{ \begin{array}{ll}
 (-1)^{\gamma(e)} {\tilde E}_e, & \mbox{ if } e\not \in \mathcal P_0, \mbox{ with } \gamma(e) \mbox{ as in (\ref{eq:eps_not_path})},\\
\displaystyle \frac{(-1)^{\gamma(e)}}{w_e} {\tilde E}_e, & \mbox{ if } e\in \mathcal P_0, \mbox{ with } \gamma(e) \mbox{ as in (\ref{eq:eps_on_path})},
\end{array}\right.
\end{equation}
is the system of vectors on the network $({\mathcal N},{\hat {\mathcal O}},\mathfrak l)$ satisfying the boundary conditions
\begin{equation}
\label{eq:orient2}
\hat E_{e_{j}}= \left\{ \begin{array}{ll} (-1)^{\mbox{int} (e_j)^{\prime}} E_{j} & \mbox{ if } j\in \bar I \backslash \{ j_0\}\\
E_{i_0}, &\mbox{ if } j = i_0,
\end{array}
\right.
\end{equation}
where $\mbox{int}(e)^{\prime}$ is the number of intersections of the gauge ray $\mathfrak{l}_{j_0}$  with $e$.
\end{lemma}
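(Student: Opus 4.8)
The strategy is to verify directly that the vectors $\hat E_e$ defined by (\ref{eq:hat_E_P}) satisfy the linear relations (\ref{eq:lineq_biv})--(\ref{eq:lineq_white}) at all internal vertices of $({\mathcal N},{\hat {\mathcal O}},\mathfrak l)$ together with the boundary conditions (\ref{eq:orient2}), and then invoke the uniqueness Theorem \ref{theo:consist} to conclude that $\hat E_e$ is \emph{the} system of edge vectors for the re-oriented network. The first preparatory step is to understand the system $\tilde E_e$: since it solves the same relations as $E_e$ on $({\mathcal N},\mathcal O,\mathfrak l)$ but with modified boundary data, linearity gives $\tilde E_e = E_e + \lambda_e A[r_0]$ for some scalar $\lambda_e$ depending only on the edge $e$ (because $A[r_0]$ is itself the solution with the canonical-basis data at the boundary sinks, up to the summand $E_{i_0}$ at the source), and one checks that $\lambda_e$ is, up to normalization $-1/A^{r_0}_{j_0}$, precisely the $j_0$-th component of $E_e$, i.e. the weighted signed sum over directed paths from $e$ to $b_{j_0}$. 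This identifies $\tilde E_e$ as the edge-vector system that ``forgets'' the sink $b_{j_0}$.

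\textbf{Key steps.} After this, I would proceed vertex by vertex. For a vertex $V$ not lying on $\mathcal P_0$, all incident edges keep their orientation and their weights, and by construction all their $\epsilon$-indices (\ref{eq:eps_not_path}) are determined by the single region containing $V$ (with the infinitesimal-shift convention only mattering for edges whose initial vertex is on $\mathcal P_0$); hence the relation at $V$ among the $\tilde E$'s is multiplied through by a common sign $(-1)^{\epsilon}$ and is preserved. For a vertex $V$ on $\mathcal P_0$, exactly two of its incident edges lie on $\mathcal P_0$ (an incoming $e_{\mathrm{in}}$ and an outgoing $e_{\mathrm{out}}$ in the old orientation, which swap roles in $\hat{\mathcal O}$) and the weight of at least one changes to its reciprocal; here the three-part index (\ref{eq:eps_on_path}) is engineered precisely so that the sign bookkeeping matches. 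The combinatorial heart is the identity relating the winding $\mbox{wind}(e_k,e_{k+1})$ in the reversed orientation to the old one together with the region markings: reversing a single edge on $\mathcal P_0$ changes the ``$+$/$-$'' type of the two regions it bounds, and this is exactly compensated by $\epsilon_1$, while $\epsilon_2$ absorbs the change in $s(e,\mathfrak l)$ when $e$ reverses and $\epsilon_3$ tracks the parity of gauge-ray crossings which flips because the path direction through $e$ flips. I would do one representative case of each type of trivalent vertex on $\mathcal P_0$ (white versus black, and according to which of the two incident $\mathcal P_0$-edges is the ``$e_1$'' in (\ref{eq:lineq_black})--(\ref{eq:lineq_white})), leaving the remaining cases as symmetric; the endpoints $i_0$ (now a sink) and $j_0$ (now a source) require checking that (\ref{eq:orient2}) holds, which follows from the normalization $-1/A^{r_0}_{j_0}$ in (\ref{eq:orient1}) together with the fact that the old source edge at $i_0$ had unit weight and no gauge-ray intersections.

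\textbf{Main obstacle.} The routine part is checking that signs cancel at vertices off $\mathcal P_0$; the genuinely delicate part is the local sign ledger at a vertex $\emph{on}$ $\mathcal P_0$, where one must simultaneously track: (i) which edge weight gets inverted, (ii) how $\mbox{wind}(\cdot,\cdot)$ transforms for each of the (at most three) ordered pairs of edges at the vertex when one edge reverses, (iii) how the region-marking $\pm$ changes, and (iv) the intersection-number parity. The claim is that $\epsilon_1+\epsilon_2+\epsilon_3$ is the unique combination making all four contributions consistent for \emph{every} incident edge of \emph{every} vertex simultaneously — and verifying this global consistency (rather than just at one isolated vertex) is where care is needed. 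I expect the cleanest route is to phrase it as: the quantity $\epsilon(e)$ is a coboundary, i.e. $\epsilon(e_{\mathrm{out}}) - \epsilon(e_{\mathrm{in}})$ at each vertex equals the change in the relevant sign exponent, so the global relations transform covariantly; this is the step I would write out in full, deferring the others to Appendix \ref{app:orient} as the excerpt already announces.
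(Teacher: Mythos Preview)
Your overall strategy---verify that $\hat E_e$ satisfies the vertex relations in $(\mathcal N,\hat{\mathcal O},\mathfrak l)$ together with the boundary conditions (\ref{eq:orient2}), then invoke Theorem~\ref{theo:consist}---is exactly the paper's approach, and your identification of $\tilde E_e - E_e$ as a scalar multiple of $A[r_0]$ (with the scalar equal to $-(E_e)_{j_0}/A^{r_0}_{j_0}$) together with the boundary check at $i_0$ matches the paper's argument.

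However, your treatment of vertices $V$ \emph{off} $\mathcal P_0$ contains a genuine slip. You claim that all incident edges have their $\epsilon$-index determined by the region of $V$, so that the relation at $V$ is ``multiplied through by a common sign $(-1)^{\epsilon}$.'' This is wrong on two counts. First, for an \emph{incoming} edge $e_3$ at $V$, the index $\epsilon(e_3)$ in (\ref{eq:eps_not_path}) is governed by the region of the \emph{starting} vertex of $e_3$, which is not $V$ and may lie in a differently marked region. Second, you have overlooked that the intersection number $\mbox{int}(e_3)$ appearing in (\ref{eq:lineq_biv})--(\ref{eq:lineq_white}) itself changes between the two orientations, since the gauge ray $\mathfrak l_{i_0}$ is removed and $\mathfrak l_{j_0}$ is added. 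The paper resolves this (Lemma~\ref{lemma:equiv_rel_1}) by showing that these two effects compensate exactly: $\mbox{int}(e_3)-\widehat{\mbox{int}}(e_3)\equiv \epsilon(e_1)-\epsilon(e_3)\pmod 2$, which holds because the starting vertices of the outgoing edge $e_1$ and of $e_3$ lie in the same $\pm$-region if and only if $e_3$ meets $\mathfrak l_{i_0}$ and $\mathfrak l_{j_0}$ a total even number of times. So the off-path case is not a common-sign multiplication; it is a cancellation between two nontrivial parity changes. For the on-path vertices your outline is in line with the paper (Lemmas~\ref{lem:0.1}, \ref{lemma:equiv_rel}, \ref{lem:indip}), where the explicit sign discrepancy is reduced to a formula independent of the gauge direction and then verified case by case in a fixed gauge.
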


\begin{remark}
To simplify the proof of Lemma \ref{lemma:path}, we assume without loss of generality that the edges at boundary vertices have unit weight and that no gauge ray intersects them in the initial orientation. This hypothesis may be always fulfilled modifying the initial network using the weight gauge freedom (Remark~\ref{rem:gauge_weight}) and adding, if necessary, bivalent vertices next to the boundary vertices using move (M3). 
In Sections~\ref{sec:different_gauge} and \ref{sec:moves_reduc}, we show that the effect of these transformations amounts to a well--defined non zero multiplicative constant for the edge vectors. Therefore, the statement in Lemma \ref{lemma:path} holds in the general case with obvious minor modifications in the boundary conditions for the three systems of vectors $E_e$, $\tilde E_e$ and $\hat E_e$.
\end{remark}

\begin{proof}
The system of vectors  $\tilde E_e - E_e$ is the solution to the system of linear relations on 
$({\mathcal N},\mathcal O,\mathfrak l)$ for the following boundary conditions: 
\[
\tilde E_{e_{j}}-E_{e_{j}} = \left\{ \begin{array}{ll} 0 & \mbox{ if } j\not = j_0;\\
-\frac{1}{A^{r_0}_{j_0}} A[r_0], &\mbox{ if } j = j_0.
\end{array}
\right.
\]
Then at all edges $e\in {\mathcal N}$ the difference $\tilde E_{e}-E_{e}$ is proportional to $A[r_0]$. In particular $\tilde E_{e_{i_0}} = -E_{i_0}$, since, by construction, $E_{e_{i_0}} = A[r_0] - E_{i_0}$. Therefore, each vector $\hat E_e$ in (\ref{eq:hat_E_P}) is a linear combination of the vector $E_e$ and $A[r_0]$.

Next  we check that the system of edge vectors ${\hat E}_e$ defined by  (\ref{eq:hat_E_P})  satisfies the boundary conditions for the transformed network (\ref{eq:orient2}). First of all, any given boundary sink edge $e_j$, $j\not = j_0, i_0$, ends in a $+$ region, whereas it starts in a $-$ region only if it intersects $\mathfrak{l}_{j_0}$. The latter is exactly the unique case in which $\prec E_{e_j}, \hat E_{e_j} \succ =\prec \tilde E_{e_j}, \hat E_{e_j} \succ =-1$. 

The edge $e_{i_0}$ belongs to the path $\mathcal P_0$ and it does not intersect any gauge ray in both orientations of the network. $e_{i_0}$ has a $+$ region to the left and the pair $(e,\mathfrak{l})$ is negatively oriented or it has a $-$ region to the left and the pair $(e,\mathfrak{l})$ is positively oriented (see Figure \ref{fig:edgei_0}). Therefore
\[
\gamma (e_{i_0}) = \gamma_1 (e_{i_0}) + \gamma_2 (e_{i_0}) =1.
\]
Finally $\hat E_{e_{i_0}} = (-1)^{\gamma (e_{i_0})} \tilde E_{e_{i_0}} = E_{i_0}$ since $\tilde E_{e_{i_0}} = - E_{i_0}$. 

To complete the proof we have to check that the system $\hat E_e$ defined by (\ref{eq:hat_E_P})  solves the linear system on $({\mathcal N},{\hat {\mathcal O}},\mathfrak l)$ at each internal vertex of the network. We prove it in Appendix \ref{app:orient}.

\end{proof}

\begin{example}\label{example:null_orient}
We illustrate Lemma \ref{lemma:path} for the Example \ref{example:null_orient} in Figure \ref{fig:talaska_orientation}. Let us compute the vectors $\tilde E$ using the orientation in Figure \ref{fig:talaska_orientation} [left]
and boundary condition ${\tilde E}_{u_1} = E_{u_1} - \frac{1+p+q}{2p+1} A[1] = (0, -\frac{1+p+q}{2p+1})$. Then, we immediately get
\[
\resizebox{\textwidth}{!}{$ 
{\tilde E}_{u} ={\tilde E}_{v} = \frac{q-p}{1+p+q } {\tilde E}_{u_1} = \left( 0   , \frac{p-q}{2p+1 }\right),
\quad {\tilde E}_{u_p} = \frac{p(1+2q)}{1+p+q } {\tilde E}_{u_1} = \left( 0   , -\frac{p(1+2q)}{2p+1 }\right), \quad {\tilde E}_{u_2} =  \frac{1+2p}{1+p+q } {\tilde E}_{u_1} = \left( 0   , -1\right).
$}
\]
Applying (\ref{eq:hat_E_P}), we get ${\hat E}_{-u_2} = - {\tilde E}_{u_2} =\left( 0   ,1 \right)$ and
\[
\resizebox{\textwidth}{!}{$ 
{\hat E}_{-u} = {\tilde E}_u = \left( 0   , \frac{p-q}{2p+1 }\right), \quad {\hat E}_{v} = -{\tilde E}_v = \left( 0   , \frac{q-p}{2p+1 }\right), \quad {\hat E}_{-u_p} = -	\frac{1}{p}{\tilde E}_{u_p} = \left( 0   , \frac{1+2q}{2p+1 }\right), \quad
{\hat E}_{-u_1} = {\tilde E}_{u_1},
$}
\]
since  $\gamma(u)=\gamma(u_1)=1$ and  $\gamma(v)=\gamma(u_p)=\gamma(u_2)=-1$. The latter vectors coincide with those computed directly using Theorem \ref{theo:null} in the new orientation (Figure \ref{fig:talaska_orientation}[right]): there are two untrivial conservative flows of weight $1$ and $p^{-1}$ so that
\[
\resizebox{\textwidth}{!}{$ 
{\hat E}_{-u} = \left( 0   , \frac{p}{2p+1} \Big( 1- \frac{q}{p}\Big)\right), \quad {\hat E}_{v} = \left( 0   , \frac{p}{2p+1} \Big( \frac{q}{p}-1\Big)\right), \quad {\hat E}_{-u_p} =  \left( 0   , \frac{p}{2p+1} \Big( \frac{2q+1}{p}\Big)\right), \quad
{\hat E}_{-u_1} =\left( 0   , \frac{p}{2p+1} \Big( 1+\frac{q+1}{p}\Big)\right).
$}
\]
\end{example}

\begin{figure}
  \centering{\includegraphics[width=0.4\textwidth]{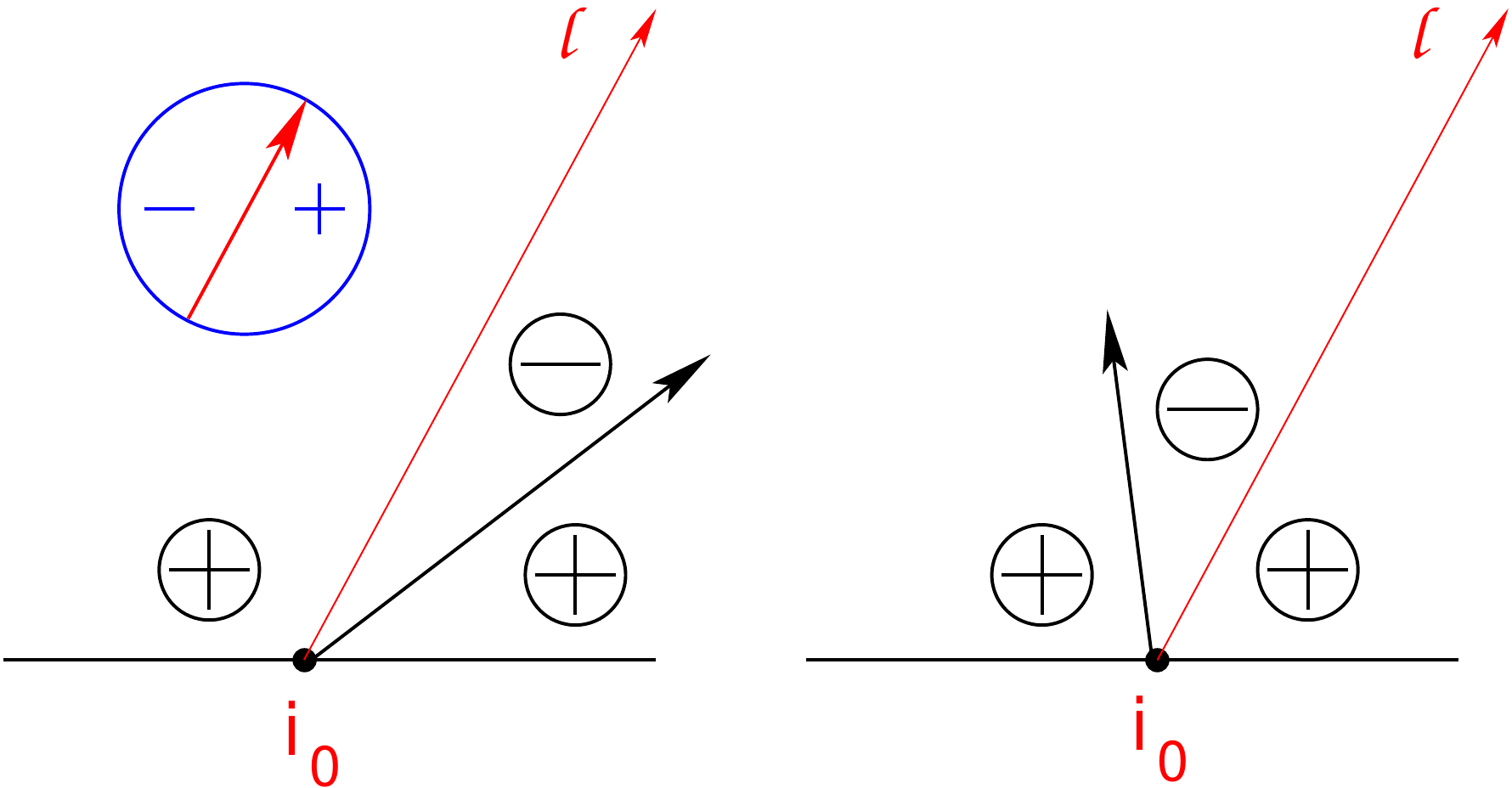}}
  \caption{\small{\sl The rule of the sign at $e_{i_0}$.}}\label{fig:edgei_0}
\end{figure}

The effect of a change of orientation along a closed simple path $\mathcal Q_0$ on the system of edge vectors follows along similar lines as above. 

 \begin{lemma}\label{lemma:cycle}\textbf{The effect of a change of orientation along a simple closed cycle.}
Let $I = \{ 1\le i_1 < i_2 < \cdots < i_k\le n\}$ be the pivot indices for $({\mathcal N},\mathcal O,\mathfrak l)$.
Let $E_e$ be the system of vectors on $({\mathcal N},\mathcal O,\mathfrak l)$ satisfying the boundary conditions $E_j$ at the boundary sinks $j\in \bar I$. Assume that we change the orientation along a simple closed cycle $\mathcal Q_0$ and let $({\mathcal N},{\hat {\mathcal O}},\mathfrak l)$ be the newly oriented network. 
Then, the system of edge vectors 
\begin{equation}\label{eq:hat_E_Q}
{\hat E}_e = \left\{ \begin{array}{ll}
 (-1)^{\gamma(e)} E_e, & \mbox{ if } e\not \in \mathcal Q_0, \mbox{ with } \gamma(e) \mbox{ as in (\ref{eq:eps_not_path})},\\
\displaystyle \frac{(-1)^{\gamma(e)}}{w_e} E_e, & \mbox{ if } e\in \mathcal Q_0, \mbox{ with } \gamma(e) \mbox{ as in (\ref{eq:eps_on_path})}.
\end{array}\right.
\end{equation}
is the system of vectors on the network  $({\mathcal N},{\hat {\mathcal O}},\mathfrak l)$ satisfying the same boundary conditions $E_j$ at the boundary sinks $j\in \bar I$.
\end{lemma}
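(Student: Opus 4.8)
The plan is to follow the same scheme as in Theorem~\ref{theo:orient} and Lemma~\ref{lemma:path}: by the uniqueness part of Theorem~\ref{theo:consist}, the system of edge vectors on $({\mathcal N},{\hat{\mathcal O}},\mathfrak l)$ is the \emph{only} solution of the linear relations (\ref{eq:lineq_biv})--(\ref{eq:lineq_white}) with a prescribed datum at the boundary sinks, so it suffices to verify that the vectors $\hat E_e$ defined in (\ref{eq:hat_E_Q}) (a) solve those relations at every internal vertex of $({\mathcal N},{\hat{\mathcal O}},\mathfrak l)$, and (b) satisfy $\hat E_{e_j}=E_j$ at the boundary sinks $b_j$, $j\in\bar I$. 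Since a change of orientation along a closed cycle ${\mathcal Q}_0$ does not change which boundary vertices are sources and which are sinks, and does not touch the edges incident to the boundary, the boundary datum is literally unchanged; this is precisely the case $c^r_e=0$, $\alpha_e=(-1)^{\epsilon(e)}$ (or $(-1)^{\epsilon(e)}/w_e\ne 0$) of (\ref{eq:orient}), so no correction by the rows $A[r]$ is needed, and $\hat{\mathcal O}$ is automatically a perfect orientation since reversing a directed cycle preserves the in/out count at every vertex.

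For (b): ${\mathcal Q}_0$ is a simple closed walk made of edges of the planar graph lying strictly inside the disk, hence it encloses no boundary vertex and the component of the disk adjacent to its boundary is exactly the region exterior to ${\mathcal Q}_0$, which by our convention carries the mark $+$. An edge $e_j$ at a boundary sink does not lie on ${\mathcal Q}_0$, and since edges of a planar graph do not cross, the interior of $e_j$ lies entirely in that exterior region; in particular the infinitesimally shifted starting vertex of $e_j$ is in a $+$ region, so $\epsilon(e_j)=0$ by (\ref{eq:eps_not_path}) and $\hat E_{e_j}=E_{e_j}=E_j$.

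For (a): fix an internal vertex $V$. If $V\notin{\mathcal Q}_0$, then none of the edges at $V$ belongs to ${\mathcal Q}_0$ (an edge of ${\mathcal Q}_0$ would have both endpoints on ${\mathcal Q}_0$), so their weights are unchanged and the winding numbers occurring in (\ref{eq:lineq_biv})--(\ref{eq:lineq_white}) are unchanged, the gauge ray direction $\mathfrak l$ being fixed; moreover, since each such edge is disjoint from ${\mathcal Q}_0$ in its interior and planar edges do not cross, the interior of each edge at $V$ lies entirely in the region containing $V$, so the index $\epsilon(\cdot)$ of (\ref{eq:eps_not_path}) — which reads the region of a point in the interior of the edge — takes one and the same value for all the edges appearing in the relation at $V$. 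Hence the relation at $V$ in the new orientation is obtained from the old one by multiplying both sides by that common sign, and it holds. If $V\in{\mathcal Q}_0$, exactly two of the edges at $V$ lie on ${\mathcal Q}_0$: these reverse orientation and have their weights replaced by reciprocals, while the third edge (when $V$ is trivalent) keeps both orientation and weight and falls under the off-cycle rule (\ref{eq:eps_not_path}). One then checks, case by case on the colour of $V$, that the decomposition $\epsilon=\epsilon_1+\epsilon_2+\epsilon_3$ of (\ref{eq:eps_on_path}), together with the local winding rule of Definition~\ref{def:winding_pair}, exactly absorbs the sign produced by the two cycle-edges swapping their roles of incoming and outgoing; this is the analogue of the computation performed for Lemma~\ref{lemma:path}, and I would carry it out in Appendix~\ref{app:orient}.

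The routine part is (b) together with the $V\notin{\mathcal Q}_0$ case of (a); the genuine obstacle is the $V\in{\mathcal Q}_0$ case, where one must keep precise track of how $\mbox{wind}(e_k,e_{k+1})$ in (\ref{eq:lineq_black})--(\ref{eq:lineq_white}) changes when, at a cycle vertex, the two cycle-edges exchange orientation, and then match that change against the region mark $\epsilon_1$, the factor $\epsilon_2=\tfrac{1-s(e,\mathfrak l)}{2}$ and the intersection count $\epsilon_3=\mbox{ind}_{\mathcal O}(e)$. Once this local sign bookkeeping is settled, the Lemma follows at once from the uniqueness statement of Theorem~\ref{theo:consist}.
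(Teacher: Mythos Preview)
Your proposal is correct and follows essentially the same strategy as the paper: invoke the uniqueness of Theorem~\ref{theo:consist}, check the boundary conditions (here unchanged since $\mathcal Q_0$ avoids the boundary and the exterior region is marked $+$), and verify the vertex relations separately for $V\notin\mathcal Q_0$ and $V\in\mathcal Q_0$, deferring the latter to the local sign computations of Appendix~\ref{app:orient} (Lemmas~\ref{lem:0.1}, \ref{lemma:equiv_rel}, \ref{lem:indip}). One small remark: in your $V\notin\mathcal Q_0$ argument you describe $\epsilon(\cdot)$ as reading ``a point in the interior of the edge'', whereas (\ref{eq:eps_not_path}) actually reads the region of the \emph{starting} vertex (infinitesimally shifted along $e$ if that vertex lies on $\mathcal Q_0$); your planarity argument still gives the right conclusion, and the cycle case is genuinely simpler than Lemma~\ref{lemma:path} because the set of gauge rays is unchanged, so $\mbox{int}(e)=\widehat{\mbox{int}}(e)$ for every edge off the cycle and Lemma~\ref{lemma:equiv_rel_1} reduces to the equality of the $\epsilon$'s you assert.
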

The proof that the system ${\hat E}_e$ satisfy the linear relations for the transformed network is presented in Appendix~\ref{app:orient}.

\subsection{Dependence of edge vectors on weight and vertex gauge freedoms}\label{sec:different_gauge}

\begin{figure}
  \centering{\includegraphics[width=0.48\textwidth]{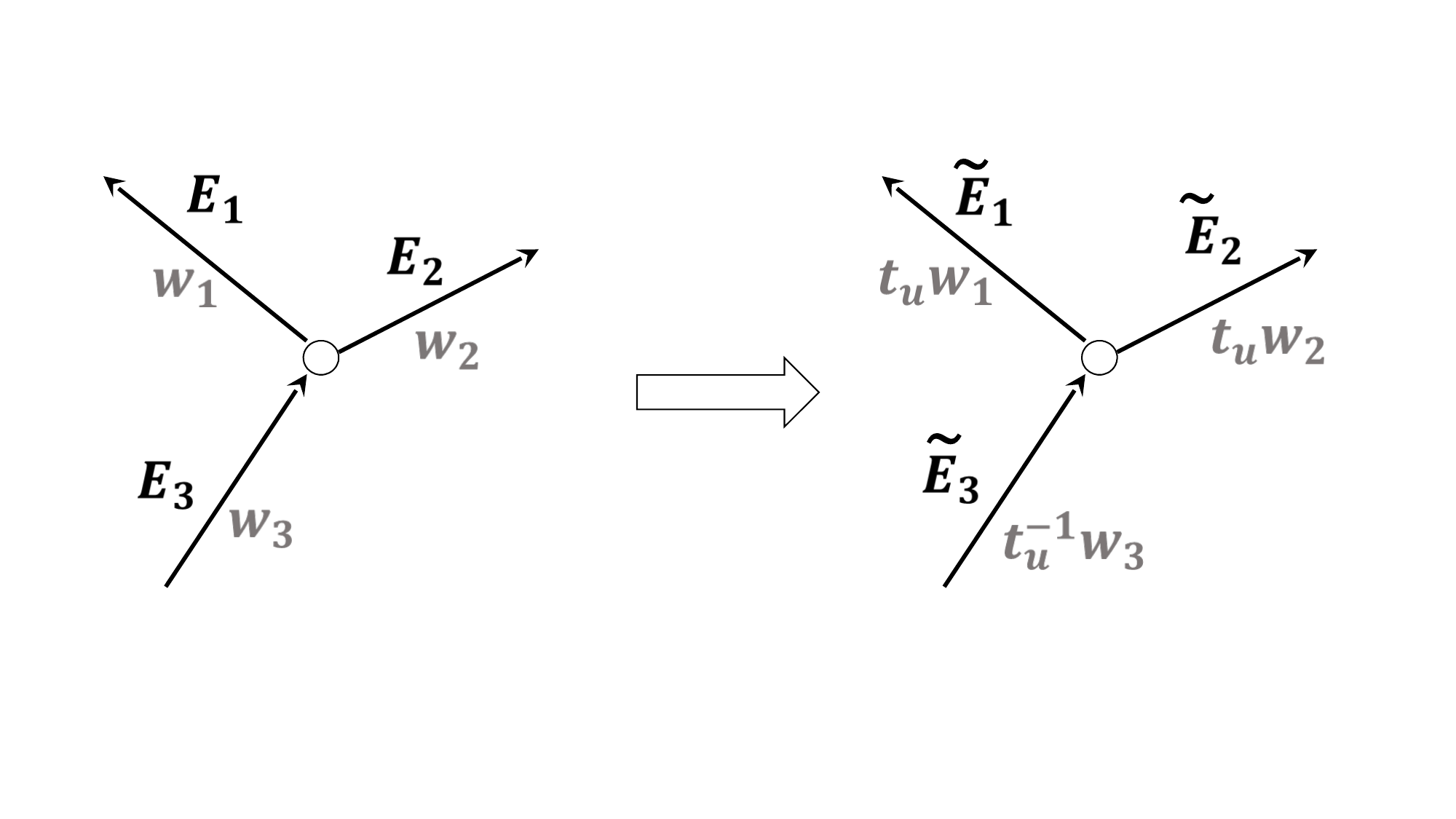}
	\hfill
	\includegraphics[width=0.48\textwidth]{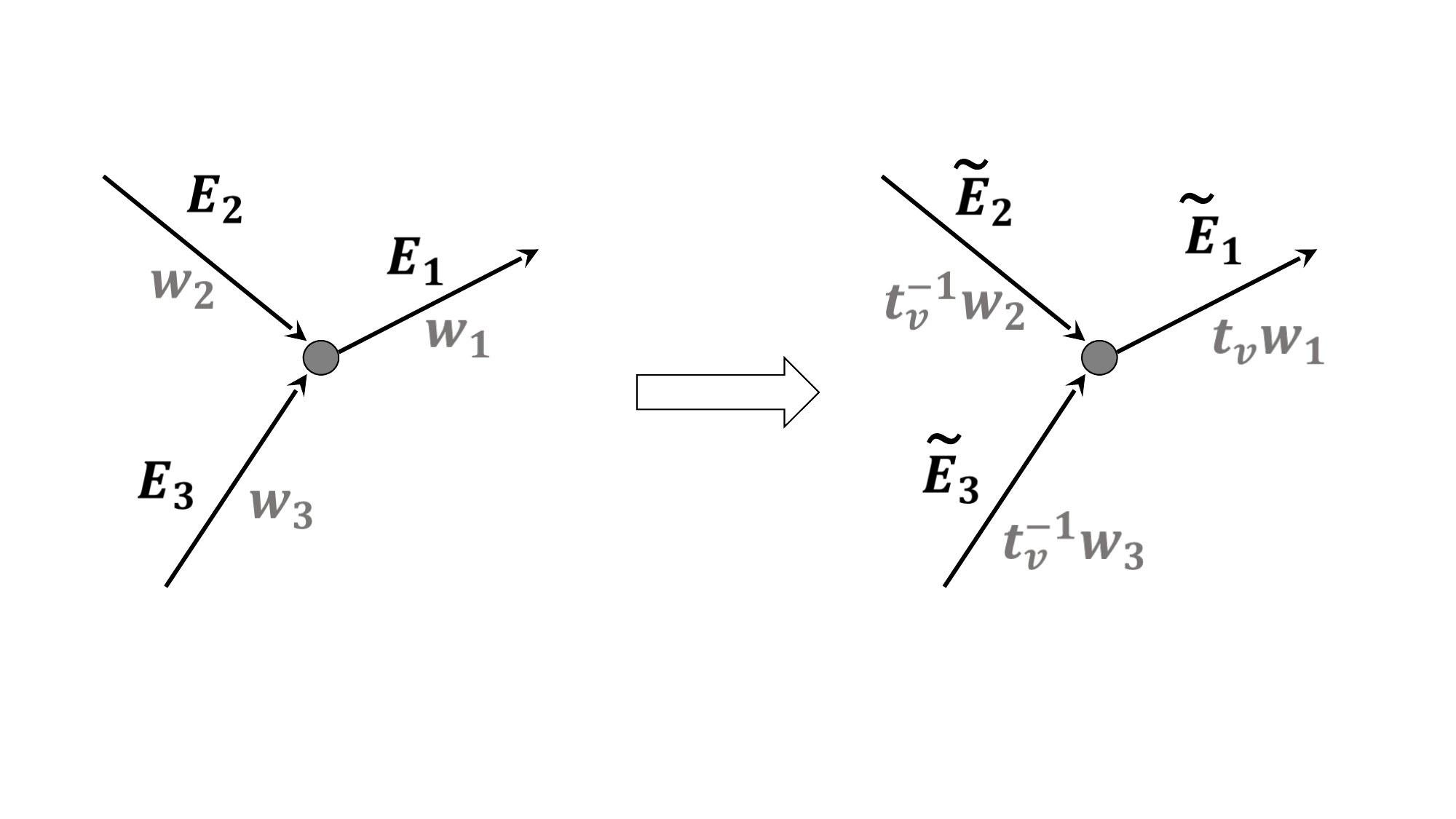}}
	\vspace{-1.5 truecm}
  \caption{\small{\sl The effect of the weight gauge transformation at a white [left] and at a black [right] vertex on the edge vectors.}\label{fig:weight_gauge_vectors}}
\end{figure}

Next we discuss the effect of the weight gauge and vertex gauge feeedom on the system of edge vectors: in both cases it is just local.

For any given $[A]\in \S$, $\mathcal N$ denotes a network representing $[A]$ with graph $\mathcal G$ and edge weights $w_e$. There is a fundamental difference in the gauge freedom of assigning weights depending on whether or not the graph $\mathcal G$ is reduced \cite{Pos}.

\begin{remark}\label{rem:gauge_weight}\textbf{The weight gauge freedom \cite{Pos}.} Given a point $[A]\in\S$
and a planar directed graph ${\mathcal G}$ in the disk representing $\S$, then $[A]$
 is represented by infinitely many gauge equivalent systems of weights $w_e$ on the edges $e$ of ${\mathcal G}$. Indeed, if a positive number $t_V$ is assigned to each internal vertex $V$, whereas $t_{b_i}=1$ for each boundary vertex $b_i$, then the transformation on each directed edge $e=(U,V)$
\begin{equation}
\label{eq:gauge}
w_e\rightarrow w_e t_U \left(t_V\right)^{-1},
\end{equation}
transforms the given directed network into an equivalent one representing $[A]$. 
\end{remark}

\begin{remark}\label{rem:gauge_freedom}\textbf{The unreduced graph gauge freedom.} As it was pointed out in \cite{Pos}, for unreduced directed graphs there is no one-to-one correspondence between the orbits of the gauge weight action (\ref{eq:gauge}) and the points in the corresponding positroid cell. Since we do not consider graphs with components isolated from the boundary, this extra gauge freedom arises if we apply the creation of parallel edges and leafs (see Section~\ref{sec:moves_reduc}). In Section \ref{sec:null_vectors} we show that in contrast with gauge transformations of the weights (\ref{eq:gauge}), the unreduced graph gauge freedom affects the system of edge vectors untrivially.
\end{remark}

\begin{lemma}\label{lem:weight_gauge}\textbf{Dependence of edge vectors on the weight gauge}
Let $E_e$ be the edge vectors on the network $({\mathcal N}, {\mathcal O}, \mathfrak{l})$.
\begin{enumerate}
\item Let ${\tilde E}_e$ be the system of edge vectors on $(\tilde {\mathcal N}, {\mathcal O}, \mathfrak{l})$, where $\tilde {\mathcal N}$ is obtained from ${\mathcal N}$ applying the weight gauge transformation at a white trivalent vertex as in Figure \ref{fig:weight_gauge_vectors} [left]. Then 
\begin{equation}\label{eq:wg_vector_white}
{\tilde E}_e = \left\{ \begin{array}{ll} E_e, &\quad  \forall e\in \mathcal N, \;\; e\not = e_1,e_2,\\
t_u E_e &\quad \mbox{ if } e = e_1,e_2.
\end{array}
\right.
\end{equation}
\item Let ${\tilde E}_e$ be the system of edge vectors on $(\tilde {\mathcal N}, {\mathcal O}, \mathfrak{l})$, where $\tilde {\mathcal N}$ is obtained from ${\mathcal N}$ applying the weight gauge transformation at a black trivalent vertex as in Figure \ref{fig:weight_gauge_vectors} [right]. Then
\begin{equation}\label{eq:wg_vector_black}
{\tilde E}_e = \left\{ \begin{array}{ll} E_e, &\quad \forall e\in \mathcal N, \;\;  e\not = e_1,\\
t_v E_e &\quad \mbox{ if } e = e_1.
\end{array}
\right.
\end{equation}
\end{enumerate}
\end{lemma}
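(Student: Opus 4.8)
The plan is to deduce the statement from the path--sum definition (\ref{eq:sum}) together with the uniqueness result of Theorem \ref{theo:consist}. The single fact about the weight gauge (\ref{eq:gauge}) that is used is that, when $t_W=1$ for every vertex $W$ except the chosen trivalent vertex $V$ (equal to $u$ in the white case and to $v$ in the black case), the transformation rescales only the weights of the three edges incident to $V$, and changes neither the remaining edge weights nor any winding or intersection index. In particular the edges at the boundary vertices are untouched: by Definition \ref{def:graph} every boundary vertex is attached to a \emph{bivalent} white vertex, so no trivalent vertex is adjacent to the boundary, and the boundary data fed into Theorem \ref{theo:consist} is the same for $\mathcal N$ and $\tilde{\mathcal N}$.

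Next I would use the telescoping structure of path weights. Given a directed walk $\mathcal P=(f_1,\dots,f_m)$ from an edge $e$ to a boundary sink $b_j$, write $f_l=(U_l,U_{l+1})$, so that $U_1=V_e$ is the initial vertex of $e$ and $U_{m+1}=b_j$. Under (\ref{eq:gauge}) its weight transforms into
\[
w'(\mathcal P)=\prod_{l=1}^m w_{f_l}\,t_{U_l}\,t_{U_{l+1}}^{-1}=w(\mathcal P)\,\frac{t_{U_1}}{t_{U_{m+1}}}=w(\mathcal P)\,t_{V_e},
\]
since the $t$--factors telescope along consecutive edges (hence the cancellations are insensitive to how often the walk revisits a vertex) and $t_{b_j}=1$; here $t_{V_e}$ equals $t_u$ (resp. $t_v$) when $V_e=V$ and $1$ otherwise. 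As (\ref{eq:sum}) multiplies each $w(\mathcal P)$ by a sign built only from winding and intersection numbers, which do not change, summing over all walks from $e$ to $b_j$ gives $\tilde E_e=t_{V_e}E_e$. Identifying the edges with initial vertex $V$ finishes the computation: at a white trivalent vertex $u$ these are exactly the two outgoing edges $e_1,e_2$, which yields (\ref{eq:wg_vector_white}); at a black trivalent vertex $v$ the unique such edge is the outgoing edge $e_1$, which yields (\ref{eq:wg_vector_black}).

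To turn this into a clean argument in the presence of infinitely many walks I would avoid manipulating formal sums and instead check directly that the vectors prescribed by the right--hand sides of (\ref{eq:wg_vector_white})--(\ref{eq:wg_vector_black}) satisfy the relations of Lemma \ref{lem:relations} on $(\tilde{\mathcal N},\mathcal O,\mathfrak l)$ with the (unchanged) boundary conditions, and then quote the uniqueness in Theorem \ref{theo:consist}. Only the relations that carry a rescaled weight need inspection. In the white case: the relation (\ref{eq:lineq_white}) at $u$, where the factor $t_u$ supplied by $\tilde E_{e_1}=t_u E_{e_1}$ and $\tilde E_{e_2}=t_u E_{e_2}$ cancels the factor $t_u^{-1}$ in $\tilde w_3=w_3 t_u^{-1}$; and the relation among (\ref{eq:lineq_biv})--(\ref{eq:lineq_white}) at the endpoint of $e_1$ and of $e_2$, where $e_i$ occurs as an incoming edge and the same factor $t_u$ now multiplies both $\tilde E_{e_i}$ and $\tilde w_{e_i}$ and again cancels. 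Every other relation is literally unchanged, so the prescribed vectors are the edge vectors of $\tilde{\mathcal N}$. The black case is identical with incoming and outgoing edges exchanged.

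The argument is bookkeeping rather than anything deep, so there is no real obstacle; the two points that deserve a moment of care are the validity of the telescoping identity for walks that pass through a vertex more than once — which holds because the cancellation happens along consecutive edges, not along distinct vertices — and the remark that trivalent vertices never touch the boundary, which is exactly what guarantees that the boundary conditions of Theorem \ref{theo:consist} survive the gauge transformation.
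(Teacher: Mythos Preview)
Your argument is correct and is precisely the kind of straightforward verification the paper has in mind; the paper itself omits the proof entirely, stating only that it is straightforward. Both of your routes --- the telescoping of the path weights in (\ref{eq:sum}) and the direct check of the relations in Lemma~\ref{lem:relations} followed by the uniqueness in Theorem~\ref{theo:consist} --- are valid and natural, and your observations about the telescoping surviving repeated visits and about trivalent vertices being separated from the boundary by the bivalent vertices of Definition~\ref{def:graph} are exactly the small points one should check.
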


The proof is straightforward and is omitted.
\begin{figure}
  \centering{\includegraphics[width=0.6\textwidth]{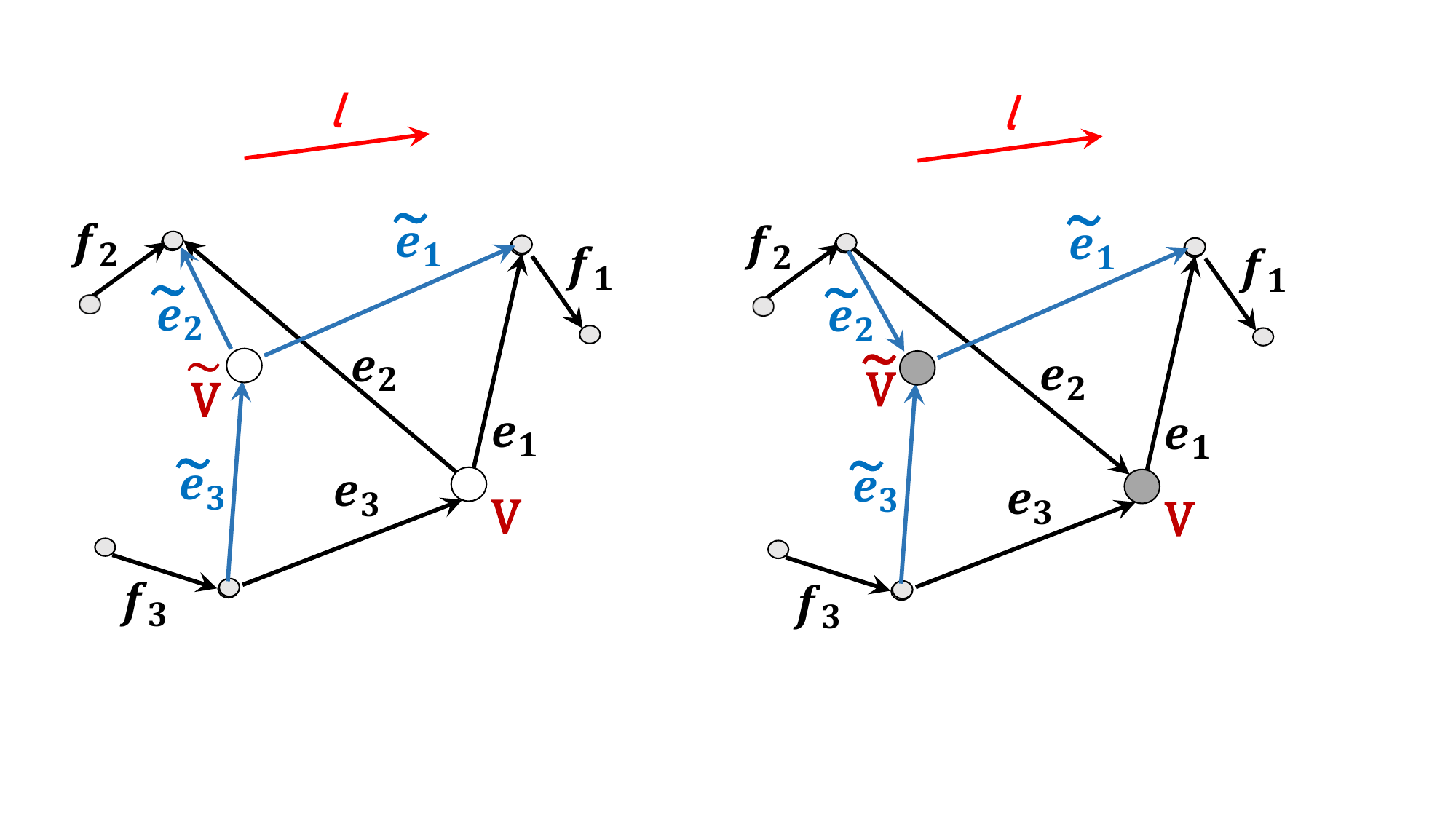}}
	\vspace{-.9 truecm} 
  \caption{\small{\sl The vertex gauge transformation at a white [left] and at a black [right] vertex consists in moving an internal vertex from position $V$ to $\tilde V$.}\label{fig:vertex_gauge_vectors}}
\end{figure}

\begin{remark}\label{rem:gauge_vertices}\textbf{Vertex gauge freedom of the graph} The boundary map is the same if we move vertices in $\mathcal G$ without changing their relative positions in the graph. Such transformation acts on edges via rotations, translations and contractions/dilations of their lenghts. Any such transformation may be decomposed in a sequence of elementary transformations in which a single vertex is moved whereas all other vertices remain fixed (see also Figure \ref{fig:vertex_gauge_vectors}).
\end{remark}

This transformation effects only the three edge vectors incident at the moving vertex and the latter may only change of sign. 

\begin{lemma}\label{lem:vertex_gauge}\textbf{Dependence of edge vectors on the vertex gauge}
\begin{enumerate}
\item Let $E_e$ and ${\tilde E}_e$ respectively be the system of edge vectors on $({\mathcal N}, {\mathcal O}, \mathfrak{l})$ and on $({\tilde {\mathcal N}}, {\mathcal O}, \mathfrak{l})$, where ${\tilde {\mathcal N}}$ is obtained from ${\mathcal N}$ moving one internal white vertex where notations are as in Figure \ref{fig:vertex_gauge_vectors}[left]. Then ${\tilde E}_e =E_e$, for all $e\not = e_1,e_2,e_3$ and
\begin{equation}\label{eq:white_vertex_gauge}
\resizebox{\textwidth}{!}{$ 
{\tilde E}_{e_i} = (-1)^{\mbox{wind}({\tilde e}_i,f_i)- \mbox{wind}(e_i,f_i)+\mbox{int}({\tilde e}_i)-\mbox{int}(e_i)} E_{e_i}, \quad i=1,2,
\quad\quad
{\tilde E}_{e_3} = (-1)^{\mbox{wind}(f_3,{\tilde e}_3)-\mbox{wind}(f_3,e_3)  } E_{e_3};
$}
\end{equation}
\item Let $E_e$ and ${\tilde E}_e$ respectively be the system of edge vectors on $({\mathcal N}, {\mathcal O}, \mathfrak{l})$ and on $({\tilde {\mathcal N}}, {\mathcal O}, \mathfrak{l})$, where ${\tilde {\mathcal N}}$ is obtained from ${\mathcal N}$ moving one internal black vertex as in Figure \ref{fig:vertex_gauge_vectors}[right]. Then ${\tilde E}_e =E_e$, for all $e\not = e_1,e_2,e_3$ and
\[
\resizebox{\textwidth}{!}{$ 
{\tilde E}_{e_1} = (-1)^{\mbox{wind}({\tilde e}_1,f_1)- \mbox{wind}(e_1,f_1)+\mbox{int}({\tilde e}_1)-\mbox{int}(e_1)} E_{e_1},
\quad\quad
{\tilde E}_{e_i} = (-1)^{\mbox{wind}(f_i,{\tilde e}_i)-\mbox{wind}(f_i,e_i)  } E_{e_i}, \quad i=2,3;
$}
\]
\end{enumerate}
\end{lemma}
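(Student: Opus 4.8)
The plan is to reduce both cases to the uniqueness of the edge vector system established in Theorem \ref{theo:consist}: it suffices to exhibit a system of vectors on the moved network $(\tilde{\mathcal N},\mathcal O,\mathfrak l)$ that satisfies every relation of Lemma \ref{lem:relations} together with the (unchanged) boundary conditions at the sinks, and to identify it. Write $U_i$ for the second endpoint of $e_i$ and $f_i$ (and $g_i$, when $U_i$ is trivalent) for the remaining edges at $U_i$, as in Figure \ref{fig:vertex_gauge_vectors}; I will treat the white vertex, the black case being obtained by reversing all arrows and bivalent vertices being the degenerate two-edge subcase (coincidences among $U_1,U_2,U_3$ or with $V$ cause only minor bookkeeping changes). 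The move leaves the combinatorial graph, all weights, and every vertex other than $V$ in place, so only $e_1,e_2,e_3$ change and, in particular, the boundary sink edges (which are internal-to-internal) are untouched, so the boundary conditions are unchanged.

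First I would set $\hat E_e:=E_e$ for $e\notin\{e_1,e_2,e_3\}$ and $\hat E_{e_i}:=\sigma_i E_{e_i}$ with $\sigma_1,\sigma_2,\sigma_3$ the signs in (\ref{eq:white_vertex_gauge}). Every relation of Lemma \ref{lem:relations} at a vertex other than $V,U_1,U_2,U_3$ is literally the relation already valid in $\mathcal N$ and involves only vectors with $\hat E_e=E_e$, hence holds. There remain the relations at $V$, $U_1$, $U_2$, $U_3$; at each one I would substitute $\hat E_{e_i}=\sigma_i E_{e_i}$ into the relation on $\tilde{\mathcal N}$ and compare it, term by term, with the same relation on $\mathcal N$ rescaled by the relevant $\sigma$. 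Since the vectors appearing on a given right-hand side are in general position, this produces for each outgoing edge $f$ at that vertex an identity $\sigma_f=\sigma_{e_i}(-1)^{\Delta\mathrm{int}+\Delta\mathrm{wind}}$, where $\Delta$ records the change under the move of the relevant local winding and of the intersection number of the edge whose vector is on the left. Inserting the explicit $\sigma$'s, all these identities collapse to a single local statement: for every passage $g\to e_3\to e_i\to h$ through $V$ with $g,h$ unmoved edges,
\[
\Delta\mathrm{wind}(g,e_3)+\Delta\mathrm{wind}(e_3,e_i)+\Delta\mathrm{wind}(e_i,h)+\Delta\mathrm{int}(e_3)+\Delta\mathrm{int}(e_i)\equiv 0\pmod 2,
\]
whose specializations also recover the consistency of the two terms at a trivalent $U_i$, the compatibility of the $e_1$– and $e_2$–coefficients at $V$, and — for the incomplete passage whose initial edge is one of $e_1,e_2,e_3$ — the residual signs $\sigma_i$ themselves. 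The same identity, read off the flow formula of Theorem \ref{theo:null} (the denominator is unchanged because winding and intersection are irrelevant for conservative flows), shows directly that $\tilde E_e=E_e$ off the three edges.

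I would then prove the local identity by realizing the displacement of $V$ as a continuous isotopy $V(t)$, $t\in[0,1]$, within the angular slot of each of $e_1,e_2,e_3$ at its fixed endpoint, and verifying that the left-hand side is invariant mod $2$ across the finitely many degenerate instants. These are of three kinds: a gauge ray $\mathfrak l_{i_r}$ crossing $V$, which flips the parity of each of $\mathrm{int}(e_1),\mathrm{int}(e_2),\mathrm{int}(e_3)$ — but only two of them occur in the identity and any two of the simultaneous $\pm1$ jumps sum to an even number; one of $e_1,e_2,e_3$ aligning with $\mathfrak l$, which flips precisely the two winding terms in which that edge occurs; and one of $e_1,e_2,e_3$ reversing through the direction of an unmoved neighbour at its fixed endpoint, to be treated via the antiparallel convention (\ref{eq:s_antipar}) and Figure \ref{fig:antipar}. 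Since the identity holds trivially at $t=0$, where $\tilde{\mathcal N}=\mathcal N$, it holds at $t=1$; the system $\hat E_e$ then satisfies all relations and boundary conditions on $\tilde{\mathcal N}$ and therefore, by Theorem \ref{theo:consist}, coincides with its edge vector system, which is the assertion.

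I expect the last type of degenerate instant — the reversal of a moved edge past an unmoved neighbour — to be the main obstacle: there one must check, directly from (\ref{eq:def_s})–(\ref{eq:def_wind}) and the rotation rule of Figure \ref{fig:antipar}, that the antiparallel convention keeps the displayed parity (equivalently, the four affected local relations) consistent, possibly after arguing that such reversals can be confined to a controlled normal form inside a vertex gauge move. Everything else — the reduction to the local identity and the first two kinds of instant — is routine substitution into Lemma \ref{lem:relations} together with elementary bookkeeping of windings and intersection numbers.
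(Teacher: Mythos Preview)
Your approach is essentially the same as the paper's: reduce to a local parity identity on the path $f_3\to e_3\to e_i\to f_i$ through the moved vertex (the paper records it as the pair of identities $\mbox{int}(e_i)+\mbox{int}(e_3)\equiv\mbox{int}(\tilde e_i)+\mbox{int}(\tilde e_3)$ and $\mbox{wind}(f_3,e_3)+\mbox{wind}(e_3,e_i)+\mbox{wind}(e_i,f_i)\equiv\mbox{wind}(f_3,\tilde e_3)+\mbox{wind}(\tilde e_3,\tilde e_i)+\mbox{wind}(\tilde e_i,f_i)$, both mod $2$), and then invoke the linear relations at the affected vertices. The paper simply asserts these identities without further argument; your isotopy/degenerate-instant verification is a more detailed justification of the same fact, and your appeal to uniqueness (Theorem~\ref{theo:consist}) makes explicit what the paper leaves implicit.
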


\begin{proof}
The statement follows from the linear relations at vertices and the following identities at trivalent white vertices
\begin{equation}\label{eq:int_vertex_gauge}
\begin{array}{c}
\mbox{int} (e_i) +\mbox{int} (e_3) =\mbox{int} ({\tilde e}_i) +\mbox{int} ({\tilde e}_3), \quad (\!\!\!\!\!\!\mod 2),\\
\mbox{wind}(f_3,e_3) +  \mbox{wind}(e_3,e_i) +  \mbox{wind}(e_i,f_i) = \mbox{wind}(f_3,{\tilde e}_3) +  \mbox{wind}({\tilde e}_3,{\tilde e}_i) +  \mbox{wind}({\tilde e}_i,f_i) \quad (\!\!\!\!\!\!\mod 2),
\end{array}
\end{equation}
and the corresponding identities at black vertices and the next Lemma.
\end{proof}
\begin{lemma}
 Denote the vertex we move by $V$.  
\begin{enumerate}
\item If $e_1$ is an outgoing vector for $V$, and the vertex $V_1$ at the end of $e_1$ is white trivalent with the outgoing vectors $f_1$, $f_2$, then
  $$
  \mbox{wind}(e_1,f_1) - \mbox{wind}(\tilde e_1,f_1) = \mbox{wind}(e_1,f_2) - \mbox{wind}(\tilde e_1,f_2)
  \quad (\!\!\!\!\!\!\mod 2);
  $$
\item If $e_3$ is an incoming vector for $V$, and the vertex $V_3$ at the beginning of $e_3$ is black trivalent with the incoming vectors $g_1$, $g_2$, then
  $$
  \mbox{wind}(g_1,e_3) - \mbox{wind}(g_1,\tilde e_3) = \mbox{wind}(g_2,e_3) - \mbox{wind}(g_2,\tilde e_3). 
  \quad (\!\!\!\!\!\!\mod 2);
  $$
\begin{proof}
  Let us proof the first statement. When we move $V$, the vector $e_1$ continuously changes the direction. By Lemma~\ref{lem:rotation}, the winding numbers $\mbox{wind}(e_1,f_1)$  ($\mbox{wind}(e_1,f_2)$ respectively) changes if $e_1$ intersects $\mathfrak l$ or $-f_1$ ($\mathfrak l$ or $-f_2$ respectively). We keep the topology of the graph fixed, therefore $e_1$ cannot intersect either $-f_1$ or $-f_2$, therefore both windings may only change simultaneously.
  
  The proof of the second statement uses the same arguments.
\end{proof} 
\end{enumerate}  
\end{lemma}

\section{Effect of moves and reductions on edge vectors}\label{sec:moves_reduc}

In \cite{Pos} it is introduced a set of local transformations - moves and reductions - on planar bicolored networks in the disk which leave invariant the boundary measurement map. Two networks in the disk connected by a sequence of such moves and reductions represent the same point in $\GTNN$. 
There are three moves,
(M1) the square move (Figure \ref{fig:squaremove}),
(M2) the unicolored edge contraction/uncontraction (Figure \ref{fig:flipmove}),
(M3) the middle vertex insertion/removal (Figure \ref{fig:middle}),
and three reductions
(R1) the parallel edge reduction (Figure \ref{fig:parall_red_poles}),
(R2) the dipole reduction (Figure \ref{fig:dip_leaf}[left]),
(R3) the leaf reduction (Figure \ref{fig:dip_leaf}[right]).

In our construction each such transformation induces a well defined change in the system of edge vectors.
In the following, we restrict ourselves to plabic networks and, without loss of generality, we fix both the orientation and the gauge ray direction since their effect on the system of vectors is completely under control in view of the results of Section \ref{sec:vector_changes}.
We denote $({\mathcal N}, \mathcal O, \mathfrak l)$ the initial oriented 
network and $({\tilde {\mathcal N}}, {\tilde{\mathcal O}}, \mathfrak l)$ the oriented network obtained from it by applying one move (M1)--(M3) or one reduction 
(R1)--(R3). We assume that the orientation ${\tilde {\mathcal O}}$ coincides with $\mathcal O$ at all edges except at those involved in the move or reduction where we use Postnikov rules to assign the orientation. We denote with the same symbol and a tilde any quantity referring to the transformed network. 

\smallskip

{\bf (M1) The square move}
If a network has a square formed by four trivalent vertices
whose colors alternate as one goes around the square, then one can switch the colors of these
four vertices and transform the weights of adjacent faces as shown in Figure \ref{fig:squaremove}.
The relation between the face weights before and after the square move is \cite{Pos}
${\tilde f}_5 = (f_5)^{-1}$, ${\tilde f}_1 = f_1/(1+1/f_5)$, ${\tilde f}_2 = f_2 (1+f_5)$, ${\tilde f}_3 = f_3 (1+f_5)$, ${\tilde f}_4 = f_4/(1+1/f_5)$,
so that the relation between the edge weights with the orientation in Figure \ref{fig:squaremove} is
${\tilde \alpha}_1 = \frac{\alpha_3\alpha_4}{{\tilde\alpha}_2}$, 
${\tilde \alpha}_2 = \alpha_2 + \alpha_1\alpha_3\alpha_4$, ${\tilde \alpha}_3 = \alpha_2\alpha_3/{\tilde \alpha}_2$,  ${\tilde \alpha}_4 = \alpha_1\alpha_3/{\tilde \alpha}_2$.

The system of equations on the edges outside the square is the same before and after the move and also the boundary
conditions remain unchanged. The uniqueness of the solution implies that all vectors outside the square 
including $E_1$, $E_2$, $E_3$, $E_4$ remain the same. In the following Lemma, $e_j, h_k$ respectively are the edges carrying the vectors $E_j,F_k$ in the initial configuration. For instance, $\mbox{int}(h_1)$ is the number of intersections of gauge rays with the edge carrying the vector $F_1$ in the initial configuration, whereas $\mbox{wind}(-h_1,h_2)$ is the winding number of the pair of edges carrying the vectors $\tilde F_1, \tilde F_2$ after the move because the edge $h_1$ has changed of versus.

\begin{figure}
\centering{\includegraphics[width=0.45\textwidth]{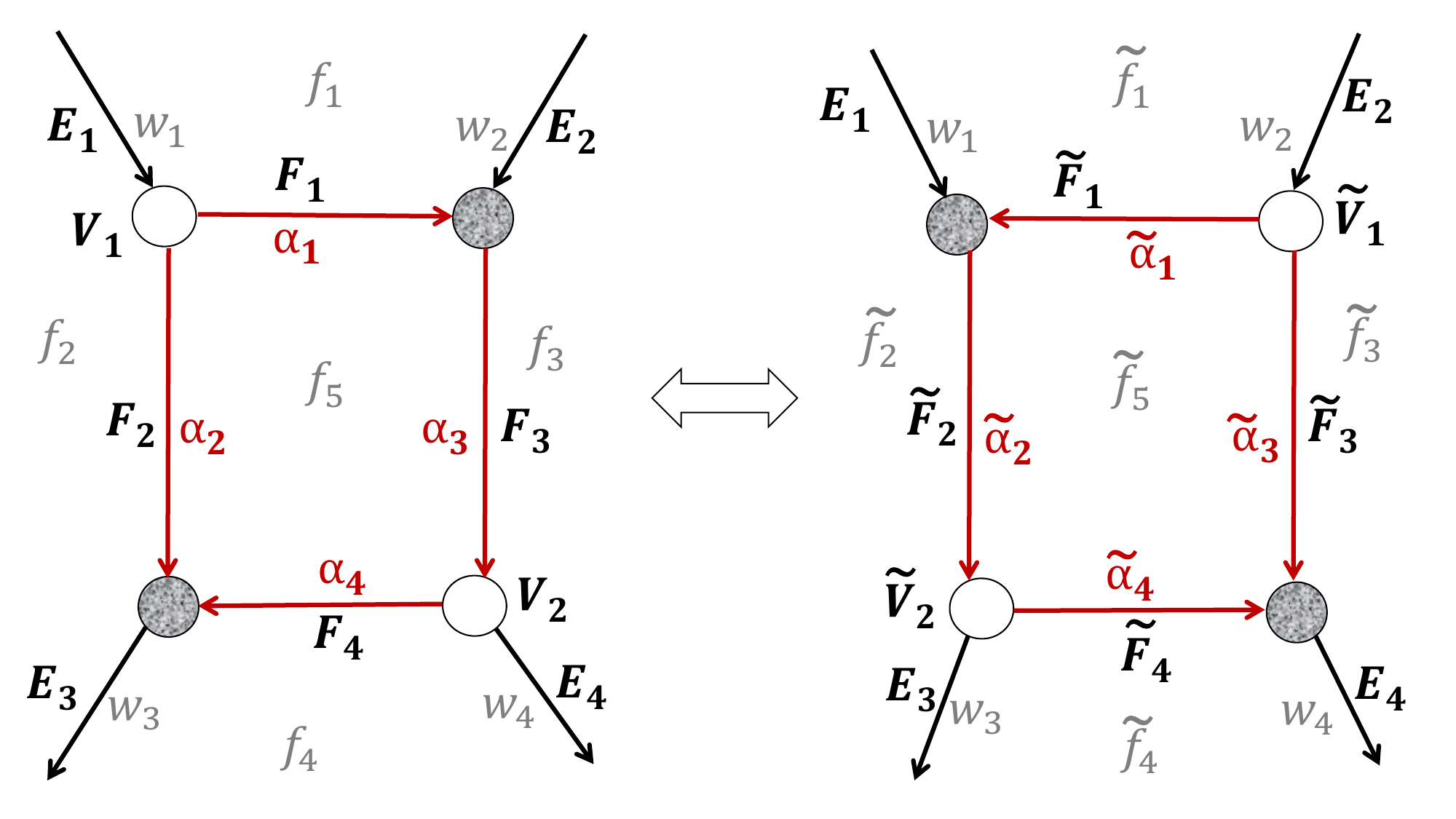}}
\caption{\small{\sl The effect of the square move.}\label{fig:squaremove}}
\end{figure}

In the next Lemma we provide the transformation rules between the vectors $F_j$,${\tilde F}_j$ and $E_j$, $j\in [4]$, assuming the orientation as in Figure \ref{fig:squaremove}.
\begin{lemma}
  The vectors $\tilde F$ after the square move are given by:
$$
{\tilde F}_4 = \tilde\alpha_4 \alpha_3^{-1} (-1)^{\mbox{int}(h_3)+\mbox{int}(h_4)+\gamma_2(h_4) +\mbox{wind}(h_3,h_4)+1 } F_3 + \tilde\alpha_4 (-1)^{\mbox{int}(h_4)+\gamma_2(h_4) } F_4,
$$
$$
{\tilde F}_2 = \alpha_1 (-1)^{\mbox{int}(h_1) +\gamma_2(h_4) + \mbox{wind}(h_2,-h_4) +\mbox{wind}(h_3,h_4)+1}  F_3 +\alpha_2 \alpha_4^{-1} (-1)^{1+\mbox{int}(h_2)+\mbox{int}(h_4)+\gamma_2(h_4)+  \mbox{wind}(h_2,-h_4)} F_4.
$$
$$
{\tilde F}_1 = \tilde\alpha_1 (-1)^{\mbox{int}(h_1)+\mbox{wind}(-h_1,h_2)} {\tilde F}_2,
$$
$$
{\tilde F}_3 =  (-1)^{\mbox{int}(h_3)+ \mbox{int}(h_4) + \gamma_2(h_4) +\mbox{wind}(h_3,h_4)+1 } \alpha_2 \alpha_1^{-1}  {\tilde F}_4,
$$

\end{lemma}  
\begin{proof}
Using the linear relations
\begin{equation}\label{eq:move1}
\begin{array}{l}
\displaystyle F_1 = (-1)^{\mbox{int}(h_1)+\mbox{wind}(h_1,h_3)} \alpha_1 F_3, \quad F_2 = (-1)^{\mbox{int}(h_2)+\mbox{wind}(h_2,e_3)} \alpha_2E_3,\quad F_4 = (-1)^{\mbox{int}(h_4)+\mbox{wind}(h_4,e_3)} \alpha_4 E_3,
\\
F_3 = (-1)^{\mbox{int}(h_3)} \alpha_3 \left( (-1)^{\mbox{int}(h_4)+\mbox{wind}(h_3,h_4)+\mbox{wind}(h_4,e_3)}\alpha_4 E_3 + (-1)^{\mbox{wind}(h_3,e_4)} E_4  \right),
\end{array}
\end{equation}
\begin{equation}\label{eq:move2}
\begin{array}{l}
{\tilde F}_3 =  (-1)^{\mbox{int}(h_3)+\mbox{wind}(h_3,e_4)} {\tilde \alpha}_3 E_4, \quad {\tilde F}_4 =  (-1)^{\mbox{int}(h_4)+\mbox{wind}(-h_4,e_4)} {\tilde \alpha}_4 E_4, \\
{\tilde F}_2 =  (-1)^{\mbox{int}(h_2)} {\tilde \alpha}_2 \left( \,  (-1)^{\mbox{wind}(h_2,e_3)} E_3 + (-1)^{\mbox{int}(h_4)+\mbox{wind}(h_2,-h_4)+\mbox{wind}(-h_4,e_4)} {\tilde \alpha}_4  E_4  \right),
\end{array}
\end{equation}
we immediately get
$$
{\tilde F}_4 = \tilde\alpha_4 \alpha_3^{-1} (-1)^{\mbox{int}(h_3)+\mbox{int}(h_4)+ \mbox{wind}(h_3,e_4) +\mbox{wind}(-h_4,e_4)} F_3 + \tilde\alpha_4 (-1)^{1+\mbox{int}(h_4) + \mbox{wind}(h_3,h_4)+ \mbox{wind}(h_3,e_4) +\mbox{wind}(-h_4,e_4) } F_4.
$$
Since the triple $h_4,e_4,-h_3$ is oriented counterclockwise, the cyclic order $[h_4,e_4,-h_3]=0$  (see Definition \ref{def:cyclic_order}), and the statement follows from (\ref{eq:white2}).

Analogously,
$$
{\tilde F}_2 = \tilde\alpha_2 \alpha_4^{-1} (-1)^{\mbox{int}(h_2)+\mbox{int}(h_4)+ \mbox{wind}(h_2,e_3) +\mbox{wind}(h_4,e_3)} F_4 + \tilde\alpha_2 (-1)^{\mbox{int}(h_2) + \mbox{wind}(h_2,-h_4)}{\tilde F}_4.
$$
Again, the triple $-h_2,e_3,-h_4$ is oriented counterclockwise, the cyclic order $[-h_2,e_3,-h_4]=0$ and the statement follows from (\ref{eq:white2}).
\end{proof}

\smallskip

\begin{figure}%[H]
  \centering{\includegraphics[width=0.6\textwidth]{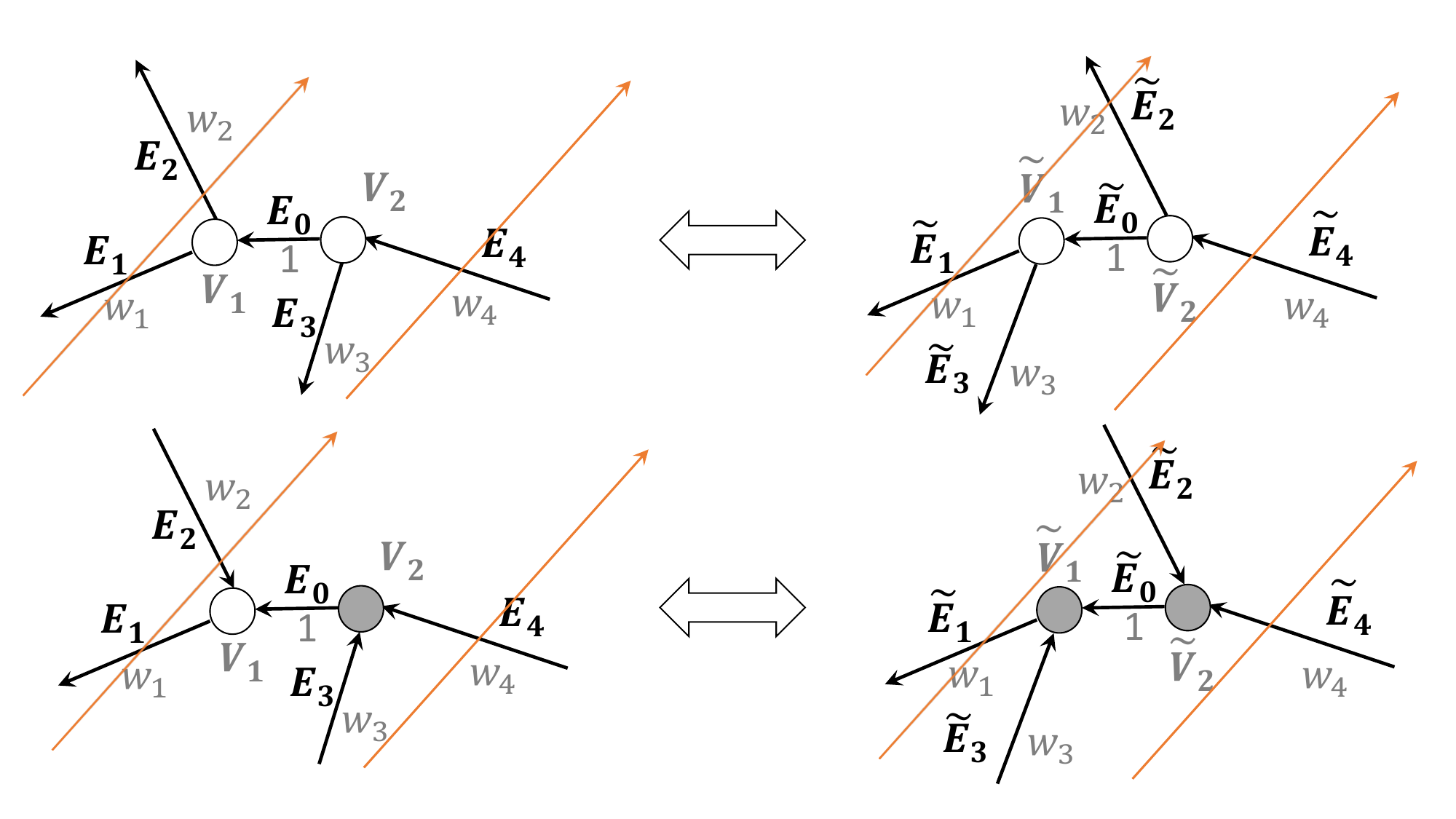}}
  \caption{\small{\sl The insertion/removal of an unicolored internal vertex is equivalent to a flip move of the unicolored vertices.}\label{fig:flipmove}}
\end{figure}

{\bf (M2) The unicolored edge contraction/uncontraction}
The unicolored edge contraction/un\-con\-traction consists in the elimination/addition of an internal vertex of equal color and of a unit edge, and it leaves invariant the face weights and the boundary measurement map \cite{Pos}. 

The contraction/uncontraction of an unicolored internal edge combined with the trivalency condition is equivalent to a flip of the unicolored vertices involved in the move (see Figure \ref{fig:flipmove}). We consider only pure flip moves, i.e.
all vertices keep the same positions before and after the move. Moreover, we assume that the edge $e_0$ connecting this pair of vertices has unit weight and sufficiently small length so that no gauge ray crosses it, all other edges
preserve their intersection numbers, and the winding at the vertices not involved in the move remain invariant.
Therefore, additivity of the winding numbers holds in this special case $\mbox{wind} (e_i,e_0)+ \mbox{wind} (e_0,e_j) = \mbox{wind} (e_i,e_j),$ with $e_i$ -- any incoming vector, $e_j$ -- any outgoing vector involved in the move.

Finally,
$$
{\tilde F}_i = F_i
$$
in all cases, 
$$
{\tilde E}_0 =
\left\{
\begin{array}{ll} E_0, & \mbox{if the vertices are black}, \\
  E_0 - (-1)^{\mbox{wind}(e_0, e_3)} F_3 +(-1)^{\mbox{wind}(e_0, e_2)} F_2, &
       \mbox{if the vertices are white.}
\end{array}
\right.
$$
We remark that the flip move may create/eliminate null edge vectors. For instance suppose that $(-1)^{\mbox{wind}(e_0, e_1)} F_1 +(-1)^{\mbox{wind}(e_0, e_3)} F_3=0$ and $(-1)^{\mbox{wind}(e_0, e_1)} F_1 +(-1)^{\mbox{wind}(e_0, e_2)} F_2\not =0$. Then in the initial configuration all edge vectors are different from zero whereas in the final $\tilde E_0=0$.

\smallskip

{\bf (M3) The middle edge insertion/removal}

\begin{figure}%[H]
  \centering
	{\includegraphics[width=0.49\textwidth]{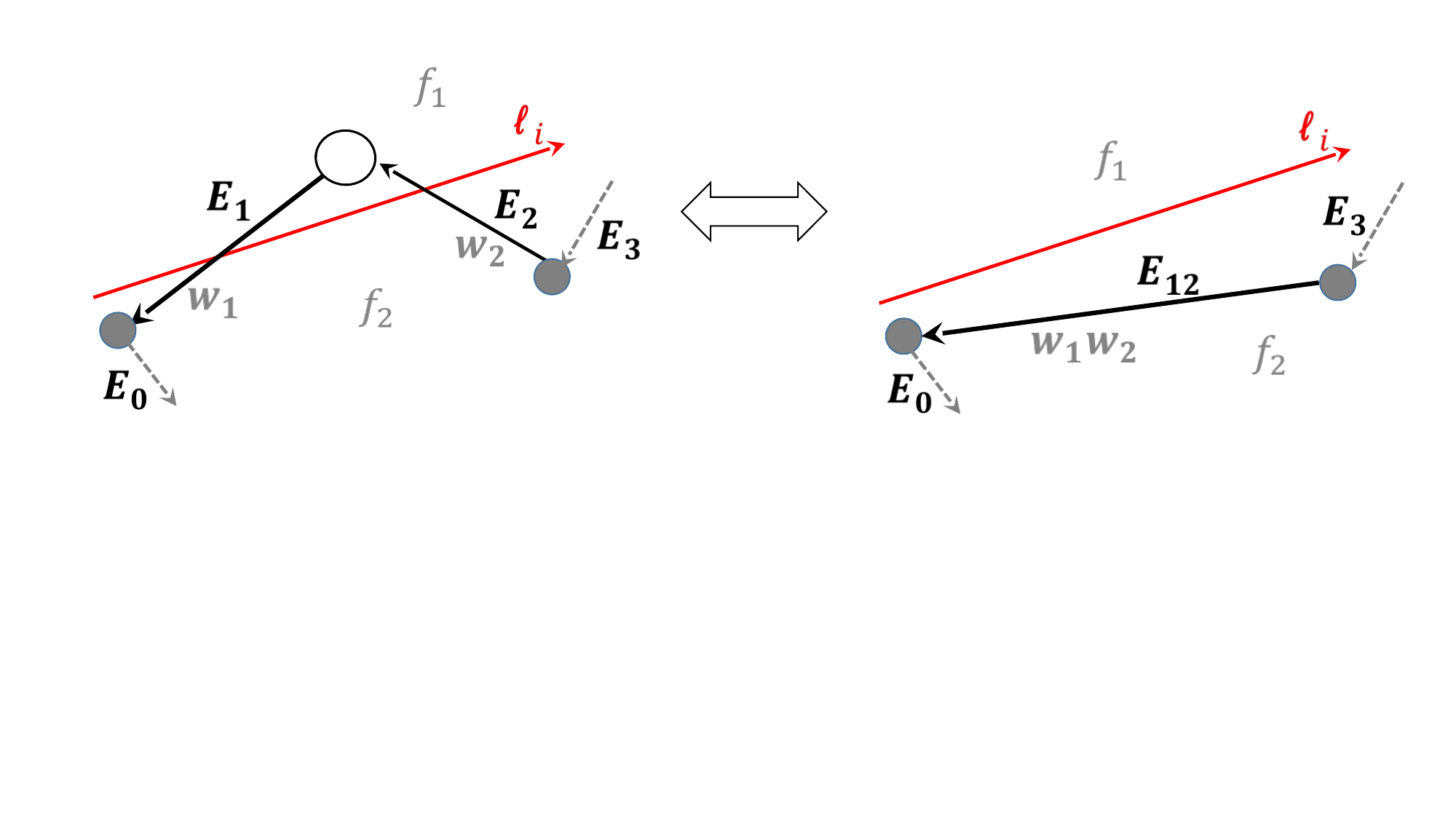}}
	\hfill
	{\includegraphics[width=0.49\textwidth]{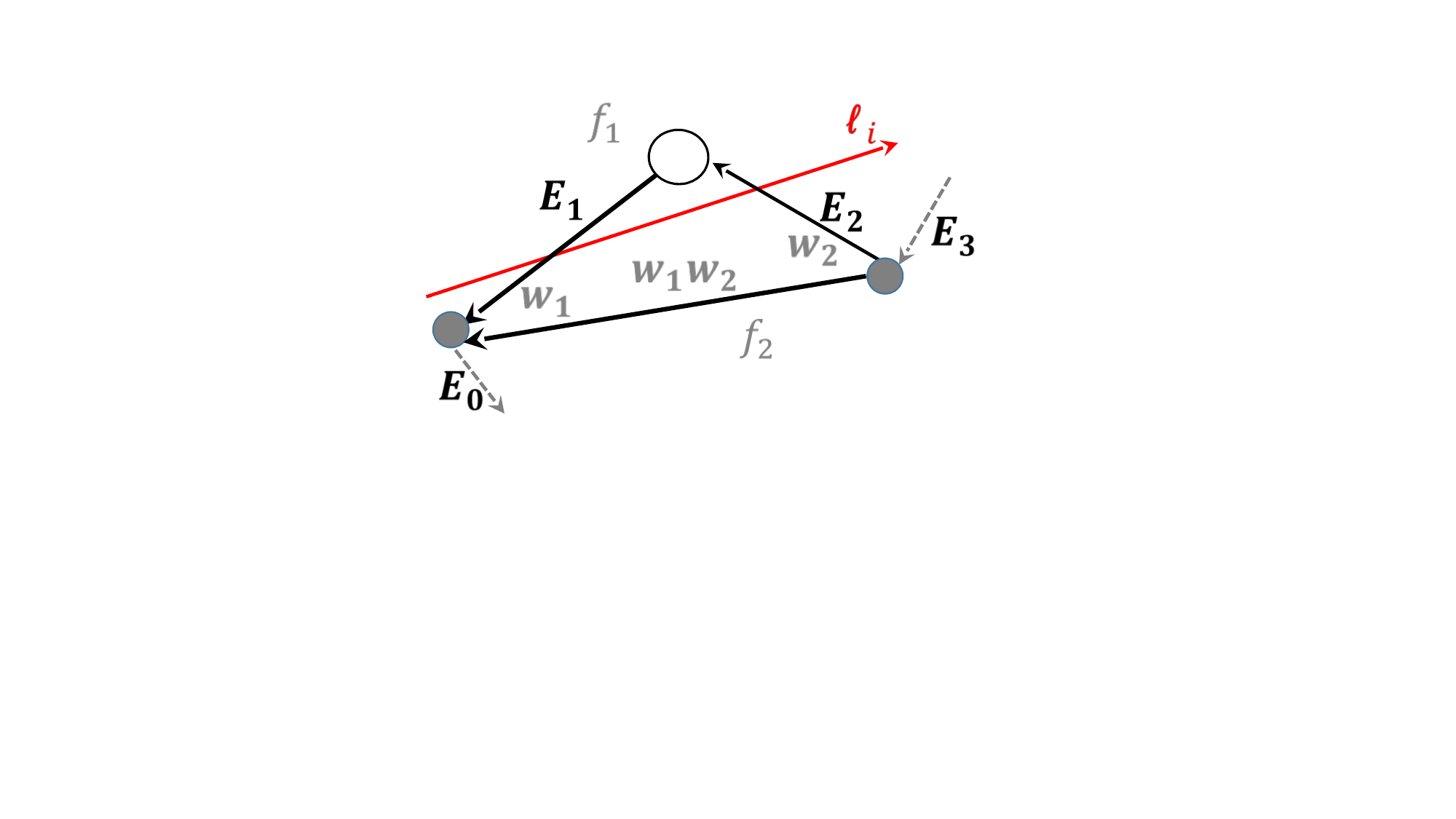}}
	\vspace{-2.1 truecm}
  \caption{\small{\sl The middle edge insertion/removal.}\label{fig:middle}}
\end{figure}
The middle edge insertion/removal consists in the addition/elimination of bivalent vertices (see Figure \ref{fig:middle}) without changing the face configuration. i.e. the triangle formed by the edges $e_1$, $e_2$, $e_{12}$ does not contain other edges of the network. Then the action of such move is trivial, since 
$\mbox{int} (e_1) +\mbox{int} (e_2) = \mbox{int} (e_{12}), \quad (\!\!\!\!\mod 2)$, 
$\mbox{wind}(e_3, e_2) +\mbox{wind}(e_2, e_1)+\mbox{wind}(e_1, e_0) =\mbox{wind}(e_3, e_{12}) +\mbox{wind}(e_{12}, e_0), \quad (\!\!\!\!\mod 2)$ so that the relation between the vectors $E_2$ and $E_{12}$ is simply, 
$$
E_{12} =(-1)^{\mbox{wind}(e_3, e_2) -\mbox{wind}(e_3, e_{12})} E_2.
$$

\smallskip

{\bf (R1) The parallel edge reduction}
The parallel edge reduction consists of the removal of two trivalent vertices of different color connected by a pair of parallel edges (see Figure \ref{fig:parall_red_poles}[top]). If the parallel edge separates two distinct faces, the relation of the face weights before and after the reduction is 
${\tilde f}_1 = \frac{f_1}{1+(f_0)^{-1}}$, ${\tilde f}_2 = f_2 (1+f_0)$,
otherwise ${\tilde f}_1 = {\tilde f}_2 = f_1 f_0$ \cite{Pos}. In both cases, for the choice of orientation in Figure \ref{fig:parall_red_poles}, the relations between the edge weights and the edge vectors respectively are 
${\tilde w}_1 = w_1(w_2+w_3)w_4$,
$ {\tilde E}_{1} = E_1 = (-1)^{\mbox{int}(e_1)+\mbox{int}(e_2)}w_1(w_2+w_3) E_4$, $
E_2 = (-1)^{\mbox{int}(e_2)}w_2 E_4$, $E_3 = (-1)^{\mbox{int}(e_2)}w_3 E_4$,
since $\mbox{wind} (e_1,e_2) =\mbox{wind} (e_1,e_3) =\mbox{wind} (e_2,e_4) =\mbox{wind} (e_3,e_4) =0$, $\mbox{int}(e_1)+\mbox{int}(e_2)+\mbox{int}(e_4) =\mbox{int}({\tilde e}_1)$ and $\mbox{int}(e_2) =\mbox{int}(e_3)$.

\begin{figure}%[H]
  \centering{\includegraphics[width=0.55\textwidth]{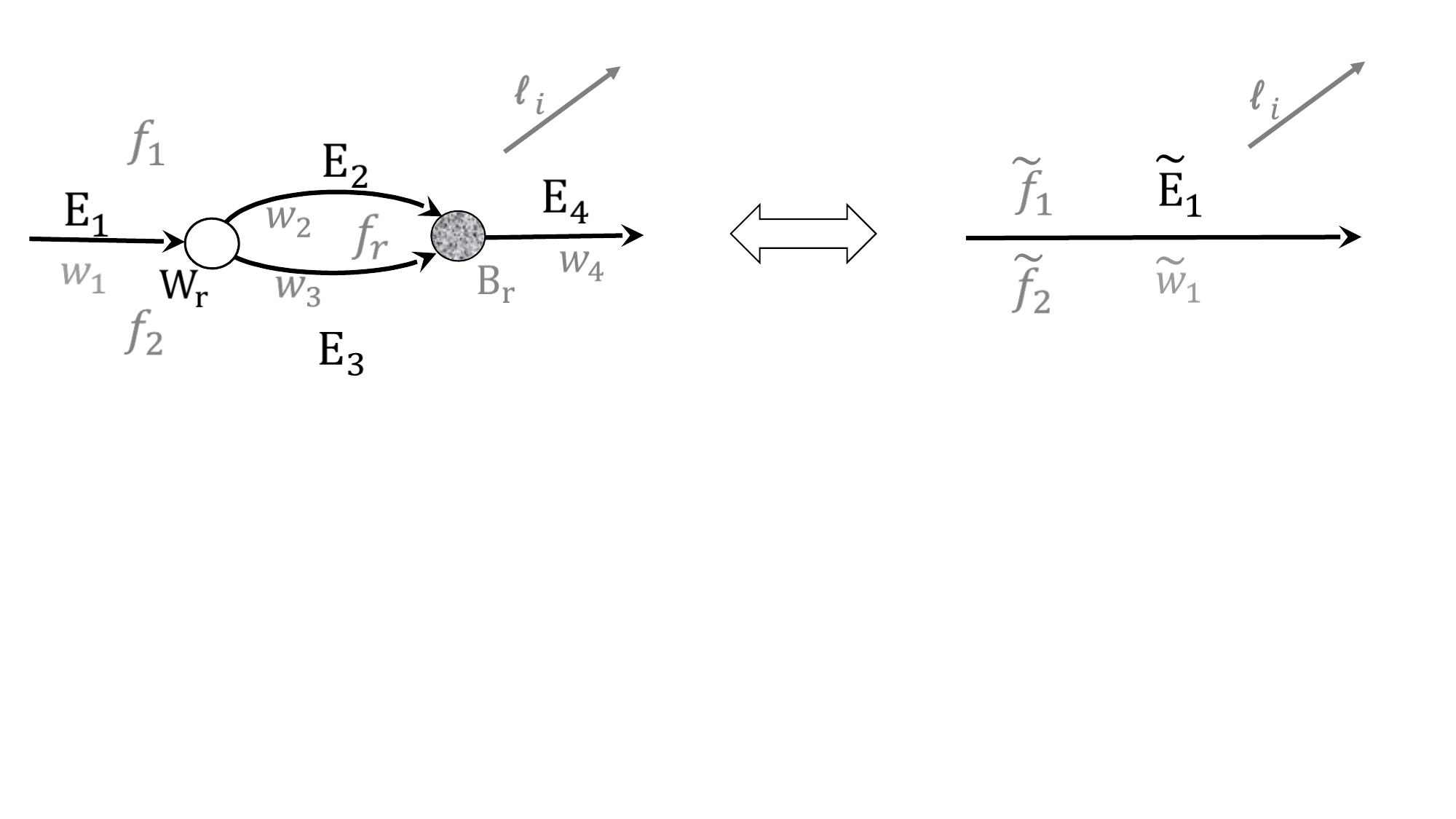}}
	\vspace{-3. truecm}
  \caption{\small{\sl The parallel edge reduction.}\label{fig:parall_red_poles}}
\end{figure}

{\bf (R2) The dipole reduction}
The dipole reduction eliminates an isolated component consisting of two vertices joined by an edge $e$ (see Figure \ref{fig:dip_leaf}[left]). The transformation leaves invariant the weight of the face containing such component. Since the edge vector at $e$ is $E_e=0$, this transformation acts trivially on the vector system.

\begin{figure}%[H]
\includegraphics[width=0.48\textwidth]{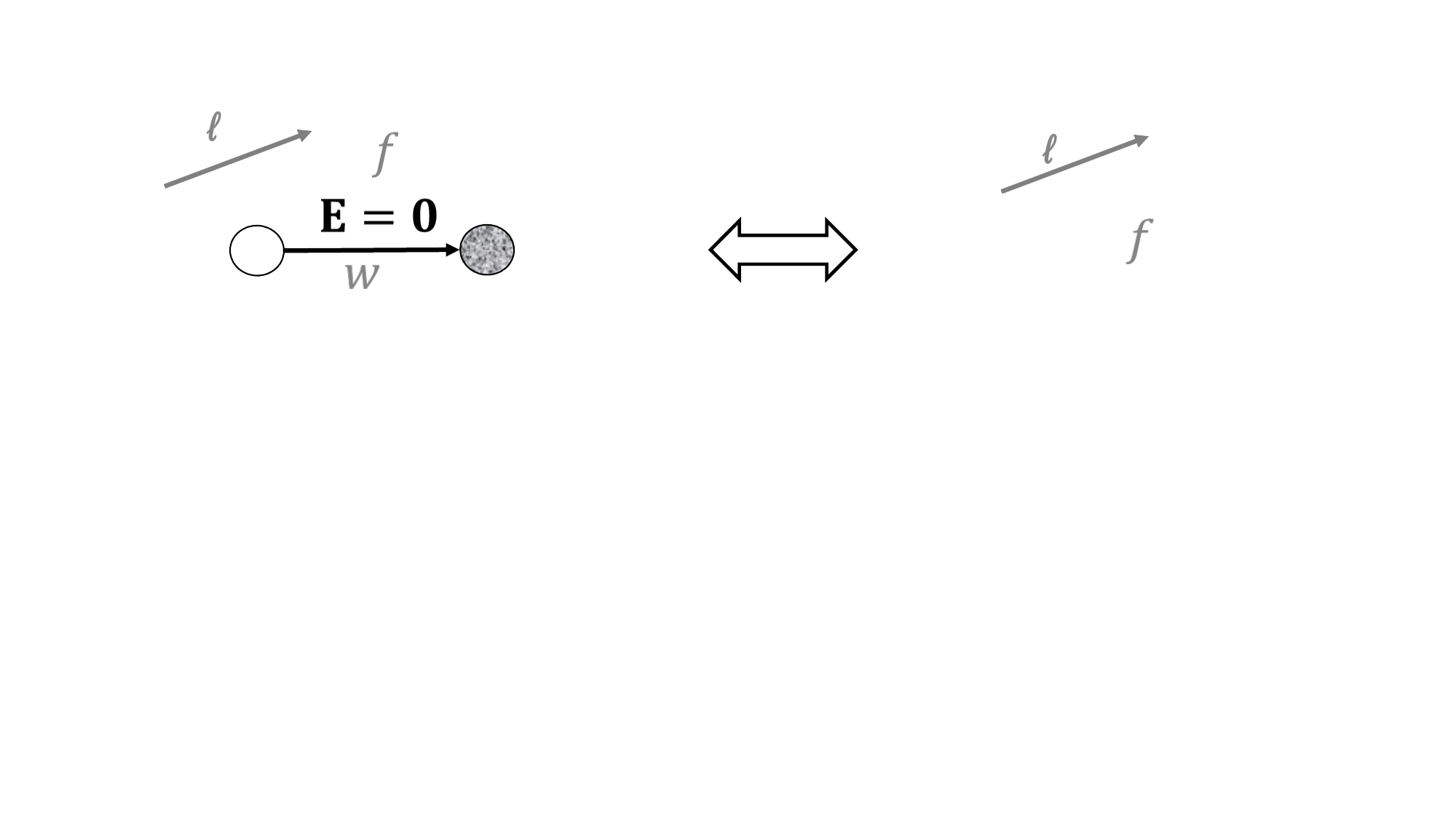}
\hfill
\includegraphics[width=0.48\textwidth]{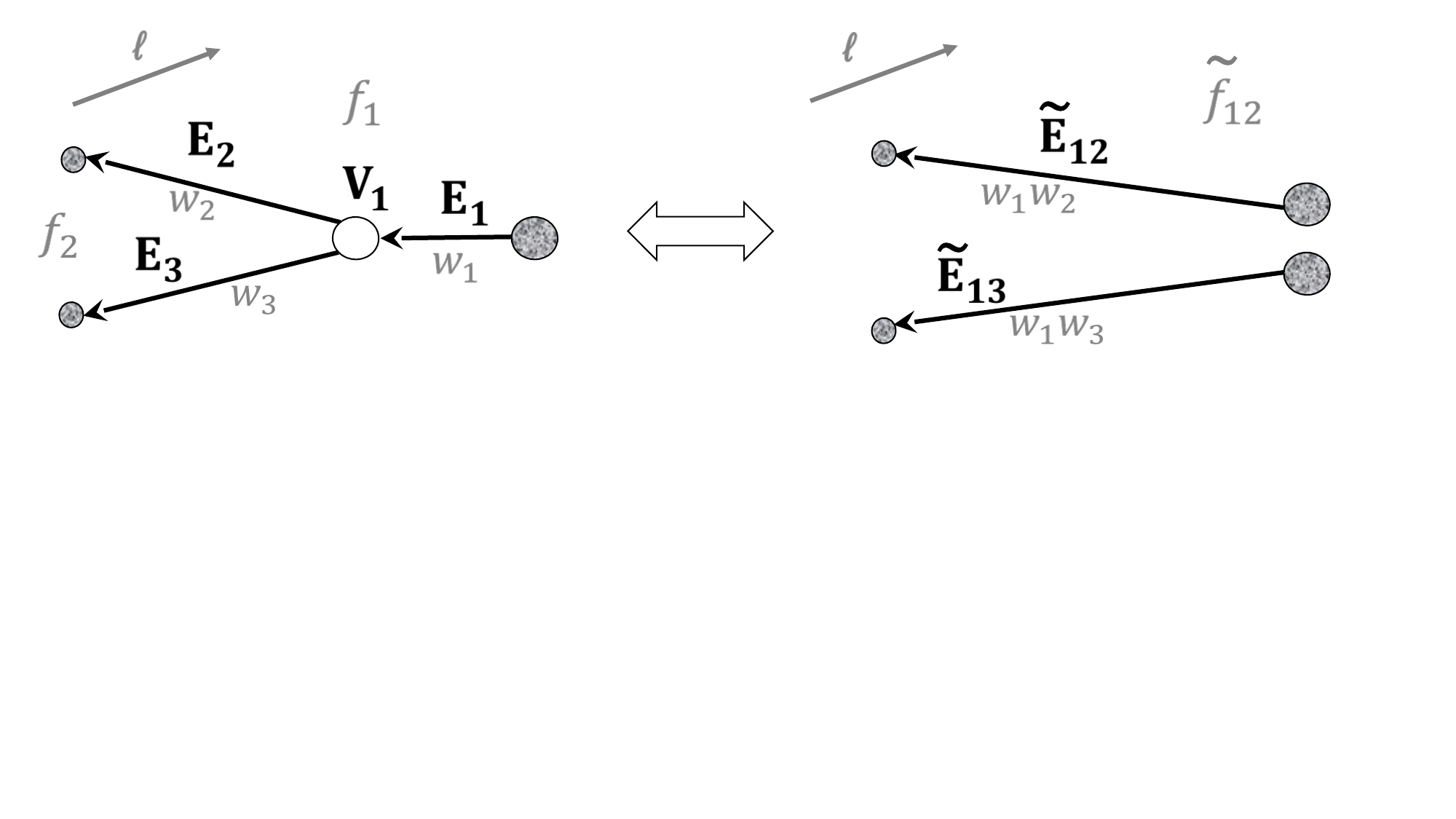}
\vspace{-2.5 truecm}
\caption{\small{\sl Left: the dipole reduction. Right: the leaf reduction.}}
\label{fig:dip_leaf}
\end{figure}

\smallskip
{\bf (R3) The leaf reduction}
The leaf reduction occurs when a network contains a vertex $u$ incident to a single edge $e_1$ ending at a trivalent vertex (see Figure \ref{fig:dip_leaf}[right]): in this case it is possible to remove $u$ and $e_1$, disconnect $e_2$ and $e_3$, assign the color of $u$ at all newly created vertices of the edges $e_{12}$ and $e_{13}$. In the leaf reduction (R3) the only non-trivial case corresponds to the situation where the faces $f_1$, $f_2$ are distinct in the initial configuration. We assume that $e_1$ is short enough, and it does not intersect the gauge rays. If we have two faces of weights $f_1$ and $f_2$ in the initial configuration, then we merge them into a single face of weight ${\tilde f}_{12} =f_1f_2$; otherwise ${\tilde f}_{12}=f_1$ and the effect of the transformation is to create new isolated components. We also assume that the newly created vertices are close enough to $V_1$, therefore the windings are not affected. Then $E_1 = {\tilde E}_{12} + {\tilde E}_{13}$ and 
${\tilde E}_{12} = w_{1} E_2$, ${\tilde E}_{13} =  w_{1} E_3$.

\section{Existence of null edge vectors on reducible networks}\label{sec:null_vectors}

Edge vectors associated to the boundary source edges are not null due to Postnikov's results if the boundary source is not isolated. On the contrary, a component of a vector associated to an internal edge 
can be equal to zero even if the corresponding boundary sink can be reached from that edge (see Example \ref{example:null}). More in general, suppose that $E_e=0$ where $e$ is an edge ending at the vertex $V$. Then, if $V$ is black, all other edges at $V$ carry null vectors; the same occurs if $V$ is bivalent white. If $V$ is trivalent white, then the other edges at $V$ carry proportional vectors.

\begin{figure}%[H]
  \centering
	{\includegraphics[width=0.49\textwidth]{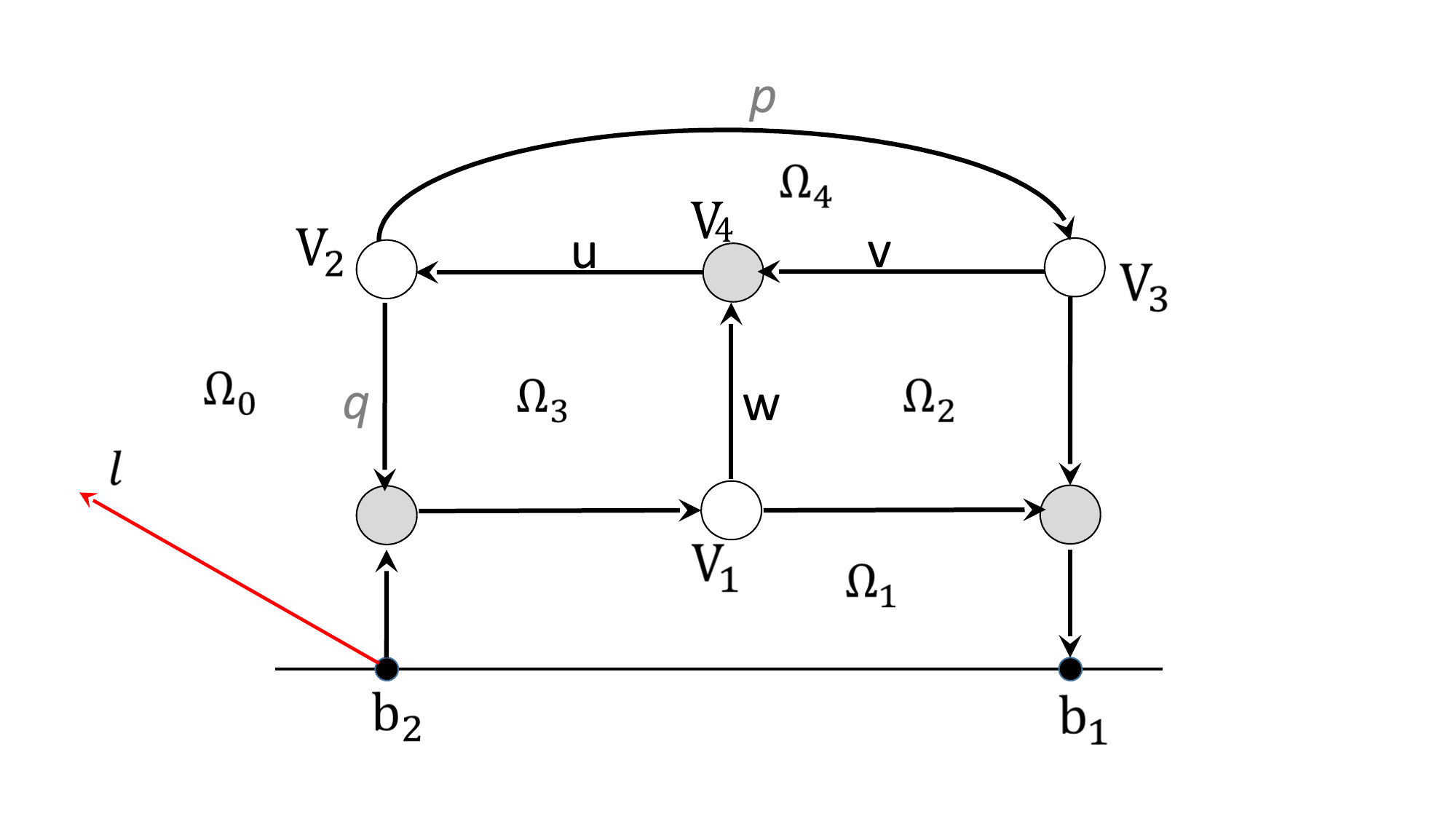}}
	\hfill
	{\includegraphics[width=0.49\textwidth]{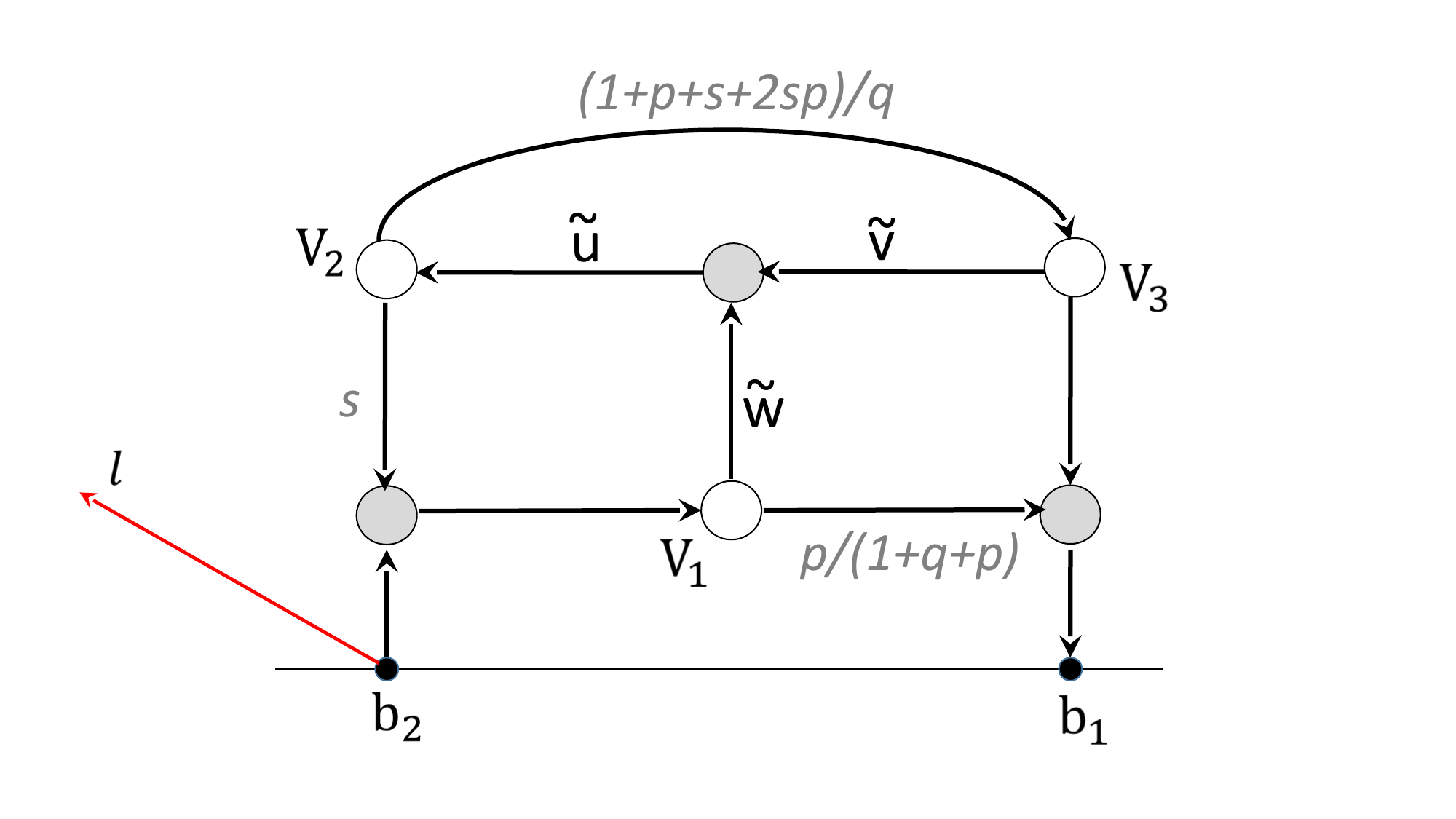}}
  \caption{\small{\sl The appearance of null vectors on reducible networks [left] and their elimination using the 
	gauge freedom for unreduced graphs of Remark \ref{rem:gauge_freedom} [right].}\label{fig:zero-vector}}
\end{figure}

Of course, if the network possesses either isolated boundary sources or components isolated from the boundary, null-vectors are unavoidable. In \cite{AG3}, we provide a recursive construction of edge vectors for canonically oriented Le--networks and obtain as a by-product that null edge vectors are forbidden if the Le--network represents a point in an irreducible positroid cell. The latter property indeed is shared by systems of edge vectors on acyclically oriented networks as a consequence of the following Theorem:

\begin{theorem}\label{thm:null_acyclic}\textbf{Edge vectors on acyclically oriented networks}
Let   $({\mathcal N}, {\mathcal O}, \mathfrak{l})$ be an acyclically oriented plabic network, which possesses neither internal sources nor sinks, representing a point in an irreducible positroid cell where $\mathcal O=\mathcal O(I)$. Then all edge vectors $E_e$ (both the internal and the boundary ones) are not-null. Moreover, in such case (\ref{eq:tal_formula}) in Theorem \ref{theo:null} simplifies to
\begin{equation}\label{eq:edge_parity}
E_{e}= \displaystyle\sum\limits_{j\in \bar I} \Big(\sum\limits_{F\in {\mathcal F}_{e,b_j}(\mathcal G)} \big(-1\big)^{\mbox{wind}(F)+\mbox{int}(F)}\ w(F)\Big) E_j
=\sum\limits_{j\in \bar I} \Big((-1)^{\sigma(e, b_j)}\sum\limits_{F\in {\mathcal F}_{e,b_j}(\mathcal G)} \ w(F)\Big) E_j,
\end{equation}
where the sum runs on all directed paths $F$ starting at $e$ and ending at $b_j$ and $\sigma(e, b_j)$ is the same for all $F$s. 
In particular (\ref{eq:edge_parity}) holds for all edges of Le-networks representing points in irreducible positroid cells.
\end{theorem}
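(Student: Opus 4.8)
The plan is to exploit the acyclicity of $\mathcal O$ to kill the conservative-flow corrections in Theorem \ref{theo:null} and then to combine the resulting formula with the linear relations of Lemma \ref{lem:relations} and the connection with the boundary measurement matrix (Corollary \ref{cor:bound_source}) to rule out null vectors. First I would observe that if $\mathcal O$ is acyclic, then $\mathcal C(\mathcal G)$ contains only the trivial flow, so $\sum_{C}w(C)=1$ and $\mathcal F_{e,b_j}(\mathcal G)$ is in bijection with the set of directed paths from $e$ to $b_j$ (each such path is automatically edge loop--erased since there are no cycles at all). Hence (\ref{eq:tal_formula}) collapses to the first equality in (\ref{eq:edge_parity}): a finite subtraction--free--denominator expression which is in fact a polynomial in the weights.

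The next step is the parity claim: that $\mbox{wind}(F)+\mbox{int}(F)$ has a fixed parity $\sigma(e,b_j)$ over all directed paths $F:e\to b_j$. I would prove this exactly as in the proof of Corollary \ref{cor:bound_source}, but now with $e$ an arbitrary (possibly internal) edge. Given two directed paths $\mathcal P,\tilde{\mathcal P}$ from $e$ to $b_j$, they share the first edge $e$ and the last edge at $b_j$, and since the orientation is acyclic one can close up each of them with an auxiliary edge from $b_j$ back to the initial vertex $V_e$ of $e$ to obtain two simple oriented cycles with the same orientation; along each simple closed loop the winding is $1\pmod 2$ (as already used repeatedly in the excerpt, e.g.\ in the proof of Theorem \ref{theo:consist}), so $\mbox{wind}(\mathcal P)\equiv\mbox{wind}(\tilde{\mathcal P})\pmod 2$. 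For the intersection numbers one argues segment by segment: the two paths differ on finitely many disjoint sub-arcs $e_{l_p+1},\dots,e_{r_p-1}$ versus $\tilde e^{(p)}_1,\dots,\tilde e^{(p)}_{f_p}$ with common endpoints, and for each $p$ the two sub-arcs together with an auxiliary segment bound a region, so the total intersection numbers with the gauge rays $\mathfrak l_{i_r}$ of the two sub-arcs agree mod $2$; summing gives $\mbox{int}(\mathcal P)\equiv\mbox{int}(\tilde{\mathcal P})\pmod 2$. This yields the second equality in (\ref{eq:edge_parity}).

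Finally, to show no edge vector is null, I would argue by descent along the acyclic orientation. For the edges $e_{i_r}$ at boundary sources, Proposition \ref{prop:rays}(1) gives $E_{e_{i_r}}=A[r]-E_{i_r}$; since the positroid cell is irreducible there is no isolated boundary source, so for each $r$ some minor forces a nonzero off-pivot entry in row $r$, hence $E_{e_{i_r}}\neq 0$. For the edges $e_j$ at boundary sinks, (\ref{eq:vec_bou_sink}) gives $E_{e_j}=\pm w(e_j)E_j\neq0$. Now process the internal edges in reverse order of the acyclic orientation (so that when we treat an edge $e$, the vectors on all edges reachable in one step from the head of $e$ are already known to be nonzero). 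Using the relations (\ref{eq:lineq_biv})--(\ref{eq:lineq_white}): at a bivalent vertex or at a black vertex, $E_e$ is a nonzero scalar multiple of an already-known nonzero vector, hence nonzero; at a white vertex with incoming $e_3$ and outgoing $e_1,e_2$, we have $E_{e_3}=\pm w_3 E_{e_1}\pm w_3 E_{e_2}$, and here irreducibility together with the parity statement is what saves us — by (\ref{eq:edge_parity}) each of $E_{e_1},E_{e_2}$ has all of its nonzero components of a single, explicitly determined sign, and for components $j$ reachable from both, the two sign patterns $\sigma(e_1,b_j)$ and $\sigma(e_3,b_j)$, $\sigma(e_2,b_j)$, $\sigma(e_3,b_j)$ are governed by the local winding/intersection indices in such a way that the two terms on the right of (\ref{eq:lineq_white}) add with the \emph{same} sign rather than cancelling, so no cancellation occurs and $E_{e_3}\neq 0$. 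The main obstacle is precisely this last point: making the sign bookkeeping at white vertices rigorous, i.e.\ checking that $\sigma(e_3,b_j)\equiv \sigma(e_1,b_j)+\mbox{int}(e_3)+\mbox{wind}(e_3,e_1)\equiv\sigma(e_2,b_j)+\mbox{int}(e_3)+\mbox{wind}(e_3,e_2)\pmod 2$ consistently, so that (\ref{eq:edge_parity}) propagates through the vertex and simultaneously guarantees subtraction--freeness with respect to the canonical basis; this is where one uses the hypothesis that the graph carries a global acyclic orientation in an essential way, and the rest is the straightforward induction. Since Le--networks with the canonical orientation are acyclic (Definition \ref{def:graph} together with the Le--graph construction of \cite{Pos}), the final sentence follows immediately.
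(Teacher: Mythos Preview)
Your proposal is essentially correct and follows the same overall strategy as the paper (acyclicity kills conservative flows, then a parity argument of the type used in Corollary~\ref{cor:bound_source}), but the execution differs from the paper's in two places.

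First, for the parity claim the paper does not close up $\mathcal P$ and $\tilde{\mathcal P}$ with an auxiliary edge from $b_j$ back to the internal vertex $V_e$. Instead it extends both paths \emph{backward}: since there are no internal sources, there is a directed path $P_0$ from some boundary source $b_i$ to the initial vertex of $e$, and by acyclicity $P_0$ shares no edge with $\mathcal P$ or $\tilde{\mathcal P}$. Then $P_0\cup\mathcal P$ and $P_0\cup\tilde{\mathcal P}$ are simple boundary--to--boundary paths, to which Corollary~\ref{cor:bound_source} applies verbatim, giving $\mbox{wind}(P_0)+\mbox{wind}(\mathcal P)\equiv\mbox{wind}(P_0)+\mbox{wind}(\tilde{\mathcal P})$ and $\mbox{int}(P_0)+\mbox{int}(\mathcal P)\equiv\mbox{int}(P_0)+\mbox{int}(\tilde{\mathcal P})\pmod 2$. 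This reduction sidesteps the issue, implicit in your argument, of whether the auxiliary arc $b_j\to V_e$ can be drawn so that the closed curve is simple and so that the same correction terms arise for both paths; your version can be made to work, but the backward extension is cleaner.

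Second, your inductive argument for non--nullity (with its ``main obstacle'' at white vertices) is unnecessary. Once the second equality in (\ref{eq:edge_parity}) is established, non--nullity is immediate and the paper treats it as such: since a PBDTP graph has no internal sinks, every edge $e$ admits a directed path to some boundary sink $b_j$, and then the $j$--th component of $E_e$ equals $(-1)^{\sigma(e,b_j)}\sum_{F}w(F)$ with the sum over a nonempty set of positive terms, hence is nonzero. The sign bookkeeping you worry about at white vertices is exactly what the global parity statement already encodes, so there is nothing further to check.
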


\begin{proof}
  Let $e\in {\mathcal N}$ be an internal edge and let $b_j$ be a boundary sink such that there exists a directed path $P$ starting at $e$ and ending at $b_j$.  In order to prove (\ref{eq:edge_parity}), we need to show that $\mbox{wind}(F)+\mbox{int}(F)$ has the same parity for all directed paths from $e$ to $b_j$. Acyclicity and the absence of internal sources or sinks implies that there exists a directed path $P_0$ starting at a boundary source $b_i$ to $e$. Due to acyclicity, $P_0$ has no common edges with $P$ except $e$. Moreover, any other directed path ${\tilde P}$ from $e$ to $b_j$ may have a finite number of edges in common with $P$ and has no edge in common with $P_0$ except $e$. 

Therefore, using Corollary \ref{cor:bound_source}, we have:
\[
\begin{array}{c}
\mbox{int} (P_0)+ \mbox{int}( P) =\mbox{int} (P_0) + \mbox{int} ({\tilde P}) \quad (\!\!\!\!\!\!\mod 2),\\
\mbox{wind} (P_0) +\mbox{wind} (P) =\mbox{wind} (P_0\cup P) = \mbox{wind} (P_0\cup {\tilde P})
= \mbox{wind} (P_0) + \mbox{wind} ({\tilde P}) \quad (\!\!\!\!\!\!\mod 2),
\end{array}
\]
and the statement follows.
\end{proof}

The absence of null edge vectors for a given network is independent on its orientation and on the choice of ray direction, of vertex gauge and of weight gauge, because of the transformation rules of edge vectors established in Sections \ref{sec:gauge_ray}, \ref{sec:orient} and \ref{sec:different_gauge}. 
We summarize all the above properties of edge vectors in the following Proposition:

\begin{proposition}\label{prop:null_vectors}\textbf{Null edge vectors and changes of orientation, ray direction, weight and vertex gauges in $\mathcal N$.}
Let $({\mathcal N}, {\mathcal O}, \mathfrak{l})$ be a perfectly oriented plabic network with gauge direction $\mathfrak l$.
Let $E_e$ be its edge vector system with respect to the canonical basis at the boundary sink vertices. Then, $E_e\not =0$ on  $({\mathcal N}, {\mathcal O}, \mathfrak{l})$ if and only if $E_e\not =0$ 
on  $({\mathcal N}^{\prime}, {\mathcal O}^{\prime}, \mathfrak{l}^{\prime})$, where ${\mathcal N}^{\prime}$ is obtained from ${\mathcal N}$ changing either its orientation or the gauge ray direction or the weight gauge or the vertex gauge. 
\end{proposition}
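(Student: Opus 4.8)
The plan is to read off, from the transformation formulas of Sections~\ref{sec:gauge_ray}, \ref{sec:orient} and \ref{sec:different_gauge}, that under each of the four operations the edge vector $E_e$ is sent either to a non--zero scalar multiple of itself, or to a non--zero scalar multiple of a vector which vanishes precisely when $E_e$ does. The cases of the gauge ray direction, the weight gauge and the vertex gauge are immediate: by Proposition~\ref{prop:rays} a change of $\mathfrak l$ replaces $E_e$ by $(-1)^{\mbox{int}(V_e)+\mbox{par}(e)}E_e$ at any non--source edge and leaves $E_{e_{i_r}}=A[r]-E_{i_r}$ unchanged at a source edge; by Lemma~\ref{lem:weight_gauge} a weight gauge transformation multiplies the affected edge vectors by a positive constant ($t_u$ or $t_v$) and fixes the others; and by Lemma~\ref{lem:vertex_gauge} a vertex gauge transformation multiplies the at most three affected edge vectors by $\pm1$. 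In all of these cases $E'_e=0$ if and only if $E_e=0$.

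The only operation which does not act simply by rescaling is a change of orientation, and here an argument is needed. Since any change of orientation is a finite composition of elementary ones, it suffices to treat one elementary change and iterate. For a change along a simple closed cycle $\mathcal Q_0$, Lemma~\ref{lemma:cycle} gives $\hat E_e=(-1)^{\epsilon(e)}E_e$ or $\hat E_e=(-1)^{\epsilon(e)}w_e^{-1}E_e$ with the boundary data unchanged, so $\hat E_e=0\iff E_e=0$. For a change along a non--self--intersecting path $\mathcal P_0$ from $b_{i_0}$ to $b_{j_0}$, Lemma~\ref{lemma:path} gives $\hat E_e=\pm w_e^{\mp1}\tilde E_e$ with a non--zero scalar, so $\hat E_e=0\iff\tilde E_e=0$, and it remains to see that $\tilde E_e=0\iff E_e=0$. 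The proof of Lemma~\ref{lemma:path} already shows $\tilde E_e-E_e=\lambda_e A[r_0]$ for some scalar $\lambda_e$; two vanishing facts then finish the job. First, $i_0\in I$ is a source index of $\mathcal O$, so the $i_0$--th component of $E_e$ is $0$ while that of $A[r_0]$ is $1$. Second, $j_0$ is a source index of the new orientation $\hat{\mathcal O}$, so the $j_0$--th component of $\hat E_e$, and hence that of $\tilde E_e$, is $0$, while that of $A[r_0]$ equals $A^{r_0}_{j_0}\neq0$. Therefore: if $E_e=0$ then $\tilde E_e=\lambda_e A[r_0]$, and $0=(\tilde E_e)_{j_0}=\lambda_e A^{r_0}_{j_0}$ forces $\lambda_e=0$, so $\tilde E_e=0$; conversely if $\tilde E_e=0$ then $E_e=-\lambda_e A[r_0]$, and reading off the $i_0$--th component gives $\lambda_e=0$, so $E_e=0$. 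This settles the path case and, by iteration, every change of orientation.

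Finally, Lemma~\ref{lemma:path} was stated under the normalization that the boundary edges have unit weight and meet no gauge ray; as noted in the remark following it, reaching this normalization is a weight gauge transformation together with a finite number of middle vertex insertions treated in Section~\ref{sec:moves_reduc}, all of which multiply the edge vectors by non--zero constants and so do not affect the vanishing of $E_e$. Hence the argument applies in full generality. The only genuinely new ingredient is the last one --- using the vanishing of the $i_0$--th and $j_0$--th components to kill the correction term $\lambda_e A[r_0]$; everything else is bookkeeping of scalars already computed in the earlier sections, so I do not anticipate any real obstacle.
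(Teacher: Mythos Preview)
Your proposal is correct and follows essentially the same approach as the paper, which simply asserts that each of the four operations acts on $E_e$ by a non--zero multiplicative constant ``provided we use Lemma~\ref{lemma:path} to represent the edge vectors when changing of base.'' Your contribution is to make explicit the one step the paper leaves implicit, namely the verification that $\tilde E_e=0\iff E_e=0$ in the path case; your argument via the $i_0$--th and $j_0$--th components is clean and correct (equivalently, one can note that the new boundary data $\{E_j\}_{j\in\bar I\setminus\{j_0\}}\cup\{E_{j_0}-\frac{1}{A^{r_0}_{j_0}}A[r_0]\}$ remain linearly independent, so the coefficients of $E_e$ and $\tilde E_e$ in their respective bases coincide).
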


The above statement follows from the fact that any change in the gauge freedoms - ray direction, weight gauge, vertex gauge - or in the network orientation acts by non--zero multiplicative constant on the edge vector, provided we use Lemma \ref{lemma:path} to represent the edge vectors when changing of base. This property suggests the fact that each graph possesses a unique system of relations up to gauge equivalence, and in \cite{AG7} we prove that this is indeed the case. In the next Corollary we explicitly discuss the special case of acyclically orientable networks.

\begin{corollary}\label{cor:null_changes}\textbf{Characterization of edge vectors on acyclically orientable networks}
Let $(\mathcal N, \mathcal O, \mathfrak l)$ be an acyclically orientable plabic network representing a point in an irreducible positroid cell. Then, the edge vector components are subtraction--free rational in the weights for any choice of orientation and gauge ray direction, and any given change of gauge or orientation on the network acts on the right hand side of (\ref{eq:edge_parity}) with a non-zero multiplicative factor which just depends on the edge. 
\end{corollary}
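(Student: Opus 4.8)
The plan is to combine Theorem \ref{thm:null_acyclic} with the transformation rules already established in Sections \ref{sec:gauge_ray}, \ref{sec:orient} and \ref{sec:different_gauge}, together with Proposition \ref{prop:null_vectors}. First I would observe that since $(\mathcal N,\mathcal O,\mathfrak l)$ is acyclically orientable and represents a point in an irreducible positroid cell, we may fix \emph{any} acyclic orientation $\mathcal O' = \mathcal O(I')$ of $\mathcal N$ and any gauge ray direction. By Theorem \ref{thm:null_acyclic}, for this choice the edge vectors are given by (\ref{eq:edge_parity}), i.e.\ each component $(E_e)_j$ equals $(-1)^{\sigma(e,b_j)}\sum_{F\in{\mathcal F}_{e,b_j}(\mathcal G)} w(F)$, which is manifestly a subtraction--free expression in the weights (the denominator being $1$ because the orientation is acyclic, by Theorem \ref{theo:consist}), and in particular all edge vectors are non--null. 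This establishes the subtraction--free property for the distinguished acyclic orientation.

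Next I would treat a general orientation $\mathcal O$ of $\mathcal N$: by Theorem \ref{theo:orient} it is obtained from an acyclic one by a finite composition of elementary changes of orientation, and Lemmas \ref{lemma:path} and \ref{lemma:cycle} give the explicit relation. The key point is that, when we express the edge vectors in the \emph{new} basis $\{A[r]\}_{r\in[k]}$ adapted to the new source set (rather than the canonical basis), the constants $c^r_e$ in (\ref{eq:orient}) are absorbed: writing both $E_e$ and $\hat E_e$ with respect to the basis naturally attached to each orientation, formulas (\ref{eq:hat_E_P}) and (\ref{eq:hat_E_Q}) show that $\hat E_e$ is obtained from $\tilde E_e$ (and $\tilde E_e$ from $E_e$, since their difference is proportional to a single row $A[r_0]$) by a factor $(-1)^{\epsilon(e)}$ on edges outside the path/cycle and by $(-1)^{\epsilon(e)}/w_e$ on edges inside it. Both are non--zero scalars depending only on $e$ (and on the chosen sequence of elementary moves), and in the basis adapted to $\hat{\mathcal O}$ the subtraction--free form (\ref{eq:edge_parity}) is preserved up to this scalar. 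Then I would invoke Proposition \ref{prop:rays} for the dependence on $\mathfrak l$, which multiplies each edge vector by $(-1)^{\mbox{int}(V_e)+\mbox{par}(e)}$, again a sign depending only on $e$; and Lemmas \ref{lem:weight_gauge} and \ref{lem:vertex_gauge} for the weight and vertex gauges, which likewise multiply $E_e$ by a positive constant $t_U$ (weight gauge) or a sign (vertex gauge) depending only on $e$. Composing these, any change of orientation, gauge ray direction, weight gauge or vertex gauge acts on the right--hand side of (\ref{eq:edge_parity}) by a non--zero multiplicative factor depending only on the edge, and the subtraction--free rationality in the weights is preserved.

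The main obstacle is bookkeeping rather than conceptual: one must be careful that when the orientation changes, the ``subtraction--free'' statement is understood \emph{with respect to the new basis of rows} $A[r]$, not the fixed canonical basis — in the canonical basis the extra terms $\sum_r c^r_e A[r]$ of (\ref{eq:orient}) generally destroy subtraction--freeness, which is precisely the phenomenon that \cite{Pos} accounts for by changing the reference chart. Concretely one checks that the formula (\ref{eq:edge_parity}) in the new orientation $\hat{\mathcal O}$ — which is again acyclic if we start from an acyclic orientation and the elementary moves preserve acyclicity, or which follows from the already proven acyclic case applied directly to $\hat{\mathcal O}$ when $\hat{\mathcal O}$ is itself acyclic — expresses $\hat E_e$ as a subtraction--free combination of the canonical vectors $E_l$, $l\in\bar I'$, and that the scalar relating it to $E_e$ read in the $\{A[r]\}$ basis is the product of the elementary factors listed above. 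Since $\mathcal N$ is acyclically orientable, it suffices to verify everything between two acyclic orientations and then transport by Proposition \ref{prop:null_vectors}; the non--vanishing of all factors guarantees no null edge vectors arise, completing the proof.
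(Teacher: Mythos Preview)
Your proposal is correct and follows essentially the same route as the paper: invoke Theorem~\ref{thm:null_acyclic} for the acyclic case, then use Lemma~\ref{lemma:path} (and the analogous Lemma~\ref{lemma:cycle}) to show that under an elementary change of orientation the vector $\hat E_e$ differs from $\tilde E_e$ by the explicit nonzero factor $(-1)^{\epsilon(e)}$ or $(-1)^{\epsilon(e)}/w_e$, with $\tilde E_e$ itself subtraction--free once written in the basis adapted to the new source set (the paper records this as formula~(\ref{eq:tilde_parity})). Your treatment is in fact more explicit than the paper's, which dismisses the gauge ray, weight gauge and vertex gauge cases as trivial and only spells out the orientation change; your careful remark that subtraction--freeness must be read in the new basis rather than the fixed canonical one is exactly the point the paper makes when it says the solution is ``the same up to a change of basis and a multiplicative factor.''
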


\begin{proof}
If $\mathcal N$ is acyclically oriented, the statement follows from Theorem \ref{thm:null_acyclic}.
The unique case which is not completely trivial is the change of orientation along a directed path from $i_0$ to $j_0$. Then, according to the proof of Lemma~\ref{lemma:path}, $\hat E_e$ differs from $\tilde E_e$ by a non-zero multiplicative factor (see Equation (\ref{eq:hat_E_P}), and $\tilde E_e$ is just a linear combination with the same coefficients as  $E_e$ with respect to a different set of linearly independent vectors at the boundary sinks.
\end{proof}

In Section~\ref{sec:moves_reduc}, we discussed the effect of Postnikov moves and reductions on the transformation rules of edge vectors on equivalent networks. In particular, moves (M1), (M2)-flip and (M3) preserve both the plabic class and the acyclicity of the network.

\begin{corollary}\label{cor:Le_net}\textbf{Absence of null vectors for plabic networks equivalent to the Le-network.}
Let the positroid cell $\S$ be irreducible and let the plabic network  $({\mathcal N},{\mathcal O},\mathfrak l)$ represent a point in  $\S$ and be equivalent to the Le-network via a finite sequence of moves (M1), (M3) and flip moves (M2). Then ${\mathcal N}$ does not possess null edge vectors.
\end{corollary}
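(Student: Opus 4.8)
The plan is to deduce the statement from Theorem~\ref{thm:null_acyclic}: I would show that under the allowed moves the network stays acyclically orientable, and then transport the conclusion from an acyclic orientation back to the given pair $({\mathcal O},\mathfrak l)$ using Proposition~\ref{prop:null_vectors}. Recall that the Le-network carries the canonical acyclic orientation of \cite{AG3}, so it is acyclically orientable; since $({\mathcal N},{\mathcal O},\mathfrak l)$ is move-equivalent to it, $\mathcal N$ represents a point of the same cell $\S$, which is irreducible by hypothesis.

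First I would check that $\mathcal N$ is acyclically orientable. As recalled at the end of Section~\ref{sec:moves_reduc}, the moves (M1), (M3) and the flip move (M2) preserve both the PBDTP class and acyclicity: applying any one of them to an acyclically oriented PBDTP network and orienting the newly created edges by Postnikov's rule yields again an acyclically oriented PBDTP network. Running along the given finite sequence of such moves from the canonically oriented Le-network to $\mathcal N$ and, at each step, orienting the edges produced by the move according to Postnikov's rule, one obtains by induction a perfect orientation ${\mathcal O}'$ of $\mathcal N$ that is acyclic.

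Then I would fix the point $[A]\in\S$ represented by $({\mathcal N},{\mathcal O})$ with its positive weights, re-orient $\mathcal N$ from ${\mathcal O}$ to the acyclic ${\mathcal O}'$ replacing the weight of each reversed edge by its reciprocal, so that $({\mathcal N},{\mathcal O}')$ still represents $[A]$. Since $\S$ is irreducible, Theorem~\ref{thm:null_acyclic} applies to $({\mathcal N},{\mathcal O}',\mathfrak l)$ and gives that every edge vector is non-null: indeed each $E_e$ has the subtraction-free form (\ref{eq:edge_parity}), and for any internal edge $e$ the sum $\sum_{F} w(F)$ over directed paths from $e$ to some boundary sink is strictly positive, because a PBDTP graph has no internal sinks and hence a directed path issued from $e$ always reaches the boundary. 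Finally, Proposition~\ref{prop:null_vectors} asserts that the property ``$E_e\neq 0$ for all $e$'' is invariant under change of orientation and of gauge ray direction; applying it to the passage from $({\mathcal N},{\mathcal O}',\mathfrak l)$ back to $({\mathcal N},{\mathcal O},\mathfrak l)$ yields the claim.

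The main obstacle, and the reason the argument must go through the whole move-equivalence class rather than through a step-by-step computation, is that the local transformation rules of Section~\ref{sec:moves_reduc} do not suffice by themselves: the flip move (M2) can in general create or destroy null edge vectors, as observed in the discussion there. The content of the corollary is precisely that this cannot happen while one stays inside the acyclically orientable class, which is why Theorem~\ref{thm:null_acyclic} has to be invoked globally. One must also use ``preservation of acyclicity'' in its global form --- that no directed cycle is created, including one passing through the region of the move --- and this is exactly why the reductions (R1)--(R3) are excluded from the statement: the dipole and leaf reductions can genuinely produce components isolated from the boundary, hence null edge vectors.
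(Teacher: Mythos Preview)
Your proposal is correct and follows essentially the same route as the paper: the paper simply observes, immediately before the corollary, that (M1), the flip move (M2) and (M3) preserve both the PBDTP class and acyclicity, so that $\mathcal N$ inherits an acyclic orientation from the canonical one on the Le-network, and then Theorem~\ref{thm:null_acyclic} together with Proposition~\ref{prop:null_vectors} yield the claim. Your write-up spells out these steps in more detail than the paper does; the only minor inaccuracy is that the acyclicity-preservation remark is located in Section~\ref{sec:null_vectors} just above the corollary rather than at the end of Section~\ref{sec:moves_reduc}, and your closing commentary on why the reductions are excluded is not quite on target (the relevant issue is that (R1), read backwards as parallel-edge creation, destroys acyclic orientability), but neither point affects the validity of your argument.
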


Null-vectors may just appear in reducible not acyclically orientable networks as in the example of Figure \ref{fig:zero-vector}. We plan to discuss thoroughly the mechanism of creation of null edge vectors in a future publication.
Edges carrying null vectors are contained in connected maximal subgraphs such that every edge belonging to one such subgraph carries a null vector and all edges belonging to its complement and having a vertex in common with it carry non zero vectors. For instance in the case of Figure \ref{fig:zero-vector}[left] there is one such subgraph and it consists of the edges $u,v,w$ and the vertices $V_1$, $V_2$, $V_3$ and $V_4$.
We conjecture that we may always choose the weights on reducible networks representing a given point so that all edge vectors are not null using the extra freedom in fixing the edge weights in reducible networks.

\begin{conjecture}\textbf{Elimination of null vectors on reducible plabic networks}\label{conj:null}
Let $({\mathcal N}, \mathcal O, \mathfrak l)$ be a reducible plabic network representing a given point $[A]\in\S \subset \GTNN$ for some irreducible positroid cell and such that it possesses a finite number of edges $e_1,\dots, e_s$ carrying null vectors,
$E_{e_l} = 0$, $l\in [s]$, and such that through each such edge there exists a path from some boundary source to some boundary sink. Then using the gauge freedom for unreduced graphs of Remark \ref{rem:gauge_freedom}, we may always change the weights on $\mathcal N$ so that the resulting network $({\tilde {\mathcal N}}, \mathcal O, \mathfrak l)$ still represents $[A]$ and the edge vectors ${\tilde E}_e \not =0$, for all $e\in {\tilde {\mathcal N}}$.
\end{conjecture}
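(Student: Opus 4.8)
The plan is to turn the statement into a genericity argument over the positive parameters of the unreduced graph gauge freedom. By Proposition~\ref{prop:null_vectors} the set of edges carrying null vectors is insensitive to the orientation $\mathcal O$, the gauge ray direction $\mathfrak l$ and the choice of representative within a vertex gauge orbit, so we fix $\mathcal O$ and $\mathfrak l$ and work modulo the vertex gauge of Remark~\ref{rem:gauge_weight}. I would first record the elementary observation that, under the hypotheses, every edge of $\mathcal G$ lies on a directed path to some boundary sink: a null edge does by assumption, and a non--null edge does because a non--zero component of $E_e$ certifies such a path; since the existence of a directed path to a sink is a property of $(\mathcal G,\mathcal O)$ and not of the weights, it persists under any reweighting, so the only possible cause of $E_e=0$ is cancellation in the numerator of Talaska's formula~(\ref{eq:tal_formula}), never the absence of paths.

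The second step localises the problem at the white trivalent vertices. By~(\ref{eq:lineq_white}), at a white trivalent vertex with incoming edge $e_3$ and outgoing edges $e_1,e_2$ one has $E_{e_3}=0$ precisely when $E_{e_1}$ and $E_{e_2}$ are antiparallel (with the sign prescribed by the winding and intersection indices), while~(\ref{eq:lineq_biv})--(\ref{eq:lineq_black}) show that at black and bivalent vertices a null vector on an outgoing edge forces null vectors on all incoming ones. Tracing any null edge upstream through black and bivalent vertices therefore terminates either at a white trivalent vertex with antiparallel outgoing vectors or at a boundary source edge; the latter is impossible since $\S$ is irreducible, so the source is not isolated and its edge vector is non--null by Postnikov. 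Hence $\mathcal N$ has a null edge if and only if one of the finitely many white trivalent vertices has antiparallel outgoing edge vectors, and it suffices to choose the weights so as to avoid all these antiparallelisms simultaneously.

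The third step is the genericity count. Since $\mathcal G$ is reducible it is connected to a reduced PBDTP graph $\mathcal G_0$ by a finite sequence of Postnikov moves and of reductions (R1), (R3) -- (R2) cannot occur because we exclude components isolated from the boundary -- and $\mathcal G_0$, being reduced for the irreducible cell $\S$, carries no null vectors (Theorem~\ref{thm:null_acyclic}, equivalently Corollary~\ref{cor:Le_net}). Undoing this sequence, the positive weights on $\mathcal G$ representing the fixed point $[A]$, taken modulo vertex gauge, form a family $\{w(t):t\in\mathbb R_{>0}^{N}\}$ fibred over the single vertex gauge class on $\mathcal G_0$, each inverse (R1) contributing one free positive ratio through ${\tilde w}_1=w_1(w_2+w_3)w_4$ and each inverse (R3) one through the face splitting ${\tilde f}_{12}=f_1f_2$. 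By Theorem~\ref{theo:null}, for every edge $e$ and sink $b_j$ the component $(E_e)_j(w(t))$ equals $N_{e,j}(t)/D(t)$ with $D(t)=\sum_{C}w(C)>0$ and $N_{e,j}$ polynomial in $t$; so at a fixed white vertex with outgoing edges $e_1,e_2$ the set of $t$ for which $E_{e_1}(t)$ and $E_{e_2}(t)$ are antiparallel is contained in the common zero locus of the polynomials $N_{e_1,j}N_{e_2,j'}-N_{e_1,j'}N_{e_2,j}$, $j,j'\in\bar I$. If at every white vertex these polynomials do not all vanish identically, the union of these zero loci has measure zero, any $t^{*}$ in its complement gives a network $(\tilde{\mathcal N},\mathcal O,\mathfrak l)$ still representing $[A]$ with $\tilde E_e\neq0$ for all $e$, and the theorem follows.

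The hard part -- and the reason the statement is only conjectured -- is exactly this non--degeneracy, i.e. the claim that the flow geometry never rigidly forces $E_{e_1}\parallel E_{e_2}$ at a white vertex for every $t$. I would attack it by induction on the number of reductions, using the transport rules of Section~\ref{sec:moves_reduc} and the uniqueness of the linear system (Theorem~\ref{theo:consist}): one checks that an inverse (R1) merely rescales the two parallel edges' vectors against a common surviving vector $E_4$ and so creates no new antiparallelism, so all the freedom to be exploited sits at the inverse (R3) steps, where the splitting parameter $f_1/f_2$ perturbs the two vectors feeding a white vertex through the edges bounding the distinct faces $f_1$ and $f_2$. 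The crux is to prove that this perturbation acts independently on the two vectors -- for instance by exhibiting, possibly after applying moves preserving the relevant data, a flow contributing to $N_{e_1,j}$ but not to $N_{e_2,j}$ whose weight depends nontrivially on the parameter -- so that the candidate minors cannot all vanish identically; establishing this independence in full generality, rather than in examples such as Figure~\ref{fig:zero-vector}, is the step I expect to require the most work.
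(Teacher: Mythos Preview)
The statement you are attempting to prove is labeled a \emph{Conjecture} in the paper, and the paper offers no proof of it---only the illustrative verification on the example of Figure~\ref{fig:zero-vector}. So there is no argument in the paper to compare yours against; the authors explicitly leave this open and say they ``plan to discuss thoroughly the mechanism of creation of null edge vectors in a future publication.''

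Your proposal is a reasonable strategy sketch, and you are candid that it is incomplete: you isolate the non--degeneracy of the polynomial antiparallelism conditions as the missing ingredient and do not claim to have established it. That honesty is appropriate, but it means what you have written is not a proof. A couple of smaller points are also worth tightening. First, your upstream trace at a white trivalent vertex does not behave as you describe: if the null edge is one of the two \emph{outgoing} edges at a white vertex, the incoming edge need not be null, so the trace stalls there without producing the ``antiparallel outgoing vectors'' configuration; tracing \emph{downstream} (through black and bivalent vertices to the white vertex where the null edge is the \emph{incoming} one) gives the characterisation you want. Second, the assertion that the fiber over $[A]$ modulo the vertex gauge is globally parametrised by $\mathbb{R}_{>0}^{N}$, with one coordinate per inverse (R1) or (R3), is plausible but not justified here; the inverse moves interact with the vertex gauge and with each other, and you would need to argue that the resulting map to the fiber is surjective (or at least hits an open set) before the genericity step can be invoked. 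Finally, the claim that inverse~(R1) ``creates no new antiparallelism'' is local to the new white vertex; it says nothing about whether the free ratio $w_2/w_3$ can be used to destroy an antiparallelism elsewhere in the network, which is what you actually need. Until these points---above all the non--degeneracy---are settled, the conjecture remains open.
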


The example in Figure \ref{fig:zero-vector} satisfies the conjecture. Both networks represent the same point 
$[ 2p/(1+p+q),1] \in Gr^{\mbox{\tiny TP}}(1,2)$, but for the second network all edge vectors are different from zero. Indeed for any choice of $s>0$, using (\ref{eq:tal_formula}), we get:
$$
E_{\tilde w}=-E_{\tilde u}=-E_{\tilde v}=\left(\frac{1+p}{1+p+q},0\right).
$$

\section{Amalgamation of positroid cells and Kasteleyn orientations}\label{sec:comb}

In this Section we provide two applications of the geometric construction of the previous Sections:
\begin{enumerate}
\item We show that the edge vectors represent a natural parametrization for the amalgamation procedure introduced in \cite{FG1} (see also \cite{AGP1,AGP2,Kap,MS}) in the total non--negative setting;
\item We discuss the statistical mechanical interpretation of the geometric signatures.
\end{enumerate}

\subsection{Reformulation of the geometric system of relations as a Lam system of relations for half-edge vectors}\label{sec:complete}

In this Section we recall some results from  \cite{AG7} which we apply in the following Sections.

We start reformulating the geometric relations for the edge vectors in the form proposed by Lam \cite{Lam2}. In \cite{AG7} we have proven that any two geometric signatures on the same graph are connected by a gauge transformation. Moreover, if the plabic graph is a PBDTP graph, that is for any edge of $\mathcal G$  there exists a directed path from the boundary to the boundary containing it, then the equivalence class of the geometric signature is the only one which guarantees that Lam system of relations has full rank for any choice of positive weights; moreover, in such case the image is exactly $\S$ by Corollary~\ref{cor:bound_source}. Finally, in \cite{AG7} it is shown that the total signature of each face just depends on the number of white vertices bounding it.

In Lam \cite{Lam2} it is proposed to parametrize the boundary measurement map using spaces of relations on plabic graphs introducing formal half--edge variables $z_{U,e}$ which satisfy the following system on the graph:
\begin{definition}\textbf{Lam system of relations}\label{def:lam}
\begin{enumerate}
\item $w_{U,V}$ is the weight of the oriented edge $e=(U,V)$. Therefore, if one reverses the orientation, $w_{V,U} = \left(w_{U,V}\right)^{-1}$;
\item To any edge $e=(U,V)$ it is assigned a signature $\epsilon_{U,V}\in\{0,1\}$, $\epsilon_{U,V}=\epsilon_{V,U}$; 
\item For any edge $e=(U,V)$, $z_{U,e} = (-1)^{\epsilon_{U,V}}w_{U,V} z_{V,e}$;
\item If $e_i$, $i\in [m]$, are the edges at an $m$-valent white vertex $V$, then $\sum_{i=1}^m z_{V,e_i} =0$;
\item If $e_i$, $i\in [m]$, are the edges at an $m$-valent black vertex $V$, then $z_{V,e_i} =z_{V,e_j}$ for all $i,j\in[m]$.
\end{enumerate}
\end{definition}
  
In \cite{Lam2} it is conjectured that there exist simple rules to assign signatures for the edge weights so that the above system has full rank for any choice of positive weights, and the image of this weighted space of relations is the positroid cell $\S\subset\GTNN$ corresponding to the graph. 

\begin{remark}
If all internal vertices are trivalent, then the linear system can be interpreted as the amalgamation of little positive Grassmannians $Gr^{\mbox{\tiny TP}}(1,3)$, $Gr^{\mbox{\tiny TP}}(2,3)$ \cite{Lam2}.
\end{remark}

\begin{definition}\textbf{Equivalence between edge signatures}.\label{def:admiss_sign_gauge}
Let $\epsilon^{(1)}_{U,V}$ and $\epsilon^{(2)}_{U,V}$ be two signatures on all the edges $e=(U,V)$ of the plabic graph $\mathcal G$, included the edges at the boundary. We say that the two signatures are equivalent if there exists an index $\eta(U) \in \{ 0,1\}$ at each internal vertex $U$ such that
\begin{equation}\label{eq:equiv_sign}
\epsilon^{(2)}_{U,V} = \left\{\begin{array}{ll}
\epsilon^{(1)}_{U,V} +\eta(U)+\eta(V), \mod(2) & \mbox{ if } e=(U,V) \mbox{ is an internal edge},\\
\epsilon^{(1)}_{U,V} +\eta(U), \mod(2) & \mbox{ if } e=(U,V) \mbox{ is the edge at some boundary vertex } V.
\end{array}\right.
\end{equation} 
\end{definition}

It is clear that
\begin{proposition}
If the system of Lam relations has full rank for a given collection of weights and signature $\epsilon^{(1)}_{U,V}$, then it has full rank for any other signature $\epsilon^{(2)}_{U,V}$ equivalent to $\epsilon^{(1)}_{U,V}$ and the same collection of weights.
\end{proposition}

\begin{definition}\textbf{Geometric signature.} \label{def:geometric-signature}
Let $(\mathcal G, \mathcal O, \mathfrak l)$ be a plabic graph representing a $D$--dimensional positroid cell $\S \subset \GTNN$ with perfect orientation $\mathcal O$ associated to the base $I$ and gauge ray direction $\mathfrak l$. We call \textbf{geometric signature} on  $(\mathcal G, \mathcal O, \mathfrak l)$ any signature equivalent to the following one (the right-hand sides of all equations are taken  $\mod(2)$):
\begin{enumerate}
\item If $e=(V,b_j)$ is the edge at the boundary sink $b_j$, $j\in \bar{I}$, then
\begin{equation}
\label{eq:lin_lam_1.0.5}
\epsilon_{V,b_j}=\left\{\begin{array}{ll} \mbox{int}(e), & \mbox{ if } V \mbox{ is black,} \\ 
1+ \mbox{int}(e) + \mbox{wind}(e_1,e), & \mbox{ if } V  \mbox{ is white and } e_1 \mbox{ is incoming at } V;
                        \end{array} \right.          
\end{equation}

\item If $e=(b_i,V)$ is the edge at the boundary source $b_i$, $i\in I$, then
\begin{equation}
\label{eq:lin_lam_1.1}
\epsilon_{b_i,U}=\left\{\begin{array}{ll} 1+\mbox{int}(e)+\mbox{wind}(e,e_3), & \mbox{ if } V \mbox{ is black and }  e_3 \mbox{ is outgoing at } V,\\ 
1+ \mbox{int}(e), & \mbox{ if } V  \mbox{ is white};
      \end{array} \right.
\end{equation}
\item If $e_3=(U,V)$ is an internal edge, then
\begin{equation}\label{eq:lam_corr_edge}
\resizebox{\textwidth}{!}{$ 
\epsilon_{U,V}= \left\{ \begin{array}{ll}
\mbox{int}(e_3), & \mbox{ if } U \mbox{ black and } V { white};\\
1+\mbox{int}(e_3)+\mbox{wind}(e_1,e_3), & \mbox{ if } U, V \mbox{ white and } e_1 \mbox{ incoming at } U;\\
1+ \mbox{int}(e_3)+\mbox{wind}(e_1,e_3)+\mbox{wind}(e_3,e_5), & \mbox{ if } e_1 \mbox{ incoming at } U \mbox{ white and } e_5 \mbox{ outgoing at } V\mbox{ black;}\\
\mbox{int}(e_3) +\mbox{wind}(e_3,e_5), & \mbox{ if } U, V \mbox{ black and } e_5 \mbox{ outgoing at } V.
\end{array} \right.
$}
\end{equation}
\end{enumerate}
  
\end{definition}
\begin{figure}%[H]
\centering
{\includegraphics[width=0.48\textwidth]{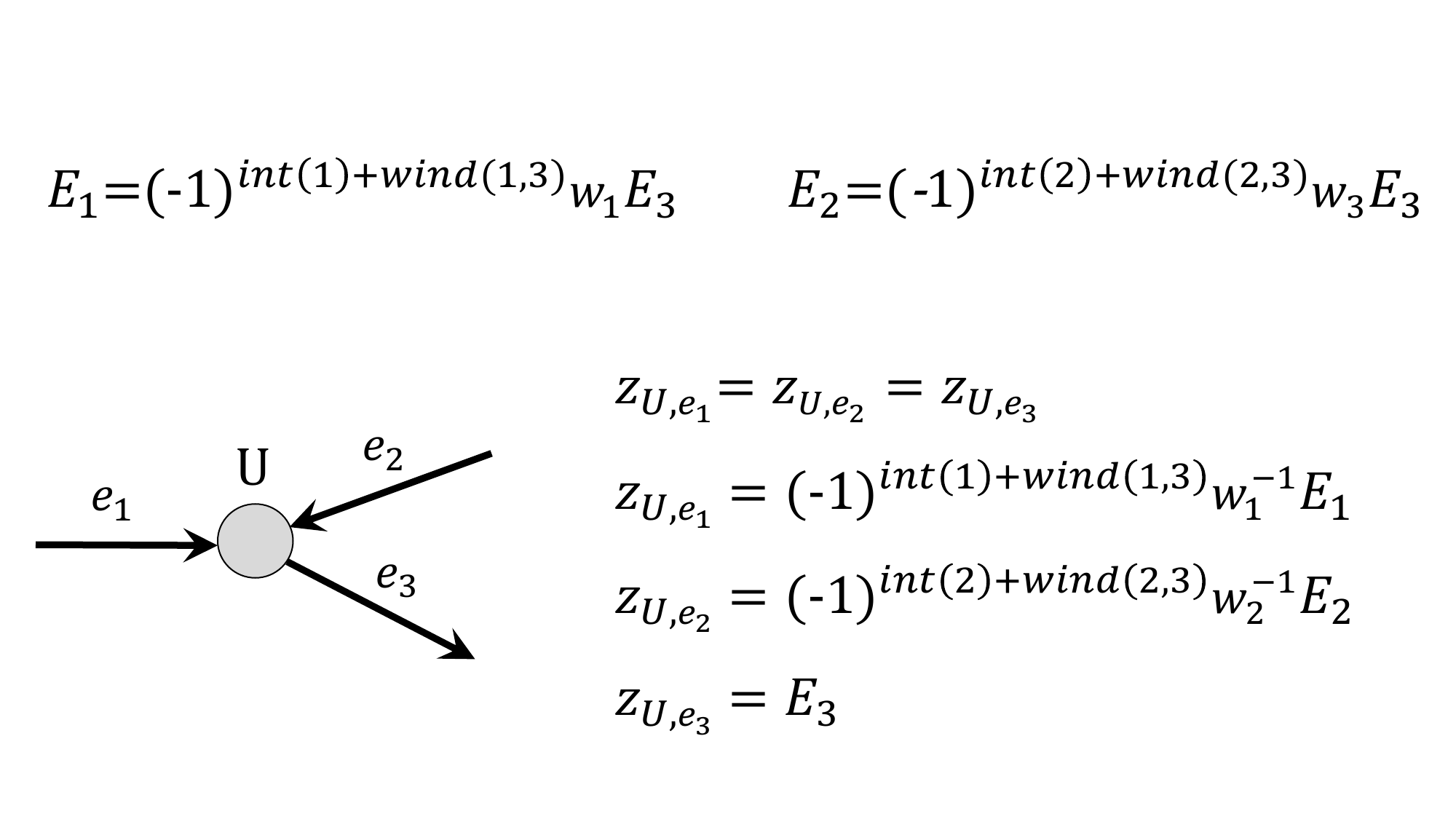}}
\hfill
{\includegraphics[width=0.48\textwidth]{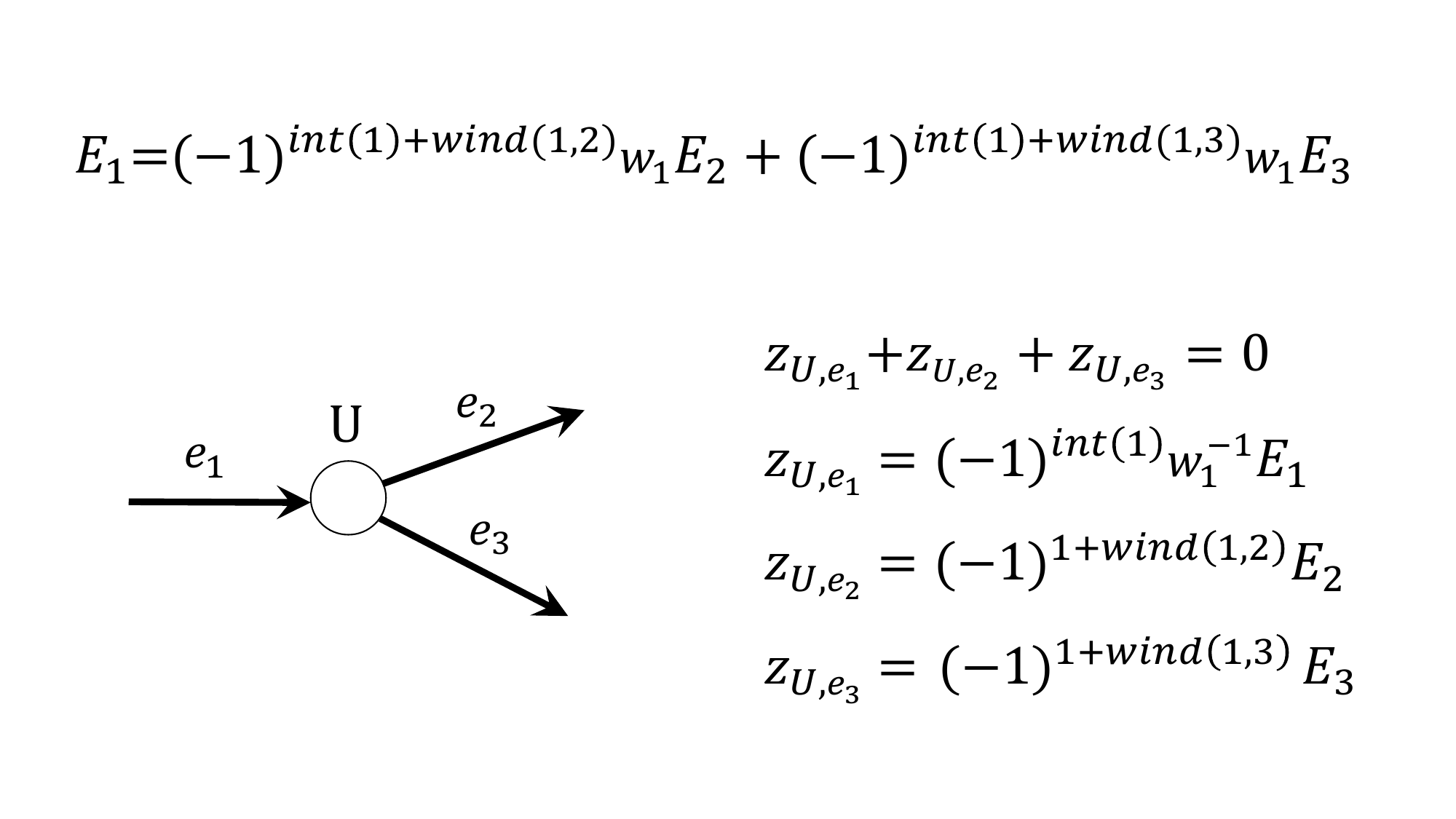}}
{\includegraphics[width=0.48\textwidth]{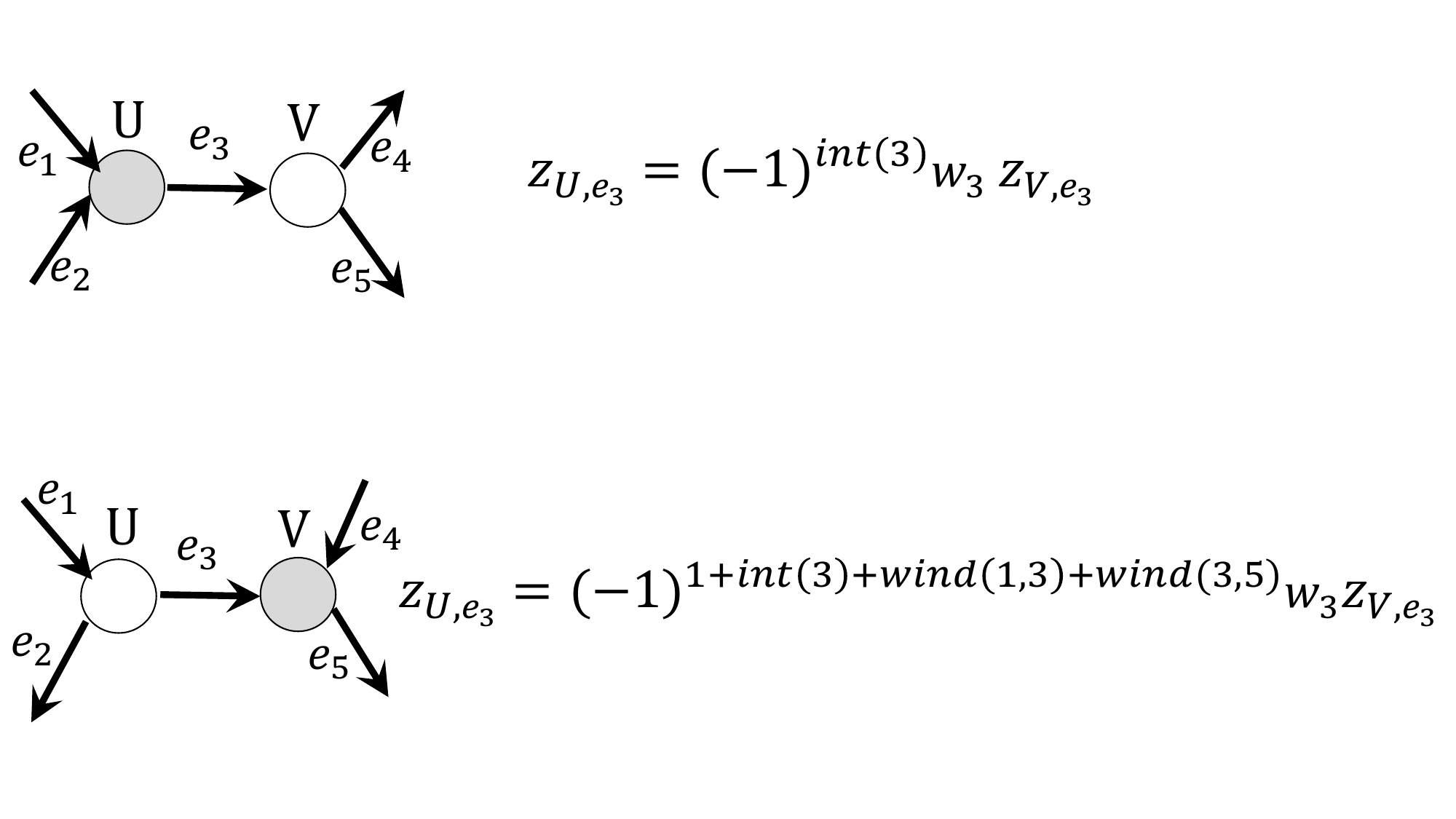}}
\hfill
{\includegraphics[width=0.48\textwidth]{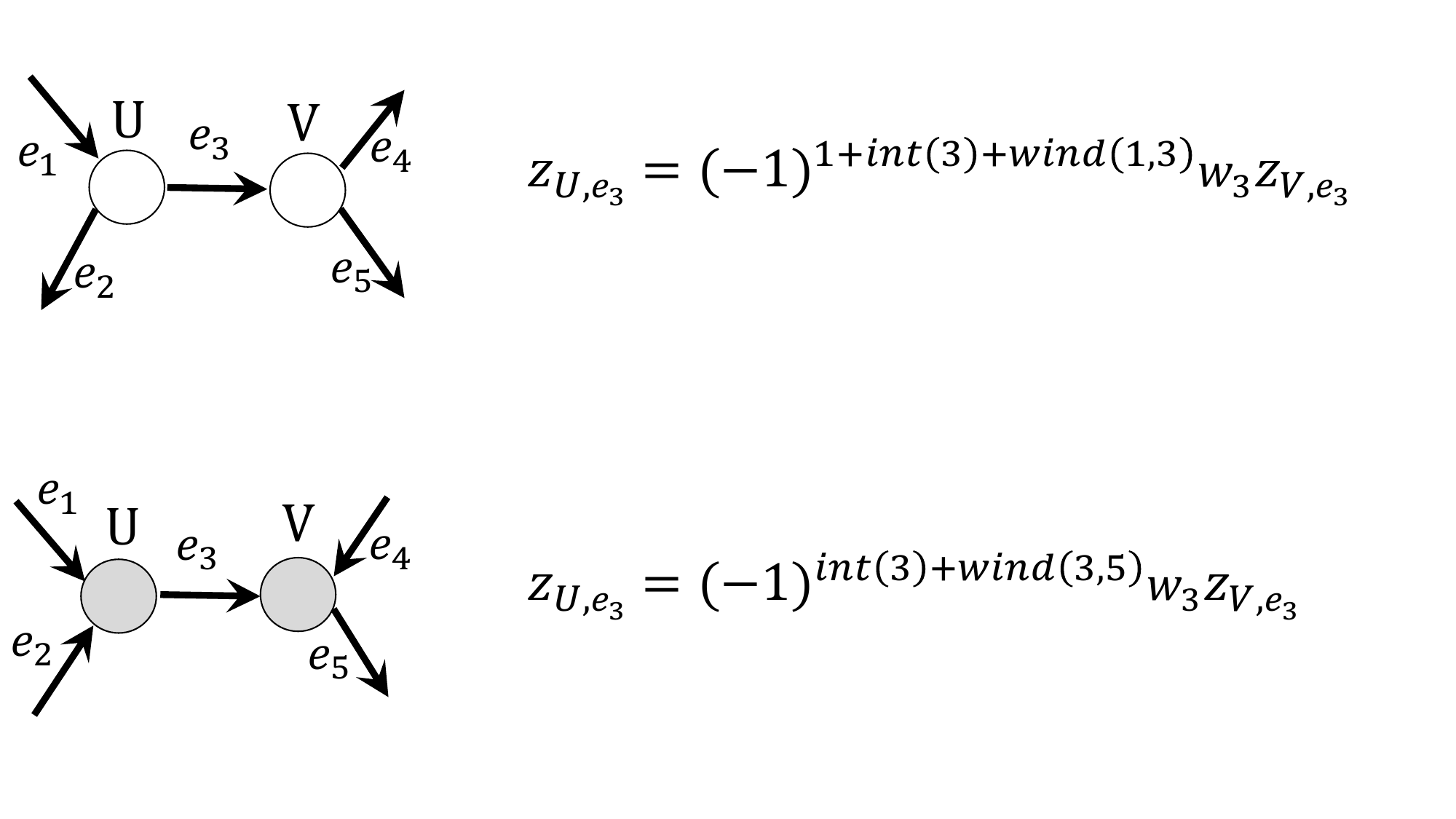}}
\caption{\small{\sl The reformulation of the linear relations for the edge vectors as geometric relations for half-edge vectors compatible with Lam \cite{Lam2} approach at the vertices [top] and at edges [bottom]. We use the abridged notations $\mbox{wind}(i,j)\equiv \mbox{wind}(e_i,e_j)$ and $\mbox{int}(i)\equiv \mbox{int}(e_i)$.}\label{fig:lin_lam1}
}
\end{figure}
\smallskip

\begin{theorem} \cite{AG7} \label{lemma:lam_rela}
Let the network $(\mathcal N, \mathcal O, \mathfrak l)$ of graph $\mathcal G$ be fixed. Let the geometric signature be as in Definition~\ref{def:geometric-signature} and the half-edge vectors be defined as follows:
\begin{enumerate}
\item If $U$ is a black vertex of valency $m$, and $e_m$ denotes the unique outgoing edge at $U$, then we define:
  \begin{equation}\label{eq:z_vs_E_b}
  z_{U,e_j}=\left\{\begin{array}{ll} (-1)^{\mbox{int}(e_j) + \mbox{wind}(e_j,e_m)} w_{e_j}^{-1} E_{e_j}, & \ \ \mbox{if} \ \   j\ne m ;\\
 E_{e_m}, & \ \ \mbox{if} \ \   j= m;                   
  \end{array}\right.  
\end{equation} 
\item If $U$ is a white vertex of valency $m$, and $e_1$ denotes the unique incoming edge at $U$, then we define:
\begin{equation}\label{eq:z_vs_E_w} 
  z_{U,e_j}=\left\{\begin{array}{ll} (-1)^{1+ \mbox{wind}(e_j,e_1)}  E_{e_j}, & \ \ \mbox{if} \ \   j\ne 1 ;\\
 (-1)^{\mbox{int}(e_1)} w_{e_1}^{-1}  E_{e_1}, & \ \ \mbox{if} \ \   j= 1;                   
  \end{array}\right.  
\end{equation}  
\end{enumerate}
Then the linear system of edge vectors defined in Lemma~\ref{lem:relations} is equivalent to Lam system of relations in Definition~\ref{def:lam}. Therefore the latter has full rank for any choice of positive weights and induces Postnikov boundary measurement map in the sense of  Corollary~\ref{cor:bound_source}.
\end{theorem}
  
\begin{remark}
When passing from the system of edge vectors (\ref{eq:lineq_biv})--(\ref{eq:lineq_white}) to the system of relations for half--edge vectors, there is a gauge freedom at each vertex which is evident from Figure \ref{fig:lin_lam1}[top]. Indeed, at any internal black vertex $U$ we may use 
\begin{equation}\label{eq:corr_black_2}
 z_{U,e_j}=\left\{\begin{array}{ll} (-1)^{1+\mbox{int}(e_j) + \mbox{wind}(e_j,e_m)} w_{e_j}^{-1} E_{e_j}, & \ \ \mbox{if} \ \   j\ne m ;\\
 -E_{e_m}, & \ \ \mbox{if} \ \   j= m,                   
  \end{array}\right.    
\end{equation}
instead of (\ref{eq:z_vs_E_b}).

Similarly, at any white vertex $U$, we may use
\begin{equation}\label{eq:corr_white_2}
 z_{U,e_j}=\left\{\begin{array}{ll} (-1)^{\mbox{wind}(e_j,e_1)}  E_{e_j}, & \ \ \mbox{if} \ \   j\ne 1 ;\\
 (-1)^{1+\mbox{int}(e_1)} w_{e_1}^{-1}  E_{e_1}, & \ \ \mbox{if} \ \   j= 1,              
\end{array}\right.                
\end{equation}
instead of instead of (\ref{eq:z_vs_E_w}).

These alternative choices of the correspondence between half-edge vectors and edge vectors are equivalent up to vertex gauge transformation, therefore they correspond to gauge-equivalent geometric signatures.
\end{remark}

\begin{remark}
We remark that with our definition of geometric signature the half-edge vectors at the boundary sources take opposite values to the corresponding edge vectors at the boundary sources. The reason of this choice is that the total geometric signature at each face is invariant with respect to changes of orientations of the graph.
\end{remark}  

We remark that the above system is meaningful also in a perfectly oriented bicolored network of valency bigger than three and may be extended to the non planar case using the cut parameter $\lambda$ introduced in \cite{GSV2}. 

By definition, geometric signatures depend on the graph orientation, on the position of vertices in the disc and on the gauge ray direction. In \cite{AG7} it is proven that all these transformations generate gauge transformations of the signature, therefore geometric signatures are well-defined. Therefore for a given graph $\mathcal G$ all geometric signatures belong to the same equivalence class.

\begin{theorem}\textbf{The total signature of a face}\label{theo:sign_face} \cite{AG7}
Let $({\mathcal G},{\mathcal O}(I))$ be a PBDTP graph in the disc representing a positroid cell $\S\subset \GTNN$, and let $\epsilon_{U,V}$ be its geometric signature. Then
\begin{equation}\label{eq:sign_face}
\epsilon(\Omega) \; = \;  
\left\{ \begin{array}{ll}
n_{\mbox{\scriptsize{white}}}(\Omega) \; + \; 1 \quad \mod 2, & \quad \mbox{if } \Omega \mbox{ is a finite face}; \\
\\
n_{\mbox{\scriptsize{white}}}(\Omega) \; + \; k \quad \mod 2, & \quad \mbox{if } \Omega \mbox{ is the infinite face},
\end{array}
\right.
\end{equation}
where $n_{\mbox{\scriptsize{white}}}(\Omega)$ denotes the number of white vertices bounding the face $\Omega$.
\end{theorem}

\begin{theorem}\textbf{Completeness of the geometric signatures}\label{theo:complete} \cite{AG7}

  Let ${\mathcal G}$ be PBDTP graph, with the perfect orientation ${\mathcal O}(I)$ representing the irreducible positroid $\S$, $\epsilon_{U,V}$ be a signature on ${\mathcal G}$, and  $w_{U,V}$ be a collection of positive edge weights. 

  Assume that Lam system of relations  in Definition~\ref{def:lam} possesses a solution $z_{U,e}$ in $\R^n$ on the network $({\mathcal G},{\mathcal O}(I),w_{U,V})$ for the signature  $\epsilon_{U,V}$  where we assign the canonical basic vectors at the boundary sinks. Let us denote $A$ the $k\times n$ matrix whose $r$-th row $A[r]=E_{i_r} -z_{b_{i_r}, e_{i_r}}$, where $E_{i_r}$ is the $i_r$-th vector of the canonical basis and $z_{b_{i_r}, e_{i_r}}$ is the half-edge vector at the boundary source $b_{i_r}$, $r\in[k]$. 
  Then for generic real weights $w_e$ the Lam system of equations has unique solution, and the matrix $A$ is well-defined.

If, moreover, for almost all positive weights the matrix $A$ is totally non-negative, then the signature $\epsilon_{U,V}$ 
is equivalent to the geometric signature, and the system of relations possesses a unique solution for every choice of positive weights, and the matrix $A$ coincides with the Postnikov boundary measurement matrix.
\end{theorem}

\begin{remark} In \cite{AG3}, we have used the Le--diagram to combinatorially associate a master signature to the edges of each Le--graph with canonical orientation. In \cite{AG7} we show that this signature is geometric.
\end{remark}

\subsection{Interpretation of relations as totally non-negative amalgamation of the small Grassmannians}

\begin{figure}
  \centering
	{\includegraphics[width=0.49\textwidth]{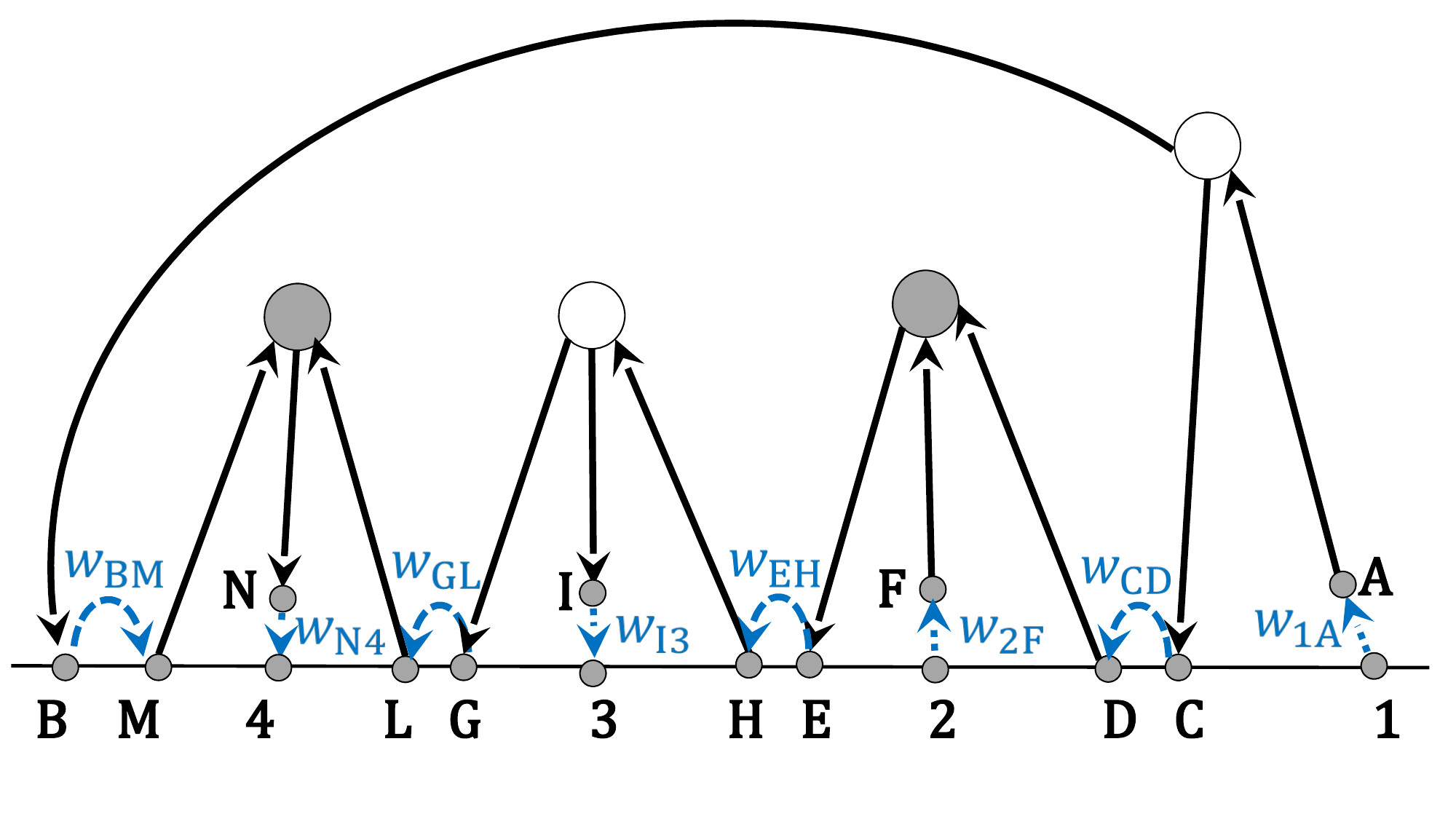}}
	\hfill
	{\includegraphics[width=0.49\textwidth]{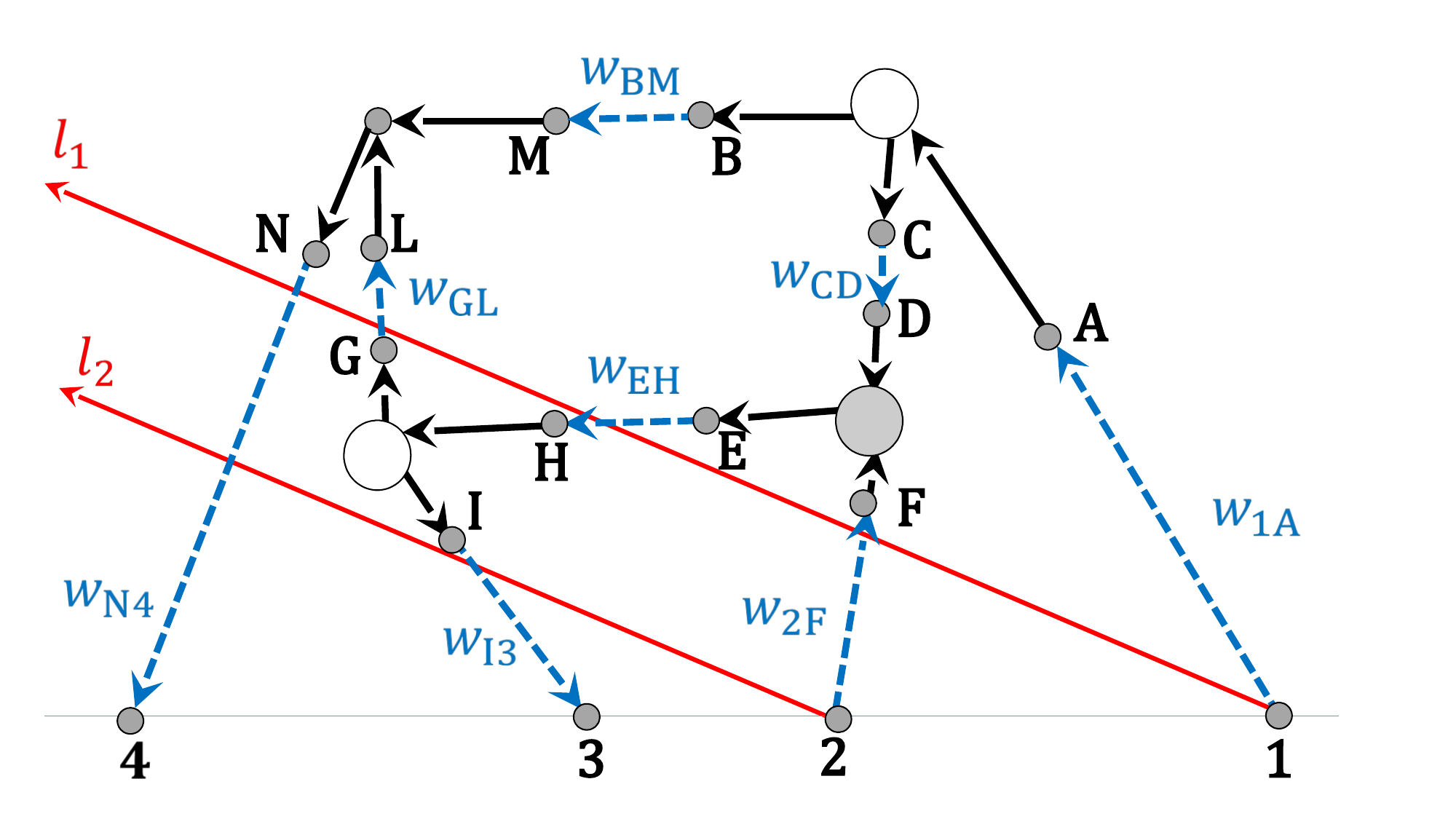}}
  \caption{\small{\sl Amalgamation of $Gr^{\mbox{\tiny TP}}(2,4)$ using the definition [left] and the linear system for half edge vectors of Theorem~\ref{lemma:lam_rela} [right].}\label{fig:Gr24_amalg}}
\end{figure}
Next, following \cite{AGP1, AGP2, Lam2}, we explain how the linear relations of half-edge vectors represent amalgamation of Grassmannians in the totally non--negative setting.
The amalgamation of cluster varieties, of which Grassmannians are a special case, has been introduced in \cite{FG1} and plays a relevant role in theoretical physics \cite{Kap, MS}, in particular for the computation of on--shell scattering processes in terms of simpler on-shell diagrams \cite{AGP1, AGP2}. In the case of Grassmannians it may be explicitly described as the composition of two elementary operations: disjoint sum and contraction of boundary vertices of plabic networks.

The first operation (disjoint sum) is trivial. If $\mathcal N_L$, with boundary vertices $b_1,\dots, b_{n_L}$, and $\mathcal N_R$, with boundary vertices $b_{n_L+1},\dots, b_{n_L+n_R}$, are two bicolored networks representing 
respectively a point $[A_L] \in Gr(k_L,n_L)$  and $[A_R] \in Gr(k_R,n_R)$, then the image is the point $[A_{RL}] \in Gr(k_R+k_L,n_R+n_L)$ represented by the disjoint union of the two networks with boundary vertices $b_1,\dots,b_{n_L+n_R}$, so that the representative matrix is in the block form 
\[
A_{RL} = \left[\begin{array}{cc}
A_L & 0 \\
0 & A_R
\end{array}
\right]. 
\]
In particular if $[A_L] \in Gr^{\mbox{\tiny TNN}}(k_L,n_L)$  and $[A_R] \in Gr^{\mbox{\tiny TNN}}(k_R,n_R)$, then $[A_{RL}] \in Gr^{\mbox{\tiny TNN}}(k_R+k_L,n_R+n_L)$.

The second operation (projection) is a map $\pi_{j_1,j_2}  :  Gr(k+1,n+2)\mapsto Gr(k,n)$, for some $j_1,j_2\in [n+2]$, $j_1	\not = j_2$, defined in the following way. Given $\mathcal N$ representing a point in $[A_{{\mathcal N}}] \in Gr(k+1,n+2)$ and such that the edges incident at the boundary vertices $b_{j_1}$ and $b_{j_2}$ have unit weight, then ${\mathcal N}^{\prime} = \pi_{j_1,j_2} ({\mathcal N})$ is the 
bicolored network with boundary set $(b_1,\dots, b_{n+2}) \backslash \{ b_{j_1}, b_{j_2} \}$ obtained by removing the two boundary vertices $b_{j_1}, b_{j_2}$, gluing together the two boundary edges incident to them and assigning unit weight to the resulting edge. If $j_2 =j_1+1$, and $[A_{{\mathcal N}}] \in Gr^{\mbox{\tiny TNN}}(k+1,n+2)$ then $[A_{{\mathcal N}^{\prime}}] \in Gr^{\mbox{\tiny TNN}}(k,n)$, since the resulting network ${\mathcal N}^{\prime}$ is planar in the disk. Up to cyclic permutation, $j_1=1$ and the maximal minors of the resulting matrix $A_{{\mathcal N}^{\prime}}$ have the following representation in terms of the maximal minors of the initial matrix $A_{{\mathcal N}}$ (see \cite{AGP1, AGP2, Lam2})
\begin{equation}\label{eq:deltaAB}
\Delta_J (A_{{\mathcal N}^{\prime}}) = \Delta_{1,J} (A_{{\mathcal N}}) + \Delta_{2,J} (A_{{\mathcal N}}).
\end{equation}
Any reduced plabic graph in the disk representing an irreducible positroid cell and such that its internal vertices are all trivalent may be obtained through such procedure. The linear system for half edge vectors introduced above provides an efficient way to construct the totally non-negative part of the image of the boundary measurement map of the final graph using those of its elementary components. The latter are, by assumption, the Le--graphs associated to $Gr^{\mbox{\tiny TP}} (1,3)$ and $Gr^{\mbox{\tiny TP}} (2,3)$. 
Indeed the  procedure to construct the boundary measurement matrix for any given planar bicolored trivalent network in the disk built out of the  amalgamation of a finite number of trivalent black and white vertices is then straightforward:
\begin{enumerate}
\item All the initial trivalent white or black vertices have edges with unit weight and are aligned along the boundary of the disk respecting an order in the amalgamation procedure so that at each step the resulting network be planar. Indeed it is not restrictive to assume that two vertices at the boundary will undergo the projection exactly at a step in which they are consecutive, since both the initial and the final network configurations are planar;
\item A perfect orientation is chosen so that each pair of boundary vertices involved in a projection step consists in a
boundary source and a boundary sink. Moreover, if the resulting network is reduced, it is not restrictive to assume that the global orientation is acyclic; 
\item A small oriented edge of weight $w_{A,B}$ is then added to the figure joining the boundary source $A$ and the boundary sink $B$ if the pair $(A,B)$ is projected at some step. Similarly, one adds an internal bivalent vertex $C$ and an edge of weight $w_{Cj}$ (respectively $w_{iC}$)
if the vertex $C$ is a boundary sink (respectively a boundary source) in the final network; 
\item Each pair of vertices joined by a weighted edge corresponding to a projection is moved inside the disk keeping the overall configuration planar;
\item A gauge ray direction is chosen which satisfies the conditions settled in Section \ref{sec:def_edge_vectors} and, up to slight changes in the position of the internal vertices using the vertex gauge fredom, we assume that gauge rays starting at the boundary sources may only intersect the weighted edges;
\item The resulting linear system takes the form settled in Theorem~\ref{lemma:lam_rela}, and the half edge vectors at the boundary sources give the boundary matrix associated to the final network except for the pivot terms. 
\end{enumerate}
We illustrate this procedure in Figure \ref{fig:Gr24_amalg} for the construction of the cell $Gr^{\mbox{\tiny TP}} (2,4)$. In Figure \ref{fig:Gr24_amalg}[left] we show the amalgamation procedure 
with two copies of both $Gr^{\mbox{\tiny TP}} (1,3)$ and $Gr^{\mbox{\tiny TP}} (2,3)$ which produces the Le--graph associated to
$Gr^{\mbox{\tiny TP}} (2,4)$. Even in this elementary example, the computation of the boundary measurement map
of a point in $Gr^{\mbox{\tiny TP}} (2,4)$ requires the composition of four amalgamation steps and is not computationally efficient using the definition. On the contrary, the representation of the amalgamation procedure as a  geometric linear system for half-edge vectors produces the boundary measurement matrix as a solution of a linear system, which is in Gaussian reduced form if the final network is acyclically oriented. 

\subsection{The relation between the geometric signature and  Kasteleyn orientation}

In this Section we introduce the dimer model on plabic graphs in the disc and discuss the relation between geometric signatures and Kasteleyn orientations.

\begin{definition}
  Let $G$ be an undirected graph in the disc. A dimer configuration $M$ is a subset of the edges of $G$ such that each internal vertex belongs to exactly one edge in $M$, whereas the boundary vertices appear in $M$ at most once. The set of boundary vertices appearing in $M$ is the boundary of the configuration, and is denoted $\partial M$.  
\end{definition}  

\begin{definition}
  If one assigns positive weights to the edges of $G$, the weight of the configuration $M$ is the product of the weights on the edges in $M$
\[
W(M) = \prod_{e\in M} w_{e}.
\]
Then the partition function of the dimer configuration sharing the same boundary condition $\partial M_0$ is the sum of the weights of such configurations
\begin{equation}\label{eq:part2}
Z(G,w_{e};\partial M_0) = \sum_{M:\partial M=\partial M_0} W(M).
\end{equation}
\end{definition}  
If $G$ is a bipartite perfect graph in the disc representing a positroid cell in $\GTNN$, then there exist dimer configurations on $G$. Let us assume for simplicity that all boundary vertices are black; then the number of boundary vertices in $\partial M$ is the same for any dimer configuration and is equal to $k$, see \cite{PSW}. Indeed, there is a natural bijection between dimer configurations and perfect orientations via the identification of the occupied sites with the edges oriented from black to white vertices.

The connection between Postnikov boundary measurement map and the dimer model in the disc in the case of bipartite graphs  was first observed in \cite{PSW}: the partition functions $Z(G,w_{e};\partial M_0)$ are the Pl\"ucker coordinates of the image of the boundary measurement map for the same network after a canonical transformation of the weights. 

In \cite{Lam1} the connection between the dimer model and the positroid stratification  of the totally non-negative Grassmannian was further investigated. In \cite{Sp} it was topologically proven the existence of a Kasteleyn signature on bipartite graphs in the disc such that the maximal minors of the weighted Kasteleyn matrix are the the Pl\"ucker coordinates of the image of the boundary measurement map. 
Indeed, plabic graphs in the disc admit Kasteleyn orientations as follows from the generalization to surface graphs with boundaries \cite{CR2} of a classical result \cite{Kas1,Kas2}. Therefore dimer configurations exist for the whole class of plabic graphs introduced in \cite{Pos}.

In  \cite{A3} it is shown that in the case of reduced bipartite graphs in the disk the geometric signature defined in Section~\ref{sec:complete} is equivalent to the Kasteleyn signature, thus providing an explicit realization of the theorem in \cite{Sp}. In \cite{AGPR} it is also observed that Kasteleyn signatures fulfill  (\ref{eq:sign_face}) on the faces of the graph.

Below we show that, on the contrary, if the plabic graph is not bipartite, the boundary measurement map has not such statistical mechanical interpretation.

Let us first recall the definition and properties of Kasteleyn orientations for plabic networks following \cite{CR2}. In this case it is possible to construct a Kasteleyn orientation using a spanning tree of the dual graph by orienting arbitrarily all edges not crossed by the tree and then choosing the orientation of the remaining edges in such a way that the number of edges oriented anticlockwise on each face is odd. 

Two Kasteleyn orientations are equivalent if one may be obtained from the other by a sequence of moves reversing the orientation of all edges adjacent to a given vertex; there exists a unique equivalence class of Kasteleyn orientations in the disc \cite{CR2}. 

The weighted Kasteleyn matrix is defined as follows:
\[
K_{ij} = \sum_{e} \theta_{e} w_{e}, 
\]
where the sum is over all edges $e$ connecting the vertices $V_i$ and $V_j$, $w_{e}$ is the weight of the edge and
\[
\theta_{e}=\left\{\begin{array}{ll} +1 & \mbox{if the Kasteleyn orientation of } e \mbox{ is from } V_i \mbox{ to } V_j \\-1 &  \mbox{otherwise.} \end{array}   \right.
\]
Then if we eliminate from the matrix $K$ the rows and columns corresponding to a given subset of boundary vertices $\bar I$, then the Pfaffian of such submatrix is the partition function of dimer configurations whose boundary vertices are the complement of $\bar I$ \cite{CR2}.

If the graph is bipartite, then it is possible to associate a Kasteleyn signature to a Kasteleyn orientation in the following way:
\[
\epsilon^{\mbox{\scriptsize KAS}}_e =
\left\{\begin{array}{ll} +1 & \mbox{if the Kasteleyn orientation of } e \mbox{ is from the black to the white vertex}, \\-1 &  \mbox{otherwise.} \end{array}   \right.
\]
Let us assume that all boundary vertices are black so that the number of edges bounding each face is always even. Then the product of the Kasteleyn signature along the edges of each finite face $\Omega$ is equal to $(-1)^{\#(\Omega)/2+1}$, where $\#(\Omega)$ is the number of edges in the graph bounding  $\Omega$. Since
\[
\frac{\#(\Omega)}{2} = n_{\mbox{\scriptsize{white}}}(\Omega),
\]
where $n_{\mbox{\scriptsize{white}}}(\Omega)$ is the number of white vertices bounding $\Omega$, Theorem~\ref{theo:sign_face} implies that Kasteleyn and geometric signatures are equivalent in the case of bipartite graphs. Indeed, one may construct a Kasteleyn signature directly from the geometric signature $\epsilon_e$ in Definition \ref{def:geometric-signature}:
\[
\epsilon^{\mbox{\scriptsize KAS}}_e = (-1)^{\epsilon_e}.
\]
Therefore the partition functions
$Z(G,w_{e};\partial M_0)$ are the Pl\"ucker coordinates for bases $\partial M_0$ of the corresponding point in the totally non-negative Grassmannian \cite{Sp,A3,AGPR}. 

If the graph is not bipartite, the cardinality of $\bar I$ is not constant if boundary vertices are equally colored. Therefore, we cannot expect a simple relation to the boundary measurement map, as we show in the following example.

\begin{example}
 \begin{figure}%[H]
  \centering
	{\includegraphics[width=0.49\textwidth]{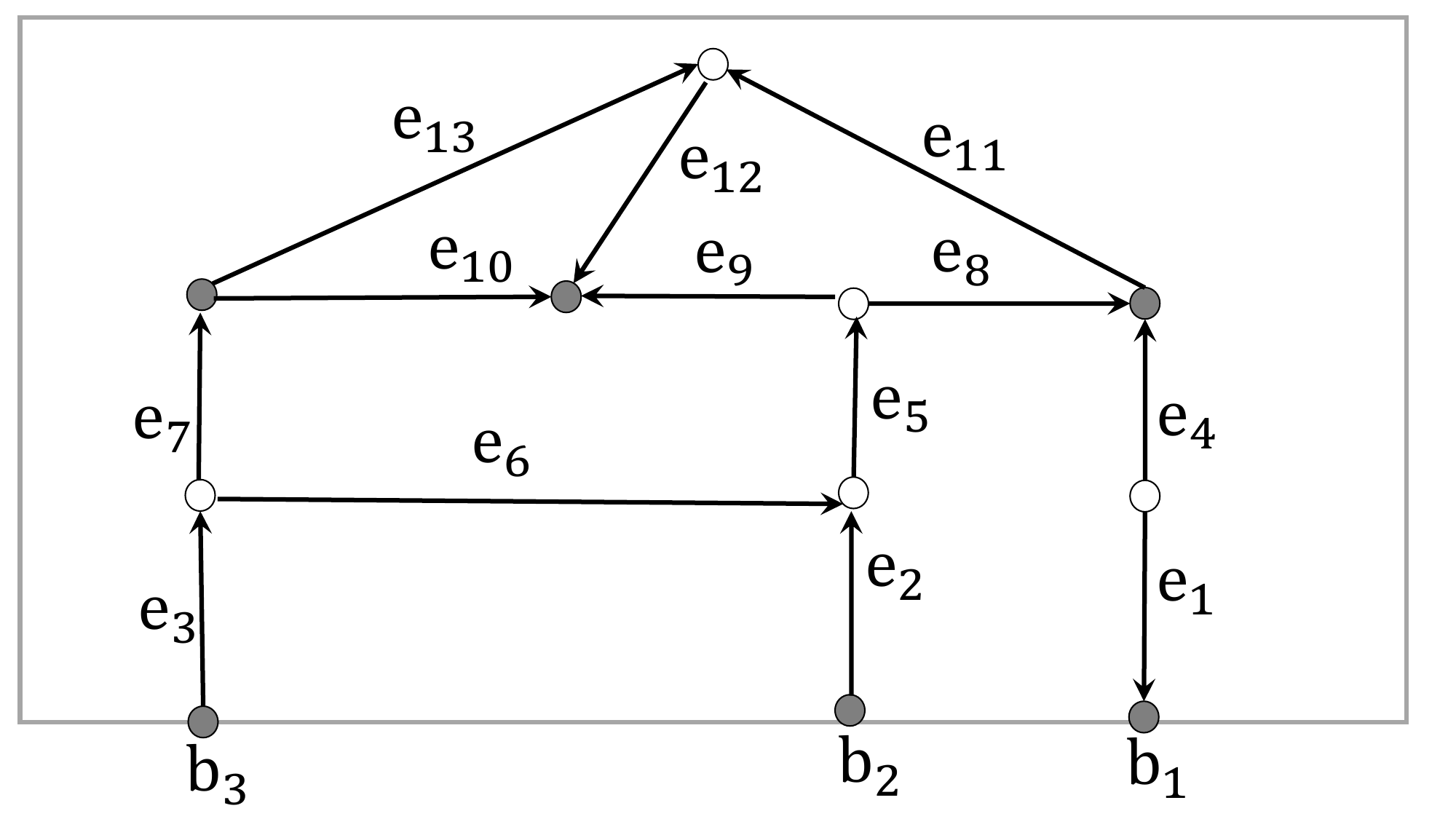}}
	\hfill
	{\includegraphics[width=0.49\textwidth]{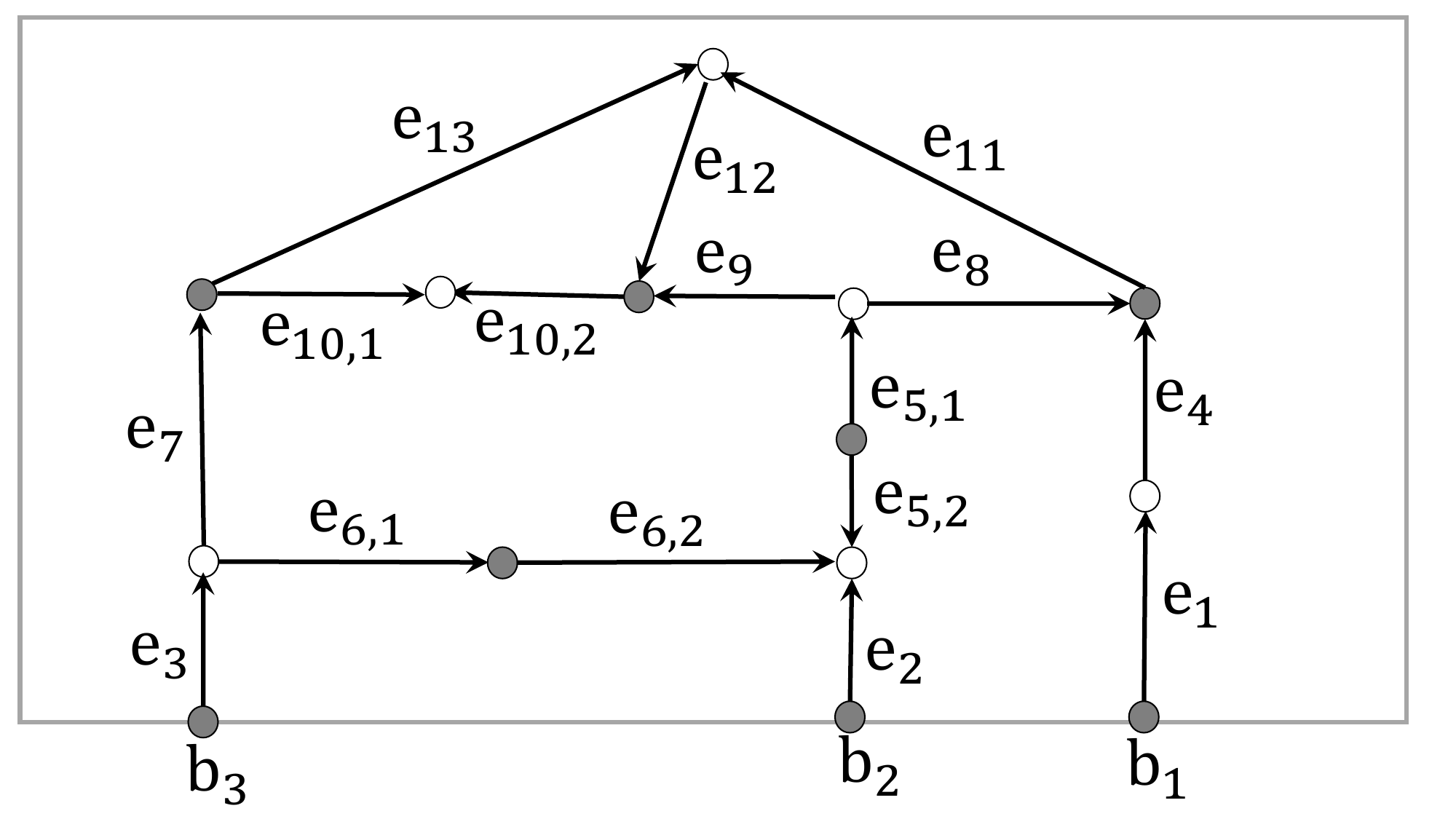}}
  \caption{\small{A Kasteleyn orientation on two equivalent graphs, respectively, non bipartite (left) and bipartite (right).}\label{fig:counterexample}}
\end{figure}
In Figure~\ref{fig:counterexample} we show a Kasteleyn orientation on two equivalent graphs representing $Gr^{\mbox{\tiny TP}} (1,3)$. For the non--bipartite graph on the left we have the following partition functions, associated respectively to the dimer configurations with no boundary vertices and with two boundary vertices:
\begin{align}
\label{eq:part_c}
  Z(G,w_{e};\emptyset) &= w_4w_6w_9w_{13}+ w_4w_5w_7w_{12} , \nonumber\\
  Z(G,w_{e};\{e_1,e_2\}) &= w_1 w_2 w_7 w_9 w_{11} + w_1 w_2 w_7 w_8 w_{12}, \\
  Z(G,w_{e};\{e_1,e_3\}) &= w_1 w_3 w_5 w_{10} w_{11}, \nonumber\\
  Z(G,w_{e};\{e_2,e_3\}) &= w_2 w_3 w_4 w_9 w_{13}. \nonumber 
\end{align}
For the bipartite graph on the right we have the following partition functions, associated to the dimer configurations with exactly one boundary vertex.
\begin{align}
\label{eq:part_c2}
  Z(G,w_{e};\{e_1\}) &= w_1 w_{5,1} w_{6,2} w_7 w_{10,2} w_{11} + w_1 w_{5,2} w_{6,1} w_9 w_{10,1} w_{11}+ \nonumber\\
                     &+w_1 w_{5,2} w_{6,1} w_8 w_{10,1} w_{12} +  w_1 w_{5,2} w_{6,1} w_8 w_{10,2} w_{13} \nonumber\\
  Z(G,w_{e};\{e_2\}) &= w_2 w_4 w_{5,1} w_{6,1}  w_{10,1} w_{12}+ w_2 w_4 w_{5,1} w_{6,1}  w_{10,2} w_{13},\\
  Z(G,w_{e};\{e_3\}) &= w_3 w_4 w_{5,1} w_{6,2}  w_{10,1} w_{12}+ w_3 w_4 w_{5,1} w_{6,2}  w_{10,2} w_{13}.  \nonumber 
\end{align}
It is easy to check that the Pfaffian formulas from \cite{CR2} hold in both cases using the Kasteleyn orientation of Figure \ref{fig:counterexample}.
 \begin{figure}%[H]
  \centering
	{\includegraphics[width=0.49\textwidth]{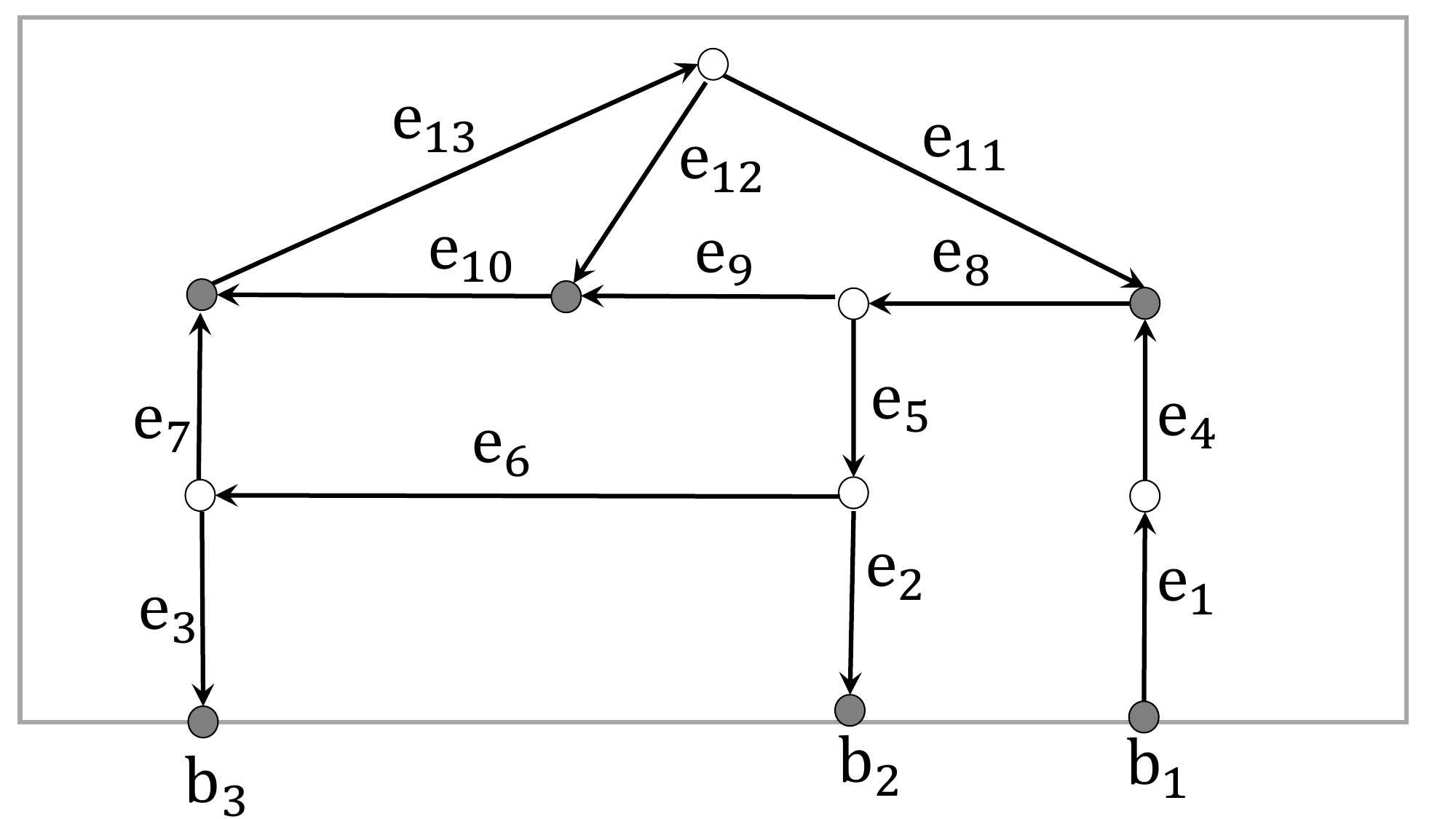}}
	\hfill
	{\includegraphics[width=0.49\textwidth]{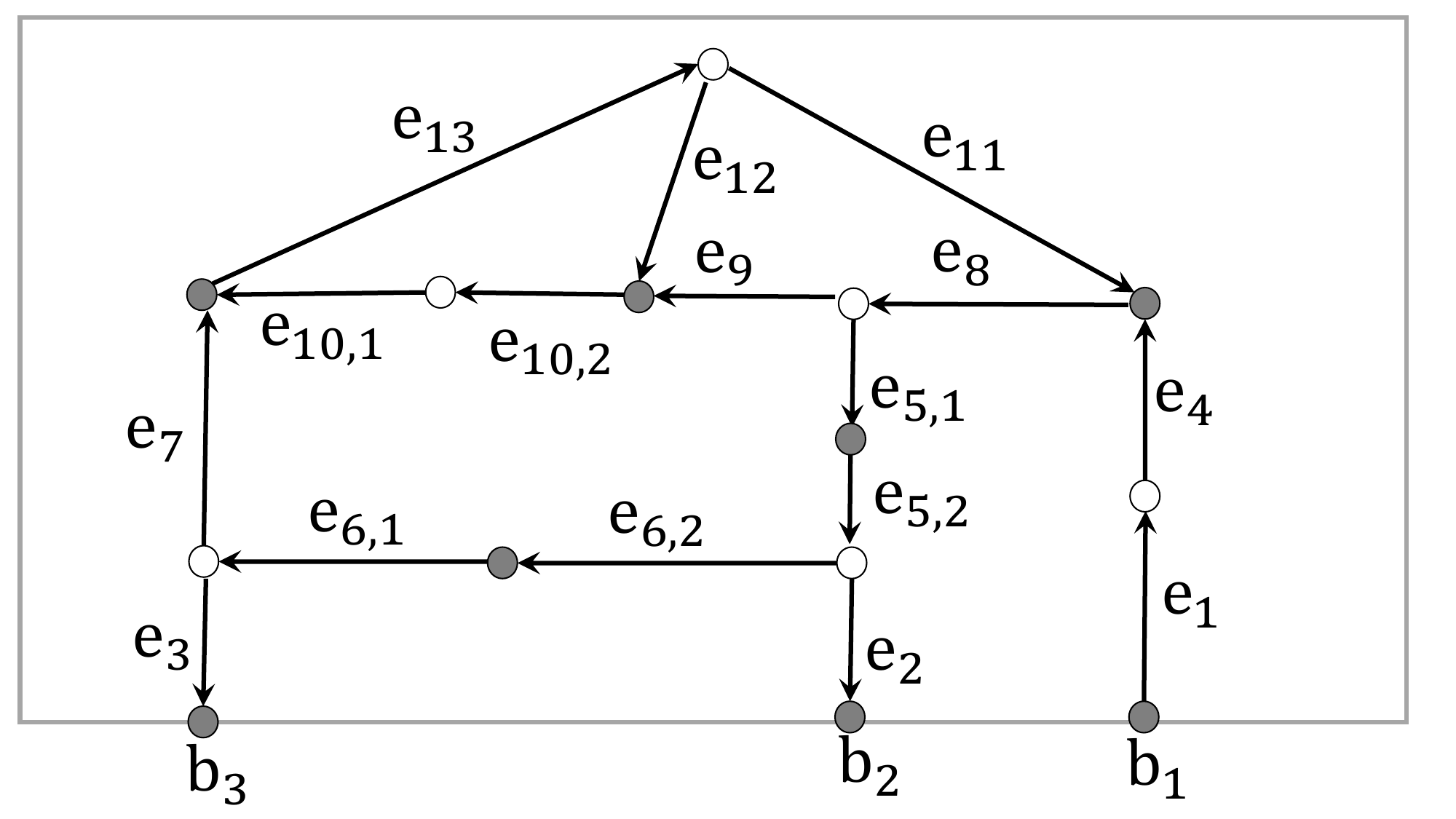}}
  \caption{\small{A perfect orientation on the equivalent graphs of Figure \ref{fig:counterexample}.}\label{fig:counterexample2}}
\end{figure}

Let us now calculate a boundary measurement matrix for both networks using Talaska's formula (see \cite{Tal2} and Corollary \ref{cor:bound_source}. Let us start from the bipartite graph. The canonical rule of transformation between the edge weights  $w_U(e)$ on the undirected graph and the edge weights  $w_O(e)$ on the perfectly oriented graph is the following:
\begin{equation}\label{eq:kasteleyn10}
w_O(e) =\left\{\begin{array}{ll} 1/w_U(e)   & \mbox{if } e \mbox{ is oriented from the black to the white vertex}, \\ w_U(e)  &  \mbox{otherwise.} \end{array}   \right.
\end{equation}
In Figure~\ref{fig:counterexample2} (right) there are three cycles,
\begin{equation*}
  C_1 = \frac{w_{12} w_{10,1}}{w_{13}w_{10,2}},\ \
  C_2 = \frac{w_{9} w_{10,1} w_{11}}{w_{8} w_{10,2} w_{13}},\ \
  C_3 = \frac{w_{5,1}w_{6,2} w_{7} w_{11}}{w_{5,2}w_{6,1} w_{8} w_{13}},
\end{equation*}
one loop-erased path from $b_1$ to $b_2$
\begin{equation*}
  P_{1,2} = \frac{w_{4} w_{5,1} w_{2}}{w_{1}w_{5,2}w_{8}}, 
\end{equation*}
and one loop-erased path from $b_1$ to $b_3$
\begin{equation*}
  P_{1,3} = \frac{w_{4} w_{5,1} w_{6,2} w_{3}}{w_{1}w_{5,2}w_{6,1} w_{8}}. 
\end{equation*}
Therefore the boundary measurement matrix is
\begin{equation}\label{eq:kasteleyn11}
  A = \left(1, \frac{P_{1,2}(1+C_1)}{1+C_1+C_2+C_3},  \frac{P_{1,3}(1+C_1)}{1+C_1+C_2+C_3}\right),  
\end{equation}
and it is easy to check that $(Z(G,w_{e};\{e_1\}) ,  Z(G,w_{e};\{e_2\}), Z(G,w_{e};\{e_3\}))$ in (\ref{eq:part_c2}) are the Pl\"ucker coordinates of the point $[A]$ in $Gr^{\mbox{\tiny TP}} (1,3)$ .

If we compute the boundary measurement matrix for the graph in Figure~\ref{fig:counterexample2} (left), we get the same point $[A]$ if use the same weights of the bipartite graph and define
\begin{equation*}
w_{5} = \frac{w_{5,1}}{w_{5,2}}, \ \ w_{6} = \frac{w_{6,2}}{w_{6,1}}, \ \ w_{10} = \frac{w_{10,1}}{w_{10,2}}.
\end{equation*}
It is evident that $Z(G,w_{e};\emptyset),Z(G,w_{e};\{e_1,e_2\}),Z(G,w_{e};\{e_1,e_3\}),Z(G,w_{e};\{e_2,e_3\})$ in (\ref{eq:part_c}) have no relation to the Pl\"ucker coordinates of $[A]$. Therefore the statistical mechanical model associated to the boundary measurement map on plabic networks which are not bipartite is not the dimer model.
\end{example}

\appendix

\section{Consistency of the system $\hat E_e$ at internal vertices}\label{app:orient}

In this Section we complete the proof of Lemmas~\ref{lemma:path} and \ref{lemma:cycle}.

Throughout this Appendix we use the same notations as in Section \ref{sec:orient}. In particular $\mathcal P_0$ is the simple path changing orientation and directed from the boundary source $b_{i_0}$ to the boundary sink $b_{j_0}$ in the initial orientation, and $\mathcal Q_0$ is the simple cycle changing orientation. 

We have to check that the system of vectors ${\hat E}_e$ defined in (\ref{eq:hat_E_P}) satisfy the linear relations at all internal vertices $V$ in the new orientation. We have to distinguish two cases:
\begin{enumerate}
\item $V$ does not belong to the path $\mathcal P_0$ (cycle $\mathcal Q_0$ respectively);
\item $V$ belongs to the path $\mathcal P_0$ (cycle $\mathcal Q_0$ respectively). 
\end{enumerate}

 Denote $\mbox{int}(e)$ and $\widehat{\mbox{int}}(e)$ the intersection number for $e$ before and after the change of orientation respectively.

Let us prove Lemmas~\ref{lemma:path},~\ref{lemma:cycle} in the first case. In this case all vectors incident to $V$ preserve the orientation, but the intersections may change. It is easy to check that ${\hat E}_e$ defined in (\ref{eq:hat_E_P}) solve the linear system at $V$ for the transformed network iff for each pair $f,g$ where $f$ is an incoming edge and $g$ is an outcoming one at $V$, one has 
\begin{equation}
\label{eq:int_vert_eq}
\mbox{int}(f)-\widehat{\mbox{int}}(f) = \gamma(f) - \gamma(g)\ \ \  (\!\!\!\!\!\mod 2).
\end{equation}
Indeed, if the orientation changes along a cycle $\mathcal Q_0$, then $\mbox{int}(f)=\widehat{\mbox{int}}(f)$ and the starting and the ending point of $f$ belong to the same region as $V$, therefore $\gamma(f) =\gamma(g)$; If the orientation changes along a path $\mathcal P_0$, $\mbox{int}(f)-\widehat{\mbox{int}}(f)$ is equal to the number of intersection of $f$ with ${\mathfrak{l}}_{i_0}$, ${\mathfrak{l}}_{j_0}$ $(\!\!\!\!\!\mod 2)$. If this number is even, the starting and the ending points of $f$ lie in regions with the same indexes, and  $\gamma(f)=\gamma(g)$. Otherwise,  $\gamma(f)=1-\gamma(g)$. 
Therefore  (\ref{eq:int_vert_eq}) is fulfilled, and the proof for vertices not belonging to $\mathcal P_0$ ($\mathcal Q_0$ respectively) is completed.

Let us prove Lemmas~\ref{lemma:path},~\ref{lemma:cycle} for $V$  belonging to $\mathcal P_0$ or $\mathcal Q_0$.  
\begin{figure}%[H]
  \centering
  {\includegraphics[width=0.49\textwidth]{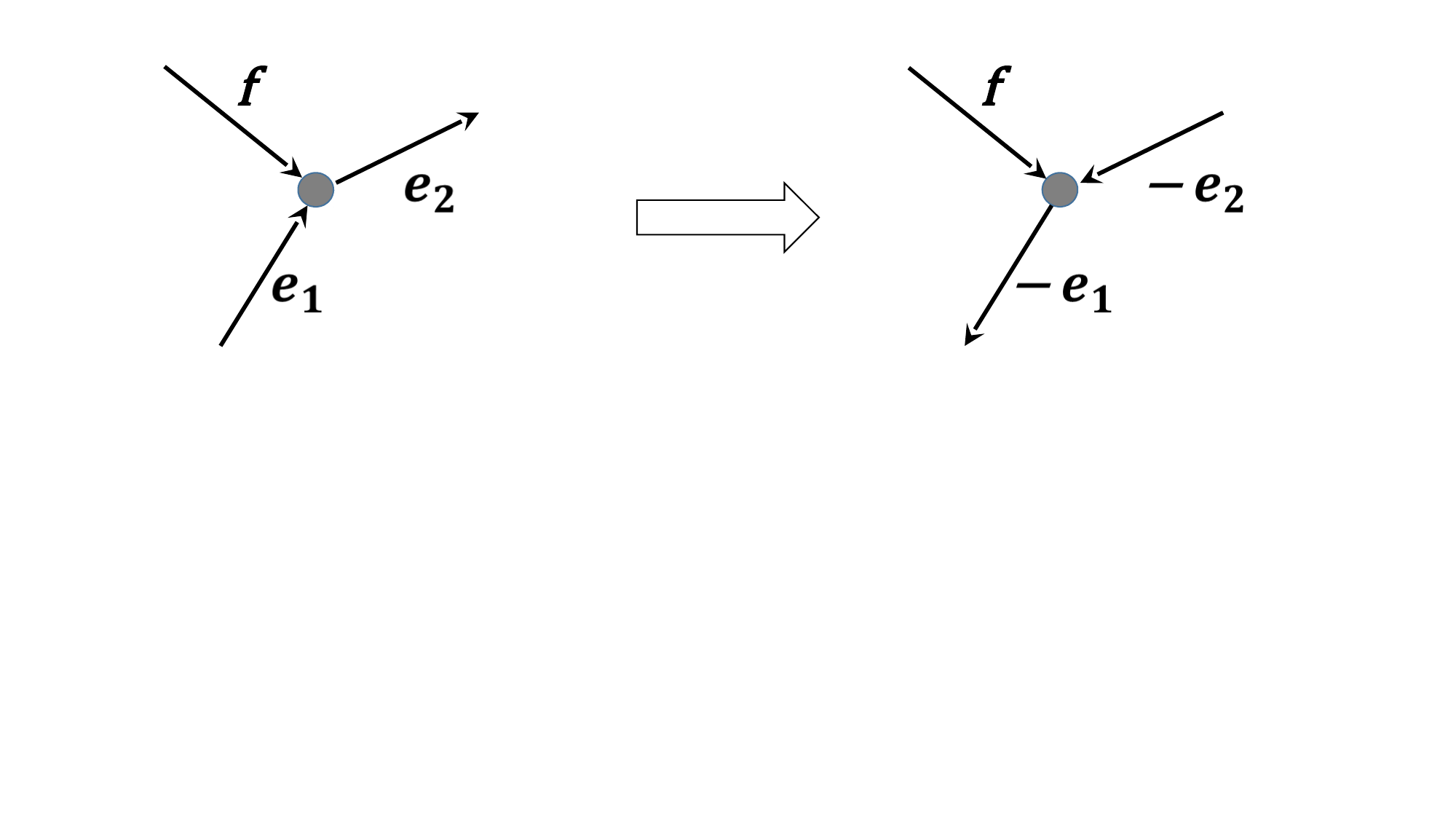}}
	\hfill
	{\includegraphics[width=0.49\textwidth]{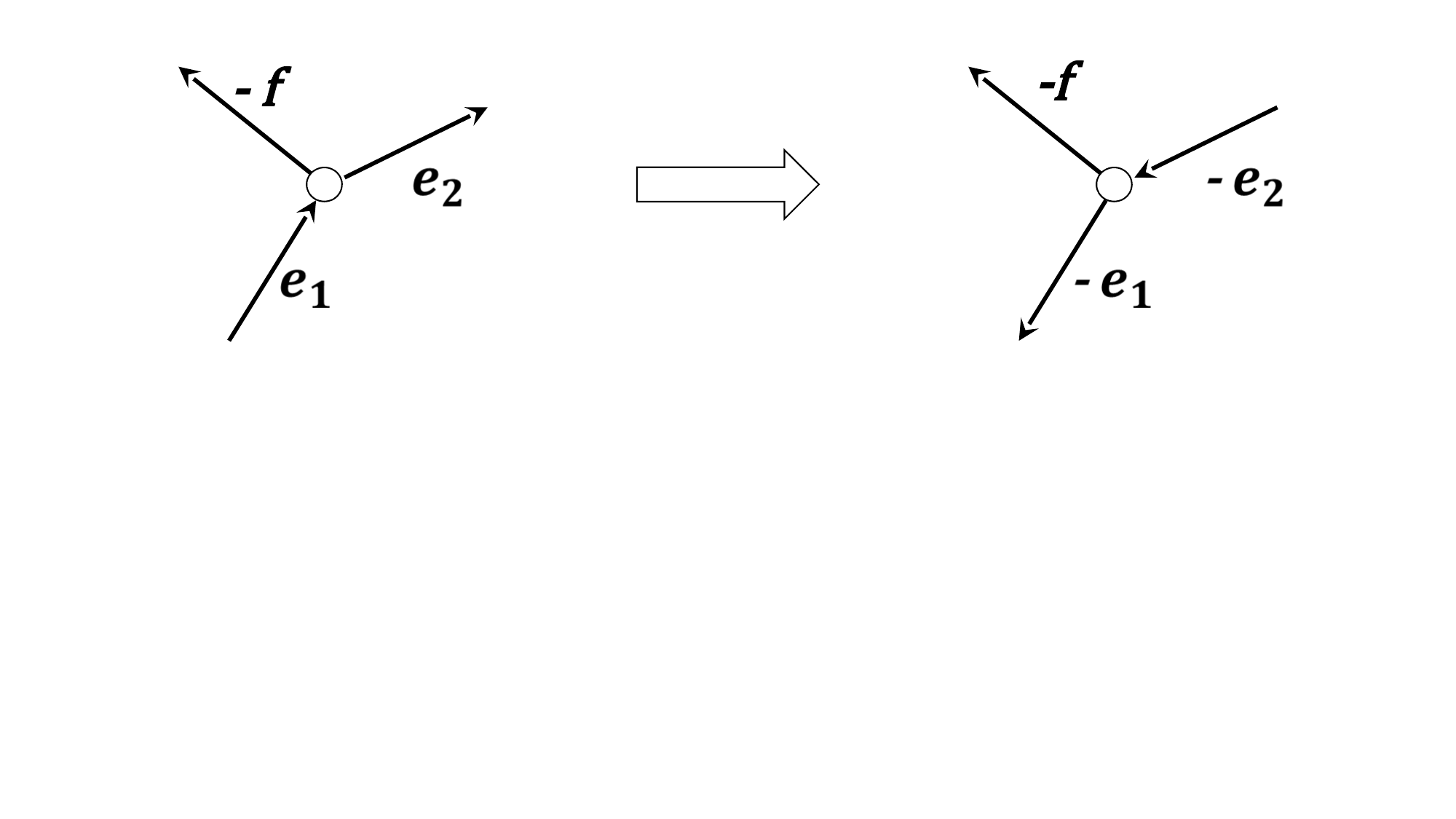}}
	\vspace{-2.3 truecm}
  \caption{Configurations at black [left] and white [right] vertices when $e_1,e_2$ belong to $\mathcal P_0$ or to $\mathcal Q_0$.}
	\label{fig:config_triv}
\end{figure}

Equations at a white $V_w$ before and after the change of variables are:
\begin{align}
\label{eq:path_w_vert_eq1}
 w_1^{-1} \tilde E_{e_1} &= (-1)^{\mbox{int}(e_1)+ \mbox{wind}(e_1,e_2)} \tilde E_{e_2} +
                   (-1)^{\mbox{int}(e_1)+ \mbox{wind}(e_1,f)} \tilde E_{f}, \\
\label{eq:path_w_vert_eq2}  
 w_2 \hat  E_{-e_2} &= (-1)^{\widehat{\mbox{int}}(e_2)+ \mbox{wind}(-e_2,-e_1)} \hat E_{-e_1}+
   (-1)^{\widehat{\mbox{int}}(e_2)+ \mbox{wind}(-e_2,f)} \hat E_{f}.                
\end{align}
Equations at a black $V_w$ before and after the change of variables are:
\begin{align}
\label{eq:path_b_vert_eq1}
 &(-1)^{\mbox{int}(e_1)+ \mbox{wind}(e_1,e_2)}  w_1^{-1} \tilde E_{e_1} &=&  &(-1)^{\mbox{int}(f)+ \mbox{wind}(f,e_2)}  w_f^{-1} \tilde E_{f}&  &=&   &\tilde E_{e_2},\\
\label{eq:path_b_vert_eq2}  
 &(-1)^{\widehat{\mbox{int}}(e_2)+ \mbox{wind}(-e_2,-e_1)}  w_2 \hat  E_{-e_2} &=&    &(-1)^{\widehat{\mbox{int}}(f)+ \mbox{wind}(f,-e_1)}  w_f^{-1} \tilde E_{f}&    &=&     &\hat E_{-e_1}.                
\end{align}
Taking into account that
\begin{equation}\label{eq:hat_E_P_bis}
{\hat E}_e = \left\{ \begin{array}{ll}
 (-1)^{\gamma(e)} {\tilde E}_e, & \mbox{ if } e\not \in \mathcal P_0 \ \ \mbox{or} \ \  \mathcal Q_0 , \mbox{ with } \gamma(e) \mbox{ as in (\ref{eq:eps_not_path})},\\
\displaystyle \frac{(-1)^{\gamma(e)}}{w_e} {\tilde E}_e, & \mbox{ if } e\in \mathcal P_0 \ \ \mbox{or} \ \  \mathcal Q_0 , \mbox{ with } \gamma(e) \mbox{ as in (\ref{eq:eps_on_path})},
\end{array}\right.
\end{equation}
where  $\tilde E =E$ if orientation changes along  $\mathcal Q_0$, the statement of Lemmas~\ref{lemma:path},~\ref{lemma:cycle} is equivalent to the following identities:
\begin{align}
  \label{eq:path_b_vert_eq3}
  \mbox{int}(e_1)+ \mbox{wind}(e_1,e_2)+ \widehat{\mbox{int}}(e_2)+ \mbox{wind}(-e_2,-e_1) + \gamma(e_1) + \gamma(e_2) =0  \ \ (\!\!\!\!\!\mod 2), \\
   \label{eq:path_b_vert_eq4}    
  \mbox{int}(e_1)+ \mbox{wind}(e_1,f) + \mbox{wind}(-e_2,f) + \mbox{wind}(-e_2,-e_1) +\gamma(e_1) + \gamma(f) = 1  \ \ (\!\!\!\!\!\mod 2), \\
 \mbox{int}(e_1)+ \mbox{wind}(e_1,e_2) +  \mbox{int}(f)+ \mbox{wind}(f,e_2) + \widehat{\mbox{int}}(f)+ \mbox{wind}(f,-e_1) +\gamma(e_1) + \gamma(f) = 0  \ \ (\!\!\!\!\!\mod 2).
 \label{eq:path_w_vert_eq5}               
\end{align}
Using the definition of the index $\gamma()$ in (\ref{eq:eps_not_path}) and  (\ref{eq:eps_on_path}), the left-hand side of Equation (\ref{eq:path_b_vert_eq3}) can be rewritten as:
\begin{equation}
\mbox{int}(e_1)+ \mbox{wind}(e_1,e_2)+ \widehat{\mbox{int}}(e_2)+ \mbox{wind}(-e_2,-e_1) + \gamma(e_1) + \gamma(e_2) =
\end{equation}
$$
= [ \widehat{\mbox{int}}(e_2) +  \mbox{int}(e_2) + \gamma_1(e_1) + \gamma_1(e_2)]  + [ \mbox{wind}(e_1,e_2) + \mbox{wind}(-e_2,-e_1)+  \gamma_2(e_1) + \gamma_2(e_2) ].
$$
From (\ref{eq:int_vert_eq}) it follows that the first parentheses equals to 0 $(\!\!\!\!\!\mod 2)$, therefore it is sufficient to check that 
\begin{equation}
\label{eq:a10}  
[ \mbox{wind}(e_1,e_2) + \mbox{wind}(-e_2,-e_1)+  \gamma_2(e_1) + \gamma_2(e_2) ] = 0  \ \ (\!\!\!\!\!\mod 2).
\end{equation}
Indeed, if $e_1$ and $e_2$ belong to the same half-plane with respect to $\mathfrak l$, then $ \gamma_2(e_1) + \gamma_2(e_2)=0 \ \ (\!\!\!\!\!\mod 2)$ and  $\mbox{wind}(e_1,e_2) = \mbox{wind}(-e_2,-e_1)=0$. If they belong to opposite half-planes, then  $ \gamma_2(e_1) + \gamma_2(e_2)=1$, one of the windings  $\mbox{wind}(e_1,e_2)$, $\mbox{wind}(-e_2,-e_1)=0$ equals to $\pm1$ and the other is zero (see Fig~\ref{fig:a10}). It proves (\ref{eq:a10})
\begin{figure}%[H]
  \centering
  \includegraphics[width=0.3\textwidth]{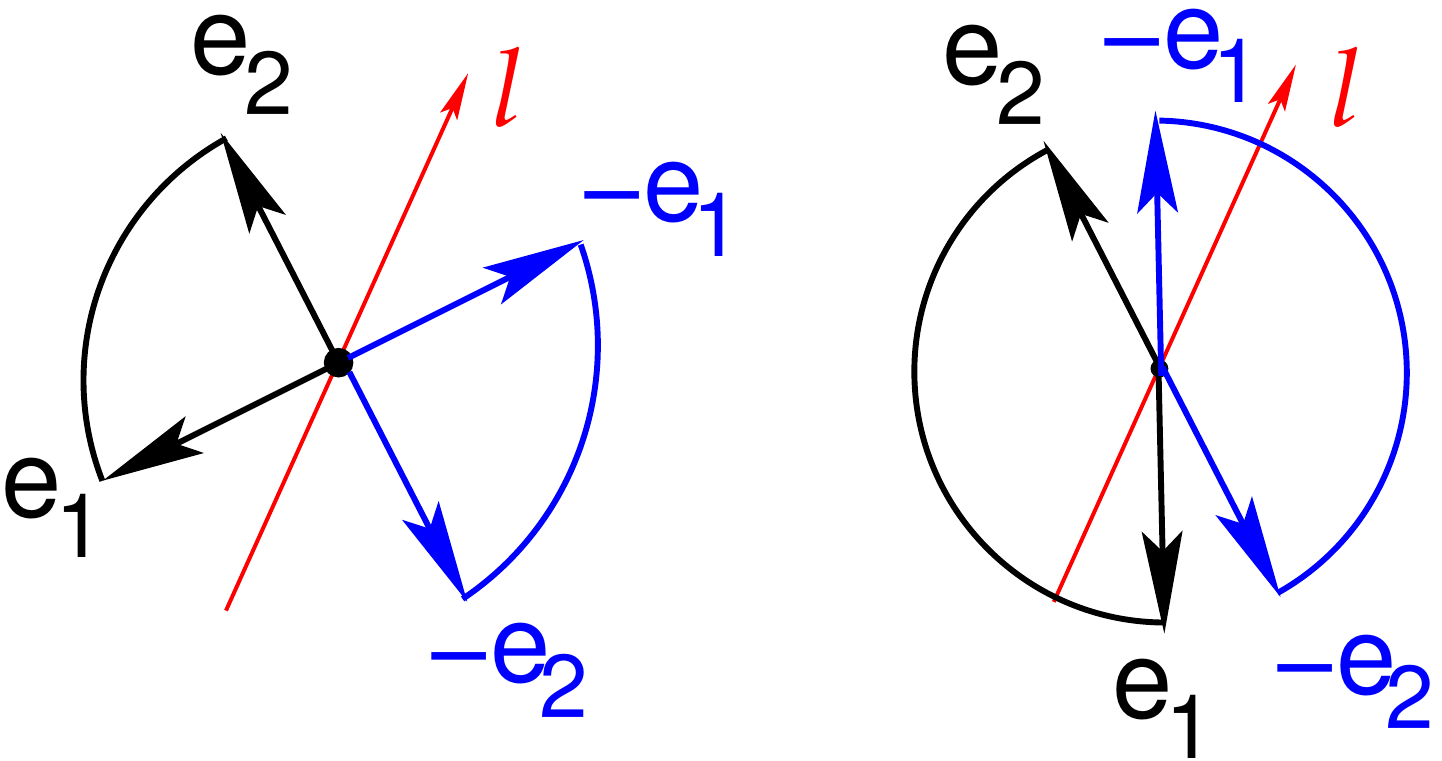}
  \caption{On the left $e_1$ and $e_2$ belong to the same half-plane, $\gamma_2(e_1)= \gamma_2(e_2)=1$,  $\gamma_2(-e_1)= \gamma_2(-e_2)=0$,  $\mbox{wind}(e_1,e_2)=\mbox{wind}(-e_2,-e_1)=0$. On the right $e_1$ and $e_2$ belong to opposite half-planes, $\gamma_2(e_1)= \gamma_2(-e_2)=0$,  $\gamma_2(-e_1)= \gamma_2(e_2)=1$, $\mbox{wind}(e_1,e_2)=0$, $\mbox{wind}(-e_2,-e_1)=1$.}
	\label{fig:a10}
\end{figure}

Similarly, the left-hand sides of Equations (\ref{eq:path_b_vert_eq4}), (\ref{eq:path_w_vert_eq5})  can be rewritten as:
\begin{equation}
\mbox{int}(e_1)+ \mbox{wind}(e_1,f) + \mbox{wind}(-e_2,f) + \mbox{wind}(-e_2,-e_1) +\gamma(e_1) + \gamma(f) = 
\end{equation}
$$
= [ \mbox{wind}(e_1,f) + \mbox{wind}(-e_2,f) + \mbox{wind}(-e_2,-e_1) +\gamma_2(e_1)] +  [\gamma_1(e_1) +\gamma(f) ],
$$
\begin{equation}
\mbox{int}(e_1)+ \mbox{wind}(e_1,e_2) +  \mbox{int}(f)+ \mbox{wind}(f,e_2) + \widehat{\mbox{int}}(f)+ \mbox{wind}(f,-e_1) +\gamma(e_1) + \gamma(f)=
\end{equation}
$$
= [\mbox{wind}(e_1,e_2) + \mbox{wind}(f,e_2) +  \mbox{wind}(f,-e_1) +\gamma_2 (e_1)] +
[\mbox{int}(f) + \widehat{\mbox{int}}(f) + \gamma_1(e_1) + \gamma(f)].
$$

To complete the proof, we use the cyclic order:

\begin{definition}\label{def:cyclic_order}\textbf{Cyclic order.} Generic triples of vectors in the plane have natural cyclic order. We write $[f,g,h]=0$ if the triple $f$, $g$, $h$ is ordered counterclockwise, and $[f,g,h]=1$ if the triple $f$, $g$, $h$ is ordered clockwise (see Fig~\ref{fig:cyclic_order}).
\end{definition}  
\begin{figure}%[H]
  \centering
  \includegraphics[width=0.3\textwidth]{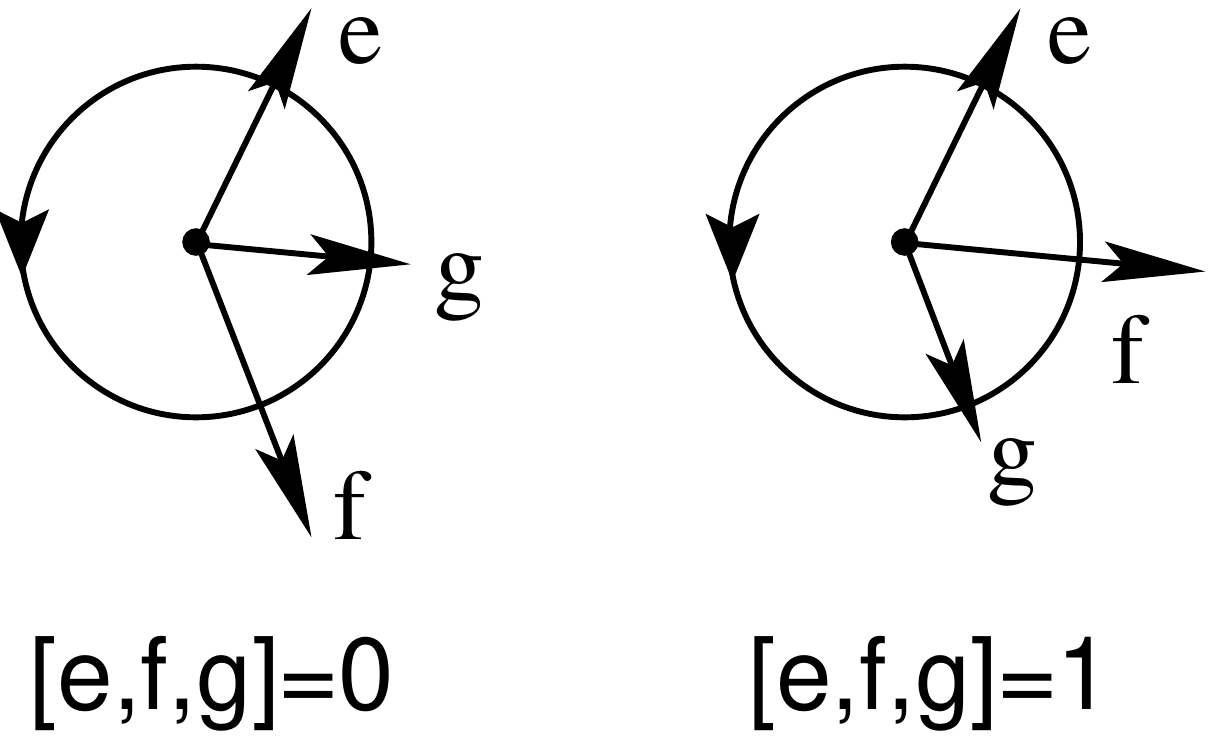}
  \caption{Cyclic order on triples of vectors. By definition $[e,f,g]=[f,g,e]=[g,e,f]=1-[e,g,f]=1-[g,f,e]=1-[f,e,g]$.}
	\label{fig:cyclic_order}
\end{figure}

Then the proof of  Lemmas~\ref{lemma:path},~\ref{lemma:cycle} in the second case immediately follows from Lemma~\ref{lemma:equiv_rel}.

\begin{lemma}
  \label{lemma:equiv_rel}
  Let $V$ belong to $\mathcal P_0$ or $\mathcal Q_0$, $e_1,e_2,f$ be the edges at $V$, where $e_1$, $e_2$ belong to  $\mathcal P_0$ ($\mathcal Q_0$),  and $e_1$ ($e_2$) denotes an incoming (outgoing) edge respectively in the initial configuration (see Figure \ref{fig:config_triv}). 
\begin{enumerate}
\item If $V$ is black, then:
  \begin{align}
 \label{eq:black1}   
\widehat{\mbox{int}}(f) + \mbox{int}(f) + \gamma(f) +\gamma_1(e_1) &= [e_1,-e_2,f] \quad
                                                                         (\!\!\!\!\!\!\mod 2),\\
 \label{eq:black2}      
 \mbox{wind}(e_1,e_2) + \mbox{wind}(f,e_2) +  \mbox{wind}(f,-e_1) +\gamma_2 (e_1) &=  [e_1,-e_2,f] \quad
(\!\!\!\!\!\!\mod 2). 
\end{align}
\item If $V$ is white, then:
  \begin{align}
   \label{eq:white1}    
\gamma(f) +\gamma_1(e_1) &= [e_1,-e_2,-f] \quad
                               (\!\!\!\!\!\!\mod 2),\\
  \label{eq:white2}     
  \mbox{wind}(e_1,f) + \mbox{wind}(-e_2,f) + \mbox{wind}(-e_2,-e_1) +\gamma_2(e_1)&= 1-[e_1,-e_2,-f] \quad
(\!\!\!\!\!\!\mod 2). 
\end{align}
\end{enumerate}
\end{lemma}

\begin{proof}
  In the proof all identities hold mod 2.
  
  To prove (\ref{eq:black1}), let us remark that $\widehat{\mbox{int}}(f) + \mbox{int}(f)=0 $ if and only if
  both the starting and the ending points of $f$ lie in regions equally marked. Moreover
  $\gamma(f) +\gamma_1(e_1)=0$ if and only if the starting point of $f$ lies in area with the same marking as the area at the left of the ending point of $e_1$. Therefore  $\widehat{\mbox{int}}(f) + \mbox{int}(f)+\gamma(f) +\gamma_1(e_1)=0$ if vector $f$ lies to the left of the directed chain $e_1,e_2$, otherwise this expression equals 1. But vector $f$ lies to the left (right) of the directed chain $e_1,e_2$ if and only if $[e_1,-e_2,f]=0$ (1 respectively).

  To prove (\ref{eq:white1}), let us remark that $\gamma(f) +\gamma_1(e_1)=0$ if and only if the starting point of $f$ lies in area at the left of the ending point of $e_1$.  In this case vector $f$ lies to the left (right) of the directed chain $e_1,e_2$ if and only if $[e_1,-e_2,-f]=0$ (1 respectively).

It is evident that  (\ref{eq:black2}), (\ref{eq:white2})  are true for configurations presented at Figure~\ref{fig:bw_winding}:
  \begin{figure}%[H]
  \centering
  \includegraphics[width=0.8\textwidth]{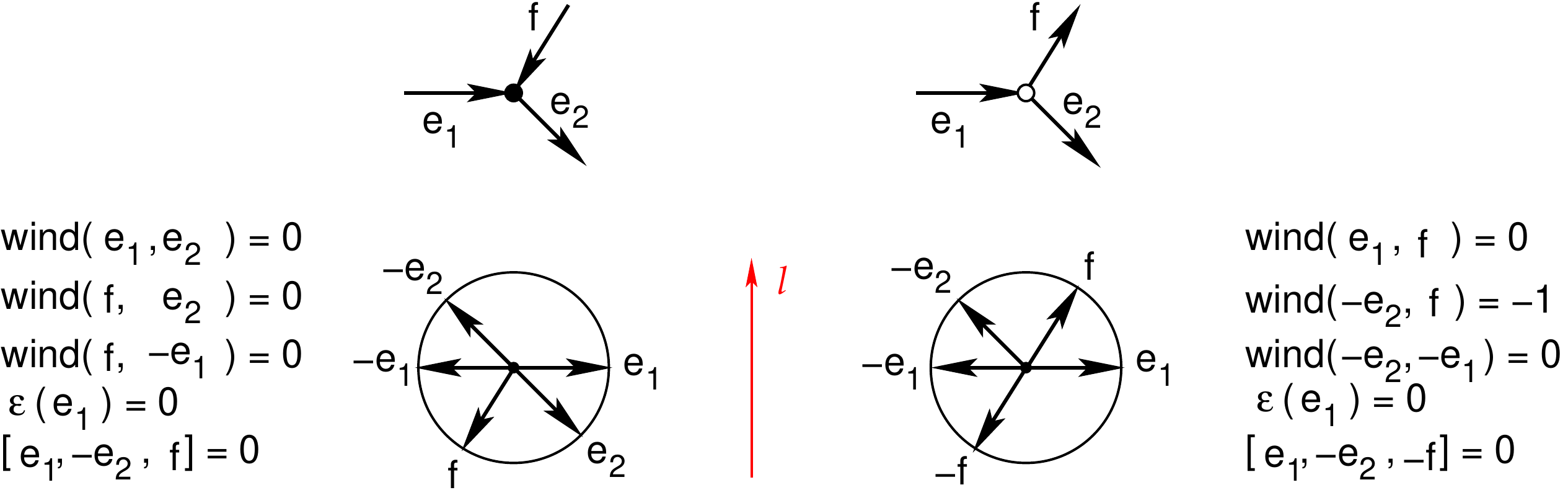}
  \caption{Check of  (\ref{eq:black2}), (\ref{eq:white2}) for one configurartion.}
	\label{fig:bw_winding}
\end{figure}

Lemma~\ref{lem:rotation} implies that they hold true for all configurations and gauge ray directions:
  \begin{enumerate}
  \item The right-hand side of  (\ref{eq:black2}), (\ref{eq:white2}) does not depend on $\mathfrak l$. The terms in the left-hand side change when $\mathfrak l$ passes $f$, $e_2$, $e_1$, $-e_1$ ( $f$, $-e_2$, $e_1$, $-e_1$) if $V$ is black (white) respectively. If $\mathfrak l$ passes either $f$ or $e_2$ ($-e_2$),  then two winding terms change by 1 contemporarily. If $\mathfrak l$ passes either $e_1$ or $-e_1$, exactly one winding term and $\gamma_1(e_1)$ change. In all cases equality remains true;
  \item If we keep $\mathfrak l$ and all vectors except $f$ fixed, both the right-hand side and the left-hand side change by 1 when $f$ ($-f$) passes $e_1$, $-e_2$, and remain fixed in all other cases, therefore the equality remains true;
  \item  If we keep $\mathfrak l$ and all vectors except $e_2$ fixed, both the right-hand side and the left-hand side change by 1 when $e_2$ passes $-e_1$, $-f$ ($f$), and remain fixed in all other cases, therefore the equality remains true.
   \end{enumerate} 
\end{proof}

\section*{Acknowledgments}  

The authors would like to express their gratitude to T. Lam for pointing our attention to Reference \cite{AGPR}.

\end{document}